\documentclass{amsart}

\usepackage[T1]{fontenc}
\usepackage[utf8]{inputenc}
\usepackage{lmodern}
\usepackage[english]{babel}
\usepackage[autostyle]{csquotes}

\usepackage{amsmath}
\usepackage{amsfonts}
\usepackage{mathtools}
\usepackage{graphicx}
\usepackage{mathrsfs}
\usepackage{amsthm}
\usepackage{amssymb}
\usepackage{centernot}
\usepackage{nicematrix}
\usepackage{enumerate}
\usepackage{colonequals}
\usepackage{mathbbol}
\usepackage[all,cmtip]{xy}
\usepackage{tikz}
\usepackage{tikz-cd}
\usepackage[mathcal]{euscript}
\usepackage[hidelinks]{hyperref}
\usepackage{cleveref}
\usepackage{comment}
\usepackage{mathdots}
\usepackage{varwidth}

\numberwithin{equation}{subsection}

\usepackage[margin = 2cm]{geometry}

\makeatletter
\newcommand{\sigmaop}[1]{\mathop{\mathpalette\@sigmaop{#1}}\slimits@}
\newcommand{\@sigmaop}[2]{%
  \vphantom{\sum}%
  \sbox\z@{$\m@th#1\sum$}%
  \dimen@=\ht\z@ \advance\dimen@\dp\z@
  \dimen\tw@=\wd\z@
  \ifx#1\displaystyle\dimen@=.9\dimen@\fi
  \ooalign{%
    \hidewidth
    $\vcenter{\hbox{$\m@th#1#2$\kern.3\dimen\tw@}%
     \ifx#1\scriptstyle\kern-.25ex\fi}$\hidewidth\cr
    $\vcenter{\hbox{%
      \resizebox{!}{\dimen@}{$\m@th\boxtimes$}%
    }\ifx#1\scriptstyle\kern-.25ex\fi}$\cr
  }%
}
\makeatother

\numberwithin{equation}{subsection}

\newtheorem{theorem}{Theorem}[subsection]
\newtheorem{lemma}[theorem]{Lemma}

\newtheorem{conjecture}[theorem]{Conjecture}

\newtheorem{corollary}[theorem]{Corollary}
\newtheorem{definition}[theorem]{Definition}

\newtheorem{proposition}[theorem]{Proposition}

\newtheorem{assumption}[theorem]{Assumption}

\newtheorem*{thm1}{Theorem 1}

\newtheorem*{prop1}{Proposition 1}
\newtheorem*{prop2}{Proposition 2}
\newtheorem*{prop3}{Proposition 3}

\newtheorem*{thmA}{Theorem A}
\newtheorem*{thmB}{Theorem B}
\newtheorem*{thmC}{Theorem C}
\newtheorem*{thmD}{Theorem D}
\newtheorem*{thmE}{Theorem E}

\theoremstyle{remark}

\newtheorem{remark}[theorem]{Remark}

\newtheorem{exa}[theorem]{Example}
\newtheorem{algorithm}[theorem]{Algorithm}

\newcommand{\colim}{%
  \mathop{\mathpalette\colim@{\rightarrowfill@\scriptscriptstyle}}\nmlimits@
}
\renewcommand{\varprojlim}{%
  \mathop{\mathpalette\varlim@{\leftarrowfill@\scriptscriptstyle}}\nmlimits@
}
\renewcommand{\varinjlim}{%
  \mathop{\mathpalette\varlim@{\rightarrowfill@\scriptscriptstyle}}\nmlimits@
}

\newcommand{\GZip}{\mathop{\text{$G$-{\tt Zip}}}\nolimits}

\newcommand{\procskip}{\vskip\procskipamount}
\newcommand{\interskip}{\vskip\interskipamount}
\newcommand{\refskip}{\vskip\refskipamount}

\newcommand{\procbreak}{\par
   \ifdim\lastskip<\procskipamount\removelastskip
   \penalty-100
   \procskip\fi
   \noindent\ignorespaces}

\newcommand{\titlebreak}{\par%
\ifdim\lastskip<\interskipamount\removelastskip%
\penalty10000%
\interskip\fi%
\noindent}%

\newcommand{\interbreak}{\par%
\ifdim\lastskip<\interskipamount\removelastskip%
\penalty-100%
\interskip\fi%
\noindent\ignorespaces}%

\newcommand{\refbreak}{\par%
\ifdim\lastskip<\refskipamount\removelastskip%
\penalty-100%
\refskip\fi%
\noindent\ignorespaces}%

\newcounter{listcounter}
\newcounter{deflistcounter}
\newcounter{equivcounter}

\newskip{\itemsepamount}
\newskip{\topsepamount}
\newenvironment{assertionlist}{%
  \begin{list}
    {\upshape (\arabic{listcounter})}
    {\setlength{\leftmargin}{18pt}
     \setlength{\rightmargin}{0pt}
     \setlength{\itemindent}{0pt}
     \setlength{\labelsep}{5pt}
     \setlength{\labelwidth}{13pt}
     \setlength{\listparindent}{\parindent}
     \setlength{\parsep}{0pt}
     \setlength{\itemsep}{\itemsepamount}
     \setlength{\topsep}{\topsepamount}
     \usecounter{listcounter}}}
  {\end{list}}

\newenvironment{definitionlist}{%
  \begin{list}
    {\upshape (\alph{deflistcounter})}
    {\setlength{\leftmargin}{18pt}
     \setlength{\rightmargin}{0pt}
     \setlength{\itemindent}{0pt}
     \setlength{\labelsep}{5pt}
     \setlength{\labelwidth}{13pt}
     \setlength{\listparindent}{\parindent}
     \setlength{\parsep}{0pt}
     \setlength{\itemsep}{\itemsepamount}
     \setlength{\topsep}{\topsepamount}
     \usecounter{deflistcounter}}}
  {\end{list}}

\newenvironment{equivlist}{%
  \begin{list}
    {\upshape (\roman{equivcounter})}
    {\setlength{\leftmargin}{18pt}
     \setlength{\rightmargin}{0pt}
     \setlength{\itemindent}{0pt}
     \setlength{\labelsep}{5pt}
     \setlength{\labelwidth}{13pt}
     \setlength{\listparindent}{\parindent}
     \setlength{\parsep}{0pt}
     \setlength{\itemsep}{\itemsepamount}
     \setlength{\topsep}{\topsepamount}
     \usecounter{equivcounter}}}
  {\end{list}}

\newcommand{\Bcal}{{\mathcal B}}
\newcommand{\Ccal}{{\mathcal C}}

\newcommand{\Ecal}{{\mathcal E}}
\newcommand{\Fcal}{{\mathcal F}}
\newcommand{\Gcal}{{\mathcal G}}
\newcommand{\Hcal}{{\mathcal H}}
\newcommand{\Ical}{{\mathcal I}}
\newcommand{\Jcal}{{\mathcal J}}

\newcommand{\Lcal}{{\mathcal L}}

\newcommand{\Ocal}{{\mathcal O}}
\newcommand{\Pcal}{{\mathcal P}}
\newcommand{\Qcal}{{\mathcal Q}}

\newcommand{\Scal}{{\mathcal S}}
\newcommand{\Tcal}{{\mathcal T}}
\newcommand{\Ucal}{{\mathcal U}}

\newcommand{\Xcal}{{\mathcal X}}

\newcommand{\Zcal}{{\mathcal Z}}

\newcommand{\pfr}{{\mathfrak p}}

\newcommand{\Sfr}{{\mathfrak S}}

\renewcommand{\AA}{\mathbb{A}}

\newcommand{\CC}{\mathbb{C}}
\newcommand{\DD}{\mathbb{D}}

\newcommand{\FF}{\mathbb{F}}
\newcommand{\GG}{\mathbb{G}}

\newcommand{\QQ}{\mathbb{Q}}

\newcommand{\ZZ}{\mathbb{Z}}

\newcommand{\Fscr}{{\mathscr F}}

\newcommand{\Pscr}{{\mathscr P}}

\newcommand{\Uscr}{{\mathscr U}}

\newcommand{\Xscr}{{\mathscr X}}

\newcommand{\leftexp}[2]{{\vphantom{#2}}^{#1}{#2}}

\DeclareMathOperator{\Cent}{Cent}

\DeclareMathOperator{\C}{C}

\DeclareMathOperator{\NC}{NC}
\DeclareMathOperator{\Gal}{Gal}

\DeclareMathOperator{\rank}{rank}
\DeclareMathOperator{\Span}{Span}

\DeclareMathOperator{\Lie}{Lie}

\DeclareMathOperator{\Tr}{Tr}

\DeclareMathOperator{\Stab}{Stab}

\DeclareMathOperator{\pr}{pr}

\DeclareMathOperator{\Ker}{Ker}

\DeclareMathOperator{\rk}{rk}

\DeclareMathOperator{\inv}{inv}
\DeclareMathOperator{\Nm}{Nm}

\DeclareMathOperator{\Sh}{Sh}
\DeclareMathOperator{\spec}{Spec}

\DeclareMathOperator{\Adj}{Adj}

\DeclareMathOperator{\GL}{GL}

\newcommand{\id}{{\rm Id}}

\newcommand{\loccit}{{\em loc.\ cit.\ }}
\newcommand{\loccitn}{{\em loc.\ cit.}}

\newcommand{\diag}{{\rm diag}}

\renewcommand{\Im}{{\rm Im}}

\newcommand{\iw}{\leftexp{I}{W}}

\DeclareMathOperator{\divi}{div}

\DeclareMathOperator{\ima}{im}

\DeclareMathOperator{\val}{val}

\DeclareMathOperator{\Bru}{Br}

\DeclareMathOperator{\iden}{id}

\DeclareMathOperator{\Mat}{Mat}

\newcommand{\relmiddle}[1]{\mathrel{}\middle#1\mathrel{}}

\newcommand{\smallmat}[4]{\bigl(\begin{smallmatrix}#1&#2\\#3&#4\end{smallmatrix}\bigr)}

\newcommand{\xdasharrow}[2][->]{
\tikz[baseline=-\the\dimexpr\fontdimen22\textfont2\relax]{
\node[anchor=south,font=\scriptsize, inner ysep=1.5pt,outer xsep=2.2pt](x){#2};
\draw[shorten <=3.4pt,shorten >=3.4pt,dashed,#1](x.south west)--(x.south east);
}
}

\newcommand{\chara}{\mathsf{char}}
\newcommand{\Ha}{\mathsf{Ha}}

\newcommand{\JS}[1]{{\color{red} [#1]}} %Jean-Stefan: 
\newcommand{\lorenzo}[1]{{\color{blue} [#1]}} % Lorenzo: 
\title{Decidability of singularities in the Ekedahl--Oort stratification} 

\author{Jean-Stefan Koskivirta, Lorenzo La Porta}

\date{}

\makeatletter
\let\c@equation=\c@subsubsection

\let\c@figure=\c@subsubsection

\let\c@table=\c@subsubsection

\begin{document}

\begin{abstract}
For an abelian type Shimura variety and an odd prime \(p\) of good reduction, we characterize the regularity in codimension one of Zariski closures of Ekedahl--Oort strata in terms of the Frobenius action on the root datum. We give an algorithm that detects codimension one singularities for arbitrary Ekedahl--Oort strata. When the Shimura datum is of split type, we relate the singularities of Ekedahl--Oort strata to a stack of $G$-zips over the complex numbers. We study the existence of generalized Hasse invariants on this stack.
%This stack can also detect the presence of singularities in unions of Ekedahl--Oort strata in characteristic \(p\).
\end{abstract}

\maketitle

\tableofcontents

\section{Introduction}
The Ekedahl--Oort (EO) stratification on the special fiber of a Shimura variety was first introduced by Ekedahl and Oort \cite{oortastrat}, in the case of Siegel modular varieties. This was subsequently generalized by the work of many \cite{EOVW, zhang, shen.zhang, imai2024prismaticrealizationfunctorshimura}. It has proved a very effective tool to study the geometry of such moduli spaces. For example, the theory of generalized Hasse invariants on EO strata in \cite{Goldring-Koskivirta-Strata-Hasse, boxer} made it possible to extend the construction of Galois representations attached to more general automorphic representations. The singularities appearing in the EO strata encode interesting arithmetic information. For this reason, it is an important problem to detect and measure such singularities. In this paper, building on the results of \cite{Koskivirta-LaPorta-Reppen-singularities}, we address these questions. %and answer some questions \JS{"some questions" - not precise. I I prefer not using "some"} that were left open in \loccit We achieve these results by introducing new ideas that are likely to have applications beyond the scope of this work.

Let us explain in further detail. Fix \(p\) an odd prime. Write \(S\) for the \(\overline{\FF}_p\)-fiber at some \(p\)-hyperspecial level \(K\) of an abelian type Shimura variety. %When \(p\) and \(K\) are clear from context, we simply write \(S\), instead of \(S_{K, p}\). 
Denote the EO strata in \(S\) by $(S_w)_{w\in {}^I W}$, where ${}^IW$ is a certain subset of the Weyl group of the relevant reductive group $G$, \S\ref{subsubsec-zip-strata}. Here, $I$ denotes the type of the Hodge parabolic $P\subseteq G$ attached to the Shimura datum. The smooth and normal loci of the Zariski closure $\overline{S}_w$ can be written as unions of EO strata. It is a difficult problem to determine which EO strata appear therein. In the recent preprint \cite{Koskivirta-LaPorta-Reppen-singularities}, the authors together with S.\ Reppen gave a criterion for the normality of a locally closed subcheme $U\subseteq S$ which is a union of EO strata. Such a subcheme is called an \emph{elementary $w$-open} if $U=S_w\cup S_{w'}$, where $S_{w'}$ has codimension one in $\overline{S}_w$. This condition is equivalent to $w'\in \Gamma_I(w)$, the set of lower neighbors of $w$ in ${}^IW$ with respect to a certain partial order $\preccurlyeq_I$, see \S\ref{sec-closure}.

For elementary $w$-open subschemes $U$, the authors gave a list of explicit conditions that are equivalent to the smoothness of $U$ (itself equivalent to normality, in this setting). We recall the result below. To explain it, we need to introduce two notions. First, $U$ is \emph{$w$-bounded} if $P_{w'}\subseteq P_w$, where $P_w$, $P_{w'}$ denote the \emph{canonical parabolic subgroups} of $w,w'$ respectively, see \cite[Def.~2.11]{Koskivirta-LaPorta-Reppen-singularities}. Second, for any intermediate parabolic subgroup $B\subseteq P_0\subseteq P$ (where $B$ is a fixed Borel subgroup), we consider the flag space $S^{(P_0)}$, which parametrizes points of $S$ endowed with a $P_0$-subtorsor of the $P$-torsor given by the Hodge filtration. Write $\pi_{P_0}\colon S^{(P_0)}\to S$ for the natural projection. There is a stratification $(S^{(P_0)}_{w})_{w\in {}^{I_0} W}$, where $I_0\subseteq I$ is the type of $P_0$, \S\ref{subsub-flag}. We say that $U$ \emph{admits a separating canonical cover} if the preimage of $U$ under the map 
\begin{equation}
    \pi_{P_w}\colon \overline{S}^{(P_w)}_w \to \overline{S}_w
\end{equation}
coincides with $U^{(P_w)} \colonequals S^{(P_w)}_w\cup S^{(P_w)}_{w'}$.
\begin{thm1}[{\cite[Corollary 4.17]{Koskivirta-LaPorta-Reppen-singularities}}] \label{theorem: previous result}
The following are equivalent:
\begin{enumerate}
    \item The scheme $U$ is smooth.
    \item The scheme $U$ is normal.
    \item The scheme $U$ is $w$-bounded and admits a separating canonical cover.
\end{enumerate}
\end{thm1}
In this paper, we build upon Theorem \ref{theorem: previous result} and uncover several new aspects of the EO stratification. Our main results are twofold:
\begin{enumerate}[(1)]
    \item We give an algorithm which determines whether an elementary $w$-open subset is smooth (equivalently, normal) in terms of the root datum of $G$.
    \item When the group $G$ splits over $\FF_p$, we show that the presence of singularities in the EO stratification is controlled by an object in characteristic zero.
\end{enumerate}
\subsection{Algorithm for smoothness}
Write $\Xcal \colonequals \GZip^\mu$ for the stack of $G$-zips associated to the Shimura datum, \S\ref{subsec-generalities-zip}. It is defined as a quotient stack $[E\backslash G_{\overline{\FF}_p}]$, where $E$ is the attached zip group, \S\ref{subsubsec-G-zips}. Recall that there is a smooth surjective map
\begin{equation}\label{zeta-intro-eq}
    \zeta\colon S\to \Xcal
\end{equation}
constructed by Viehmann--Wedhorn, Zhang and  Imai--Kato--Youcis \cite{EOVW, zhang, imai2024prismaticrealizationfunctorshimura}. The EO stratification is the pullback of the \emph{zip stratification} $(\Xcal_w)_{w\in {}^IW}$ of $\Xcal$, which is itself given by the $E$-orbits $(G_w)_{w\in {}^I W}$ inside $G_{\overline{\FF}_p}$.

Similarly to $\pi_{P_0}\colon S^{(P_0)}\to S$, we have a flag space $\pi_{P_0}\colon \Fcal^{(P_0)}\to \Xcal$ (for any intermediate parabolic $P_0$), carrying a stratification $(\Fcal^{(P_0)}_w)_{w\in {}^{I_0}W}$. For $w\in W$, consider the stratum $\Fcal^{(B)}_w$ in the total flag space $\Fcal^{(B)}$. Its image $\pi_B(\Fcal^{(B)}_w)$ is a union of certain zip strata of $\Xcal$. There exists a unique zip stratum $\Xcal_{v}$ which is open dense in $\pi_B(\Fcal^{(B)}_w)$. This defines a map $w\mapsto v$ that we denote by
\begin{equation}
    \pi_{\overline{\FF}_p}\colon W\to {}^I W.
\end{equation}
The subscript $\overline{\FF}_p$ is used to distinguish this map from an analogous map $\pi_{\CC}\colon W\to {}^I W$ which we define later. The map $\pi_{\overline{\FF}_p}$ is a section of the natural inclusion ${}^I W\subseteq W$. The computation of the map $\pi_{\overline{\FF}_p}$ is in general highly involved. We do not know an algorithm to determine it.

Set $z\colonequals \sigma(w_{0,I})w_0$, where $w_{0,I}\in W_I\colonequals W(L,T)$ and $w_0\in W$ are the longest elements in \(W_I\) and \(W\), respectively, and $\sigma$ is the action of the $p$-power Frobenius on $W$. Write $\Xi\colon G(\overline{\FF}_p)\to {}^I W$ for the map which takes an element $g\in G(\overline{\FF}_p)$ to the unique element $w\in {}^I W$ such that $gz^{-1}\in G_w$. There is an explicit algorithm that computes the restriction of the map $\Xi$ to the subset $W\subseteq G(\overline{\FF}_p)$ (this inclusion is defined by choosing representatives). In the case $G=\GL_{n,\FF_p}$, we also give an algorithm to determine $\Xi$ entirely, \S\ref{subsec-algo-GLn}. For general strata, one needs to know the image by $\Xi$ of infinitely many points in order to compute $\pi_{\overline{\FF}_p}$. This is an obstacle to determining $\pi_{\overline{\FF}_p}$ algorithmically.

We show, however, that the image by \(\pi_{\overline{\FF}_p}\) of certain special elements of $W$ can be computed by an algorithm. We say that $w\in W$ is \emph{small} if $\ell(\pi_{\overline{\FF}_p}(w))=\ell(w)$, where $\ell\colon W\to \ZZ_{\geq 0}$ is the length function. Write $W^{\rm sm}\subseteq W$ for the set of small elements. There is an explicit, combinatorial characterization of smallness in terms of roots, which is computable, \S \ref{subsec-small-roots}. One has the following key result:
\begin{prop1}[{Proposition \ref{prop-small-image}}] \label{prop: pi for small} Let $w\in W^{\rm sm}$.
\begin{enumerate}
    \item The underlying topological space of $\Fcal^{(B)}_w$ is a single point.
    \item One has $\pi_{\overline{\FF}_p}(w)=\Xi(w)$.
\end{enumerate}
 In particular, the restriction of $\pi_{\overline{\FF}_p}$ to $W^{\rm sm}$ can be computed algorithmically.
\end{prop1}

We now explain how to obtain an algorithm to determine the smoothness of elementary $w$-open subsets.
%We conjecture that the maps $\pi_{\CC}$ and $\pi_{\overline{\FF}_p}$ coincide, but we are not, at the moment, able to prove it. 
Let $w\in {}^I W$ and consider $w'$ a lower neighbor of $w$ in ${}^I W$. We have the corresponding elementary $w$-open subsets $U(w,w')$ and $\Ucal(w,w')$ inside $S$ and $\Xcal$, respectively.
%have corresponding elementary $w$-open substacks $U(w,w')$, $\Ucal(w,w')$ and $\Uscr_{\CC}(w,w')$ inside $S$, $\Xcal$ and $\Xscr_\CC$, respectively.
Write $\Gamma_{I_w}(w)$ for the set of lower neighbors of $w$ in the set ${}^{I_w} W$ (with respect to $\preccurlyeq_{I_w}$), where $I_w\subseteq I$ is the type of the canonical parabolic $P_w$. Set $\Gamma_{I_w}^{\rm sm}(w)\colonequals \Gamma_{I_w}(w)\cap W^{\rm sm}$.
% \begin{prop2} \label{prop: 4 condition}
% The three equivalent conditions of Theorem \ref{theorem: previous result} are also equivalent to the following:
% \begin{enumerate}
%     \item[(4)] One has $P_{w'}\subseteq P_w$ and $w'\notin \pi_{\overline{\FF}_p}(\Gamma^{\rm sm}_{I_w}(w)\setminus\{w'\})$.
% \end{enumerate}
% \end{prop2}
We show that the three equivalent conditions of Theorem \ref{theorem: previous result} are also equivalent to the following:
\begin{enumerate}
    \item[(4)] One has $P_{w'}\subseteq P_w$ and $w'\notin \pi_{\overline{\FF}_p}(\Gamma^{\rm sm}_{I_w}(w)\setminus\{w'\})$.
\end{enumerate}
Since the map $\pi_{\overline{\FF}_p}$ can be algorithmically computed on small elements by Proposition 1, we obtain our first main result:
%The determination of the map $\pi_{\overline{\FF}_p}\colon W\to {}^I W$ involves a priori an infinite search space. However, the above shows that it suffices to consider small strata \(w_1\), and check which stratum of $\Xcal$ the element \(w_1\) is sent to. Concretely, one has to determine the $E$-orbit of each element of the form $w_1z^{-1}$.
\begin{thmA}[{Theorem \ref{thm: algo for smooth}}]
There exists an algorithm which determines whether $U(w,w')$ (equivalently, $\Ucal(w,w')$) is smooth (equivalently, normal).
\end{thmA}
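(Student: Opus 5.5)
The plan is to reduce smoothness of $U(w,w')$ to condition (4) and then check that every ingredient of (4) is computable from the root datum $(X^*(T),\Phi,X_*(T),\Phi^\vee)$ together with the action of $\sigma$ and the type $I$. The smoothness transfers between $S$ and $\Xcal$ because $\zeta$ is smooth and surjective and $U(w,w')=\zeta^{-1}(\Ucal(w,w'))$. It therefore suffices to decide smoothness of $\Ucal(w,w')$. By Theorem 1, this is equivalent to $w$-boundedness together with the existence of a separating canonical cover. I would first prove that these two conditions together are equivalent to (4).

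\textbf{Step 1: the equivalence of (3) and (4).} The inclusion $P_{w'}\subseteq P_w$ is exactly $w$-boundedness, so assume it holds. The separating canonical cover condition says that no stratum $\Fcal^{(P_w)}_{u}$ other than those of $w$ and $w'$ maps onto $\Xcal_{w'}$ inside $\overline{\Fcal}^{(P_w)}_w$. Since $\pi_{P_w}$ restricted to $\overline{\Fcal}^{(P_w)}_w$ is proper and birational onto $\overline{\Xcal}_w$ (canonical parabolic), a failure of separation yields a stratum in the preimage of $\Xcal_{w'}$ of dimension at least $\ell(w')$, lying in the closure of $\Fcal^{(P_w)}_w$. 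Passing to codimension-one strata, we may take such a stratum indexed by some $u\in\Gamma_{I_w}(w)\setminus\{w'\}$ with $\pi(\Fcal^{(P_w)}_u)$ dense in $\Xcal_{w'}$. Comparing dimensions, $\ell(u)=\ell(w)-1=\ell(w')$. Lifting to the full flag space $\Fcal^{(B)}$ via the open stratum above $u$ gives $\pi_{\overline{\FF}_p}(u)=w'$ and $\ell(\pi_{\overline{\FF}_p}(u))=\ell(u)$, so $u$ is small. The converse is immediate from the definition of $\pi_{\overline{\FF}_p}$: a small $u\in\Gamma_{I_w}(w)\setminus\{w'\}$ with $\pi_{\overline{\FF}_p}(u)=w'$ gives an extra stratum over $\Xcal_{w'}$ in $\overline{\Fcal}^{(P_w)}_w$.

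\textbf{Step 2: computability.} I would then check the following ingredients in turn.
\begin{enumerate}
\item The set ${}^IW$, the order $\preccurlyeq_I$, and hence the neighbor sets $\Gamma_I(w)$ and $\Gamma_{I_w}(w)$ are finite and computable from $(W,\sigma,I)$.
\item The canonical parabolic type $I_w$ is computable by its iterative definition.
\item Smallness is decidable by the root-theoretic characterization of \S\ref{subsec-small-roots}.
\item For small $u$, Proposition 1 gives $\pi_{\overline{\FF}_p}(u)=\Xi(u)$, and $\Xi$ restricted to $W$ is given by an explicit algorithm (determining the $E$-orbit of $\dot u z^{-1}$).
\end{enumerate}
Running over the finitely many $u\in\Gamma^{\rm sm}_{I_w}(w)\setminus\{w'\}$ and testing whether $\Xi(u)=w'$ then decides (4).

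\textbf{Main obstacle.} The delicate point is Step 1. One must show that the offending component over $\Xcal_{w'}$ in $\overline{\Fcal}^{(P_w)}_w$ can be taken to be a codimension-one stratum indexed by an element of $\Gamma_{I_w}(w)$, rather than something smaller whose image merely meets $\Xcal_{w'}$. One must also show that density of its image forces smallness. To handle this, I would use the closure relations $\preccurlyeq_{I_w}$ on the flag space and the fact that each $\Fcal^{(P_w)}_u$ has dimension $\ell(u)$ and maps with image of dimension at most $\ell(u)$. Equality of dimensions then pins down both the neighbor relation and smallness.
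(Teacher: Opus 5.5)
Your proposal is correct and follows essentially the same route as the paper: reduce via Theorem 1 to condition (4), then restrict to small lower neighbours as in Proposition \ref{prop-smooth-small}. The dimension count you flag as the main obstacle is exactly Lemma \ref{lem-length-ineq}: any $u\preccurlyeq_{I_w} w$ with $u\neq w$ whose image meets $\Xcal_{w'}$ automatically satisfies $\ell(w')\leq \ell(\pi_\Zcal(u))\leq \ell(u)\leq \ell(w)-1$, so no choice is needed. After that, you decide $w$-boundedness, smallness and $\pi_\Zcal=\Xi_\Zcal$ on small elements using the iterative formula for $I_w$, the root criterion, and Propositions \ref{prop-pi-Xi} and \ref{prop-W-representative}, just as the paper does.
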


In work in progress \cite{Koskivirta-LaPorta-Reppen-n-22} of the authors together with  S.\ Reppen, we are able to completely determine the singularities appearing in the Ekedahl--Oort stratification for unitary Shimura varieties of signature $(n-2,2)$ at a split prime, using this algorithm.

\subsection{Independence from \texorpdfstring{$p$}{p}}
A consequence of the proof of Proposition 1 and Theorem A above %\ref{prop: pi for small} and \ref{prop: 4 condition}\JS{refs not working!}
is that the smoothness of an elementary open \(U\) is entirely determined by the action of the Frobenius on the based root datum \(\Phi\) of \(G\) (with respect to an adequate choice of a Borel pair, see \S\ref{subsubsec-G-zips} and \S\ref{sssec: frob action}). Consider now two odd primes $p,\ell$ of good reduction. Write \(\sigma_p, \sigma_\ell\) for the automorphisms of \(\Phi\) induced by the Frobenius at \(p\) and $\ell$, respectively. We deduce the following.
\begin{thmB}[{Corollary \ref{cor-lp}}]
Assume that \(\sigma_p = \sigma_\ell\). For $w\in \iw$ and $w'\in \Gamma_{I}(w)$, denote by $U_p$ and $U_\ell$ the corresponding $w$-open subsets in the mod $p$ and mod $\ell$ special fibers, respectively.
The following are equivalent:
\begin{enumerate}
\item $U_{p}$ is smooth (equivalently, normal).
\item $U_{\ell}$ is smooth (equivalently, normal).
\end{enumerate}
\end{thmB}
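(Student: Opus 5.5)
By Theorem 1, smoothness (equivalently normality) of $U_p$ is equivalent to smoothness of the corresponding elementary $w$-open $\Ucal_p(w,w')$ in the zip stack $\Xcal_p=\GZip^\mu$ at $p$. This uses that $\zeta$ is smooth and surjective, so smoothness and normality descend and ascend along it. The same holds at $\ell$. So it suffices to show that the characterization of smoothness of $\Ucal(w,w')$ via condition (4) depends only on the datum $(W,I,\sigma,\ell,\text{roots})$. Here I will use that for the two primes the group $G$ is the same reductive group over $\ZZ_{(p)}$ and $\ZZ_{(\ell)}$, after choosing Borel pairs as in \S\ref{subsubsec-G-zips}. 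Thus the based root datum $\Phi$, the Weyl group $W$, the cocharacter type $I$, and, by hypothesis, the Frobenius action $\sigma_p=\sigma_\ell$ on $\Phi$ are identified.

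The key steps, in order, are as follows.

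First, I check that the combinatorial objects in condition (4) are determined by $(\Phi,I,\sigma)$. These are ${}^IW$, the partial order $\preccurlyeq_I$ and hence the set of lower neighbors $\Gamma_I(w)$, the canonical parabolic $P_w$ (its type $I_w$ is computed by the usual algorithm: the largest subset $J\subseteq I$ with $w\sigma(J)$-stability, iterating $\sigma$ and $w$), $\Gamma_{I_w}(w)$, and the set $W^{\rm sm}$ of small elements. For the latter I use the root-theoretic characterization of \S\ref{subsec-small-roots}, which involves only $\Phi$, $\sigma$ and $I$. The element $z=\sigma(w_{0,I})w_0$ is likewise determined.

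Second, I show that the restriction of $\pi_{\overline{\FF}_p}$ to $W^{\rm sm}$ agrees with that of $\pi_{\overline{\FF}_\ell}$. By Proposition 1, $\pi_{\overline{\FF}_p}(w)=\Xi_p(w)$ for small $w$. The explicit algorithm computing $\Xi$ on $W\subseteq G(\overline{\FF}_p)$ is carried out purely in the Weyl group. It is the Pink--Wedhorn--Ziegler iterative procedure, which determines the $E$-orbit of $\dot{w}z^{-1}$ through operations on $W$, $W_I$, $\sigma$ and lengths. Hence $\Xi_p|_W=\Xi_\ell|_W$ under the identifications.

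Third, I combine these: condition (4) reads ``$P_{w'}\subseteq P_w$ and $w'\notin\pi(\Gamma^{\rm sm}_{I_w}(w)\setminus\{w'\})$'', and every ingredient is now prime-independent. The equivalence with Theorem 1 (Theorem A) then gives $U_p$ smooth if and only if $U_\ell$ smooth.

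The main obstacle is the second step. One must be sure that the algorithm for $\Xi|_W$ never uses the value of $p$ itself, for instance through the choice of representatives $\dot w$ or through Frobenius on the torus, but only through $\sigma$ on $\Phi$. I would first verify this by checking that the $E$-orbit of $\dot wz^{-1}$ is independent of the choice of lift in $N(T)$ (torus elements can be absorbed by $E$ via Lang's theorem on $T$). Then I would check that each step of the algorithm is a recursion on $W$ involving only $\sigma$, $I$ and the Bruhat order.
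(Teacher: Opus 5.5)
Your proposal is correct and follows essentially the same route as the paper. You reduce to the zip stack via the smooth surjective map $\zeta$, then observe that every ingredient of the criterion in Proposition \ref{prop-smooth-small} depends only on $(\Phi,W,\sigma)$: the type $I_w=\bigcap_m \varphi_w^m(I)$, the orders $\preccurlyeq_{I_0}$, smallness via Proposition \ref{prop-small-by-chains}, and $\pi_\Zcal=\Xi_\Zcal$ on $W^{\rm sm}$ by Proposition \ref{prop-pi-Xi}. The step you flag as the main obstacle is settled directly by Proposition \ref{prop-W-representative} (Pink--Wedhorn--Ziegler, Cor.~9.18). It characterizes $\Xi_\Zcal|_W$ by the purely Weyl-group condition $w'=axw\psi(a)^{-1}$ with $a\in W_I$ and $x\in W_{I_w}$, so no separate analysis of lifts or Lang's theorem is needed.
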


\subsection{Length \texorpdfstring{\(2\)}{2} strata}
We illustrate our results with a concrete application to 2-dimensional strata in unitary Shimura varieties of signature $(r,s)$ at a split prime of good reduction. Let $S$ denote the special fiber of this Shimura variety. We assume that $r\geq s \geq 2$ (in the case $s=1$, the Zariski closure of each EO stratum is smooth by \cite{laporta2023generalised}). In this case, there are exactly two elements of length $2$ in ${}^{I}W$, that we denote by $w_1$, $w_2$ (see \S\ref{subsec-GUrs} for the exact parametrization), and a single length $1$ element, denoted $w'$. Set $U_i\colonequals S_{w_i}\cup S_{w'}$ for $i\in \{1,2\}$. For $x$ coprime to $n$, let $\inv_n(x)$ denote the unique element $k\in \{1,\dots,n-1\}$ such that $kx\equiv 1 \pmod{n}$.
\begin{thmC}[{Theorem \ref{thm-Urs-smooth}}] \ 
\begin{enumerate}
    \item If $\gcd(r,s)>3$, then $U_i$ is not $w_i$-bounded (for $i=1,2$). In particular, $U_i$ is not smooth.
    \item If $\gcd(r,s)\in \{2,3\}$, then $U_i$ is smooth for $i=1,2$.
    \item If $\gcd(r,s)=1$ and we set $m=\inv_n(s)$, one has
    \begin{equation*}
       U_1 \ \textrm{smooth} \ \Longleftrightarrow \ m>\frac{n}{2}, \quad  
        U_2 \ \textrm{smooth} \ \Longleftrightarrow \ m<\frac{n}{2}.
    \end{equation*}
\end{enumerate}
\end{thmC}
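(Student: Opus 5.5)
To prove Theorem C, I will apply criterion (4) together with Theorem 1. The criterion says that $U_i$ is smooth if and only if $P_{w'}\subseteq P_{w_i}$ and $w'\notin \pi_{\overline{\FF}_p}(\Gamma^{\rm sm}_{I_{w_i}}(w_i)\setminus\{w'\})$. The setting is the split unitary case, so $G_{\overline{\FF}_p}\simeq \GL_n$ with $n=r+s$. Since $p$ is split, $\sigma$ acts trivially on $W=S_n$, and $I$ is the type of the $(r,s)$ parabolic. The plan has three steps.

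\textbf{Step 1: explicit elements.} Using the parametrization of \S\ref{subsec-GUrs}, I write down $w_1,w_2,w'$ as permutations. The length one element $w'$ is a single simple reflection $s_r$ (up to the twist by $z$). The two length two elements should be $s_{r-1}s_r$ and $s_{r+1}s_r$, or something close to these. I then compute the canonical parabolics $P_{w_i}$ and $P_{w'}$. Recall that $P_w$ is the parabolic obtained by iterating $J\mapsto J\cap w^{-1}\sigma... $, that is, the largest type $J\subseteq I$ with ${}^{z}w$-stability. In the split case this amounts to iterating a permutation of the indices $\{1,\dots,n-1\}$ given by $z w$ together with a shift.

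\textbf{Step 2: boundedness and the role of $\gcd(r,s)$.} The permutation $z=w_{0,I}w_0$ acts on $\{1,\dots,n\}$ as a rotation by $r$ (or by $s$) modulo $n$. The orbits of the resulting iteration therefore have the structure of $\ZZ/n$ under translation by $s$. Consequently the type $I_w$ is obtained by deleting from $I$ the orbits, under translation by $s$ modulo $n$, of the one or two positions touched by $w$. The orbits have size $n/\gcd(r,s)$, and there are $d=\gcd(r,s)$ cosets. I will show that $P_{w'}\subseteq P_{w_i}$ fails precisely when $w_i$ touches a residue class mod $d$ that $w'$ does not cover, and that this happens exactly when $d>3$. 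This proves part (1).

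\textbf{Step 3: the small-neighbour condition.} When $d\le 3$ the bounded condition holds, and it remains to check the second part of (4). First I enumerate $\Gamma_{I_{w_i}}(w_i)$. Since $\ell(w_i)=2$, these are the length-one elements $s_j\in{}^{I_{w_i}}W$ below $w_i$, together with possibly some others. I then check which of them are small, using the root-theoretic criterion of \S\ref{subsec-small-roots}. Next I compute $\Xi(s_j)$ for each small one via Proposition 1, using the explicit $\GL_n$ algorithm of \S\ref{subsec-algo-GLn}. In other words, I determine the $E$-orbit of $s_jz^{-1}$.

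Here is how I expect the cases to go.
\begin{itemize}
\item For $d\in\{2,3\}$, I expect no other small neighbour to map to $w'$, giving part (2).
\item For $d=1$ there is a single orbit. Deciding whether some $s_j\neq w'$ lands on $w'$ should reduce to comparing the positions of $r$ and $r\pm1$ along the cycle $k\mapsto k+s \pmod n$. The number of steps from $r$ to $r+1$ along this cycle is $\inv_n(s)=m$, and the number from $r$ to $r-1$ is $n-m$. Whichever of $r\pm1$ is reached first should make the corresponding $U_i$ smooth. This gives the dichotomy $m>n/2$ versus $m<n/2$ of part (3), after matching the indices $1,2$ to the parametrization.
\end{itemize}

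\textbf{Main obstacle.} I expect Step 3 to be the hardest part. Computing $\Xi(s_j)$ requires running the $E$-orbit algorithm on $s_jz^{-1}$ in general $n$. The first thing I would try is to track the zip datum, that is, the flags and their Frobenius twists, as explicit permutation iterations, so that the orbit is read off from the cycle structure of translation by $s$.
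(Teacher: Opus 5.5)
Your framework (criterion (4), i.e.\ Proposition~\ref{prop-smooth-small}) is the right one, but you have misidentified where the work lies, and the plan as written would not produce parts (2) and (3).

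\textbf{The reduction you are missing.} The element $w'=s_{\alpha_r}$ is the \emph{unique} length-one element of ${}^IW$. So any small $v\in\Gamma_{I_{w_i}}(w_i)$ satisfies $\ell(\pi(v))=\ell(v)=1$, and hence $\pi(v)=w'$ automatically. This has three consequences.
\begin{enumerate}
\item No computation of $\Xi$ is ever needed, so the ``main obstacle'' of your Step~3 is vacuous.
\item The criterion collapses to: $U_i$ is smooth if and only if it is $w_i$-bounded and $\Gamma^{\rm sm}_{I_{w_i}}(w_i)=\{w'\}$.
\item Your expectation for $\gcd(r,s)\in\{2,3\}$ (``no other small neighbour maps to $w'$'') is the wrong target, since any other small neighbour necessarily maps to $w'$. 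What must be proved is that the other Bruhat lower neighbour $w'_i$ is \emph{not small}.
\end{enumerate}
Note also that $w_1=s_{\alpha_r}s_{\alpha_{r+1}}$ and $w_2=s_{\alpha_r}s_{\alpha_{r-1}}$, with $s_{\alpha_r}$ on the left. Your products $s_{r\pm1}s_r$ lie in $W^I$, not ${}^IW$. The Bruhat lower neighbours are $w'$ and $w'_i=s_{\alpha_{r\pm1}}$. The paper shows that $w'_i$ is small if and only if $\gcd(r,s)=1$ (Proposition~\ref{prop-small}), using the sequence criterion of Remark~\ref{rmk-2conditions-small}. Essentially only one compact root (e.g.\ $\alpha_{r-s+1}$ for $i=1$) has its sign changed by $w'_iz^{-1}$, and its $w'_iz^{-1}$-orbit stays inside $\Phi_L$ exactly when $\gcd(r,s)>1$. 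Your proposal contains no such argument.

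\textbf{Part (3).} Here the plan fails outright. When $\gcd(r,s)=1$, both $w'_1$ and $w'_2$ are small, so both map to $w'$. The dependence on $m$ can therefore only come from whether $w'_i$ belongs to $\Gamma_{I_{w_i}}(w_i)$ at all, i.e.\ from the canonical type $I_{w_i}$.

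Your Step~2 model deletes from $I$ the full orbits of $k\mapsto k+s$. For $\gcd(r,s)=1$ there is a single orbit, so the model gives $I_{w_1}=I_{w_2}=\emptyset$. Then $\Gamma^{\rm sm}_{I_{w_i}}(w_i)=\{w',w'_i\}$ and neither $U_i$ would be smooth, contradicting the statement.

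The correct computation is Proposition~\ref{Pw-leng2}.
\begin{enumerate}
\item $I_{w_1}\cup\{0\}$ is the set of periodic points of a \emph{modified} shift $\gamma_1$. It kills $r-s-1,r-s+1,r-s+2,r$ but sends $r-s\mapsto r+1$.
\item The orbit of $r+1$ closes up if and only if $u_k=r+1+ks$ reaches $r-s$ (first at $k=n-m-1$) before $r-s+2$ (first at $k=m-1$). This happens if and only if $m>\frac{n}{2}$, and then $\alpha_{r+1}\in I_{w_1}$.
\item One must still show $w'_1$ is absorbed into the orbit of $w'$. Since $w_1z^{-1}$ commutes with $w_{0,I_{w_1}}$ and $s_{\alpha_{r+1}}(\alpha_r)=s_{\alpha_r}(\alpha_{r+1})$, one gets $w_{0,I_{w_1}}w'_1z^{-1}w_{0,I_{w_1}}^{-1}=w'z^{-1}$. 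Hence $\Gamma^{\rm sm}_{I_{w_1}}(w_1)=\{w'\}$ (Corollary~\ref{cor: length 2 small calc}).
\item The case of $w_2$ with $m<\frac{n}{2}$ is symmetric.
\end{enumerate}
Your ``$m$ versus $n-m$ steps from $r$ to $r\pm1$'' heuristic points at the right inequality. But you attach it to a mechanism, computing $\Xi$, that cannot detect it.

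\textbf{Part (1).} Your orbit heuristic should be replaced by the precise inputs. For $\delta=\gcd(r,s)\le 3$ one has $I_{w'}=\emptyset$. For $\delta>3$ one has $I_{w'}=\{\alpha_i : i\not\equiv -1,0,1 \pmod{\delta}\}$, which contains $\alpha_{r-s+2}$. On the other hand $\alpha_{r-s+2}\notin I_{w_1}$, because $w_1z^{-1}(\alpha_{r-s+2})$ is not simple and $I_{w_1}$ is $w_1z^{-1}$-stable.
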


%The important aspects of this result are the following:
%\begin{itemize}
%    \item The determination of the map $\pi_{\overline{\FF}_p}$ involves a priori an infinite search space. The above shows that it can be reduced to a finite set.
%    \item The algorithm is independent of $p$.
%\end{itemize}

\subsection{The case of a split prime}
The second main part of this paper shows that for primes of good reduction at which \(G\) splits, the presence of singularities in the EO stratification is controlled by a characteristic zero object. Assume from now on that $G$ admits a split Borel pair $(B,T)$ over $\FF_p$ such that $B\subseteq P$. In particular, $P$ is defined over $\FF_p$. Let $Q$ denote the opposite parabolic of $P$ with respect to $T$ and let $L=P\cap Q$ be the common Levi subgroup. Recall that $\Xcal$ parametrizes tuples $(\Ical,\Ical_P,\Ical_Q,\iota)$ where $\Ical$ is a $G$-torsor, $\Ical_P\subseteq \Ical$ is a $P$-torsor, $\Ical_Q\subseteq\Ical$ is a $Q$-torsor, and $\iota\colon (\Ical_P/R_{\mathrm{u}}(P))^{(p)}\to \Ical_Q/R_{\mathrm{u}}(Q)$ is an isomorphism of $L$-torsors. For any integer $m\geq 0$, let $\Xcal_m$ be the stack parametrizing such tuples, but with $\iota$ now an isomorphism of $L$-torsors $(\Ical_P/R_{\mathrm{u}}(P))^{(p^m)}\to \Ical_Q/R_{\mathrm{u}}(Q)$ (in particular, $\Xcal=\Xcal_1$). Each $\Xcal_m$ admits a zip stratification $(\Xcal_{m,w})_{w\in {}^I W}$. There is an isomorphism $\Xcal_m\simeq [E_m\backslash G_{\overline{\FF}_p}]$ where $E_m\subseteq P\times Q$ is the corresponding zip group, \S\ref{subsubsec-zip-expm}. Via this isomorphism, we can write $\Xcal_{m,w}=[E_m\backslash G_{m,w}]$ for a locally closed subscheme $G_{m,w}\subseteq G$. Using Galois cohomology, we show that $G_{m,w}(\FF_{p^m})=G_{0,w}(\FF_{p^m})$ for all $m\geq 0$ (Proposition \ref{prop-Gw}). In particular, the set $G_{m,w}(\FF_{p})$ is independent of $m\geq 0$. This has the following consequence. Write $\Ucal_m(w,w')$ for the elementary $w$-open subset $\Xcal_{m,w}\cup \Xcal_{m,w'}$, where $w'$ is a lower neighbor in ${}^I W$ of $w$. The case $m=0$ is of particular interest.
\begin{prop2}[Proposition \ref{prop-SCC-m}]
Let $m\geq 1$ be an integer. The following are equivalent:
\begin{enumerate}
    \item $\Ucal_0(w,w')$ is $w$-bounded and admits a separating canonical cover. 
    \item $\Ucal_m(w,w')$ is smooth (equivalently, normal).
\end{enumerate}    
\end{prop2}
Now, assume that $G$ admits a reductive model $\Gcal$ over $\ZZ_p$ which is split over $\ZZ_p$ (we say that $p$ is a prime of \emph{split good reduction}). Since Frobenius is not involved in the construction of $\Xcal_0$, this stack admits a natural model $\Xscr$ over $\ZZ_p$, defined similarly. Again, it carries a stratification $(\Xscr_w)_{w\in {}^I W}$. Choosing an isomorphism $\overline{\QQ}_p\simeq \CC$, we can also consider the stack $\Xscr_\CC$ over $\CC$. The notions of ``$w$-boundedness'' and ``separating canonical cover'', as well as the formation of the flag space, make sense on $\Xscr_\CC$. We also have the analogue of $\pi_{\overline{\FF}_p}$, that we denote by $\pi_\CC\colon W\to {}^I W$.
\begin{prop3}[Proposition \ref{prop-pi-Qp-equals-Fp}]
    For $w\in W^{\rm sm}$, we have $\pi_{\overline{\FF}_p}(w)=\pi_{\CC}(w)$.
\end{prop3}
We conjecture the stronger statement $\pi_{\overline{\FF}_p}=\pi_{\CC}$ but we are unable to prove it at the moment. Write $\Uscr_{\CC}(w,w')$ for the corresponding elementary substack of $\Xscr_{\CC}$, i.e.\ the union of $\Xscr_{\CC,w}$ and $\Xscr_{\CC,w'}$. As a consequence, we obtain:
\begin{thmD}[Theorem \ref{thm-main}]
\label{thm-intro} 
The following are equivalent:
\begin{enumerate}
    \item\label{UKp1-intro} $U(w,w')$ is smooth (equivalently, normal).
    \item \label{UKp2-intro} $\Ucal(w,w')$ is smooth (equivalently, normal). 
    \item \label{UKp3-intro} $\Uscr_{\CC}(w,w')$ is $w$-bounded and admits a separating canonical cover.
\item \label{UKp4-intro} We have $P_{w'}\subseteq P_w$ and $w'\notin \pi_{\CC}(\Gamma^{\rm sm}_{I_w}(w)\setminus \{w'\})$.
\end{enumerate}
\end{thmD}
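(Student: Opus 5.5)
The plan is to chain together equivalences already stated in the introduction. Two links are essentially free. For (1)$\Leftrightarrow$(2): the map $\zeta\colon S\to\Xcal$ in \eqref{zeta-intro-eq} is smooth and surjective, and $U(w,w')=\zeta^{-1}(\Ucal(w,w'))$. Smoothness and normality both descend and ascend along smooth surjections, so the two conditions agree; alternatively one can cite Theorem 1 together with its stack analogue. The real work lies in linking (2) with (3) and (4).

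For (2)$\Leftrightarrow$(4) I will use the equivalence of condition (4) with the conditions of Theorem 1, formulated with $\pi_{\overline{\FF}_p}$; this is the input behind Theorem A. The condition $P_{w'}\subseteq P_w$ is purely combinatorial, since canonical parabolics are determined by the root datum and Frobenius, and Frobenius is trivial in the split case. So it only remains to replace $\pi_{\overline{\FF}_p}$ by $\pi_\CC$. Condition (4) only involves $\pi$ evaluated on $\Gamma^{\rm sm}_{I_w}(w)\subseteq W^{\rm sm}$. Proposition 3 says $\pi_{\overline{\FF}_p}=\pi_\CC$ on $W^{\rm sm}$, so the two versions of (4) coincide.

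For (3)$\Leftrightarrow$(4) I will rerun the argument giving (4) over $\CC$ on the stack $\Xscr_\CC$. First, $w$-boundedness on $\Xscr_\CC$ means $P_{w'}\subseteq P_w$, and this is again combinatorial. Second, I need: $\Uscr_\CC(w,w')$ admits a separating canonical cover if and only if $w'\notin\pi_\CC(\Gamma^{\rm sm}_{I_w}(w)\setminus\{w'\})$.

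To prove this, I would first describe $\pi_{P_w}^{-1}(\Uscr_\CC(w,w'))\cap\overline{\Xscr}^{(P_w)}_{\CC,w}$. It is the union of the strata $\Xscr^{(P_w)}_{\CC,v}$ with $v\preccurlyeq_{I_w} w$ whose image is contained in $\Xscr_{\CC,w}\cup\Xscr_{\CC,w'}$. Separation fails exactly when some lower neighbor $v\neq w'$ in $\Gamma_{I_w}(w)$ maps into $\Xscr_{\CC,w'}$.

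Next I would reduce to small $v$. A stratum mapping into $\Xscr_{\CC,w'}$ has length at least $\ell(w')=\ell(w)-1$, while $\ell(v)=\ell(w)-1$. Hence $\pi_\CC(v)=w'$ forces $\ell(\pi_\CC(v))=\ell(v)$, that is, $v$ is small. One also needs that the image of such a stratum contains the dense stratum $\pi_\CC(v)$, so "maps into $\Xscr_{\CC,w'}$" is the same as "$\pi_\CC(v)=w'$". This follows because closed images are unions of strata, and the dimension count matches.

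The main obstacle is this last step: it requires the flag-space formalism (closure relations, dimension formulas, images of flag strata) over $\CC$ on $\Xscr_\CC=[E_0\backslash G_\CC]$. My first approach would be to argue that the proofs in \cite{Koskivirta-LaPorta-Reppen-singularities} only use that $E_0$ acts with finitely many orbits, which holds for $m=0$ over any field. Failing that, I would spread out over $\ZZ_p$ via $\Xscr$ and compare with the mod $p$ fiber $\Xcal_0$. There, Proposition 2 (applied with $m=1$) equates (3) for $\Xcal_0$ with smoothness of $\Ucal(w,w')$. It then suffices to show that the separating-canonical-cover property and $w$-boundedness are the same on the special and generic fibers of $\Xscr$. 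For the orbit closures involved I would use flatness of $\overline{\Xscr}_w$ over $\ZZ_p$, together with the fact that the relevant strata are detected by $\pi$ on small elements, where Proposition 3 already identifies the two fibers.
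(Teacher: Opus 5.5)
\paragraph{Where the proposal stands.} Your links (1)$\Leftrightarrow$(2) and (2)$\Leftrightarrow$(4) are fine and agree with the paper. The first uses smoothness and surjectivity of $\zeta$. The second uses Proposition~\ref{prop-smooth-small} together with $\pi_{\overline{\FF}_p}=\pi_{\CC}$ on $W^{\rm sm}$. The gap is in (3)$\Leftrightarrow$(4).

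\paragraph{The missing inequality.} Your separating-cover analysis over $\CC$ rests on $\ell(\pi_{\CC}(v))\leq \ell(v)$ for \emph{every} $v\preccurlyeq_{I_w} w$, not only for small $v$. This inequality does two jobs. It keeps out of the preimage of $\Uscr_{\CC}(w,w')$ any stratum $v$ with $\ell(v)<\ell(w)-1$ or with $w\in\Pi_{\CC}(v)$. It also forces $\pi_{\CC}(v)=w'$, and hence smallness, when $\ell(v)=\ell(w)-1$. For that last step you only need $w'\in\Pi_{\CC}(v)\Rightarrow w'\preccurlyeq\pi_{\CC}(v)$ (Lemma~\ref{lem-unique-maxlen}), not that images are unions of strata.

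\paragraph{Why your justification fails.} You argue that $E_0$ has finitely many orbits, and this is false: the zip datum attached to $(G,\mu,\iden)$ is not orbitally finite. For trivial $\mu$ it is the conjugation action of $G$ on itself (Example~\ref{ex-GL2-image}). For $(r,s)=(n-1,1)$, the weight-zero invariants $\Ha_i$ of \S\ref{subsec-n-1} are non-constant $E$-invariant functions. Consequently, images of flag strata need not be unions of zip strata, as Example~\ref{ex-GL2-image} shows. The dimension count behind Lemma~\ref{lem-length-ineq} then breaks down. The image of the flag stratum of $v$ meets $\Xscr_{\CC,\pi_{\CC}(v)}$ densely, but possibly only in a family of orbits of smaller dimension than that stratum. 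So nothing bounds $\ell(\pi_{\CC}(v))$ by $\ell(v)$.

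\paragraph{Why the fallback does not close it.} Proposition~3 of the introduction, as you use it, compares the two maps only on $W^{\rm sm}$. It says nothing about a non-small $v$ with $\ell(\pi_{\CC}(v))>\ell(v)$, which is exactly what has to be excluded. Specialization along the flat closures over $\ZZ_p$ only gives $\pi_{\FF_p}(v)\preccurlyeq\pi_{\QQ_p}(v)$ (Lemma~\ref{lem-Fp-Qp}), which bounds the wrong side. What is missing is the reverse statement $\pi_{\QQ_p}(v)\in\Pi_{\FF_p}(v)$ for all $v\in W$: the generically dense stratum of the image must also occur in the image over the special fiber.

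\paragraph{How the paper supplies it.} The paper proves this in Proposition~\ref{prop-pi-Qp-equals-Fp}. It uses that $\Hcal_v$ is dominated by the rational $\ZZ_p$-scheme $\Ecal_0\times\Ccal_v$ to produce a $\overline{\ZZ}_p$-point of $\Hcal_v$ mapping into $\Gcal_{\pi_{\QQ_p}(v)}$, and then reduces it mod $p$. This is combined with Corollary~\ref{lem-ineq-0}, the length inequality for the exponent-zero datum over $\overline{\FF}_p$, obtained by exponent raising to Frobenius type. Together they give $\ell(\pi_{\QQ_p}(v))\leq\ell(v)$. After that, Proposition~\ref{prop-scc-Qp} is proved exactly as Proposition~\ref{prop-scc-charac}.

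\paragraph{An alternative repair.} A fix in the style of \S\ref{sec-reduction-p} also works.
\begin{enumerate}
\item Take a point of $BvBz^{-1}\cap G_{\pi_{\CC}(v)}$, with its witnessing element of $E$ and point of $B\pi_{\CC}(v)Bz^{-1}$, and spread them out over some $\Ocal_F[\frac{1}{N}]$.
\item Reduce at a prime with prime residue field.
\item Use Proposition~\ref{prop-Gw} to land in the Frobenius-type stratum, and conclude with Lemma~\ref{lem-length-ineq}.
\end{enumerate}
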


The equivalence of \eqref{UKp1-intro} and \eqref{UKp2-intro} follows from the smoothness of $\zeta\colon S\to \Xcal$. The important part is the equivalence with \eqref{UKp3-intro}, which involves an object over the complex numbers. Point \eqref{UKp4-intro} is a reformulation of \eqref{UKp3-intro}. %We also obtain an equivalent condition if we replace $\CC$ by $\overline{\FF}_p$ in condition \eqref{UKp4-intro}.

We do not know whether condition \eqref{UKp3-intro} is equivalent to the normality or smoothness of $\Uscr_\CC(w,w')$. It would be desirable to extend the above result beyond the case of elementary $w$-open substacks. Specifically, if $\Ucal\subseteq \Xcal$ is any $w$-open substack, given by $\Ucal=\bigcup_{v\in \Gamma_\Ucal} \Xcal_{v}$ for a certain subset $\Gamma_\Ucal\subseteq {}^I W$, we may define $\Uscr_\CC\colonequals \bigcup_{v\in \Gamma_\Ucal} \Xscr_{\CC,v}$. We conjecture that the smooth (resp.\ normal) loci of $\Ucal$ and $\Uscr_\CC$ correspond to the same unions of strata. This is work in progress by the authors. %Since $\Xscr_\CC$ is independent of the prime $p$, the above result has the following important consequence.

\subsection{Hasse invariants in characteristic zero}
It was proved in \cite{Goldring-Koskivirta-Strata-Hasse} that generalized Hasse invariants exist on all zip strata of $\Xcal$, and hence on all EO strata by pullback via \eqref{zeta-intro-eq}. We explore their existence on the stack $\Xscr_\CC$. %However, for general stacks of $G$-zips which are not of maximal type, they do not always exist (see \cite{Goldring-Koskivirta-zip-flags} for a counter-example).

By condition \eqref{UKp4-intro} of Theorem D, the singularities of elementary substacks are encoded by the map $\pi_\CC\colon W\to {}^I W$. Despite our algorithm to determine $\pi_{\overline{\FF}_p}$ (equivalently, $\pi_\CC$) on small elements, the computation can become quite involved, as shown in the various cases appearing in Theorem C. Moreover, we do not know how to determine $\pi_\CC$ on non-small elements. One motivation for studying Hasse invariants on $\Xscr_\CC$ is that they make it possible to determine $\pi_\CC$ fully and explicitly, when they exist.

For $\lambda\in X^*(L)$, we have a line bundle $\Lcal(\lambda)$ on each of the stacks $\Xcal$, $\Xscr$, $\Xscr_\CC$. We say that a section $\Ha_w\in H^0(\overline{\Xscr}_{\CC,w},\Lcal(\lambda))$ is a generalized Hasse invariant if its nonvanishing locus coincides with $\Xscr_{\CC,w}$. Let $E_w$ be the set of positive roots $\alpha$ such that $ws_\alpha\leq w$ and $\ell(ws_\alpha)=\ell(w)-1$ (where $s_\alpha$ is the reflection with respect to $\alpha$). 
\begin{thmE}[Theorem \ref{thm-Hasse-char0}]
Let $w\in {}^{I}W$ and $\lambda\in X^*(L)$. %Set $z\colonequals w_{0,I}w_0$. 
The following are equivalent:
\begin{enumerate}
\item There exists a generalized Hasse invariant $\Ha_w\in H^0(\overline{\Xscr}_{\CC,w},\Lcal(\lambda))$.
\item There exists $\lambda_0\in X^*(T)$ such that $\lambda = w\lambda_0-z\lambda_0$ and $\langle \lambda_0,\alpha^\vee \rangle <0$ for all $\alpha\in E_w$.
\end{enumerate}
\end{thmE}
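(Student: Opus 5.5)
The plan is to prove Theorem E by reducing it to Schubert varieties in the flag variety of $G_\CC$, where sections of line bundles are understood through Borel--Weil and Chevalley's formula for divisors of extremal weight vectors.

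The first step is to describe $\Xscr_\CC$ concretely. Over $\CC$ the zip group $E_0$ is $\{(x,y)\in P\times Q : \bar x=\bar y \text{ in } L\}$. Since no Frobenius twist appears, $E_0\supseteq R_{\mathrm u}(P)\times R_{\mathrm u}(Q)$ together with the diagonal $L$. The $E_0$-orbits on $G$ should then be unions of $P\times Q$-double cosets refined by the $L$-conjugation. I would identify each $\Xscr_{\CC,w}$, via $g\mapsto gz^{-1}$, with a quotient of a $B\times {}^zB$-orbit.

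Next I pass to the flag space $\Fcal^{(B)}_\CC\to\Xscr_\CC$. Its strata are $[B\backslash (BwB\cdot z)]$ for an appropriate twisted action, so the closure $\overline{\Fcal}^{(B)}_{\CC,w}$ is a quotient of the Schubert variety $\overline{BwB}$. For $w\in{}^IW$ the restricted map $\pi_B\colon \overline{\Fcal}^{(B)}_{\CC,w}\to\overline{\Xscr}_{\CC,w}$ should be birational. I would prove this by the same dimension count as in positive characteristic: $\ell(w)$ equals the codimension data, and the fibre over the open stratum is a single point.

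The second step is to pull back $\Lcal(\lambda)$. Under the identification above, $\pi_B^*\Lcal(\lambda)$ restricted to $\overline{\Fcal}^{(B)}_{\CC,w}$ becomes the $B\times B$-equivariant line bundle on $\overline{BwB}$ with character pair $(\lambda_0,-z\lambda_0)$, in a form where sections are functions $f$ on $\overline{BwB}$ satisfying $f(b g b')=\lambda_0(b)^{\pm1}(z\lambda_0)(b')^{\mp1}f(g)$. On the open cell $BwB$ such a function is determined by its value at $w$. Equivariance then forces the compatibility $\lambda_{|T}=w\lambda_0-z\lambda_0$, since the stabilizer torus acts through $T$ twisted by $w$ and $z$.

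I then need to verify that a $T$-character of this shape with $\lambda\in X^*(L)$ extends along $\pi_B$. For this I use normality of $\overline{\Xscr}_{\CC,w}$ or, failing that, the fact that the pullback is injective on sections together with $L$-invariance.

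So nonzero sections nonvanishing exactly on the open stratum correspond to $\lambda_0$ for which the matrix-coefficient-type function $g\mapsto\langle v, g v'\rangle$ on $\overline{BwB}$, with $v,v'$ extremal vectors of weight $\lambda_0$ in $V(-\lambda_0)$ or its dual, is regular and vanishes on the boundary. By Chevalley's formula, the order of vanishing of this function along the codimension-one Schubert divisor $\overline{Bws_\alpha B}$, for $\alpha\in E_w$, is $-\langle\lambda_0,\alpha^\vee\rangle$. Regularity plus vanishing on every boundary divisor is therefore exactly the condition $\langle\lambda_0,\alpha^\vee\rangle<0$ for all $\alpha\in E_w$. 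Schubert varieties are normal, which justifies the divisor computation.

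The two directions then come out as follows. For (2)$\Rightarrow$(1), the function is a $B\times B$-semi-invariant with zero locus exactly the boundary, and it descends to $\overline{\Xscr}_{\CC,w}$. For (1)$\Rightarrow$(2), pulling back $\Ha_w$ gives such a semi-invariant, and the semi-invariants on $\overline{BwB}$ are exactly these functions, which reads off $\lambda_0$.

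The main obstacle is the descent in (2)$\Rightarrow$(1), namely showing that the section on $\overline{\Fcal}^{(B)}_{\CC,w}$ is constant on the fibres of $\pi_B$ over the boundary strata. In characteristic $p$ this is handled by the Frobenius-twisted argument of \cite{Goldring-Koskivirta-Strata-Hasse}. Over $\CC$ I would instead use that $\overline{\Xscr}_{\CC,w}$ is normal and that $\pi_B$ is proper birational with connected fibres, so $\pi_{B*}\Ocal=\Ocal$ by Zariski's main theorem. If normality is unavailable, I would restrict to the normalization and check that the zero locus is unaffected. I would also need to check that the choice of $\lambda_0$ is compatible with $\lambda$ being trivial on the derived group of $L$.
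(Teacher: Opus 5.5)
You have two genuine gaps, and they sit exactly where the paper needs its extra tools.

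\textbf{First gap: $\pi_B$ is not birational.} Your claim that $\pi_B\colon\overline{\Fcal}^{(B)}_{\CC,w}\to\overline{\Xscr}_{\CC,w}$ is birational with one-point fibres over $\Xscr_{\CC,w}$ is false once $I_w\neq\emptyset$. Already in characteristic $p$, $\pi_B$ is finite \'etale on the stratum but not an isomorphism. The map that is an isomorphism there is $\pi_{P_w}$ (Proposition \ref{prop-Pw} and its analogue in \S\ref{subsec-flag-strata-char0}). You are conflating $\Xscr_{\CC,w}$, which is not a quotient of a $B\times{}^zB$-orbit, with $\Fcal^{(B)}_{\CC,w}$, which is.

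Concretely, take $G=\GL_3$, $(r,s)=(2,1)$ and $w=z$, so $I_w=I$. Then $\Xscr_{\CC,w}\simeq[L\backslash L]$ and $\Fcal^{(B)}_{\CC,w}\simeq[B_L\backslash B_L]$ (conjugation actions). So $\pi_B$ is the Grothendieck--Springer map of the $\GL_2$-factor: generically $2:1$, with $\mathbb{P}^1$-fibres over scalars.

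This breaks your (2)$\Rightarrow$(1). Here $E_w=\{e_1-e_2,e_1-e_3\}$, and $\lambda_0=(0,1,2)$ satisfies (2) with $\lambda=0$. But $w\lambda_0=(1,2,0)$, so your semi-invariant restricted to $\diag(t_1,t_2,t_3)$ is $(t_1t_2^2)^{\pm1}$. Anything pulled back from $[L\backslash L]$ is symmetric in $t_1,t_2$, so no power of your semi-invariant descends, even over the open stratum. A passage from $B$ to $P_w$ is therefore missing, and it genuinely requires changing the weight.

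The paper bridges this step with a norm along $\pi_{B,P_w}$ and Lemma \ref{bru-sections}. Over $\CC$ that map is only generically finite, and the same example shows $h_w^m$ need not descend. A safer repair is to replace $w\lambda_0$ by its $W_{I_w}$-orbit sum and run Chevalley's formula directly on the closure of $P_wwz^{-1}Q_w$. This only turns $\lambda$ into a multiple of $\lambda$, because $wz^{-1}$ normalizes $W_{I_w}$.

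Your (1)$\Rightarrow$(2) also rests on the one-point claim. What it actually needs is:
\begin{enumerate}
\item Lemma \ref{bru-sections}, to know that $\pi_B^*\Ha_w$ is a $B\times{}^zB$-semi-invariant at all. Sections of $\Lcal(\lambda)$ are only $E'_B$-semi-invariant, and your pair $(\lambda_0,-z\lambda_0)$ is not of the required form $(\chi,-w^{-1}\chi)$.
\item The length inequality $\ell(\pi_\CC(v))\leq\ell(v)$ (Corollary \ref{lem-ineq-0} with Proposition \ref{prop-pi-Qp-equals-Fp}), to see that the preimage of $\Xscr_{\CC,w}$ in $\overline{\Fcal}^{(B)}_{\CC,w}$ is exactly $\Fcal^{(B)}_{\CC,w}$.
\end{enumerate}

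\textbf{Second gap: descent over the boundary.} Normality of $\overline{\Xscr}_{\CC,w}$ is not available. Closures of zip strata are singular in general and often non-normal in characteristic $p$, and the paper leaves normality over $\CC$ open. A section on the normalization is not a section on $\overline{\Xscr}_{\CC,w}$. Constancy on boundary fibres is also not the real issue, since the section vanishes on the whole preimage of the boundary.

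The missing tool is Lemma \ref{geom-lem2}, applied as follows:
\begin{enumerate}
\item Work on $\overline{\Fcal}^{(P_w)}_{\CC,w}$, which is normal because $\Fcal^{(P_w)}_{\CC,w}$ is Bruhat (Lemma \ref{lem-Bruhat0}).
\item Extend the section there by Lemma \ref{geom-lem1}.
\item Use that $\pi_{P_w}$ is an isomorphism over $\Xscr_{\CC,w}$, with preimage exactly $\Fcal^{(P_w)}_{\CC,w}$. Lemma \ref{geom-lem2} then says some power of the section descends to $\overline{\Xscr}_{\CC,w}$.
\end{enumerate}
This is why the version proved in \S\ref{sec-Hasse-0} only yields a Hasse invariant of weight $m\lambda$ for some $m\geq 1$. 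Your route aims at weight exactly $\lambda$ and cannot reach it without normality.
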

In stark contrast with the characteristic $p$ situation (and automorphic forms on Shimura varieties), the trivial line bundle (i.e.\ $\lambda=0$) sometimes admits such generalized Hasse invariants. For example, in the case $G=\GL_{n}$, with $L$ of type $(n-1,1)$, generalized Hasse invariants of weight $\lambda=0$ exist on each stratum (see \S \ref{sub-exampleGL}). This case corresponds to a unitary Shimura variety of signature $(n-1,1)$ at a prime $p$ of good reduction which splits in the associated quadratic imaginary extension $\mathbf{E}/\QQ$. For other signatures, Hasse invariants do not always exist on all strata, see \S\ref{sec-U22}. % These invariants make it possible to determine explicitly the map $\Pi_\CC$ in that case (Example \ref{sub-exampleGL}). \lorenzo{We should spell this out}

\subsection{Acknowledgements}
J-SK was supported by the University of Caen-Normandie. He thanks Wansu Kim, Stefan Reppen and Antoine Szabo for fruitful discussions on topics related to this article.

LLP was supported by the \emph{``Piano di Sviluppo Dipartimentale''} with the title \emph{``Enhancing the Research in the Math Dept'', CUP C93C24000160005}. He wishes to thank the Department of Mathematics ``\emph{Tullio Levi-Civita}'' of the University of Padova and its members for their continued support and friendship.

\section{Review of the theory of \texorpdfstring{$G$}{G}-zips}\label{sec-review-zip}

\subsection{Generalities on stacks of \texorpdfstring{$G$}{G}-zips} \label{subsec-generalities-zip}

For our purposes, we need to work in a setting that allows fields of arbitrary characteristic. We review some basic facts about the stack of $G$-zips from \cite{Pink-Wedhorn-Ziegler-zip-data, Pink-Wedhorn-Ziegler-F-Zips-additional-structure} in the case of a general isogeny $\varphi$. We will later specialize to the cases when $\varphi$ is a power of the Frobenius homomorphism, or the identity map. Let $K$ denote an algebraically closed field.  %\lorenzo{See my comment right below.}\JS{Noted}

\subsubsection{Zip data}\label{subsubsec-zip-data}
%\lorenzo{Wouldn't we want \(G\) to be defined over some subfield of \(K = \overline{K}\)?}\JS{Yes, but in the usual case we need a Frobenius isogeny on $G$, so we must start with an $\FF_p$-group.} 

Fix a connected, reductive group $G$ over $K$ and cocharacter $\mu\colon \GG_{\textrm{m},K}\to G$. Fix also an isogeny $\varphi \colon G\to G$. To the triple $(G,\mu,\varphi)$, we attach a zip datum $\Zcal$ as follows. First, write $P_{\pm}\subset G$ for the pair of opposite parabolic subgroups afforded by $\mu$ (see \cite[\S1.2.1]{Goldring-Koskivirta-Strata-Hasse}), with common Levi subgroup $L=\Cent(\mu)$. Put $P\colonequals P_-$, $Q\colonequals \varphi(P_+)$, and $M\colonequals \varphi(L)$. Set $\Zcal\colonequals (G,P,Q,L,M,\varphi)$. We call $\Zcal$ the zip datum attached to the triple $(G,\mu,\varphi)$.

\subsubsection{Frobenius-type zip data}\label{subsubsec-frob-type-zip}
An important example considered in this article is a zip datum attached to a Frobenius isogeny. In this context, we assume that $G_0$ is a reductive group over $\FF_p$ and that $K=\overline{\FF}_p$ is an algebraic closure of $\FF_p$. We let $\varphi_0\colon G_0\to G_0$ be a positive power of the Frobenius homomorphism of $G_0$. Finally, set $G\colonequals G_{0,K}$, endowed with the isogeny $\varphi\colonequals \varphi_{0,K}$. For any cocharacter $\mu\colon \GG_{\textrm{m},K}\to G$, we may consider the tuple $(G,\mu,\varphi)$ and hence obtain an attached zip datum $\Zcal$, as explained in \S\ref{subsubsec-zip-data}. A zip datum that arises in this way will be called \emph{of Frobenius-type}.

\subsubsection{Stack of $G$-zips}\label{subsubsec-G-zips}
Retain the assumptions of \S\ref{subsubsec-zip-data}. Let $E_{\Zcal}$ denote the zip group of $\Zcal$, i.e.\ the subgroup of $P\times Q$ given by
\begin{equation}
    E_{\Zcal}\colonequals \{(x,y)\in P\times Q, \ \varphi(\overline{x})=\overline{y}\},
\end{equation}
where $\overline{x}\in L$, $\overline{y}\in M$, denote the Levi projections of $x\in P$ and $y\in Q$, respectively. The zip datum $\Zcal$ gives rise to a stack of $G$-zips $\Xcal_{\Zcal}$ over $K$, defined as the quotient stack
\begin{equation}
    \Xcal_{\Zcal}\colonequals \left[ E_{\Zcal} \backslash G \right],
\end{equation}
where $E_\Zcal$ acts on $G$ by $(a,b)\cdot g\colonequals agb^{-1}$ for $(a,b)\in E_\Zcal$, $g\in G$. The zip datum $\Zcal$ is not necessarily orbitally finite (\cite[Def.~7.2]{Pink-Wedhorn-Ziegler-zip-data}), i.e.\ the underlying topological subspace $|\Xcal_\Zcal|$ may be infinite. When $\Zcal$ is of Frobenius-type, $|\Xcal_\Zcal|$ is finite. Fix a Borel pair $(B,T)$ of $G$. For simplicity, we assume:
\begin{enumerate}
    \item $\mu$ factors through $T$,
    \item $B\subseteq P$,
    \item $(B,T)$ is stable by $\varphi$.
\end{enumerate}
The first assumption implies that $T\subseteq L$. Let $W\colonequals W(G,T)$ be the Weyl group of $G$. By our assumptions, $\varphi$ induces an isomorphism of Coxeter systems
\begin{equation}\label{varphi-aut-weyl}
    \varphi\colon W\to W.
\end{equation}
When \(\Zcal\) is of Frobenius-type, we sometimes denote the isomorphism of (\ref{varphi-aut-weyl}) by \(\sigma\) instead of \(\varphi\), using the same notation as the Frobenius action on the root datum. Denote by \(\Phi^+\) the set of positive roots (in our convention, $\alpha \in \Phi^+$ if the $\alpha$-root group $U_{\alpha}$ is contained in $B^+$, the Borel opposite to \(B\) with respect to \(T\)). Write $\Delta$ for the set of simple roots and $I, J\subseteq \Delta,$ for the types of the parabolics $P,Q$, respectively. Note that $I=\Delta_L$ since $B\subseteq P$. Write $W_I\colonequals W(L,T)$ and let $w_0$, $w_{0,I},$ denote the longest elements in $W$ and $W_I$, respectively. Set $z=\varphi(w_{0,I})w_0$. Then, \({}^zB \subseteq Q\) and \(J={z^{-1}}(\varphi(I))\). Moreover, we have a natural isomorphism of Coxeter systems
\begin{align}
    \psi\colon W_I &\longrightarrow W_J, \label{psi-WIWJ} \\
    x &\longmapsto {}^{z^{-1}}\varphi(x).\notag
\end{align}

\subsubsection{Zip strata}\label{subsubsec-zip-strata}
Denote by \(\iw\), respectively \(W^J\), the set of \(w \in W\) such that \(w\) is the shortest element in the coset \(W_Iw \in W_I \backslash W\), respectively \(wW_J \in W/W_J\). For $w\in W$, fix a representative $\dot{w}\in N_G(T)$, such that $(w_1w_2)^\cdot = \dot{w}_1\dot{w}_2$ whenever $\ell(w_1 w_2)=\ell(w_1)+\ell(w_2)$ (this is possible by choosing a Chevalley system \cite[XXIII, \S6]{sga3}). When no confusion occurs, we write $w$ instead of $\dot{w}$. For $w\in {}^I W \cup W^J$, define $G_{\Zcal,w}\colonequals E_\Zcal \cdot (BwBz^{-1})$, i.e.\ the union of all $E_\Zcal$-orbits of elements of $BwBz^{-1}$. Then, \(G_{\Zcal,w}\) is a smooth, locally closed subset of $G$. The families $(G_{\Zcal,w})_{w\in {}^I W}$ and $(G_{\Zcal,w})_{w\in W^J}$ are two parametrizations of the same stratification of $G$, called the \emph{zip stratification} (the one by $W^J$ is called the \emph{dual parametrization}). For $w\in {}^I W \cup W^J$, put
\begin{equation}
    \Xcal_{\Zcal,w}\colonequals \left[E_\Zcal \backslash G_{\Zcal,w}\right].
\end{equation}
This gives a locally closed stratification $(\Xcal_{\Zcal,w})_{w\in {}^I W}$ on $\Xcal_\Zcal$. When $\Zcal$ is of Frobenius-type, $G_{\Zcal,w}$ coincides with a single $E_\Zcal$-orbit, and $|\Xcal_{\Zcal,w}|$ is a single point. We sometimes write $\Xcal$, $\Xcal_w$ for the stack of $G$-zips and the zip strata, if the zip datum $\Zcal$ is clear  from the context.

\subsubsection{Closure relations} \label{sec-closure}
The closure relations between zip strata are described by a partial order $\preccurlyeq_\Zcal$ on ${}^I W$, defined in \cite[Def.~6.1]{Pink-Wedhorn-Ziegler-zip-data}. Write $\leq$ for the Bruhat order on $W$. For two elements $w,w'\in {}^I W$, set
\begin{equation}
    w' \preccurlyeq_\Zcal w \Longleftrightarrow \exists x\in W_I, \ xw'\psi(x)^{-1}\leq w.
\end{equation}
By \loccit Theorem 7.5, we have
\begin{equation}\label{eq-closure}
\overline{\Xcal}_{\Zcal,w}=\bigcup_{w'\preccurlyeq_\Zcal w}\Xcal_{\Zcal,w'}.
\end{equation}
 In general, $\preccurlyeq_\Zcal$ is finer than the Bruhat order $\leq$.
\begin{definition}\label{low-nb-def} Let $w,w'\in {}^I W$.
\begin{enumerate}
\item When $w' \preccurlyeq_{\Zcal} w$ and $\ell(w')=\ell(w)-1$, we say that $w'$ is a lower neighbor of $w$. 
\item When $w' \leq w$ and $\ell(w')=\ell(w)-1$, we say that $w'$ is a Bruhat-lower neighbor of $w$.  
%\item We say that $w'$ is an exceptional lower neighbor of $w$ if it is a lower neighbor that is not a Bruhat-lower neighbor.
\end{enumerate}
\end{definition}
We denote by $\Gamma_{\Zcal}(w)$ the set of all lower neighbors of $w$ in ${}^I W$ with respect to the partial order $\preccurlyeq_\Zcal$. We will also use the notation $\Gamma_I(w)$, especially in \S\ref{subsec-flag-space} below.

\subsubsection{Bruhat strata}\label{subsubsec-bruhat}
The inclusion $E_\Zcal\subseteq P\times Q$ induces a natural projection $\Xcal_\Zcal\to \left[P\backslash G/Q\right]$. The fibers of this map give another stratification on $\Xcal_\Zcal$, which we call the \emph{Bruhat stratification}. In general, the Bruhat stratification is coarser than the zip stratification. We say that a stratum $\Xcal_{\Zcal,w}$ (for $w\in \iw$) is \emph{Bruhat} if it coincides with a Bruhat stratum. This condition is equivalent to the equality $G_{\Zcal,w}=Pwz^{-1}Q$.

\subsection{The flag space}\label{subsec-flag-space}
Fix \(P_0\) a parabolic subgroup such that $B\subseteq P_0 \subseteq P$. %We recall the definition of the flag space $\Fcal_\Zcal^{(P_0)}$. The idea of the flag space first appeared in \cite{ekedahl.geer.cycles.classes} in the setting of Hilbert--Blumenthal Shimura varieties, and was later generalized by Goldring and the first named author in \cite{Goldring-Koskivirta-zip-flags}.

\subsubsection{Definition}
Set $E'_{\Zcal,P_0}\colonequals E_\Zcal\cap (P_0\times G)$ and define the flag space $\Fcal_{\Zcal}^{(P_0)}$ by
\begin{equation}
    \Fcal_{\Zcal}^{(P_0)}\colonequals \left[ E'_{\Zcal,P_0} \backslash G \right].
\end{equation}
There is a natural surjection map induced by the inclusion $E'_{\Zcal,P_0}\subseteq E_\Zcal$:
\begin{equation}
    \pi_{P_0}\colon \Fcal_{\Zcal}^{(P_0)}\to \Xcal_\Zcal.
\end{equation}
When $P_0=P$, we have $\Fcal^{(P)}_\Zcal=\Xcal_\Zcal$ and the projection $\pi_P$ is the identity. For two intermediate parabolic subgroups $P_1\subseteq P_0$, there is a natural projection
\begin{equation}
    \pi_{P_1,P_0}\colon \Fcal^{(P_1)}_{\Zcal}\to \Fcal^{(P_0)}_{\Zcal}
\end{equation}
such that $\pi_{P_0}\circ \pi_{P_1,P_0}=\pi_{P_1}$. We view the family of flag spaces $\Fcal_\Zcal^{(P_0)}$ as a tower of stacks above $\Xcal_\Zcal$.

\begin{remark}\label{rmk-flag-other}
Let $E_\Zcal$ act on $P/P_0$ by left multiplication via the first projection $E_\Zcal\to P$. The flag space $\Fcal_{\Zcal}^{(P_0)}$ is isomorphic to the quotient stack $\left[E_\Zcal\backslash \left(G\times (P/P_0)\right)\right]$ (with respect to the diagonal action of $E_{\Zcal}$). The map $\pi_{P_0}$ is induced by the first projection $G\times (P/P_0)\to G$, which is $E_\Zcal$-equivariant.
\end{remark}

\subsubsection{Flag strata}\label{subsub-flag}
Let $L_0\subseteq P_0$ be the unique Levi subgroup of $P_0$ containing $T$. Set $M_0\colonequals \varphi(L_0)$ and $Q_0\colonequals M_0 ({}^zB)$, which is a parabolic with Levi subgroup $M_0$. Define a new zip datum by $\Zcal_{P_0}\colonequals (G,P_0,Q_0,L_0,M_0,\varphi)$. We obtain a stack $\Xcal_{\Zcal_{P_0}}$ of $G$-zips attached to the datum $\Zcal_{P_0}$. The natural inclusion $E'_{\Zcal,P_0}\subseteq E_{\Zcal_{P_0}}$ yields a projection morphism of $K$-stacks
\begin{equation}
    \psi_{P_0}\colon \Fcal^{(P_0)}_{\Zcal} \to \Xcal_{\Zcal_{P_0}}. 
\end{equation}
Let $I_0\subseteq I$ and $J_0\subseteq J$ denote the types of the parabolic subgroups $P_0,Q_0$ respectively. 

We can apply all results and notation of \S\ref{subsec-generalities-zip} to the stack $\Xcal_{\Zcal_{P_0}}$. In particular, $\Xcal_{\Zcal_{P_0}}$ admits a zip stratification $(\Xcal_{\Zcal_{P_0},w})_{w\in {}^{I_0} W}$, which can also be labeled $(\Xcal_{\Zcal_{P_0},w})_{w\in W^{J_0}}$, using the dual parametrization (\S \ref{subsubsec-G-zips}). Explicitly, we have $\Xcal_{\Zcal_{P_0}, w}=\left[E_{\Zcal_{P_0}}\backslash G_{\Zcal_{P_0},w} \right]$, where $G_{\Zcal_{P_0},w}=E_{\Zcal_{P_0}}\cdot \left( BwBz^{-1}\right)$, for $w\in {}^{I_0}W\cup W^{J_0}$. Define the flag strata in $\Fcal_{\Zcal}^{(P_0)}$ as the pullback of these zip strata. Thus,
\begin{equation}
    \Fcal_{\Zcal,w}^{(P_0)} \colonequals \left[ E'_{\Zcal,P_0} \backslash G_{\Zcal_{P_0},w} \right].
\end{equation}
We also use the notation $G^{(P_0)}_{\Zcal,w}$ or simply $G^{(P_0)}_{w}$ to denote $G_{\Zcal_{P_0},w}$, especially in \S\ref{sec-split} and \S\ref{sec-reduction-p}. We may also speak of Bruhat strata with respect to the stack $\Xcal_{\Zcal_{P_0}}$ (\S\ref{subsubsec-bruhat}). We use the same terminology for the corresponding flag strata $\Fcal_{\Zcal,w}^{(P_0)}$. Write $\preccurlyeq_{I_0}$ for the partial order $\preccurlyeq_{\Zcal_{P_0}}$ on ${}^{I_0}W$. For $w\in {}^{I_0}W$, write $\Gamma_{I_0}(w)$ for the set $\Gamma_{\Zcal_{P_0}}(w)\subseteq {}^{I_0}W$.

\begin{remark} 
Recall the isomorphism $\Fcal_{\Zcal}^{(P_0)}\simeq \left[E_{\Zcal}\backslash \left(G\times (P/P_0)\right) \right]$ from Remark \ref{rmk-flag-other}. In this description, the stratum $\Fcal^{(P_0)}_{\Zcal,w}$ becomes isomorphic to a quotient $[E_\Zcal \backslash H_{\Zcal,w}^{(P_0)}]$, where $H_{\Zcal,w}^{(P_0)}$ is the $E_\Zcal$-stable subset of $G\times (P/P_0)$ defined by
\begin{equation}\label{HwP0-eq}
    H^{(P_0)}_{\Zcal,w} \colonequals \left\{ (g,hP_0)\in G\times (P/P_0) \ \relmiddle| \ h^{-1}g\varphi(\overline{h})\in G_{\Zcal_{P_0},w}  \right\}.
\end{equation}
The dimension of $H^{(P_0)}_{\Zcal,w}$ is given by the formula:
\begin{equation}\label{dim-H}
    \dim(H^{(P_0)}_{\Zcal,w}) = \ell(w)+\dim(P).
\end{equation}
\end{remark}

\begin{comment}
\begin{lemma}[prospective]
    The image by $\pi_{P_0}$ of a flag stratum is a union of zip strata.
    \JS{Assumption needed}
\end{lemma}
\begin{proof}
By definition, $\pi_{P_0}(\Fcal^{(P_0)}_w)=[E \backslash D_w]$ where $D_w$ is the union of all $E$-orbits intersecting $G_{P_0,w}=E_{P_0}\left( BwBz^{-1}\right)$. Hence
\end{proof}
\end{comment}

\subsubsection{Minimal, cominimal strata}
Note that ${}^I W\subseteq {}^{I_0}W$ and $W^{J}\subseteq W^{J_0}$. We call \emph{minimal} the flag strata $\Fcal_{\Zcal,w}^{(P_0)}$ parametrized by elements $w\in {}^I W$. Similarly, the strata $\Fcal^{(P_0)}_{\Zcal,w}$ for $w\in W^{J}$ are called \emph{cominimal}. We also use this terminology for the elements of ${}^I W$ and $W^J$ themselves.

\begin{comment}
\begin{proof}
For the first statement, it suffices to show that $G_{P_0,w}\subseteq G_w$ for $w\in {}^I W$. This amounts to the containment $E_{P_0}\left( BwBz^{-1}\right)\subseteq E\left( BwBz^{-1}\right)$, which is immediate. For the second statement, it suffices to show that for any $x\in G_{P_0,w}$, the quotient $\Stab_{E}(x) / \Stab_{E'_{P_0}}(x) $ is finite etale. For this, it is enough to show that the identity component of $\Stab_{E}(x)$ is contained in $E'_{P_0}$. Note that the first projection $\pr_1\colon E\to P$ induces a closed immersion $\pr_1\colon \Stab_{E}(x)\to P$. View $\Stab_{E}(x)$ as a subgroup of $P$ in this way. We will show that $\Stab_{E}(x)^{\circ}\subseteq B$. It is equivalent to show that the Lie algebra of this stabilizer is contained in $\Lie(B)$. This verification is identical to \cite[]{Koskivirta-Normalization}. \JS{Check details}
\end{proof}
\end{comment}

\subsection{Images of flag strata}\label{subsec-images}
If $\Zcal$ is of Frobenius-type, the image of a stratum $\Fcal^{(P_0)}_{\Zcal,w}$ via the map $\pi_{P_0}\colon \Fcal_{\Zcal}^{(P_0)}\to \Xcal_\Zcal$ is a union of zip strata of $\Xcal_{\Zcal}$. For a general zip datum $\Zcal$, the image of a flag stratum is not necessarily a union of zip strata, as seen in the following example.

\begin{exa}\label{ex-GL2-image}
Consider the group $G=\GL_{2,K}$, endowed with the trivial cocharacter $\mu$ and the identity isogeny $\varphi=\iden$. Then, $\Xcal_\Zcal$ is the quotient of $G$ by itself acting by conjugation. Let $B\subseteq G$ denote the lower triangular Borel subgroup and $T\subseteq B$ the diagonal torus. Identify $W$ with the group $\left\{ 1,w \right\}$ where $w=\smallmat{0}{1}{1}{0}$. Since $I=\Delta$, we have ${}^I W=\{1\}$. Thus, there is a unique zip stratum, which is $\Xcal$ itself. The image of $\Fcal_{\Zcal,w}^{(B)}$ by the projection $\pi_B\colon \Fcal_{\Zcal}^{(B)}\to \Xcal_{\Zcal}$ is the quotient stack $[E_\Zcal\backslash H]$ where $H$ is the union of conjugacy classes intersecting $Bw$. One checks easily that 
\[H=G\setminus \left\{\left(\begin{matrix}\lambda & 0 \\ 0& \lambda \end{matrix}\right) \ \relmiddle| \ \lambda\in K^* \right\}.\]
In particular, $H$ is strictly contained in $G$.
\end{exa}

However, for general zip data, when $w$ is minimal or cominimal, one has
\begin{equation}\label{pi-image-minimal}
  \pi_{P_0}(\Fcal^{(P_0)}_{\Zcal,w}) =\Xcal_{\Zcal,w}. 
\end{equation}
We also deduce $\pi_{P_0}(\overline{\Fcal}^{(P_0)}_{\Zcal,w})=\overline{\Xcal}_{\Zcal,w}$, by properness of $\pi_{P_0}$. Similarly, if $B\subseteq P_1\subseteq P_0 \subseteq P$ are intermediate parabolic subgroups and $w\in {}^{I_0}W$, then
\begin{equation}\label{pi-image-minimal-relative}
  \pi_{P_1,P_0}(\Fcal^{(P_1)}_{\Zcal,w}) =\Fcal^{(P_0)}_{\Zcal,w}.
\end{equation}
Note that this formula makes sense since ${}^{I_0}W\subseteq {}^{I_1}W$. For Frobenius-type zip data, we have a more precise result:

\begin{proposition}[{\cite[Prop.~2.4.3, Rmk.~2.4.4]{Goldring-Koskivirta-Strata-Hasse}}]\label{prop-FP0}
Assume that $\Zcal$ is of Frobenius-type. Let $w\in {}^I W\cup W^J$. 
\begin{enumerate}
    \item\label{prop-FP0-1} The underlying topological subspace of $\Fcal^{(P_0)}_{\Zcal,w}$ is a point. \label{FP0-point}
    \item\label{prop-FP0-2} The preimage of $\Xcal_{\Zcal,w}$ under the map $\pi_{P_0}\colon \overline{\Fcal}^{(P_0)}_{\Zcal,w} \to \overline{\Xcal}_{\Zcal,w}$ coincides with $\Fcal^{(P_0)}_{\Zcal,w}$.
    \item\label{prop-FP0-3} The map $\pi_{P_0}\colon \Fcal^{(P_0)}_{\Zcal,w}\to \Xcal_{\Zcal,w}$ is finite \'{e}tale.
\end{enumerate}
\end{proposition}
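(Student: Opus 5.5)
The plan is to work in the model of Remark~\ref{rmk-flag-other}: identify $\Fcal^{(P_0)}_{\Zcal,w}$ with $[E_\Zcal\backslash H^{(P_0)}_{\Zcal,w}]$, with $\pi_{P_0}$ induced by the first projection $H^{(P_0)}_{\Zcal,w}\to G$. Everything then reduces to two inputs. The first is the dimension formula \eqref{dim-H}. The second is the fact that for Frobenius-type data each $G_{\Zcal,w}$ is a single $E_\Zcal$-orbit, of dimension $\ell(w)+\dim P$, and likewise for the datum $\Zcal_{P_0}$. I treat $w\in{}^IW$; the cominimal case $w\in W^J$ is identical using the dual parametrization.

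\emph{Step 1 (image and fibers).} By \eqref{pi-image-minimal}, $\pi_{P_0}$ maps $\Fcal^{(P_0)}_{\Zcal,w}$ onto $\Xcal_{\Zcal,w}$. Since $G_{\Zcal,w}$ is a single orbit, the first projection $H^{(P_0)}_{\Zcal,w}\to G_{\Zcal,w}$ is $E_\Zcal$-equivariant onto a homogeneous space. Hence
\[
H^{(P_0)}_{\Zcal,w}\simeq E_\Zcal\times^{\Stab(g)} F_g ,
\]
where $g\in G_{\Zcal,w}$ and $F_g=\{hP_0 : h^{-1}g\varphi(\bar h)\in G_{\Zcal_{P_0},w}\}\subseteq P/P_0$. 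Comparing dimensions via \eqref{dim-H} gives $\dim F_g=0$, so $F_g$ is a finite set of points.

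\emph{Step 2 (point and \'etaleness, items (\ref{prop-FP0-1}) and (\ref{prop-FP0-3})).} We have $\Fcal^{(P_0)}_{\Zcal,w}\simeq[\Stab(g)\backslash F_g]$. For (\ref{prop-FP0-1}) I must show that $\Stab(g)$ acts transitively on $F_g$. The most robust route is to use that $G_{\Zcal_{P_0},w}$ is itself a single $E_{\Zcal_{P_0}}$-orbit. Given two points $hP_0,h'P_0$ of $F_g$, the elements $h^{-1}g\varphi(\bar h)$ and $h'^{-1}g\varphi(\bar h')$ are related by some $(a,b)\in E_{\Zcal_{P_0}}$. One then checks that $(h'a h^{-1},\ \varphi(\overline{h'ah^{-1}})\cdot\ldots)$ can be arranged to lie in $\Stab_{E_\Zcal}(g)$. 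This is where the compatibility $Q_0=M_0({}^zB)\subseteq Q$ and the relation $\varphi(\bar x)=\bar y$ on Levi quotients are used. Granting transitivity, $\Fcal^{(P_0)}_{\Zcal,w}\simeq[\Stab_{E'}(hP_0)\backslash\ast]\to[\Stab(g)\backslash\ast]$. This map is representable by the finite set $\Stab(g)/\Stab(g)\cap E'_{\Zcal,P_0}=F_g$, and it is \'etale because $F_g$ is reduced. To see reducedness, I would compare tangent spaces: the Lie algebras of $\Stab(g)$ and its intersection with $E'_{\Zcal,P_0}$ agree, since $\Lie\Stab(g)\subseteq\Lie B$ for Frobenius-type data (the differential of $\varphi$ vanishes, so the stabilizer's Lie algebra is governed by a nilpotent linear condition). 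I expect this transitivity-plus-Lie-algebra step to be the main obstacle.

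\emph{Step 3 (item (\ref{prop-FP0-2})).} Let $x\in\overline{\Fcal}^{(P_0)}_{\Zcal,w}$ map into $\Xcal_{\Zcal,w}$, and suppose $x$ lies in a stratum $\Fcal^{(P_0)}_{\Zcal,w_1}$ with $w_1\preccurlyeq_{I_0}w$ and $w_1\neq w$. Then $\ell(w_1)<\ell(w)$. By the preceding analysis applied to $\pi_{P_0}^{-1}(\Xcal_{\Zcal,w})\cap H$, whose fibers over $G_{\Zcal,w}$ are finite, this preimage has dimension $\ell(w)+\dim P$. The stratum $H_{w_1}$ has dimension $\ell(w_1)+\dim P$, which is strictly smaller. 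That alone is no contradiction, so I would instead argue by properness and finiteness. The map $\overline{H}_w\to\overline{G_{\Zcal,w}}$ is proper, and over the open orbit its fibers $F_g$ are finite. The closure relation \eqref{eq-closure} for $\Zcal_{P_0}$, together with $E'$-equivariance, shows that the fiber $\overline{F}_g$ is a closed $\Stab(g)$-stable subset of $P/P_0$ of dimension $0$ containing $F_g$. Any additional point would lie in a lower $\Zcal_{P_0}$-stratum. That is excluded by applying Step 1 to $w_1$: the image of $\Fcal^{(P_0)}_{w_1}$ would have to meet $\Xcal_{\Zcal,w}$, contradicting the dimension count $\dim\pi_{P_0}(\Fcal^{(P_0)}_{w_1})\le\ell(w_1)<\ell(w)=\dim\Xcal_{\Zcal,w}+\dim P$ when it is read on the level of the point stacks.
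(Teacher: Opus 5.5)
There is a genuine gap in the transitivity step of Step~2, and both (1) and your derivation of (3) ("granting transitivity") rest on it. The rest of your architecture is sound: working on $H^{(P_0)}_{\Zcal,w}$, finite fibres from \eqref{dim-H}, étaleness via a Lie algebra comparison, and (2) via a dimension count.

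\emph{Why the proposed argument does not work.} The pair $(x,y)=(h'ah^{-1},\varphi(\overline{h'})\,b\,\varphi(\overline{h})^{-1})$ does fix $g$ inside $P\times Q$. However, it lies in $E_\Zcal$ only if $\varphi(\overline{a})$ equals the $M$-component of $b$. The condition $(a,b)\in E_{\Zcal_{P_0}}$ only gives equality after projecting further to $M_0$. The discrepancy is an element of $R_{\mathrm u}(Q_0)\cap M$, a group of dimension $\dim(P/P_0)$. Now $E'_{\Zcal,P_0}$ is exactly the subgroup of $E_{\Zcal_{P_0}}$ on which this discrepancy vanishes. So removing it by changing $(a,b)$ modulo $\Stab_{E_{\Zcal_{P_0}}}(u)$, for all pairs of points, amounts to $E_{\Zcal_{P_0}}=E'_{\Zcal,P_0}\cdot\Stab_{E_{\Zcal_{P_0}}}(u)$. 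That is, $G_{\Zcal_{P_0},w}$ is a single $E'_{\Zcal,P_0}$-orbit, which is (1) itself. Note also that your manipulation never uses $w\in{}^IW\cup W^J$, and it cannot work for general $w\in{}^{I_0}W$.

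\emph{The missing idea is irreducibility.} $H^{(P_0)}_{\Zcal,w}$ is the image of the irreducible variety $P\times G_{\Zcal_{P_0},w}$ under $(h,u)\mapsto(hu\varphi(\overline h)^{-1},hP_0)$. Hence it is irreducible of dimension $\ell(w)+\dim P$. By \eqref{pi-image-minimal}, and since $G_{\Zcal,w}$ is a single $E_\Zcal$-orbit, every $E_\Zcal$-orbit $C\subseteq H^{(P_0)}_{\Zcal,w}$ surjects onto $G_{\Zcal,w}$. So $\dim C\geq \ell(w)+\dim P$, every orbit is open dense, and there is only one. This is Lemma~\ref{lem-transitiveG}, the same device the paper uses in Proposition~\ref{prop-small-image}.

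\emph{Étaleness needs the right base point.} The inclusion $\Lie\Stab(g)\subseteq\Lie B$ is false for general $g\in G_{\Zcal,w}$, since it gets conjugated by the first component. Once (1) is known, check étaleness at the single point $(wz^{-1},1\cdot P_0)$. There, $d\varphi=0$ embeds the (scheme-theoretic) $\Lie\Stab_{E_\Zcal}(wz^{-1})$ into $\Lie P\cap\operatorname{Ad}(wz^{-1})\Lie R_{\mathrm u}(Q)$. A compact root occurring there has the form $\alpha=wz^{-1}\sigma(\beta)$ with $\beta$ positive non-compact. Then $w^{-1}\alpha=w_0\sigma(w_{0,I}\beta)$ is negative, so $\alpha$ is negative because $w\in{}^IW$ (the computation of Lemma~\ref{lem-cpt-neg}).

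\emph{The cominimal case is not identical.} The step above is exactly where minimality enters. Take $G=\GL_3$, $\mu$ of type $(2,1)$, $P_0=B$ and $w=s_{\alpha_1}\in W^J$, so $wz^{-1}=s_{\alpha_2}$. Let $N_{ij}$ denote the elementary matrices. The pairs $(1+aN_{12},1+aN_{13})$ with $a^p=0$ lie in $\Stab_{E_\Zcal}(wz^{-1})$ but not in $B\times G$, so the fibre of $\pi_B$ is non-reduced there. For $w\in W^J$ your method therefore gives (1), (2) and finiteness, but not étaleness; in fact (3) fails in this example.

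\emph{Step~3 should be streamlined.} The claim that $\overline F_g$ is zero-dimensional is unjustified and unnecessary. The identity $\ell(w)=\dim\Xcal_{\Zcal,w}+\dim P$ is wrong; the correct formula is $\dim G_{\Zcal,w}=\ell(w)+\dim P$. The clean argument is that of Lemma~\ref{lem-length-ineq}. Since $\Xcal_{\Zcal,w}$ is a single point, if $\pi_{P_0}(\Fcal^{(P_0)}_{\Zcal,w_1})$ meets it then $\pr_1(H^{(P_0)}_{\Zcal,w_1})\supseteq G_{\Zcal,w}$ by equivariance. This forces $\ell(w_1)\geq\ell(w)$, contradicting $w_1\preccurlyeq_{I_0}w$ with $w_1\neq w$.
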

These results also extend to a relative setting. For example, if $B\subseteq P_1\subseteq P_0 \subseteq P$ are parabolic subgroups and $w\in {}^{I_0}W$, then the map $\pi_{P_1,P_0}\colon \Fcal^{(P_1)}_{\Zcal,w} \to \Fcal^{(P_0)}_{\Zcal,w}$ (well-defined by \eqref{pi-image-minimal-relative}) is finite étale \cite[Prop.~4.1.4, 4.2.2]{Goldring-Koskivirta-zip-flags}. Assertion \eqref{prop-FP0-1} amounts to the statement that for $w\in {}^I W\cup W^J$, the set $G_{\Zcal_{P_0},w}$ is a single $E'_{\Zcal,P_0}$-orbit.
%Assertion \eqref{FP0-point} amounts to the statement that the $E_{\Zcal_{P_0}}$-orbit $G_{\Zcal_{P_0},w}$ coincides with a single $E'_{\Zcal,P_0}$-orbit. 
\subsection{\texorpdfstring{The maps $\Pi_\Zcal$ and $\pi_\Zcal$}{}}\label{subsec-maps-pi}
For a general zip datum $\Zcal$ (not necessarily of Frobenius-type) and $w\in {}^{I_0}W$, let $\Pi_{\Zcal,P_0}(w)\subseteq {}^I W$ be the subset of elements $w'\in {}^I W$ such that $\pi_{P_0}(\Fcal^{(P_0)}_{\Zcal,w})$ intersects $\Xcal_{w'}$. This defines a map
\begin{equation}
    \Pi_{\Zcal,P_0} \colon {}^{I_0}W\to \Pcal({}^I W).
\end{equation}
\begin{lemma} \label{lem-unique-maxlen}
There exists a unique element in $\Pi_{\Zcal,P_0}(w)$ of maximal length.
\end{lemma}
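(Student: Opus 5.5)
The argument uses irreducibility of the flag stratum together with the closure relations \eqref{eq-closure} and the fact that $\preccurlyeq_\Zcal$ refines the Bruhat order. First, $\Fcal^{(P_0)}_{\Zcal,w}=[E'_{\Zcal,P_0}\backslash G_{\Zcal_{P_0},w}]$ is irreducible. Indeed, $G_{\Zcal_{P_0},w}=E_{\Zcal_{P_0}}\cdot(BwBz^{-1})$ is the image of the irreducible variety $E_{\Zcal_{P_0}}\times BwBz^{-1}$, since $E_{\Zcal_{P_0}}$ is connected, being an extension of a Levi by unipotent groups. Hence its image $Y\colonequals\pi_{P_0}(\Fcal^{(P_0)}_{\Zcal,w})$ in $\Xcal_\Zcal$ is irreducible. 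Equivalently, the $E_\Zcal$-saturation $D_w=E_\Zcal\cdot G_{\Zcal_{P_0},w}\subseteq G$ is irreducible, being the image of $E_\Zcal\times G_{\Zcal_{P_0},w}$.

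Next, consider the finitely many locally closed strata $G_{\Zcal,w'}$, $w'\in{}^IW$. The irreducible set $D_w$ is covered by the finitely many sets $D_w\cap G_{\Zcal,w'}$, so its closure $\overline{D}_w$ equals the closure of one of them. Let $w_1\in\Pi_{\Zcal,P_0}(w)$ be an element for which $D_w\cap G_{\Zcal,w_1}$ is dense in $D_w$. We then show that every $w'\in\Pi_{\Zcal,P_0}(w)$ satisfies $w'\preccurlyeq_\Zcal w_1$. Indeed, $D_w\cap G_{\Zcal,w'}\neq\emptyset$ and $D_w\subseteq\overline{D_w\cap G_{\Zcal,w_1}}\subseteq\overline{G}_{\Zcal,w_1}$. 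By \eqref{eq-closure}, $\overline{G}_{\Zcal,w_1}$ is the union of the $G_{\Zcal,w''}$ with $w''\preccurlyeq_\Zcal w_1$, so $w'\preccurlyeq_\Zcal w_1$.

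Finally, we translate this into a statement about lengths. If $w'\preccurlyeq_\Zcal w_1$, then $xw'\psi(x)^{-1}\le w_1$ for some $x\in W_I$. We also need $\ell(w')\le\ell(w_1)$, with equality only if $w'=w_1$. Both follow from the fact that the strata satisfy $\dim G_{\Zcal,w'}=\ell(w')+\dim P$ and that $\overline{G}_{\Zcal,w_1}\setminus G_{\Zcal,w_1}$ has strictly smaller dimension. The dimension formula is the standard result of \cite{Pink-Wedhorn-Ziegler-zip-data} for orbitally finite data. For general data we would instead use the combinatorial statement (\loccit Thm.~5.14/Def.~6.1) that $\preccurlyeq_\Zcal$ is a partial order whose strict relation decreases length. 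Since $\preccurlyeq_\Zcal$ is antisymmetric, $w_1$ is then the unique element of maximal length in $\Pi_{\Zcal,P_0}(w)$.

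The main obstacle is this last step for zip data that are not orbitally finite, where $G_{\Zcal,w'}$ is a union of infinitely many orbits. There I would rely on the Pink--Wedhorn--Ziegler results that $G_{\Zcal,w'}$ is locally closed of dimension $\ell(w')+\dim P$, which hold for arbitrary zip data. The irreducibility of $E_{\Zcal_{P_0}}$ should be checked in the same way as for $E_\Zcal$.
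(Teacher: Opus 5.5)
Your proposal is correct and takes essentially the same route as the paper. Irreducibility of $\pi_{P_0}(\Fcal^{(P_0)}_{\Zcal,w})$ singles out a stratum $\Xcal_{\Zcal,w_1}$ meeting it densely, and the closure relation \eqref{eq-closure} then forces $w'\preccurlyeq_\Zcal w_1$ for every $w'\in\Pi_{\Zcal,P_0}(w)$. You also make explicit the step the paper leaves unstated, that $w'\preccurlyeq_\Zcal w_1$ with $w'\neq w_1$ gives $\ell(w')<\ell(w_1)$, and your dimension argument (via $\dim G_{\Zcal,w}=\ell(w)+\dim P$ and irreducibility of $G_{\Zcal,w_1}$) handles it correctly.
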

\begin{proof}
This is clear when \(\Zcal\) is of Frobenius-type, since all zip strata of $\Xcal$ are pointwise and $\pi_{P_0}(\Fcal^{(P_0)}_{\Zcal,w})$ is locally closed, irreducible in $\Xcal$. We prove this in general. Since $\pi_{P_0}(\Fcal^{(P_0)}_{\Zcal,w})$ is irreducible, there exists a unique \(w' \in \iw\) such that \(\Xcal_{w'} \cap \pi_{P_0}(\Fcal^{(P_0)}_{\Zcal,w})\) is open and dense in \(\pi_{P_0}(\Fcal^{(P_0)}_{\Zcal,w})\). Thus, for any other \(w'' \in \Pi_{\Zcal,P_0}(w)\), we have \(\emptyset \neq \Xcal_{w''} \cap \pi_{P_0}(\Fcal^{(P_0)}_{\Zcal,w}) \subseteq \overline{\Xcal}_{w'}=\bigcup_{w'_1\preccurlyeq_{\Zcal}w'} \Xcal_{w'_1}\). This implies \(w'' \preccurlyeq_\Zcal w'\), which completes the proof.
\end{proof}
We denote by $\pi_{\Zcal,P_0}(w)$ (or simply $\pi_{P_0}(w)$, if the choice of the zip datum $\Zcal$ is clear) the unique element afforded by Lemma \ref{lem-unique-maxlen}. We obtain a map
\begin{equation} \label{map-image-W}
    \pi_{\Zcal, P_0} \colon {}^{I_0}W\to {}^I W.
\end{equation}
For $w\in {}^I W$, we have $\Pi_{\Zcal,P_0}(w)=\{w\}$ and $\pi_{\Zcal,P_0}(w)=w$. Thus, $\pi_{\Zcal,P_0}$ is a section of the inclusion ${}^{I}W\subseteq {}^{I_0}W$. Formula \eqref{pi-image-minimal-relative} immediately implies that the above maps are essentially independent of the choice of $P_0$:
\begin{proposition} \label{Pi-P0}
The maps $\Pi_{\Zcal,P_0}\colon {}^{I_0}W\to \Pcal({}^I W)$ and $\pi_{\Zcal,P_0}\colon {}^{I_0}W\to {}^I W$ coincide with the restrictions to ${}^{I_0}W$ of the maps $\Pi_{\Zcal,B}\colon W\to \Pcal({}^IW)$ and $\pi_{\Zcal,B}\colon W\to {}^IW$ respectively.
\end{proposition}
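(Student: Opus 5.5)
The plan is to reduce everything to the transitivity relation $\pi_{P_0}\circ \pi_{B,P_0}=\pi_B$ together with formula \eqref{pi-image-minimal-relative} applied to the pair $B\subseteq P_0$. Fix $w\in {}^{I_0}W$. Since ${}^{I_0}W\subseteq W={}^{\emptyset}W$, the element $w$ indexes both a stratum $\Fcal^{(P_0)}_{\Zcal,w}$ of $\Fcal^{(P_0)}_\Zcal$ and a stratum $\Fcal^{(B)}_{\Zcal,w}$ of $\Fcal^{(B)}_\Zcal$.

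By \eqref{pi-image-minimal-relative} with $P_1=B$, we have $\pi_{B,P_0}(\Fcal^{(B)}_{\Zcal,w})=\Fcal^{(P_0)}_{\Zcal,w}$. Applying $\pi_{P_0}$ and using $\pi_{P_0}\circ\pi_{B,P_0}=\pi_B$ gives
\begin{equation*}
\pi_B\bigl(\Fcal^{(B)}_{\Zcal,w}\bigr)=\pi_{P_0}\bigl(\Fcal^{(P_0)}_{\Zcal,w}\bigr)
\end{equation*}
as subsets (or substacks) of $\Xcal_\Zcal$. By definition, $\Pi_{\Zcal,P_0}(w)$ is the set of $w'\in{}^IW$ such that $\Xcal_{w'}$ meets the right-hand side, and $\Pi_{\Zcal,B}(w)$ is the set of $w'$ such that $\Xcal_{w'}$ meets the left-hand side. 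These sets therefore coincide, so $\Pi_{\Zcal,P_0}=\Pi_{\Zcal,B}|_{{}^{I_0}W}$.

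Since $\pi_{\Zcal,P_0}(w)$ and $\pi_{\Zcal,B}(w)$ are each defined as the unique element of maximal length (Lemma \ref{lem-unique-maxlen}) in the same set, they agree. Equivalently, they agree because each is the unique $w'$ for which $\Xcal_{w'}$ is open dense in the same irreducible image.

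The only step needing care is the justification of \eqref{pi-image-minimal-relative} itself, which the paper takes as given. If it were to be verified, I would work in the presentation $[E'_{\Zcal,P_0}\backslash G]$. There, $\Fcal^{(P_0)}_{\Zcal,w}$ corresponds to $G_{\Zcal_{P_0},w}=E_{\Zcal_{P_0}}\cdot(BwBz^{-1})$, and its preimage in $[E'_{\Zcal,B}\backslash G]$ contains $G_{\Zcal_B,w}=E_{\Zcal_B}\cdot(BwBz^{-1})$. The main point is then to show that every $E'_{\Zcal,P_0}$-orbit in $G_{\Zcal_{P_0},w}$ meets $BwBz^{-1}$ up to $E'_{\Zcal,B}$-translation. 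For $w\in{}^{I_0}W$, I would deduce this by reducing the $L_0$-part of the $E_{\Zcal_{P_0}}$-action into $B\times{}^zB$, in the same way as \eqref{pi-image-minimal} is deduced for minimal elements.
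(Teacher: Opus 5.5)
Your argument is correct and is the same as the paper's, which deduces the proposition immediately from \eqref{pi-image-minimal-relative} and $\pi_{P_0}\circ\pi_{B,P_0}=\pi_B$: the two images in $\Xcal_\Zcal$ coincide, so the sets $\Pi$ coincide, and hence so do their longest elements. If you want to verify \eqref{pi-image-minimal-relative} itself, it is simpler than your sketch suggests and needs no minimality: one has $E_{\Zcal_{P_0}}=E'_{\Zcal,P_0}\cdot(1\times R_{\mathrm{u}}(Q_0))$, and $R_{\mathrm{u}}(Q_0)\subseteq {}^zB$ preserves $BwBz^{-1}$ under right multiplication, so $G_{\Zcal_{P_0},w}=E'_{\Zcal,P_0}\cdot(BwBz^{-1})$ directly.
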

%\begin{proof}
%For $w\in {}^{I_0}W$, it follows easily from the definition of the zip stratifications that $\pi_{B,P_0}(\Fcal^{(B)}_{\Zcal,w})=\Fcal^{(P_0)}_{\Zcal,w}$, similarly to \cite[Proposition 4.2.2]{Goldring-Koskivirta-zip-flags} in the Frobenius-type case. The result follows immediately.
%\end{proof}
We may thus drop the subscript $P_0$ from the notation and simply denote these maps by $\Pi_\Zcal$ and $\pi_\Zcal$. We emphasize that these maps encode information about the geometry of $\Xcal$, for example the singularities appearing in the closures of EO strata, as we will see later (Theorem \ref{thm-FTZD-normality}). We end this section with a lemma on the set of lower neighbors:

\begin{lemma}[{\cite[Lemma 2.8]{Koskivirta-LaPorta-Reppen-singularities}}]\label{lemma-low-nei}
Assume that $\Zcal$ is of Frobenius-type. For any $I_0\subseteq I$ and $w\in {}^I W$, we have the following.
\begin{enumerate}
    \item \label{lemma-low-nei-item1} $\Gamma_{I_0}(w)\cap {}^I W \subseteq \Gamma_{I}(w)$.
    \item \label{lemma-low-nei-item2} $\Gamma_I(w)\subseteq \pi_{\Zcal}(\Gamma_{I_0}(w))$.
\end{enumerate}
\end{lemma}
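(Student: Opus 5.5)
Both assertions come from comparing the geometry of the closure $\overline{\Xcal}_w$ with that of the minimal flag stratum closure $\overline{\Fcal}^{(P_0)}_w$, where $P_0$ is the parabolic of type $I_0$, using Proposition \ref{prop-FP0} and the dimension formula \eqref{dim-H}. Since $\Zcal$ is of Frobenius-type, all strata are single points, and codimension in closures is governed by length. Concretely, $\dim \Fcal^{(P_0)}_v$ relative to $\Fcal^{(P_0)}$ is $\ell(v)$ shifted by a constant, by \eqref{dim-H}. Likewise $\Xcal_v$ has dimension $\ell(v)$ shifted by a constant.

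For (1), let $w'\in \Gamma_{I_0}(w)\cap {}^IW$. Then $\Fcal^{(P_0)}_{w'}\subseteq \overline{\Fcal}^{(P_0)}_w$ and $\ell(w')=\ell(w)-1$. Applying $\pi_{P_0}$, which is proper and satisfies $\pi_{P_0}(\overline{\Fcal}^{(P_0)}_w)=\overline{\Xcal}_w$, and using \eqref{pi-image-minimal} for the minimal element $w'$, we get $\Xcal_{w'}=\pi_{P_0}(\Fcal^{(P_0)}_{w'})\subseteq \overline{\Xcal}_w$. By \eqref{eq-closure} this gives $w'\preccurlyeq_I w$. Together with $\ell(w')=\ell(w)-1$, this says $w'\in\Gamma_I(w)$.

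For (2), let $w'\in\Gamma_I(w)$, so $\Xcal_{w'}\subseteq\overline{\Xcal}_w$ has codimension one. The preimage $\pi_{P_0}^{-1}(\Xcal_{w'})\cap \overline{\Fcal}^{(P_0)}_w$ is nonempty by surjectivity onto $\overline{\Xcal}_w$, and it is a union of flag strata $\Fcal^{(P_0)}_v$ with $v\preccurlyeq_{I_0} w$. Since $\pi_{P_0}$ restricted to $\Fcal^{(P_0)}_w$ is finite étale onto $\Xcal_w$ (Proposition \ref{prop-FP0}(3)), we get $\dim\overline{\Fcal}^{(P_0)}_w=\dim\overline{\Xcal}_w$ in the appropriate normalized sense. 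The map $\overline{\Fcal}^{(P_0)}_w\to\overline{\Xcal}_w$ is proper and surjective between irreducible stacks of equal dimension, so it is generically finite.

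The main step is to show that some stratum $\Fcal^{(P_0)}_v$ in the preimage with $\ell(v)=\ell(w)-1$ dominates $\Xcal_{w'}$. I would argue by a dimension count: the preimage of the codimension-one locus $\Xcal_{w'}$ contains a component of codimension at most one, because the fibers over $\Xcal_{w'}$ are nonempty. I would also need that the preimage has no components of smaller dimension that dominate alone. Note that $\pi_{P_0}(\Fcal_v)$ has dimension at most $\ell(v)$, and it must equal $\dim\Xcal_{w'}=\ell(w)-1$ for some $v$ covering $\Xcal_{w'}$. So some $v$ with $\ell(v)\ge \ell(w)-1$ and $v\neq w$ maps onto $\Xcal_{w'}$. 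Such a $v$ has $v\preccurlyeq_{I_0}w$, $v\ne w$, and hence $\ell(v)=\ell(w)-1$, i.e.\ $v\in\Gamma_{I_0}(w)$.

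Finally, $\pi_{P_0}(\Fcal_v)$ is irreducible and contains $\Xcal_{w'}$ as an open dense subset, since the dimensions agree and zip strata are points. Therefore $\pi_\Zcal(v)=w'$, and $w'\in\pi_\Zcal(\Gamma_{I_0}(w))$. The delicate point I expect is justifying that $\dim\pi_{P_0}(\Fcal_v)\le\ell(v)$ while at least one $v$ attains the codimension-one image. For this I would use the finiteness of $\pi_{P_0}$ on minimal strata together with upper semicontinuity of fiber dimension over $\overline{\Xcal}_w$.
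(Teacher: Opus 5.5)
Your proof is correct and takes the natural route. The paper itself only cites \cite{Koskivirta-LaPorta-Reppen-singularities} for this lemma, but its proof of Lemma \ref{lem-sm-lownb} uses exactly your argument for (2). Part (1) follows immediately from \eqref{pi-image-minimal} and $\pi_{P_0}(\overline{\Fcal}^{(P_0)}_w)=\overline{\Xcal}_w$. For (2), surjectivity gives some $v\preccurlyeq_{I_0}w$, $v\neq w$, with $\Xcal_{w'}\subseteq\pi_{P_0}(\Fcal^{(P_0)}_v)$. Then $\ell(w')\le\ell(\pi_\Zcal(v))\le\ell(v)\le\ell(w)-1=\ell(w')$ by Lemma \ref{lem-length-ineq}, which forces $v\in\Gamma_{I_0}(w)$ and $\pi_\Zcal(v)=w'$.

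Two minor corrections. First, the preimage of $\Xcal_{w'}$ is not in general a union of flag strata, since a non-minimal flag stratum can map onto several zip strata; it is only covered by the strata it meets, which is all you use. Second, the ``delicate point'' you flag is not delicate: the bound $\dim\pi_{P_0}(\Fcal^{(P_0)}_v)\le\ell(v)$ is just the dimension of an image bounded by $\dim H^{(P_0)}_v$ via \eqref{dim-H}. So neither semicontinuity nor finiteness on minimal strata is needed, and the latter would not apply to a non-minimal $v$ anyway.
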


\subsection{Canonical parabolic}
To $w\in {}^I W$ we attach a standard parabolic subgroup $P_{w}$ of $G$, following \cite[\S 5.1]{Pink-Wedhorn-Ziegler-zip-data}. We stress that we use the convention $B\subseteq P$, whereas \cite{Pink-Wedhorn-Ziegler-zip-data} uses the convention $B\subseteq Q$. This different convention changes the definition of the canonical parabolic.
%\subsubsection{Definition}
For $w\in {}^I W$, there is a largest algebraic subgroup $L_w$ contained in $L$ such that ${}^{wz^{-1}}\varphi(L_w)=L_w$ (as subgroups of $G$). Moreover, $L_w$ is a standard Levi subgroup by \cite[Proposition 5.6]{Pink-Wedhorn-Ziegler-zip-data}. The standard parabolic subgroup $P_w\colonequals L_w B$ is called the \emph{canonical parabolic} of $w$. For $w\in {}^IW$, write $I_w\subseteq I$ for the type of the canonical parabolic $P_w$. Let also $Q_w\colonequals \varphi(L_w){}^z B$. One has the following key relation (\cite[\S 5.2]{Pink-Wedhorn-Ziegler-zip-data}):
\begin{equation}\label{Gw-withPw}
    G_{\Zcal,w} = E_\Zcal\cdot (P_w wz^{-1}).
\end{equation}
%\subsubsection{Canonical flag strata}
Furthermore, the flag stratum $\Fcal_{\Zcal,w}^{(P_w)}$ satisfies a number of important properties that we collect in the following proposition. We endow all substacks with the reduced induced structure.
\begin{proposition}[{\cite[Proposition 2.15]{Koskivirta-LaPorta-Reppen-singularities}}] \label{prop-Pw}\ 
Assume that $\Zcal$ is of Frobenius-type. For $w\in {}^I W$, the following assertions hold.
\begin{enumerate}
    \item \label{Pw-1} The map $\pi_{P_w}\colon \Fcal^{(P_w)}_{\Zcal,w}\to \Xcal_{\Zcal,w}$ is an isomorphism.
 \item \label{Pw-2} The map $\pi_{P_w}\colon \overline{\Fcal}^{(P_w)}_{\Zcal,w}\to \overline{\Xcal}_{\Zcal,w}$ is proper and birational.
 \item \label{Pw-3} $\Fcal^{(P_w)}_{\Zcal,w}$ is a Bruhat stratum. In particular, $\overline{\Fcal}^{(P_w)}_{\Zcal,w}$ is normal and Cohen--Macaulay.
\end{enumerate}
\end{proposition}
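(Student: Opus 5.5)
Proposition~\ref{prop-Pw} is quoted from \cite[Proposition 2.15]{Koskivirta-LaPorta-Reppen-singularities}, but here is how I would reprove it from the facts recalled above. The starting point is the key relation \eqref{Gw-withPw}, $G_{\Zcal,w}=E_\Zcal\cdot(P_wwz^{-1})$, together with the defining property ${}^{wz^{-1}}\varphi(L_w)=L_w$.

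\textbf{Step 1: $\Fcal^{(P_w)}_{\Zcal,w}$ is a Bruhat stratum (first half of \eqref{Pw-3}).} Work with the zip datum $\Zcal_{P_w}=(G,P_w,Q_w,L_w,\varphi(L_w),\varphi)$. The element $w\in{}^IW\subseteq{}^{I_w}W$ satisfies ${}^{wz^{-1}}\varphi(L_w)=L_w$, i.e.\ $wz^{-1}$ conjugates the Levi of $Q_w$ onto the Levi of $P_w$. Using the orbit description in \cite{Pink-Wedhorn-Ziegler-zip-data}, this should force the $E_{\Zcal_{P_w}}$-orbit of $wz^{-1}$ to be the full double coset $P_wwz^{-1}Q_w$. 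The idea is a Lang-type argument on the Levi $L_w$: the equation $\ell\,\varphi(\ell')^{-1}=\ldots$ in $L_w$ is solvable because $\varphi$ is a Frobenius, and the unipotent radicals are absorbed by the standard contraction argument. Then $G_{\Zcal_{P_w},w}=P_wwz^{-1}Q_w$, which is exactly the Bruhat condition of \S\ref{subsubsec-bruhat}.

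\textbf{Step 2: normality and Cohen--Macaulayness (rest of \eqref{Pw-3}).} The closure of a Bruhat stratum is the preimage under $\psi_{P_w}$ of the closure of $[P_w\backslash P_wwz^{-1}Q_w/Q_w]$, which is a Schubert-type variety. Schubert varieties are normal and Cohen--Macaulay. The map $\Fcal^{(P_w)}_\Zcal\to[P_w\backslash G/Q_w]$ is smooth, being a composition of the smooth map $\psi_{P_w}$ with the smooth projection $\Xcal_{\Zcal_{P_w}}\to[P_w\backslash G/Q_w]$. Both properties pull back along smooth maps, so $\overline{\Fcal}^{(P_w)}_{\Zcal,w}$ is normal and Cohen--Macaulay.

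\textbf{Step 3: the isomorphism \eqref{Pw-1}.} Proposition~\ref{prop-FP0} already gives that $\pi_{P_w}\colon\Fcal^{(P_w)}_{\Zcal,w}\to\Xcal_{\Zcal,w}$ is finite étale and surjective between one-point stacks. It therefore suffices to show the induced map of stabilizers is an isomorphism, i.e.\ $\Stab_{E_\Zcal}(g)\subseteq E'_{\Zcal,P_w}$ for a suitable $g\in P_wwz^{-1}$. Take $g=wz^{-1}$ and let $(x,y)\in E_\Zcal$ with $xg=gy$. Projecting to Levi components gives a relation of the form $\bar x={}^{wz^{-1}}\varphi(\bar x)$ modulo unipotents. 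Iterating this relation, the maximality defining $L_w$ should force $x\in P_w$. I expect this to be the main obstacle: one must control the unipotent parts carefully and ensure the whole stabilizer lands in $P_w$, not only its identity component, since degree $1$ is needed rather than merely finiteness. I would first reduce to the identity component via a Lie algebra computation, then treat the finite part via the Lang torsor description $\Xcal_{\Zcal,w}\simeq[\,\cdot/\Stab\,]$.

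\textbf{Step 4: properness and birationality \eqref{Pw-2}.} Properness holds because $\pi_{P_w}$ has fibers $P/P_w$ (Remark~\ref{rmk-flag-other}) and $\overline{\Fcal}^{(P_w)}_{\Zcal,w}$ is closed. Surjectivity onto $\overline{\Xcal}_{\Zcal,w}$ follows from \eqref{pi-image-minimal}. By Proposition~\ref{prop-FP0}\eqref{prop-FP0-2}, the preimage of the open dense $\Xcal_{\Zcal,w}$ is $\Fcal^{(P_w)}_{\Zcal,w}$, and by Step~3 the map is an isomorphism over it. Hence $\pi_{P_w}$ is birational.
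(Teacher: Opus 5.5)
\textbf{Verdict: genuine gap in Step 3.} Your outline has the same architecture as the paper's. The proposition is deduced from the Bruhat property, from Proposition~\ref{prop-FP0}, and from Lemma~\ref{stab-contained-Pw}, i.e.\ $\Stab_{E_\Zcal}(wz^{-1})\subseteq P_w$. The paper does not prove that lemma here; it imports it from \cite[Lemma 2.14]{Koskivirta-LaPorta-Reppen-singularities}. Your Steps 2 and 4 are correct, Step 4 being conditional on Step 3. So is your reduction of \eqref{Pw-1} to the containment $\Stab_{E_\Zcal}(wz^{-1})\subseteq E'_{\Zcal,P_w}$.

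The gap is that this containment is never proved, yet it is the only substantive input for \eqref{Pw-1}, and hence for birationality in \eqref{Pw-2}. Your argument stops at ``should force'', and the fallback plan does not close it. A Lie algebra computation only controls the identity component of the stabilizer; that is what yields finite \'etaleness in Proposition~\ref{prop-FP0}, not degree one. Treating the component group ``via the Lang torsor description'' gives no reason why the other components lie in $P_w\times G$.

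\textbf{How to close Step 3.} Run your first idea on arbitrary $R$-points; then there is no need to split into identity component and component group.
\begin{itemize}
\item Put $g=wz^{-1}$ and $F=\mathrm{int}(g)\circ\varphi$. If $(x,y)\in\Stab_{E_\Zcal}(g)$, then $x=gyg^{-1}\in P\cap{}^gQ$.
\item The product decomposition of this intersection of parabolics containing $T$ gives three facts: $\bar x\in P_1\colonequals L\cap{}^gQ$; $F(\bar x)=g\bar y g^{-1}\in P\cap{}^gM$; and both have the same component in the common Levi $L_1\colonequals L\cap{}^gM$.
\item The same argument one level down shows that the successive Levi projections $x_k\in L_k$ of $\bar x$ satisfy $x_k\in P_{k+1}$, where $P_{k+1}\colonequals L_k\cap F(P_k)$ and $L_{k+1}\colonequals L_k\cap F(L_k)$.
\item Each $P_{k+1}$ contains $B\cap L_k$. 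This follows from $L\cap{}^wB=L\cap B$ (as $w\in{}^IW$) and $F(B\cap L)={}^wB\cap{}^gM$.
\item The chain $L_k$ contains $L_w$ at every stage, and it stabilizes at an $F$-stable subgroup of $L$. By maximality of $L_w$ it therefore stabilizes at $L_w$.
\item Hence $\bar x\in L_w(B\cap L)=L\cap P_w$, and $x\in P_w$ because $R_{\mathrm{u}}(P)\subseteq B$.
\end{itemize}

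\textbf{Simplifying Step 1.} Step 1 needs neither Lang's theorem nor a contraction argument; as written, the unipotent ``absorption'' there is itself unproved. The canonical parabolic of $w$ for $\Zcal_{P_w}$ is again $P_w$, so \eqref{Gw-withPw} applied to $\Zcal_{P_w}$ gives $G_{\Zcal_{P_w},w}=E_{\Zcal_{P_w}}\cdot(P_wwz^{-1})$. Then $Q_w$ is absorbed: write $q=\varphi(\ell)u$ with $\ell\in L_w$ and $u\in R_{\mathrm{u}}(Q_w)$. The pair $(\ell^{-1},q^{-1})\in E_{\Zcal_{P_w}}$ sends $\ell p\,wz^{-1}$ to $p\,wz^{-1}q$, which gives $G_{\Zcal_{P_w},w}=P_wwz^{-1}Q_w$. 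This is exactly the argument of Lemma~\ref{lem-Bruhat0}.
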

We will show in \S\ref{subsec-flag-strata-char0} similar results for zip data which are not of Frobenius-type. The above proposition is a consequence of the following lemma, that we record for later use. Note that $G_{\Zcal_{P_w},w}=P_w(wz^{-1})Q_w$ by \eqref{Pw-3} of Proposition \ref{prop-Pw} above. For $x\in G$, denote by $\Stab_{E_\Zcal}(x)$ the stabilizer of $x$ in $E_\Zcal$. The first projection $\pr_1\colon E_\Zcal\to P$ induces an isomorphism between $\Stab_{E_\Zcal}(x)$ and a subgroup of $P$. We identify $\Stab_{E_\Zcal}(x)$ as a subgroup of $P$ in this way.

\begin{lemma}\label{stab-contained-Pw}
Assume that $\Zcal$ is of Frobenius-type. For any $w\in {}^I W$ and $x\in G_{\Zcal_{P_w},w}$, one has \[\Stab_{E_\Zcal}(x)\subseteq P_w.\]
\end{lemma}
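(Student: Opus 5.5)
Since $G_{\Zcal_{P_w},w}=P_w(wz^{-1})Q_w$ is a single orbit for the zip group $E_{\Zcal_{P_w}}\subseteq P_w\times Q_w$, every $x\in G_{\Zcal_{P_w},w}$ can be written as $x=a\,(wz^{-1})\,b^{-1}$ with $(a,b)\in E_{\Zcal_{P_w}}$. We then have $\Stab_{E_\Zcal}(x)=c\,\Stab_{E_\Zcal}(wz^{-1})\,c^{-1}$ only if $(a,b)\in E_\Zcal$, which need not hold. So we first reduce the problem: $E'_{\Zcal,P_w}=E_\Zcal\cap(P_w\times G)$ acts transitively on $G_{\Zcal_{P_w},w}$ (Proposition \ref{prop-FP0}\eqref{prop-FP0-1} applied with $P_0=P_w$, since $w\in{}^IW$). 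Hence $x=a(wz^{-1})b^{-1}$ with $(a,b)\in E_\Zcal$ and $a\in P_w$. Then $\Stab_{E_\Zcal}(x)$, viewed in $P$ through $\pr_1$, equals $a\,\Stab_{E_\Zcal}(wz^{-1})\,a^{-1}$. Because $a\in P_w$, it suffices to treat $x=wz^{-1}$.

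For $x=wz^{-1}$, the stabilizer is $\{p\in P : \exists q\in Q,\ \varphi(\bar p)=\bar q,\ p\,wz^{-1}=wz^{-1}q\}$, that is, the set of $p\in P$ with ${}^{zw^{-1}}p\in Q$ and $\varphi(\bar p)=\overline{{}^{zw^{-1}}p}$. We follow \cite[\S 8]{Pink-Wedhorn-Ziegler-zip-data} and define a decreasing chain of subgroups. Set $H_0=P$ and $H_{n+1}=\{p\in H_n : {}^{zw^{-1}}p\in Q,\ \overline{{}^{zw^{-1}}p}\in\varphi(\overline{H_n})\}$, transporting back via $\varphi^{-1}$ and ${}^{wz^{-1}}$ at each step. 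The stabilizer lies in every $H_n$. Using the Levi decompositions and $T\subseteq$ everything, each $H_n$ has the form $L_n\cdot U_n$ with $L_n$ a standard Levi subgroup of $L$ and $U_n$ unipotent inside $R_u(P)\cdot(\text{root groups})$. The Levi parts stabilize at the largest $L_\infty\subseteq L$ with ${}^{wz^{-1}}\varphi(L_\infty)=L_\infty$, which is $L_w$ by definition. This step is combinatorial: at the level of roots, $L_{n+1}$ has type $I_n\cap {}^{wz^{-1}}\varphi(I_n)$-style intersections.

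The delicate point, and the main obstacle, is controlling the unipotent parts: we must show the stabilizer lies in $L_wB=P_w$ and not merely in $L_w R_u(P)$ together with extra root groups of $L$. We plan to handle it as follows. Any element of the stabilizer projects under $P\to L$ into a subgroup $S$ satisfying $S\subseteq L\cap{}^{wz^{-1}}Q$ with Levi compatibility. Since $\varphi$ is a Frobenius-type isogeny, the differential of $\varphi$ is zero, so the Lie algebra computation of the stabilizer reduces to $\Lie(P)\cap \mathrm{Ad}(wz^{-1})\Lie(R_u(Q))$, modulo Levi terms killed by $d\varphi=0$. This shows $\Stab_{E_\Zcal}(x)^\circ$ is unipotent-by-finite and that its Lie algebra is spanned by root spaces $\alpha\in\Phi$ with $\alpha$ a root of $P$ and $(wz^{-1})^{-1}\alpha$ a root of $R_u(Q)$. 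Because $w\in{}^IW$ and ${}^zB\subseteq Q$, such roots are roots of $B$; this is the standard minimal-length argument. Hence $\Stab^\circ\subseteq B\subseteq P_w$. For the component group, we use the chain description above: an element of the stabilizer maps modulo unipotent radicals into $L_w$ (because its image is preserved by ${}^{wz^{-1}}\varphi$ iteratively, landing in the finite group $L_w^{\text{fixed}}$-type set). We then combine this with the unipotent part lying in $B$ to conclude $\Stab_{E_\Zcal}(x)\subseteq L_wB=P_w$.
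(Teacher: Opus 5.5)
Your reduction to $x=wz^{-1}$ (transitivity of $E'_{\Zcal,P_w}$ on $G_{\Zcal_{P_w},w}$, then conjugation by an element of $P_w$) is exactly what the paper does. The paper then simply quotes \cite[Lemma~2.14]{Koskivirta-LaPorta-Reppen-singularities} for $x=wz^{-1}$. Your direct treatment of that base case has a genuine gap at the step ``$\Lie(\Stab)\subseteq\Lie(B)$, hence $\Stab^\circ\subseteq B$''.

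In characteristic $p$ this inference is false. For example, the subgroup $\{(t,t^p)\}\subseteq\GG_{\mathrm a}^2$ has the first coordinate line as Lie algebra without lying in $\GG_{\mathrm a}\times\{0\}$. Zip stabilizers have exactly this Frobenius-twisted shape, precisely because $d\varphi=0$. Concretely, put $g=wz^{-1}$ and let $(\beta_0,\beta_1,\beta_2)$ be a $g$-sequence with $\beta_1$ compact and $\beta_2$ negative. Then for every $a_1\in U_{\beta_1}$ the pair $\bigl(a_1\cdot{}^{g}\varphi(a_1),\ {}^{g^{-1}}a_1\cdot\varphi(a_1)\bigr)$ lies in $\Stab_{E_\Zcal}(g)$. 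Its first component lies in $U_{\beta_1}U_{\beta_2}$, yet its tangent direction is only $\mathfrak g_{\beta_1}$. So your computation of $\Lie(P)\cap\mathrm{Ad}(g)\Lie(R_{\mathrm u}(Q))$ only controls the first root $\beta_1$ of each sequence. It says nothing about $\beta_2,\beta_3,\dots$, which is where the content lies.

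The component-group paragraph does not close the gap. It presupposes a decomposition of the stabilizer into $\Stab^\circ$ and a finite part (essentially \cite[Thm.~8.1]{Pink-Wedhorn-Ziegler-zip-data}), which you do not establish. Moreover, ``stabilizer elements map into $L_w$ modulo unipotent radicals'' is not the right statement. In the example above, $\bar a=a_1$ is a nontrivial element of $R_{\mathrm u}(B_L)$ that does not lie in $L_w$. What must be shown is $\bar a\in L_wB_L=P_w\cap L$.

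Your chain of subgroups, computed at the level of roots, repairs this and makes both the Lie algebra step and the component-group step unnecessary.
Set $\mathcal P_0=L$ and $\mathcal P_{n+1}=\mathcal P_n\cap{}^g\bigl(\varphi(\mathcal P_n)R_{\mathrm u}(Q)\bigr)$. Suppose $(a,b)\in\Stab_{E_\Zcal}(g)$ and $\bar a\in\mathcal P_n$. Then $b=g^{-1}ag\in\varphi(\mathcal P_n)R_{\mathrm u}(Q)$. So $a$ lies in the intersection of $P$ with the parabolic ${}^g\bigl(\varphi(\mathcal P_n)R_{\mathrm u}(Q)\bigr)$, and both contain $T$. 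By the Borel--Tits decomposition of such intersections, the image of this intersection in $L$ is $L\cap{}^g\bigl(\varphi(\mathcal P_n)R_{\mathrm u}(Q)\bigr)$. Hence $\bar a\in\mathcal P_{n+1}$.

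Now use $w\in{}^IW$ and $z=\varphi(w_{0,I})w_0$. One checks that $g^{-1}(\Phi_L^-)\subseteq(\Phi^+\setminus\Phi_M)\cup\Phi_M^-$ and $g^{-1}(\Phi_L^+)\subseteq(\Phi^-\setminus\Phi_M)\cup\Phi_M^+$. It follows by induction that every $\mathcal P_n$ contains $B_L$. Its positive roots are the $\alpha$ with $\varphi_w^{-k}\alpha\in\Phi_L^+$ for $0\le k\le n$, where $\varphi_w=wz^{-1}\sigma$. For $n$ large these are the roots whose whole $\varphi_w$-orbit lies in $\Phi_L^+$. That set is $\Phi^+_{L_w}$, by maximality of $L_w$ and $\varphi_w(I_w)=I_w$.

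Hence $\bar a\in L_wB_L$, and so $a\in L_wB_LR_{\mathrm u}(P)=P_w$. The ``extra root groups of $L$'' you were worried about are all negative, hence contained in $B$. This argument never uses $d\varphi=0$.
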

\begin{proof}
When $x=wz^{-1}$, this follows from \cite[Lemma~2.14]{Koskivirta-LaPorta-Reppen-singularities}. The general case is immediately deduced from this and the fact that $G_{\Zcal_{P_w},w}$ consists of a single $E'_{\Zcal,P_w}$-orbit.
\end{proof}

\subsection{\texorpdfstring{Representatives of elements of $W$}{}}
Assume in this section that $\Zcal$ is of Frobenius-type. Let $w'\in W$ be an arbitrary element. We explain here how to determine the unique element $w\in {}^I W$ such that $w'z^{-1}$ lies in the zip stratum $G_{\Zcal,w}$, i.e.\ such that $w'z^{-1}\in E_\Zcal\cdot(wz^{-1})$. Recall the map $\psi\colon W_I\to W_J$ from \eqref{psi-WIWJ}.
\begin{proposition}[{\cite[\S9.1, Corollary~9.18]{Pink-Wedhorn-Ziegler-zip-data}}]\label{prop-W-representative}
For any $w'\in W$, the following are equivalent:
\begin{enumerate}
    \item $w'z^{-1}$ lies in the zip stratum $G_{\Zcal,w}$.
    \item There exists $a\in W_I$ and $x\in W_{I_w}$ such that $w'=axw\psi(a)^{-1}$.
\end{enumerate}
\end{proposition}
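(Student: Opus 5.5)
The plan is to reduce the statement to the description of $G_{\Zcal,w}$ via the canonical parabolic, formula \eqref{Gw-withPw}, namely $G_{\Zcal,w}=E_\Zcal\cdot(P_w wz^{-1})$, and then to intersect everything with $N_G(T)$. This makes the question purely Weyl-group theoretic.

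\textbf{(2)$\Rightarrow$(1).} Suppose $w'=axw\psi(a)^{-1}$ with $a\in W_I$ and $x\in W_{I_w}$. Then
\[
w'z^{-1}=a\,(xwz^{-1})\,\bigl(z\psi(a)z^{-1}\bigr)^{-1},
\]
and by definition of $\psi$ we have $z\psi(a)z^{-1}=\varphi(a)$. Choosing representatives $\dot a\in N_L(T)$, the pair $(\dot a,\varphi(\dot a))$ lies in $E_\Zcal$, because $\varphi(\dot a)\in M\subseteq Q$ and the Levi components match. Hence $w'z^{-1}\in E_\Zcal\cdot(xwz^{-1})$ up to an element of $T$. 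The $T$-ambiguity is absorbed: $(t,\varphi(t))\in E_\Zcal$, and Lang's theorem for the Frobenius-type isogeny shows every $T$-translate lies in the same orbit. Finally $x\in W_{I_w}\subseteq L_w\subseteq P_w$, so $xwz^{-1}\in P_wwz^{-1}\subseteq G_{\Zcal,w}$.

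\textbf{(1)$\Rightarrow$(2).} This is the harder direction. Since the zip strata are disjoint, it suffices to show that every $w'\in W$ is of the form $axv\psi(a)^{-1}$ for \emph{some} $v\in{}^IW$, $a\in W_I$, $x\in W_{I_v}$. By the first direction, $w'z^{-1}$ then lies in $G_{\Zcal,v}$, and this forces $v=w$.

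To produce such a decomposition I would run the Pink--Wedhorn--Ziegler induction on the Levi:
\begin{itemize}
\item Write $w'=a_1v_1$ with $a_1\in W_I$ and $v_1\in{}^IW$.
\item Conjugating by $a_1$ replaces $w'$ by $v_1\psi(a_1)^{-1}a_1$, a ``twisted conjugate'' within $W_I\cdot v_1$.
\item The problem then reduces to the analogous statement for the zip datum on $L$ with the smaller type $I\cap {}^{v_1z^{-1}}\varphi(I)$, i.e.\ the twisted action $y\mapsto y'y\,{}^{v_1z^{-1}}\varphi(y')^{-1}$ of the Weyl group of the intersection.
\item Iterate. The intersections stabilize precisely at $L_w$, where the twist becomes an automorphism of $W_{I_w}$, and there every element $x\in W_{I_w}$ is allowed.
\end{itemize}

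The main obstacle is this combinatorial descent. One must check at each step that the length-minimal coset representative stays in ${}^IW$, and that the stabilized Levi is exactly $L_w$, which is the maximal subgroup with ${}^{wz^{-1}}\varphi(L_w)=L_w$. This is the content of \cite[\S9]{Pink-Wedhorn-Ziegler-zip-data}, and in writing this up I would cite Corollary~9.18 there for the descent rather than redo it, adjusting only for the convention $B\subseteq P$ in place of $B\subseteq Q$.
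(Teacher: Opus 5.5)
Your proposal is correct and matches the paper's approach: the paper gives no argument beyond the reference to \cite[\S9.1, Corollary~9.18]{Pink-Wedhorn-Ziegler-zip-data}, and your hard direction (1)$\Rightarrow$(2) rests on that same result, while your direct proof of (2)$\Rightarrow$(1) from \eqref{Gw-withPw} and your reduction of (1)$\Rightarrow$(2) to an existence statement via disjointness of strata are both sound (the torus ambiguity can even be absorbed without Lang's theorem, by moving it next to $\dot x$ and using $T\dot x\subseteq L_w\subseteq P_w$). The only blemish is in your informal descent sketch: twisted conjugation of $w'=a_1v_1$ by $a_1^{-1}$ gives $v_1\psi(a_1)\in v_1W_J$, not $v_1\psi(a_1)^{-1}a_1$, but since you defer that step to Pink--Wedhorn--Ziegler it does not affect the argument.
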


%The equation $w'=axw\psi(a)^{-1}$ can be rewritten as $w'z^{-1}=axwz^{-1}\sigma(a)^{-1}$. 
The above proposition gives an algorithm to determine the unique representative $w\in {}^I W$ such that $w'z^{-1}\in G_{\Zcal,w}$, since the groups $W_I$, $W_{I_w}$ are finite. For $g\in G(K)$, write $\Xi_{\Zcal}(g)$ for the unique element $w\in {}^I W$ such that $gz^{-1}\in G_{\Zcal,w}$. This gives a map
\begin{equation}\label{Xi-map}
    \Xi_{\Zcal}\colon G(K)\to {}^I W.
\end{equation}
%Note that the analogous map defined in the introduction is slightly different (for simplicity, we did not twist it by $z^{-1}$). %\lorenzo{TODO: check that this is still true when we have re-written the intro}.
Proposition \ref{prop-W-representative} gives an algorithm (used later in \S\ref{General algorithm for smoothness}) for the determination of $\Xi_{\Zcal}|_{W}$, where $W$ is viewed as a subset of $G(K)$ via the choice of representatives $\dot{w}$ (\S\ref{subsubsec-zip-strata}). In \S\ref{subsec-algo-GLn}, we compute algorithmically the whole map $\Xi_\Zcal$ in the case $G=\GL_{n,\FF_p}$.

\section{Singularities of zip strata}

\subsection{Previous results}
We review some of the results of \cite{Koskivirta-LaPorta-Reppen-singularities} regarding the singularities that appear in the zip stratification. In this section, most of the results will be stated with the assumption that the zip datum $\Zcal$ is of Frobenius-type, following \loccit Throughout this section, we set $K=\overline{\FF}_p$. We fix a zip datum $\Zcal$ attached to a tuple $(G,\mu,\varphi)$ as in \S\ref{subsubsec-zip-data}. Recall that when $\Zcal$ is of Frobenius-type, $G$ is the base change to $K$ of a reductive group over $\FF_p$, and $\varphi$ is a positive power of the Frobenius homomorphism of $G$. %Write $\Xcal_\Zcal$ for the attached stack of $G$-zips.

\begin{comment}
    \subsubsection{Orbitally finite zip data}
We say that the zip datum $\Zcal_\mu$ is \emph{orbitally finite} (\cite{Pink-Wedhorn-Ziegler-zip-data} ) if the set of $E_\Zcal$-orbits in $G$ is finite. In this case, each zip stratum $G_w$ (for $w\in {}^IW$) coincides with the $E_\Zcal$-orbit of $wz^{-1}$. For example, let $G$ be a connected reductive group over a finite field $\FF_q$ endowed with a cocharacter $\mu\colon \GG_{\textrm{m},k}\to G_k$ (where $k$ is an algebraic closure of $\FF_q$). Let $\varphi\colon G\to G$ be the $q$-power Frobenius homomorphism. Then, the zip datum attached to $(G,\mu,\varphi)$ is orbitally finite by \cite{Pink-Wedhorn-Ziegler-zip-data} .
\end{comment}

\subsubsection{\texorpdfstring{$w$}{w}-Open substacks}
Fix an element $w\in {}^I W$ and let $\Ucal\subseteq \Xcal_\Zcal$ be a locally closed substack. We say that $\Ucal$ is \emph{$w$-open} if it is an open substack of $\overline{\Xcal}_{\Zcal,w}$ (the Zariski closure of $\Xcal_{\Zcal,w}$) which can be written as a union
\begin{equation}
    \Ucal=\bigcup_{w'\in \Gamma_\Ucal} \Xcal_{\Zcal,w'}
\end{equation}
for a certain subset $\Gamma_\Ucal\subseteq {}^I W$. When $\Zcal$ is of Frobenius-type, a $w$-open subset is the same as an open subset of the Zariski closure $\overline{\Xcal}_{\Zcal,w}$, because the underlying topological space of a zip stratum is a single point. For example, $\overline{\Xcal}_{\Zcal,w}$ is a $w$-open substack. An \emph{elementary $w$-open substack} is a substack of the form $\Ucal(w,w')\colonequals \Xcal_{\Zcal,w}\cup \Xcal_{\Zcal,w'}$ where $w'$ is any lower neighbor of $w$ in ${}^I W$ (Definition \ref{low-nb-def}).

\subsubsection{Canonical covers}
Let $w\in {}^I W$ and let $\Ucal\subseteq \Xcal_\Zcal$ be a $w$-open substack. Let $P_w$ be the canonical parabolic attached to $w$. Define a (set-theoretical) subset $\Ucal^{(P_w)}\subseteq \Fcal_\Zcal^{(P_w)}$ as follows:
\begin{equation}
    \Ucal^{(P_w)}\colonequals \bigcup_{w'\in \Gamma_\Ucal} \Fcal^{(P_w)}_{\Zcal,w'}.
\end{equation}
We say that $\Ucal$ admits a \emph{canonical cover} if $\Ucal^{(P_w)}$ is contained in $\overline{\Fcal}^{(P_w)}_{\Zcal,w}$ and is open in it. We say that $\Ucal$ admits a \emph{separating canonical cover} if the preimage of $\Ucal$ by $\pi_{P_w}\colon \overline{\Fcal}^{(P_w)}_{\Zcal,w}\to \overline{\Xcal}_{\Zcal,w}$ coincides with $\Ucal^{(P_w)}$. 
\begin{exa}
    Let $w'\in {}^IW$ be a Bruhat lower neighbor of $w$. Then, the elementary subset $\Ucal(w,w')$ admits a canonical cover (but not necessarily a separating canonical cover).
\end{exa}

\subsubsection{\texorpdfstring{$w$}{w}-Bounded substacks}
Let $\Ucal\subseteq \Xcal_\Zcal$ be a $w$-open substack. We say that $\Ucal$ is \emph{$w$-bounded} if for all $w'\in \Gamma_\Ucal$, we have an inclusion $P_{w'}\subseteq P_w$ between the canonical parabolic subgroups.

\subsubsection{Normality of \texorpdfstring{$w$}{w}-open substacks}
We collect the statements of \cite[\S 4]{Koskivirta-LaPorta-Reppen-singularities} in the following proposition.

\begin{proposition} \label{prop: results about normality} Assume that $\Zcal$ is of Frobenius-type. Let $\Ucal$ be a \(w\)-open substack. 
\begin{enumerate}
    \item If $\Ucal$ is $w$-bounded and admits a canonical cover, then $\pi_{P_w}\colon \Ucal^{(P_w)}\to\Ucal$ is bijective, birational and unramified.
    \item \label{prop-normal-2}Assume that $\Ucal$ is normal and admits a canonical cover. Then, $\Ucal$ is $w$-bounded.
    \item \label{prop-normal-3} Assume that $\Ucal$ is $w$-bounded and admits a separating canonical cover. Then, the map $\pi_{P_w}\colon \Ucal^{(P_w)}\to\Ucal$ is an isomorphism, and $\Ucal$ is normal and Cohen--Macaulay.
\end{enumerate}
\end{proposition}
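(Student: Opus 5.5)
We prove the three assertions in order, transporting everything to the flag space $\Fcal^{(P_w)}_\Zcal$ via the proper map $\pi_{P_w}$ and using Proposition \ref{prop-Pw} and Lemma \ref{stab-contained-Pw} as the main inputs.

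For (1), assume $\Ucal$ is $w$-bounded and admits a canonical cover, so $\Ucal^{(P_w)}$ is open in $\overline{\Fcal}^{(P_w)}_{\Zcal,w}$. By Proposition \ref{prop-Pw}\eqref{Pw-2}, $\pi_{P_w}$ is birational onto $\overline{\Xcal}_{\Zcal,w}$. Its restriction to $\Ucal^{(P_w)}$ is therefore birational, since it is an isomorphism over the dense open $\Xcal_{\Zcal,w}$. For bijectivity, take $w'\in\Gamma_\Ucal$. By \eqref{pi-image-minimal} the stratum $\Fcal^{(P_w)}_{\Zcal,w'}$ maps onto $\Xcal_{\Zcal,w'}$, and it is a single point by Proposition \ref{prop-FP0}. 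Hence we get a bijection stratum by stratum, provided $\Ucal^{(P_w)}$ meets $\pi_{P_w}^{-1}(\Xcal_{\Zcal,w'})$ only in $\Fcal^{(P_w)}_{\Zcal,w'}$; this holds because distinct $w'$ give distinct images. For unramifiedness, work at the level of $E$-equivariant spaces: a point of $\Fcal^{(P_w)}_{\Zcal,w'}$ is a pair $(x, hP_w)$ as in \eqref{HwP0-eq}. The fibre of $\pi_{P_w}$ over $x$ is $\Stab_{E_\Zcal}(x)\backslash$(its $P/P_w$-fibre), so unramifiedness amounts to the inclusion $\Lie\Stab_{E_\Zcal}(x)\subseteq \Lie P_w$ (up to conjugation by $h$). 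Here $w$-boundedness is used: Lemma \ref{stab-contained-Pw} applied to $w'$ gives $\Stab(x)\subseteq P_{w'}\subseteq P_w$. Our first attempt will be to check this at the Lie-algebra level, which is the point I expect to be delicate (scheme-theoretic versus reduced stabilizers in characteristic $p$).

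For (2), assume $\Ucal$ is normal with a canonical cover. The map $\pi_{P_w}\colon\Ucal^{(P_w)}\to\Ucal$ is proper (as a restriction of a proper map to a full preimage, after shrinking) and birational. By Zariski's main theorem its fibres are connected. Suppose $P_{w'}\not\subseteq P_w$ for some $w'\in\Gamma_\Ucal$. We then show the fibre over $\Xcal_{\Zcal,w'}$ has positive dimension or several components, which contradicts normality. Concretely, $\Fcal^{(P_w)}_{\Zcal,w'}\to\Xcal_{\Zcal,w'}$ is finite étale of degree $[\Stab(x):\Stab(x)\cap P_w]$, and this degree is $>1$ when $P_{w'}\not\subseteq P_w$. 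So the fibre over a point of $\Xcal_{\Zcal,w'}$ is finite with more than one point, which contradicts connectedness of fibres. The main obstacle here is showing that this degree is really $>1$, i.e.\ that $\Stab(x)$ is not contained in $P_w$. For this I would use the description of $\Stab(wz^{-1})$ from \cite{Pink-Wedhorn-Ziegler-zip-data}, whose Levi part has $L_{w'}$ as its reductive envelope.

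For (3), the separating condition says $\Ucal^{(P_w)}=\pi_{P_w}^{-1}(\Ucal)$, so $\pi_{P_w}\colon\Ucal^{(P_w)}\to\Ucal$ is proper. By (1) it is also bijective, birational and unramified, hence quasi-finite and proper, hence finite. Since $\Ucal^{(P_w)}$ is open in $\overline{\Fcal}^{(P_w)}_{\Zcal,w}$, which is normal and Cohen--Macaulay by Proposition \ref{prop-Pw}\eqref{Pw-3}, the source is normal and CM. A finite birational universally injective unramified map is a closed immersion onto its image; combined with surjectivity and reducedness of $\Ucal$, it is an isomorphism. Hence $\Ucal$ is normal and Cohen--Macaulay.
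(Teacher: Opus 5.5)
The paper does not prove this proposition. It quotes it from \cite[\S 4]{Koskivirta-LaPorta-Reppen-singularities}, so your argument has to stand on its own. It has a genuine gap at the step that carries all the weight: unramifiedness in (1), on which (3) depends.

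Your criterion $\Lie\Stab_{E_\Zcal}(x)\subseteq \Lie P_w$ only concerns the restriction $\Fcal^{(P_w)}_{\Zcal,w'}\to\Xcal_{\Zcal,w'}$ of $\pi_{P_w}$ to a single stratum. That restriction is already finite \'etale by Proposition~\ref{prop-FP0}\eqref{prop-FP0-3}. What $w$-boundedness adds, via $\Stab_{E_\Zcal}(w'z^{-1})\subseteq P_{w'}\subseteq P_w$, is that this restriction has degree one, i.e.\ is an isomorphism. That degree-one statement is the injectivity you actually need in (3); the bijectivity you prove on one-point underlying spaces holds without any hypothesis and is too weak.

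Unramifiedness of $\Ucal^{(P_w)}\to\Ucal$ at a point of $\Fcal^{(P_w)}_{\Zcal,w'}$ with $w'\neq w$ is a different matter. It concerns the full tangent space of $\overline{\Fcal}^{(P_w)}_{\Zcal,w}$ there. Equivalently, it concerns the scheme-theoretic preimage of $\Xcal_{\Zcal,w'}$ in $\Ucal^{(P_w)}$, which may be non-reduced even though its reduction maps isomorphically onto $\Xcal_{\Zcal,w'}$. Concretely, take $x=w'z^{-1}$ and use $d\varphi=0$. The tangent space at $(x,P_w)$ to the fibre of $\pr_1\colon \overline{H}^{(P_w)}_{\Zcal,w}\to G$ is
\[
\{\xi\in \Lie(P) \mid \xi x\in T_x\overline{P_w wz^{-1}Q_w}\}/\Lie(P_w),
\]
and you must show this vanishes. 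That is a question about the tangent space of the Schubert-type variety $\overline{P_wwz^{-1}Q_w}$ at $x$, i.e.\ about how $w'$ sits below $w$. No stabilizer computation touches it.

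Without this step, (3) is unsupported. A proper, bijective, birational morphism onto a reduced target need not be an isomorphism; the normalization of a cuspidal cubic is an example. So neither the isomorphism nor the normality and Cohen--Macaulayness of $\Ucal$ follow from what you have.

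There is also a smaller error in (2). Without the separating hypothesis, $\pi_{P_w}\colon\Ucal^{(P_w)}\to\Ucal$ need not be proper, and ``shrinking'' cannot remove $\Xcal_{\Zcal,w'}$. You can repair this in either of two ways:
\begin{itemize}
\item Apply Zariski's connectedness theorem to the proper birational map $\pi_{P_w}^{-1}(\Ucal)\cap\overline{\Fcal}^{(P_w)}_{\Zcal,w}\to\Ucal$. The finitely many points of $\Fcal^{(P_w)}_{\Zcal,w'}$ over $x$ are open in that fibre, since $\Ucal^{(P_w)}$ is open, so there can be only one.
\item Apply Zariski's main theorem to the quasi-finite birational map $\Ucal^{(P_w)}\to\Ucal$ onto a normal target, which is then an open immersion.
\end{itemize}
Either way you obtain $\Stab_{E_\Zcal}(w'z^{-1})\subseteq P_w$. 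To deduce $I_{w'}\subseteq I_w$, observe that $l\mapsto (l,\varphi(l))$ embeds the finite group $L_{w'}^{F}$, with $F=\mathrm{int}(w'z^{-1})\circ\varphi$, into this stabilizer. Counting $F$-fixed points in the unipotent radical of $B^+\cap L_{w'}$ then shows that this group can lie in $P_w$ only if $I_{w'}\subseteq I_w$.
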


\subsubsection{The case of elementary $w$-opens}
%In the case of elementary $w$-opens, note that normality is equivalent to smoothness, as normal stacks are smooth in codimension one.
For elementary $w$-opens, we have a stronger result, namely Assertion \eqref{prop-normal-3} admits a converse. Write again $\pi_\Zcal\colon W\to {}^I W$ for the map defined in \S\ref{subsec-maps-pi}.
\begin{theorem}\label{thm-FTZD-normality}
Assume that $\Zcal$ is of Frobenius-type. For $w'\in \Gamma_I(w)$, the following are equivalent:
\begin{enumerate}
    \item \label{thm-normal-1} $\Ucal(w,w')$ is normal.
    \item \label{thm-normal-2} $\Ucal(w,w')$ is smooth.
    \item \label{thm-normal-3} $\Ucal(w,w')$ is $w$-bounded and admits a separating canonical cover.
    \item \label{thm-normal-4} $P_{w'}\subseteq P_w$ and $w'\notin \pi_\Zcal(\Gamma_{I_w}(w)\setminus \{w'\})$.
\end{enumerate}
\end{theorem}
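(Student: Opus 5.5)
The equivalences \eqref{thm-normal-1}$\Leftrightarrow$\eqref{thm-normal-2}$\Leftrightarrow$\eqref{thm-normal-3} are the content of \cite[Corollary 4.17]{Koskivirta-LaPorta-Reppen-singularities} (Theorem 1 of the introduction), so the new point is \eqref{thm-normal-3}$\Leftrightarrow$\eqref{thm-normal-4}. Since $\Ucal(w,w')$ has $\Gamma_\Ucal=\{w,w'\}$, $w$-boundedness is exactly $P_{w'}\subseteq P_w$, as $P_w\subseteq P_w$ is trivial. It therefore suffices to show, assuming $P_{w'}\subseteq P_w$, that $\Ucal(w,w')$ admits a separating canonical cover if and only if $w'\notin \pi_\Zcal(\Gamma_{I_w}(w)\setminus\{w'\})$.

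I would compute the preimage $\pi_{P_w}^{-1}(\Ucal(w,w'))$ inside $\overline{\Fcal}^{(P_w)}_{\Zcal,w}$ stratum by stratum. Since $\Zcal$ is of Frobenius-type, each flag stratum is a point (Proposition \ref{prop-FP0}), and $\overline{\Fcal}^{(P_w)}_{\Zcal,w}$ is the union of the strata $\Fcal^{(P_w)}_{\Zcal,v}$ with $v\preccurlyeq_{I_w} w$. By Proposition \ref{prop-Pw} and Proposition \ref{prop-FP0}\eqref{prop-FP0-2}, the preimage of $\Xcal_{\Zcal,w}$ is exactly $\Fcal^{(P_w)}_{\Zcal,w}$. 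A stratum $\Fcal^{(P_w)}_{\Zcal,v}$ maps into $\Xcal_{\Zcal,w'}$ precisely when its image, which is a point since it is irreducible and zip strata are points, equals $\Xcal_{\Zcal,w'}$, i.e.\ when $\pi_\Zcal(v)=w'$ (using Proposition \ref{Pi-P0} to identify $\pi_{\Zcal,P_w}$ with $\pi_\Zcal$). Hence the preimage equals $\Fcal^{(P_w)}_{\Zcal,w}\cup\bigcup_{v\preccurlyeq_{I_w}w,\ \pi_\Zcal(v)=w'}\Fcal^{(P_w)}_{\Zcal,v}$. The cover is separating if and only if this set equals $\Fcal^{(P_w)}_{\Zcal,w}\cup \Fcal^{(P_w)}_{\Zcal,w'}$, i.e.\ if and only if the only $v\preccurlyeq_{I_w}w$, $v\neq w$, with $\pi_\Zcal(v)=w'$ is $v=w'$. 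Note that $w'$ itself lies in $\overline{\Fcal}^{(P_w)}_{\Zcal,w}$ and maps to $w'$ by Lemma \ref{lemma-low-nei} (2): some $v\in\Gamma_{I_w}(w)$ maps to $w'$.

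The remaining step, which I expect to be the main obstacle, is reducing the condition over all $v\prec_{I_w} w$ to one over lower neighbors $v\in\Gamma_{I_w}(w)$. Any $v$ with $\pi_\Zcal(v)=w'$ has $\ell(v)\geq\ell(w')=\ell(w)-1$, since the image of $\Fcal^{(P_w)}_{\Zcal,v}$ has dimension at most that of the source, computed via \eqref{dim-H}. Combined with $v\prec_{I_w} w$, this forces $\ell(v)=\ell(w)-1$, so $v\in\Gamma_{I_w}(w)$. The condition thus becomes: no $v\in\Gamma_{I_w}(w)\setminus\{w'\}$ has $\pi_\Zcal(v)=w'$. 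To confirm that $w'\in\Gamma_{I_w}(w)$, I would use Lemma \ref{lemma-low-nei} (2), which gives some $v\in\Gamma_{I_w}(w)$ with $\pi_\Zcal(v)=w'$. If $v\neq w'$ the cover is not separating, and \eqref{thm-normal-4} also fails. This yields \eqref{thm-normal-3}$\Leftrightarrow$\eqref{thm-normal-4}. The delicate points are the dimension inequality on images and the use of the point-like strata to pass from ``intersects'' to ``maps onto''.
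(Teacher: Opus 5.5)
Your overall route is the paper's: (1)$\Leftrightarrow$(2)$\Leftrightarrow$(3) is quoted from \cite[Corollary 4.17]{Koskivirta-LaPorta-Reppen-singularities}, and (3)$\Leftrightarrow$(4) is obtained by unwinding the separating-cover condition stratum by stratum, as in the proof of Proposition \ref{prop-scc-charac}. However, the step deciding which flag strata meet the preimage of $\Xcal_{\Zcal,w'}$ rests on a false claim.

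\textbf{The false claim.} Proposition \ref{prop-FP0}\eqref{prop-FP0-1} says $\Fcal^{(P_0)}_{\Zcal,v}$ is a point only for minimal or cominimal $v\in{}^IW\cup W^J$. A general flag stratum is not a point. For instance, by \eqref{dim-H} the stratum $\Fcal^{(B)}_{\Zcal,w_0}$ has dimension $\ell(w_0)-\dim R_{\textrm{u}}(P)=\dim(P/B)>0$ as soon as $P\neq B$. Its image is an irreducible union $\bigcup_{u\in\Pi_\Zcal(v)}\Xcal_{\Zcal,u}$, which may contain zip strata besides its generic one $\Xcal_{\Zcal,\pi_\Zcal(v)}$. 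This is exactly why the paper distinguishes $\Pi_\Zcal$ from $\pi_\Zcal$, and why pointwiseness is only proved for small elements (Proposition \ref{prop-small-image}), via exponent raising. The correct criterion is that $\Fcal^{(P_w)}_{\Zcal,v}$ meets $\pi_{P_w}^{-1}(\Xcal_{\Zcal,w'})$ iff $w'\in\Pi_\Zcal(v)$, not iff $\pi_\Zcal(v)=w'$. So your description of the preimage as a union of whole strata with $\pi_\Zcal(v)=w'$ is not justified by what you cite.

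\textbf{The repair.} The conclusion survives if you apply your length inequality to $\Pi_\Zcal$ rather than $\pi_\Zcal$. Suppose $v\preccurlyeq_{I_w}w$, $v\neq w$, and $w'\in\Pi_\Zcal(v)$. Then $w'\preccurlyeq\pi_\Zcal(v)$ by the proof of Lemma \ref{lem-unique-maxlen}, so Lemma \ref{lem-length-ineq} gives
\[\ell(w')\leq \ell(\pi_\Zcal(v))\leq \ell(v)\leq \ell(w)-1=\ell(w').\]
Hence $v\in\Gamma_{I_w}(w)$, and $\pi_\Zcal(v)=w'$, since $\preccurlyeq$-comparable elements of equal length coincide. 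The same chain shows $w\notin\Pi_\Zcal(v)$. Conversely, if $\pi_\Zcal(v)=w'$, then $\Fcal^{(P_w)}_{\Zcal,v}$ meets $\pi_{P_w}^{-1}(\Xcal_{\Zcal,w'})$ in a dense open subset. Thus the separating condition is exactly that no $v\in\Gamma_{I_w}(w)\setminus\{w'\}$ satisfies $\pi_\Zcal(v)=w'$.

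\textbf{The canonical-cover condition.} Your earlier sentence asserting $w'\in\overline{\Fcal}^{(P_w)}_{\Zcal,w}$ is premature: that is the canonical-cover condition $w'\preccurlyeq_{I_w}w$, which is not automatic. Your final use of Lemma \ref{lemma-low-nei}\eqref{lemma-low-nei-item2} does establish it under (4), so that part of the argument is fine once moved into place.

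Only a posteriori, since the relevant $v$ are then small, does Proposition \ref{prop-small-image} show that $\Fcal^{(P_w)}_{\Zcal,v}$ maps entirely into $\Xcal_{\Zcal,w'}$, which recovers your formula for the preimage.
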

The equivalence between \eqref{thm-normal-1} and \eqref{thm-normal-2} in Theorem \ref{thm-FTZD-normality} above simply follows from the fact that normal schemes are regular in codimension one, and \eqref{thm-normal-4} is simply a reformulation of \eqref{thm-normal-3}.

\subsection{Pointwise and quasi-pointwise strata}\label{sec: qpoint and point}\
%\lorenzo{I tried to generalise this section to remove the hypothesis that \(G\) is split. TODO: check again}
In this section, we assume that $\Zcal$ is of Frobenius-type unless mentioned otherwise. 
\begin{definition}\label{def: quasi pointwise}\
\begin{enumerate}
    \item We say that a flag stratum $\Fcal_{\Zcal,w}^{(P_0)}$ is \emph{pointwise} if its underlying topological space is reduced to a single \(K\)-point.
    \item We say that a flag stratum $\Fcal_{\Zcal,w}^{(P_0)}$ is \emph{quasi-pointwise} if its underlying topological space admits a dense $K$-point.
\end{enumerate}
\end{definition}
By Proposition \ref{prop-FP0}~\eqref{FP0-point}, minimal and cominimal strata are pointwise. Note that the dense point in a quasi-pointwise stratum is necessarily unique, by the irreducibility of $\Fcal_{\Zcal,w}^{(P_0)}$.  Equivalently, $\Fcal_w^{(P_0)}$ is quasi-pointwise if the set $G_{\Zcal_{P_0},w}=E_{\Zcal_{P_0}}\cdot (wz^{-1})$ admits an open dense $E'_{P_0}$-orbit. We show that the property of being pointwise, resp.\ quasi-pointwise, is not affected if we replace \(P_0\) by a smaller parabolic \(P_1\) with \(B\subseteq P_1 \subseteq P_0\). First, we record a lemma about algebraic group actions.

\begin{lemma}\label{lem-transitiveG}
Let $H$ be an algebraic group over $K$, and $X,Y$ two irreducible $K$-varieties equipped with an action of $H$. Let $f\colon X\to Y$ be a $H$-equivariant and generically quasi-finite morphism. Assume that $H$ acts transitively on $Y$. Then, $H$ acts transitively on $X$.
\end{lemma}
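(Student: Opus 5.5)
Fix a point $y\in Y(K)$. Since $H$ acts transitively on $Y$, we have $Y=H\cdot y\simeq H/H_y$, where $H_y$ denotes the stabilizer. The strategy is to count dimensions: first show that $X$ and $Y$ have the same dimension, then show that some $H$-orbit in $X$ is open, and finally show that this orbit is everything.

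\textbf{Step 1: $\dim X=\dim Y$.} Because $Y$ is homogeneous, every point of $Y$ lies in $f(X)$ as soon as one does. Indeed, $f(X)$ is nonempty and $H$-stable, since $f$ is equivariant. Hence $f(X)=Y$, so $f$ is surjective, in particular dominant. Since $f$ is generically quasi-finite, the generic fibre is finite, and therefore $\dim X=\dim Y$.

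\textbf{Step 2: the fibre over $y$ is finite and some orbit is open.} Consider the fibre $F\colonequals f^{-1}(y)$. It is stable under $H_y$, and the action map $H\times F\to X$ is surjective, because $f(hx)=hf(x)$ and $H$ moves any point of $Y$ to $y$. For any $h\in H$, translation by $h$ is an isomorphism $f^{-1}(y)\simeq f^{-1}(hy)$. So all fibres of $f$ are isomorphic to $F$. Since $f$ is generically quasi-finite, some fibre is finite, hence $F$ is finite. Now take $x\in F$. Its orbit $H\cdot x$ maps onto $Y$, and $\mathrm{Stab}_H(x)$ is a finite-index subgroup of $H_y$: the group $H_y$ permutes the finite set $F$, so its orbit through $x$ is finite. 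Therefore
\[\dim H\cdot x=\dim H-\dim \mathrm{Stab}_H(x)=\dim H-\dim H_y=\dim Y=\dim X.\]
An orbit is locally closed, and $X$ is irreducible, so $H\cdot x$ is open and dense in $X$.

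\textbf{Step 3: the open orbit is all of $X$.} This holds for every point $x$ of the finite set $F$. Each orbit $H\cdot x$ with $x\in F$ is therefore open dense, and any two open dense subsets of the irreducible space $X$ intersect. Two orbits that intersect coincide. Hence all points of $F$ lie in a single orbit $H\cdot x_0$. Since $X=H\cdot F$, we conclude $X=H\cdot x_0$, so $H$ acts transitively on $X$.

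\textbf{Main obstacle.} The delicate point is Step 2: generic quasi-finiteness must be upgraded to finiteness of every fibre. This is exactly where the homogeneity of $Y$ is used, by translating fibres with elements of $H$.

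The argument works with $K$-points, and reducedness is not an issue here because transitivity is a statement about $K$-points of varieties over an algebraically closed field. If $H$ is disconnected this causes no trouble: orbits are still locally closed, and the dimension formulas hold componentwise.
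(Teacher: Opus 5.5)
Your proof is correct and follows the same dimension-counting strategy as the paper: show that the relevant $H$-orbits have dimension $\dim X=\dim Y$, so each is open dense in the irreducible $X$, which forces a single orbit. The only difference is how you get the orbit dimension. The paper obtains $\dim(H\cdot x)\geq \dim Y$ for an \emph{arbitrary} orbit directly from the surjectivity of $H\cdot x\to Y$. So your Step~2 (finiteness of every fibre and the comparison of $\Stab_H(x)$ with $H_y$), which you flag as the main obstacle, is in fact unnecessary: generic quasi-finiteness is only needed to get $\dim X=\dim Y$.
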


\begin{proof}
If $C\subseteq X$ is any $H$-orbit, the restriction of $f$ to $f\colon C\to Y$ is surjective, by $H$-equivariance and the fact that $H$ acts transitively on $Y$. Hence, $\dim(C)\geq \dim(Y)$. Moreover, we have $\dim(X)=\dim(Y)$ since $f$ is generically quasi-finite. It follows that $\dim(C)=\dim(X)$ for any $H$-orbit $C\subseteq X$. Since $X$ is irreducible, the action of $H$ on $X$ is transitive.
\end{proof}

\begin{lemma}
\label{lem: tower invariance of qpointwiseness}
Let $w\in {}^{I_0}W$ be an element and $B\subseteq P_0 \subseteq P$ an intermediate parabolic subgroup.
\begin{enumerate}
    \item $\Fcal^{(P_0)}_{w}$ is pointwise if and only if $\Fcal^{(B)}_{w}$ is pointwise.
    \item $\Fcal^{(P_0)}_{w}$ is quasi-pointwise if and only if $\Fcal^{(B)}_{w}$ is quasi-pointwise.
\end{enumerate}
\end{lemma}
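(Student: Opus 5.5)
We use the projection $\pi_{B,P_0}\colon \Fcal^{(B)}_{w}\to \Fcal^{(P_0)}_{w}$. By \eqref{pi-image-minimal-relative}, applied with $P_1=B$ and $w\in {}^{I_0}W$, it is surjective. As recalled after Proposition \ref{prop-FP0}, it is also finite étale \cite[Prop.~4.1.4, 4.2.2]{Goldring-Koskivirta-zip-flags}. In particular it is quasi-finite and the two strata have the same dimension. Both statements then reduce to comparing orbit structures on the two sides.

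We pass to group-theoretic language. $\Fcal^{(P_0)}_w$ is pointwise (resp.\ quasi-pointwise) exactly when $G^{(P_0)}_w$ is a single $E'_{\Zcal,P_0}$-orbit (resp.\ has an open dense one). The stack $\Fcal^{(B)}_w=[E'_{\Zcal,B}\backslash G^{(B)}_w]$ is then rewritten as a quotient by the larger group $E'_{\Zcal,P_0}$, in the spirit of Remark \ref{rmk-flag-other}:
\[
\Fcal^{(B)}_w\simeq \bigl[E'_{\Zcal,P_0}\backslash H\bigr],\qquad H\colonequals\{(g,hB)\in G\times P_0/B \mid h^{-1}g\varphi(\overline{h})\in G^{(B)}_w\}.
\]
The first projection $f\colon H\to G$ is $E'_{\Zcal,P_0}$-equivariant and lands in $G^{(P_0)}_w$. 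Since $\pi_{B,P_0}$ is surjective, $f$ maps onto $G^{(P_0)}_w$, and since $\pi_{B,P_0}$ is quasi-finite, $f$ is quasi-finite. Irreducibility of $H$ follows from that of the stratum: $H$ is a $E'_{\Zcal,P_0}\times^{E'_{\Zcal,B}}$-type induced space from the irreducible $G^{(B)}_w$, with $E'_{\Zcal,P_0}$ connected.

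For (1), suppose first that $\Fcal^{(P_0)}_w$ is pointwise, so $E'_{\Zcal,P_0}$ acts transitively on $G^{(P_0)}_w$. Lemma \ref{lem-transitiveG}, applied to $f\colon H\to G^{(P_0)}_w$, shows that $E'_{\Zcal,P_0}$ acts transitively on $H$, so $\Fcal^{(B)}_w$ is a point. Conversely, if $\Fcal^{(B)}_w$ is a point, its image under the surjection $\pi_{B,P_0}$ is a point.

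For (2), suppose $\Fcal^{(P_0)}_w$ is quasi-pointwise, with open dense orbit $O\subseteq G^{(P_0)}_w$. Restrict $f$ to the open dense $f^{-1}(O)\subseteq H$. Each irreducible component of $f^{-1}(O)$ is an $E'_{\Zcal,P_0}$-stable open subset of the irreducible $H$, since connectedness of the group makes it preserve components. Lemma \ref{lem-transitiveG} then applies to $f^{-1}(O)\to O$ and gives a single dense orbit. Conversely, if $\Fcal^{(B)}_w$ has a dense point, its image is a dense point of $\Fcal^{(P_0)}_w$, because surjective continuous maps preserve density.

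The main obstacle is the technical input, not the dimension argument: setting up $\Fcal^{(B)}_w$ as a quotient of an irreducible variety $H$ by $E'_{\Zcal,P_0}$, and checking that $f$ is quasi-finite with irreducible source. The connectedness of $E'_{\Zcal,P_0}$ should handle irreducibility.
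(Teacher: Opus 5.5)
Your proof is correct and follows essentially the same route as the paper. The easy directions use surjectivity from \eqref{pi-image-minimal-relative}, and the hard ones transfer finite \'etaleness of $\pi_{B,P_0}$ on strata to an equivariant map of schemes and apply Lemma \ref{lem-transitiveG}: to the whole space for pointwiseness, and to the preimage of the dense orbit for quasi-pointwiseness. The only differences are presentational: you write $\Fcal^{(B)}_w$ as an $E'_{\Zcal,P_0}$-quotient of $H\subseteq G\times P_0/B$ lying over $G^{(P_0)}_w$, whereas the paper uses the $E_\Zcal$-quotients $H^{(B)}_{w}\to H^{(P_0)}_{w}$. You also make the irreducibility hypotheses of Lemma \ref{lem-transitiveG} explicit, which the paper leaves implicit; your remark on irreducible components of $f^{-1}(O)$ is unnecessary, since an open subset of the irreducible $H$ is already irreducible.
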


\begin{proof}
 The map $\pi_{B,P_0}\colon \Fcal^{(B)}\to \Fcal^{(P_0)}$ satisfies $\pi_{B,P_0}(\Fcal^{(B)}_{w})=\Fcal^{(P_0)}_{w}$, see \eqref{pi-image-minimal-relative}. Thus, if $\Fcal^{(B)}_{w}$ is pointwise (resp.\ quasi-pointwise), then so is $\Fcal^{(P_0)}_{w}$. Assume now that $\Fcal^{(P_0)}_{w}$ is pointwise. The map $\pi_{B,P_0}\colon \Fcal^{(B)}_{w} \to \Fcal^{(P_0)}_{w}$ is finite étale (by the relative version of Proposition \ref{prop-FP0}~\eqref{prop-FP0-3}, explained below the proposition).
 %: working with the zip datum $\Zcal_{P_0}$, finite by \cite[Prop.~3.2.2]{stratif-funct}, étale by \cite[Prop.~2.2.1.(2)]{Koskivirta-Normalization}, % (where we take \(\Zcal = \Zcal_{P_0}\) and \(P_0 = B\)), a base change given by \cite[Prop.~3.1.4]{stratif-funct} and the cancellation law for étale morphisms. %(\cite[\href{https://stacks.math.columbia.edu/tag/02GW}{Lemma 02GW}]{stacks-project}).
Hence, the same is true for the map of schemes $\pi_{B,P_0}\colon H^{(B)}_{w} \to H^{(P_0)}_{w}$. By Lemma \ref{lem-transitiveG}, $E_\Zcal$ acts transitively on $H^{(B)}_{w}$, hence $\Fcal^{(B)}_w$ is pointwise. Finally, assume that $\Fcal^{(P_0)}_{w}$ is quasi-pointwise. %Then, \(H^{(B)}_{w}\) contains an open dense \(E_\Zcal\)-orbit \(U\). Since the morphism $\pi_{B,P_0}\colon H^{(B)}_{w} \to H^{(P_0)}_{w}$ is étale, it is also open. In particular, \(\pi_{B, P_0}(U)\) is an open dense \(E_\Zcal\)-orbit in \(H^{(P_0)}_{w}\). This shows that \(\Fcal^{(P_0)}_{w}\) is quasi-pointwise. Conversely, assume that \(\Fcal^{(P_0)}_{w}\) is quasi-pointwise and 
Let \(V \subseteq H^{(P_0)}_{w}\) be the unique dense \(E_\Zcal\)-orbit and let $V'\subseteq H^{(B)}_{w}$ denote the preimage of $V$ by $\pi_{B, P_0}$. Applying Lemma \ref{lem-transitiveG} to \(\pi_{B, P_0} \colon V' \to V\), we conclude that \(\Fcal^{(B)}_{w}\) is quasi-pointwise.
\end{proof}

Lemma \ref{lem: tower invariance of qpointwiseness} lets us use the terminology "pointwise", "quasi-pointwise" unambiguously to refer to elements of $W$. Assume now that the torus $T$ splits over $\FF_q$, some finite extension of \(\FF_p\). This assumption implies that all zip strata $\Xcal_w$ and all flag strata $\Fcal^{(P_0)}_w$ are defined over $\FF_q$.

\begin{lemma} \label{lem-dense}
The following are equivalent:
\begin{enumerate}
    \item The stratum $\Fcal_{\Zcal,w}^{(P_0)}$ is quasi-pointwise.
    \item There exists a point $a\in G_{\Zcal_{P_0},w}(\FF_q)$ such that the $E'_{\Zcal,P_0}$-orbit of $a$ is dense in $G_{\Zcal_{P_0},w}$.
\end{enumerate}
\end{lemma}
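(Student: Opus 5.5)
The direction (2)$\Rightarrow$(1) is immediate from the definitions. If $a\in G_{\Zcal_{P_0},w}(\FF_q)$ has a dense $E'_{\Zcal,P_0}$-orbit, then the image of $a$ in $\Fcal^{(P_0)}_{\Zcal,w}=[E'_{\Zcal,P_0}\backslash G_{\Zcal_{P_0},w}]$ is a $K$-point whose closure is everything. So the stratum is quasi-pointwise.

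For (1)$\Rightarrow$(2), I would argue by Galois descent through Frobenius stability of the dense orbit. Assume the stratum is quasi-pointwise, and let $V\subseteq G_{\Zcal_{P_0},w}$ be the unique open dense $E'_{\Zcal,P_0}$-orbit; uniqueness holds because $G_{\Zcal_{P_0},w}$ is irreducible, being an $E_{\Zcal_{P_0}}$-orbit. The plan has three steps.

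\begin{enumerate}
\item Observe that $G_{\Zcal_{P_0},w}$ and the group $E'_{\Zcal,P_0}$ are defined over $\FF_q$. This is the assumption that $T$, and hence $B$, $P_0$, $Q_0$, the representatives $\dot w$ and $z$, and the isogeny $\varphi$ (a power of Frobenius, which commutes with the $q$-Frobenius $F_q$) are all defined over $\FF_q$.
\item Deduce that $V$ is $F_q$-stable. Its image $F_q(V)$ is again an open dense orbit of $F_q(E'_{\Zcal,P_0})=E'_{\Zcal,P_0}$, so $F_q(V)=V$ by uniqueness.
\item Apply Lang's theorem. $V$ is a homogeneous space under the connected group $E'_{\Zcal,P_0}$, which is connected since it is an extension of a parabolic-type group by a unipotent group. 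Both $V$ and the group are defined over $\FF_q$, so $V(\FF_q)\neq\emptyset$. Any $a\in V(\FF_q)$ then satisfies (2).
\end{enumerate}

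The main obstacle is making the rationality claims honest. First, $V$ is a locally closed subvariety stable under $F_q$, hence defined over $\FF_q$; this uses that $K=\overline{\FF}_p$, so $\FF_q$-structure can be tested on $F_q$-stability of reduced subvarieties. Second, $E'_{\Zcal,P_0}$ is connected. I would check connectedness via the isomorphism $E'_{\Zcal,P_0}\simeq (P_0)\ltimes R_u(Q)$-type decomposition, given by the first projection to $P_0$ with a unipotent kernel.

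Once these are in place, Lang's theorem for homogeneous spaces of connected groups over finite fields gives the required rational point directly. If connectedness were problematic, I would instead use $E_{\Zcal_{P_0}}$-homogeneity of $G_{\Zcal_{P_0},w}$ to get some $\FF_q$-point first, and then translate it into $V$.
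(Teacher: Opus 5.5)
Your proposal is correct and follows essentially the same route as the paper. In both arguments, uniqueness of the dense point (equivalently, of the open dense $E'_{\Zcal,P_0}$-orbit) forces it to be fixed by the $q$-Frobenius, hence defined over $\FF_q$. Lang's theorem for a transitive action of a connected group over $\FF_q$ then produces the rational point; the paper isolates this as a separate lemma and proves it via the triviality of $H^1(\Gal(K/\FF_q),H(K))$.

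Your explicit check that $E'_{\Zcal,P_0}$ is connected, via the first projection onto $P_0$ with kernel $\{1\}\times R_{\mathrm{u}}(Q)$, is a detail the paper leaves implicit.
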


\begin{proof}
The second condition clearly implies the first one. Conversely, assume that $\Fcal_{\Zcal,w}^{(P_0)}$ is quasi-pointwise. Denote by $x\in \Fcal_{\Zcal,w}^{(P_0)}$ the unique dense point. Since the Galois group $\Gal(K/\FF_q)$ acts on $\Fcal_{\Zcal,w}^{(P_0)}$, we must have $\sigma(x)=x$ by unicity of $x$, where $\sigma$ is the $q$-power Frobenius element. This shows that $x$ is defined over $\FF_q$. Now, $x$ corresponds to an $E'_{\Zcal,P_0}$-orbit $C\subseteq G_{\Zcal_{P_0},w}$ defined over $\FF_q$. Lemma \ref{lem: lang and rational points} shows $C(\FF_q)\neq \emptyset$, which concludes the proof. 
\end{proof}

\begin{lemma}\label{lem: lang and rational points}
Let $H$ be a connected algebraic group over $\FF_q$ acting transitively on an $\FF_q$-variety $X$. Then, $X(\FF_q)$ is nonempty.
\end{lemma}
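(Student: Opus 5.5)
This is Lang's theorem applied to a twisted torsor. Let \(\sigma\) denote the \(q\)-power Frobenius, let \(\overline{\FF}_q\) be an algebraic closure, and let \(x\in X(\overline{\FF}_q)\) be any point; such a point exists since \(X\) is a nonempty variety. The plan has three steps.

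First, we use transitivity. Because \(X\) is defined over \(\FF_q\), the point \(\sigma(x)\) also lies in \(X(\overline{\FF}_q)\). Since \(H\) acts transitively on \(X\), we may choose \(h\in H(\overline{\FF}_q)\) with \(\sigma(x)=h\cdot x\).

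Second, we apply Lang's theorem. Since \(H\) is connected, the Lang map \(g\mapsto g^{-1}\sigma(g)\) is surjective on \(H(\overline{\FF}_q)\). So we can write \(h^{-1}=g^{-1}\sigma(g)\) for some \(g\in H(\overline{\FF}_q)\), which rearranges to \(\sigma(g)^{-1}=h\,g^{-1}\).

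Third, we check that \(y\colonequals g\cdot x\) is \(\FF_q\)-rational. The action is defined over \(\FF_q\), so \(\sigma\) commutes with it. Then
\[
\sigma(y)=\sigma(g)\cdot\sigma(x)=\sigma(g)\,h\cdot x .
\]
From \(h^{-1}=g^{-1}\sigma(g)\) we get \(\sigma(g)=g\,h^{-1}\), hence \(\sigma(g)\,h=g\). Therefore \(\sigma(y)=g\cdot x=y\), so \(y\in X(\FF_q)\), and \(X(\FF_q)\) is nonempty.

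The only point needing care is that the group actually acting is defined over \(\FF_q\). In the application in Lemma~\ref{lem-dense}, this is the group \(E'_{\Zcal,P_0}\) acting on the orbit \(C\). This group is defined over \(\FF_q\) because \(T\) splits over \(\FF_q\), which makes \(P_0\) and the zip group rational. It is connected, being an extension of \(L_0\) by a unipotent group. Once this is checked, the argument above is routine and there is no real obstacle.
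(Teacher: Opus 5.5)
Your proof is correct and is the same argument as the paper's. The paper likewise picks $h$ with $\sigma(a)=h\cdot a$, uses the triviality of $H^1(\Gal(K/\FF_q),H(K))$ (that is, Lang's theorem for connected $H$) to write $h=\sigma(g)^{-1}g$, and concludes that $g\cdot a$ is $\sigma$-fixed. Your remark that $E'_{\Zcal,P_0}$ is connected and defined over $\FF_q$ correctly checks the hypotheses needed to apply the lemma in Lemma~\ref{lem-dense}, which the paper leaves implicit.
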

\begin{proof}
This follows from the triviality of \(H^1(\Gal(K/\FF_q), H(K))\). Let us explain in further detail. Fix a point $a\in X(K)$. By assumption, we can write $\sigma(a)=h_\sigma\cdot a$ for some $h_\sigma\in H(K)$. Since \(H^1(\Gal(K/\FF_q), H(K))\) is trivial, we can find $g\in H(K)$ such that $\sigma(g)^{-1}g=h_\sigma$. Thus $\sigma(g)^{-1}g \cdot a = h_\sigma\cdot a=\sigma(a)$, hence $\sigma(g\cdot a)=g\cdot a$. This shows that $g\cdot a\in X(\FF_q)$.
\end{proof}

\subsection{Hasse invariants}
Assume that $\Zcal$ is of Frobenius-type. Recall that any character $\lambda\in X^*(L)$ gives rise to a line bundle $\Lcal(\lambda)$ on $\Xcal$, see for instance \cite[\S3.1]{Goldring-Koskivirta-Strata-Hasse}. For $w\in {}^I W$, a \emph{Hasse invariant} for the stratum $\Xcal_w$ is a section $\Ha_w\in H^0(\overline{\Xcal}_w,\Lcal(\lambda))$ whose non-vanishing locus coincides with $\Xcal_w$. When $\Zcal$ is of Frobenius-type, such Hasse invariants exist in most cases:

\begin{theorem}\label{thm-Hasse}
Assume that $\varphi$ is the $p^m$-power Frobenius isogeny of $G$, for some $m\geq 1$. Assume that $G$ admits a representation with central kernel $\rho\colon G\to \GL_N$ such that $\rho\circ \mu$ has $d$ different weights, and that $p^m\geq d$. Then Hasse invariants exist for all zip strata of $\Xcal$.
\end{theorem}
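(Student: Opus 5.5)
This is the Goldring--Koskivirta existence theorem for Hasse invariants on zip strata, and I will prove it by reconstructing their argument. The approach has three steps.

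\textbf{Step 1: reduce to the flag space.} I pass from $\Xcal$ to the full flag space $\pi_B\colon \Fcal^{(B)}\to\Xcal$, because the line bundles there are indexed by all of $X^*(T)$ rather than only $X^*(L)$. Fix $w\in {}^IW$. By Proposition~\ref{prop-FP0}, $\pi_B$ restricted to $\overline{\Fcal}^{(B)}_w$ surjects onto $\overline{\Xcal}_w$, and the preimage of $\Xcal_w$ is exactly $\Fcal^{(B)}_w$. So it suffices to produce a section $h\in H^0(\overline{\Fcal}^{(B)}_w,\Lcal(\chi))$ with nonvanishing locus $\Fcal^{(B)}_w$, where $\chi$ is \emph{$L$-anti-dominant}. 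I then push $h$ forward, or take its norm along $\pi_B$ (which is finite étale over the open stratum and proper), to obtain a section of $\Lcal(\lambda)$ on $\overline{\Xcal}_w$ for some $\lambda\in X^*(L)$ with the same nonvanishing locus. The point is that $\pi_{B,P}$ is a $P/B$-bundle, so $\pi_{*}\Lcal(\chi)$ is the vector bundle $V_I(\chi)$. One then either uses a lowest-weight projection, or raises the section to a power and takes its norm.

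\textbf{Step 2: construct the flag section.} On the flag space, the stratum closures $\overline{\Fcal}^{(B)}_w$ are governed by Schubert varieties through the map $\psi_B\colon \Fcal^{(B)}\to [E'\backslash G]\to[B\backslash G/{}^zB]$. On $\overline{BwB}$ I will use the classical Chevalley sections: for $\chi$ with $\langle\chi,\alpha^\vee\rangle>0$ for all $\alpha\in E_w$, the extremal weight vector gives a section on the Schubert variety $\overline{BwB}/B$ that vanishes exactly on the boundary divisors $\overline{Bws_\alpha B}$. Pulling this back, and twisting by the Frobenius/$\varphi$ relation $\Lcal(\chi)$ versus $\Lcal(p^m\sigma^{-1}\chi)$ coming from the zip structure, I obtain a section on $\overline{\Fcal}^{(B)}_w$ of weight $\chi - p^m\,{}^{w^{-1}z}\varphi(\chi)$ (up to conventions). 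Its zero locus is the union of the lower-neighbour flag strata, so its nonvanishing locus is exactly $\Fcal^{(B)}_w$.

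\textbf{Step 3: choose the weight.} This is where $p^m\geq d$ and the representation $\rho$ enter, and I expect it to be the main obstacle. The weight from Step 2 must be $L$-anti-dominant (or $I$-dominant in the paper's convention) so that the pushforward in Step 1 is nonzero. I will take $\chi$ coming from $\rho$, essentially the character $\det$ composed with $\rho\circ\mu$-weight projections, scaled by $-1$ as needed. The pairings $\langle\chi,\alpha^\vee\rangle$ for $\alpha\in I$ are then bounded in absolute value by $d-1$, because of the $d$ distinct weights of $\rho\circ\mu$. Meanwhile the term $p^m\varphi(\chi)$ dominates in the relevant directions. The inequality $p^m\geq d$ is precisely what makes the combined weight "$p$-small"/$L$-dominant, so that the restriction of the section to a fibre $P/B$ is nonzero and the norm or pushforward step goes through.

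Verifying this positivity carefully for all simple roots in $I$ and all strata is the delicate computation, and I would follow the strategy of \cite[\S3--4]{Goldring-Koskivirta-Strata-Hasse} there.
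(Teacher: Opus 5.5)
The paper gives no argument of its own. It deduces the statement from the proof of \cite[Corollary 6.2.1]{Goldring-Koskivirta-zip-flags}, with $p$ replaced by $p^m$.

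Your Step 2 is the right core of that argument. Pulling back Chevalley sections along $\psi_B$ gives a section of $\Lcal(h_w(\chi))$ on $\overline{\Fcal}^{(B)}_w$, where $h_w(\chi)=\chi-p^mA_w\chi$ for a finite-order operator $A_w$ built from $w$, $z$ and $\sigma$. Its nonvanishing locus is $\Fcal^{(B)}_w$ once $\langle\chi,w\alpha^\vee\rangle$ has the correct strict sign for every $\alpha\in E_w$.

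However, the reduction in Step 1 is false as stated. The map $\pi_B\colon\overline{\Fcal}^{(B)}_w\to\overline{\Xcal}_w$ is generically finite (finite \'{e}tale over $\Xcal_w$); it is not the $P/B$-bundle $\Fcal^{(B)}\to\Xcal$. A section on $\overline{\Fcal}^{(B)}_w$ is not a section on the full preimage $\pi_B^{-1}(\overline{\Xcal}_w)\subseteq\Fcal^{(B)}$, so it gives no section of $\mathcal{V}_I(\chi)$. A lowest-weight projection is in any case only $B_L$-equivariant, so it would not descend to $\Xcal$. The norm argument does work, but only when the flag weight already lies in $X^*(L)$: then $\Lcal(\lambda)=\pi_B^*\Lcal(\lambda)$ on $\Fcal^{(B)}$, and the norm is a section of $\Lcal(N\lambda)$.

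So you have the roles reversed. The character $\lambda\in X^*(L)$ built from $\rho$ must be the \emph{output} $h_w(\chi)$, not the Schubert input. With your choice, $\chi-p^mA_w\chi$ is in general not a character of $L$. Since $h_w=\mathrm{id}-p^mA_w$ is invertible over $\QQ$, this forces $\chi=-\sum_{k\geq 1}p^{-mk}A_w^{-k}\lambda$, up to scaling. Compare Theorem~\ref{thm-Hasse-char0}, the case $m=0$, where the condition reads $\lambda=w\lambda_0-z\lambda_0$ together with the sign condition on $E_w$.

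Consequently, Step 3 verifies the wrong positivity, and this is exactly where $p^m\geq d$ has to be used. $L$-dominance of the flag weight plays no role: a character of $L$ pairs to zero with $\alpha^\vee$ for $\alpha\in I$, so your bound there is vacuous. Restricting to a fibre $P/B$ is also meaningless, because $\overline{\Fcal}^{(B)}_w$ meets the fibres over $\Xcal_w$ in only finitely many points.

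What must be proved is that the forced $\chi$ satisfies the strict sign condition on $\langle\chi,w\alpha^\vee\rangle$ for all $\alpha\in E_w$, so that Chevalley's formula gives vanishing along every boundary divisor. Each coefficient $\langle A_w^{-k}\lambda,w\alpha^\vee\rangle$ is a pairing of $\lambda$ with a coroot in a single orbit, and it is periodic in $k$. If $\lambda$ is chosen from $\rho$ so that the nonzero such pairings differ by a factor at most $d-1\leq p^m-1$, then the first nonzero term dominates the tail, strictly by periodicity. You must still show that a nonzero term occurs and has the right sign; this uses $w\in\iw$ and the sign of $\lambda$ on non-compact roots.

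Finally, a norm along $\pi_B$ does not extend over the boundary for free, since $\pi_B$ is not finite there and $\overline{\Xcal}_w$ need not be normal. The descent can be done as in the proof of Theorem~\ref{thm-Hasse-char0}:
\begin{enumerate}
\item take the norm along the finite \'{e}tale map $\Fcal^{(B)}_w\to\Fcal^{(P_w)}_w$;
\item extend it over the normal $\overline{\Fcal}^{(P_w)}_w$ by Lemma~\ref{geom-lem1};
\item descend a power to $\overline{\Xcal}_w$ by Lemma~\ref{geom-lem2}, using Proposition~\ref{prop-Pw}.
\end{enumerate}
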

\begin{proof}
Although \cite[Corollary 6.2.1]{Goldring-Koskivirta-zip-flags} states a slightly weaker result, the statement follows immediately from the proof of \loccit
\end{proof}

In particular, Hasse invariants exist for zip data coming from abelian type Shimura varieties. For an arbitrary reductive $\FF_p$-group $G$, they exist when $p^m$ is large enough. Under the assumption of the existence of Hasse invariants, we show that the image of a quasi-pointwise stratum is given by a single zip stratum.

\begin{lemma}\label{lem-qpoint-single-im}
Assume that all zip strata in $\Xcal$ admit Hasse invariants. Let \(w \in W\) be a quasi-pointwise element and set \(w_1 = \pi_\Zcal(w) \in {}^IW\). %Suppose that there exists a character \(\lambda \in X^\ast(L)\) and a section \(h_{w_1} \in H^0(\overline{\Xcal}_{w_1}, \Lcal(\lambda))\) whose non-vanishing locus is exactly \({\Xcal}_{w_1}\). 
Then, \(\Pi_\Zcal(w) = \{w_1\}\).
\end{lemma}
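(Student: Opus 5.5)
Here is the plan. Let $\Ha_{w_1}\in H^0(\overline{\Xcal}_{w_1},\Lcal(\lambda))$ be a Hasse invariant for $w_1=\pi_\Zcal(w)$; it exists by hypothesis. By Lemma \ref{lem: tower invariance of qpointwiseness} we may work with the total flag space $\Fcal^{(B)}$. Let $Y\colonequals \pi_B(\Fcal^{(B)}_w)$. By the proof of Lemma \ref{lem-unique-maxlen}, $Y$ is irreducible, $Y\subseteq \overline{\Xcal}_{w_1}$, and $Y\cap \Xcal_{w_1}$ is open dense in $Y$. The aim is to show $Y\subseteq \Xcal_{w_1}$. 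Since each zip stratum is a single point, this is the same as $\Pi_\Zcal(w)=\{w_1\}$.

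The first step is to pull $\Ha_{w_1}$ back along $\pi_B$. The map $\pi_B$ sends $\overline{\Fcal}^{(B)}_w$ into $\overline{\Xcal}_{w_1}$, because $\pi_B$ is proper and the image of $\Fcal^{(B)}_w$ is contained in $\overline{\Xcal}_{w_1}$. This gives a section $h\colonequals \pi_B^*\Ha_{w_1}$ of $\pi_B^*\Lcal(\lambda)$ over $\Fcal^{(B)}_w$. Its vanishing locus $Z\subseteq \Fcal^{(B)}_w$ is closed. The vanishing locus of $\Ha_{w_1}$ is exactly $\overline{\Xcal}_{w_1}\setminus\Xcal_{w_1}$, so $Z$ is precisely the preimage of $Y\setminus \Xcal_{w_1}$.

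The second step uses quasi-pointwiseness to make $h$ nowhere zero. Let $x$ be the dense point of $\Fcal^{(B)}_w$. Concretely, $x$ is the open dense $E'_{\Zcal,B}$-orbit $C\subseteq G^{(B)}_w$, and $\pi_B$ maps it onto a single zip stratum. That stratum must be the generic one, $\Xcal_{w_1}$, so $h$ does not vanish at $x$. The key point, and the main obstacle, is to upgrade this to $h$ vanishing nowhere. The vanishing locus of a section of a line bundle on the irreducible smooth stack $\Fcal^{(B)}_w$ is either empty or of pure codimension one. Quasi-pointwiseness alone does not exclude a divisor in the complement of the dense orbit, so a further input is needed.

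I would try the following for that step. Work on the $E'_{\Zcal,B}$-equivariant level: the pullback of $h$ to $G^{(B)}_w$ is a function $f$ that is semi-invariant under $E'_{\Zcal,B}$ with some character $\chi$. On the open orbit $C\simeq E'_{\Zcal,B}/\Stab(a)$, with $a$ an $\FF_q$-point as in Lemma \ref{lem-dense}, $f$ is a nonvanishing semi-invariant. The aim is then to compare $f$ with an invertible function. One route is to extend $\chi$ to an invertible semi-invariant on all of $G^{(B)}_w$, which is possible because $G^{(B)}_w$ is an $E_{\Zcal_B}$-orbit whose stabilizers are controlled. The ratio of $f$ by this invertible semi-invariant is an invariant function that is constant on the dense orbit, hence constant everywhere, so $f$ is invertible.

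If this is unavailable, a cleaner alternative argument is by contradiction. Suppose $Y$ meets $\Xcal_{w''}$ with $w''\neq w_1$. Then $\pi_B^{-1}(\Xcal_{w''})\cap \Fcal^{(B)}_w$ is a nonempty closed union of orbits disjoint from $C$, on which $h$ vanishes. Comparing weights, $h$ is nonzero on the dense orbit, so $\chi$ restricted to $\Stab(a)$ must be trivial. Pairing this with the $\FF_q$-rationality of $a$ from Lemma \ref{lem-dense}, one would show that $h$ is a power of a constant in suitable coordinates, contradicting its vanishing. Once $Z=\emptyset$ is established, $Y\subseteq \Xcal_{w_1}$ and the lemma follows.
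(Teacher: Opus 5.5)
Your first route is in substance the paper's proof. The paper also compares $\pi_B^*(\Ha_{w_1})$ with $\psi_B^*(s)$, where $s$ is a nonzero section of $\Lcal(\lambda)$ on the single-orbit stack $\Xcal_{\Zcal_B,w}$; such an $s$ is automatically nowhere vanishing. The dense point enters only through $\dim H^0(\Fcal^{(B)}_w,\Lcal(\lambda))\leq 1$, which is exactly your observation that the ratio is an $E'_{\Zcal,B}$-invariant function that is constant on the dense orbit, hence constant. You are right that quasi-pointwiseness alone cannot exclude a divisor, and that the extra input must come from $\psi_B\colon \Fcal^{(B)}_w\to \Xcal_{\Zcal_B,w}$. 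Your fallback ``by contradiction'' paragraph is not an argument (the $\FF_q$-rationality of $a$ plays no role) and should be dropped.

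The step that fails as written is the existence of an invertible $E_{\Zcal_B}$-semi-invariant of weight exactly $\chi$ on $G^{(B)}_w=E_{\Zcal_B}\cdot(wz^{-1})$. Such a function exists if and only if $\chi$ is trivial on $\Stab_{E_{\Zcal_B}}(wz^{-1})$, and this need not hold. That stabilizer contains the finite group of pairs $(t,\varphi(t))$ with $t\in T$ and $t={}^{wz^{-1}}\varphi(t)$, on which $\lambda$ can restrict to a nontrivial character. Knowing $f(a)\neq 0$ only forces $\chi$ to be trivial on the smaller group $\Stab_{E'_{\Zcal,B}}(a)$.

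The fix, which is what the paper does, is to replace $\Ha_{w_1}$ by $\Ha_{w_1}^N$ and $\lambda$ by $N\lambda$; this does not change the vanishing locus. Since $\Zcal$ is of Frobenius type, the identity component of $\Stab_{E_{\Zcal_B}}(wz^{-1})$ is unipotent. Hence $N\lambda$ is trivial on the whole stabilizer for suitable $N\geq 1$, i.e.\ $H^0(\Xcal_{\Zcal_B,w},\Lcal(N\lambda))\neq 0$; the paper cites \cite[Theorem 3.1]{Koskivirta-compact-hodge} for this. With an invertible semi-invariant $g$ of weight $N\chi$ in hand, $f^N/g$ is a nonzero constant. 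So $f$ is nowhere vanishing, $Z=\emptyset$, and $\Pi_\Zcal(w)=\{w_1\}$ follows as you say.
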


\begin{proof} 
Let $\Ha_{w_1}\in H^0(\overline{\Xcal}_{w_1},\Lcal(\lambda))$ be a Hasse invariant for $w_1$. Replacing \(\lambda\) with \(N\lambda\) and \(\Ha_{w_1}\) with \(\Ha_{w_1}^N\), for some \(N > 0\) if necessary, we can assume that \(H^0(\Xcal_{\Zcal_{B}, w}, \Lcal(\lambda))\) is one-dimensional (\cite[Theorem 3.1]{Koskivirta-compact-hodge}). Since \(\Fcal_{w}^{(B)}\) admits a dense \(K\)-point, \(H^0(\Fcal_{w}^{(B)}, \Lcal(\lambda))\) is at most one-dimensional. Thus, since \(\pi_{B}^\ast(\Ha_{w_1}) \in H^0(\Fcal_{w}^{(B)}, \Lcal(\lambda))\) is non-zero, we have \(\dim_K(H^0(\Fcal_{w}^{(B)}, \Lcal(\lambda))) = 1\). On the other hand, there exists a nonzero section \(s \in H^0(\Xcal_{\Zcal_{B}, w}, \Lcal(\lambda))\), hence by dimensionality $\psi_{B}^{*}(s)=\pi_{B}^\ast(\Ha_{w_1})$ up to a nonzero scalar. If there were some \(w_2 \in \Pi_\Zcal(w) \setminus\{w_1\}\), then \(\pi_{B}^\ast(\Ha_{w_1})\) would vanish on some points of \(\Fcal_{w}^{(B)}\). On the other hand, \(s \in H^0(\Xcal_{\Zcal_{B}, w}, \Lcal(\lambda))\) is nowhere-vanishing on $\Xcal_{\Zcal_{B}}$, hence \(\psi_{B}^{*}(s)\) is nowhere-vanishing on \(\Fcal_{w}^{(B)}\). This is a contradiction.
%Let \(s \in H^0(\Xcal_{\Zcal_{B}, w}, \Lcal(\lambda))\) be any non-zero section, which must therefore be nowhere-vanishing on \(\Xcal_{\Zcal_{B}, w}\). Then, \(\psi_{B}^{*}(s) \in H^0(\Fcal_{w}^{(B)}, \Lcal(\lambda))\) is nowhere-vanishing too and, by dimensionality, it must be a non-zero multiple of \(\pi_{B}^\ast(\Ha_{w_1})\), which must also be nowhere-vanishing on \(\Fcal_{w}\). 
Therefore, we must have \(\Pi_\Zcal(w) = \{w_1\}\). 
\end{proof}

\subsection{Small strata}\label{subsec-small-strata}
In this section, we define the important notion of small strata. We assume that $\Zcal$ is of Frobenius-type unless mentioned otherwise. We first show the following key inequality.

\begin{lemma}\label{lem-length-ineq}
For any $w\in W$, we have $\ell(\pi_{\Zcal}(w))\leq \ell(w)$.
\end{lemma}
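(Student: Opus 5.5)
This is a dimension count. By definition, $w_1\colonequals\pi_\Zcal(w)$ is the unique element of ${}^IW$ such that $\Xcal_{w_1}\cap\pi_B(\Fcal^{(B)}_w)$ is open and dense in $\pi_B(\Fcal^{(B)}_w)$. In the Frobenius-type case, $\pi_B(\Fcal^{(B)}_w)$ is a union of zip strata, so $\Xcal_{w_1}$ is contained in it. I will compare dimensions upstairs on $G\times(B\backslash P)$, respectively $G$, instead of on the stacks.

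\emph{Step 1 (dimensions of the pieces).} By Remark \ref{rmk-flag-other} and \eqref{dim-H}, $\Fcal^{(B)}_w=[E_\Zcal\backslash H^{(B)}_w]$ with $\dim H^{(B)}_w=\ell(w)+\dim P$. The image $\pi_B(\Fcal^{(B)}_w)$ is $[E_\Zcal\backslash D]$, where $D\subseteq G$ is the image of $H^{(B)}_w$ under the first projection $\pr_1\colon G\times(P/B)\to G$. In the Frobenius-type case, $D$ is a union of the zip strata $G_{v}$ for $v\in\Pi_\Zcal(w)$, and $G_{w_1}$ is open dense in $D$. The standard formula $\dim G_{v}=\dim P+\ell(v)$ for $v\in{}^IW$ (equivalently, $\dim\Xcal_v=\ell(v)-\dim G+\dim P$... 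I will simply use that the codimension of $G_v$ in $G$ is $\dim G-\dim P-\ell(v)$, as in \cite{Pink-Wedhorn-Ziegler-zip-data}) then gives $\dim D=\dim P+\ell(w_1)$.

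\emph{Step 2 (surjection from $H^{(B)}_w$ onto $D$).} The map $\pr_1\colon H^{(B)}_w\to D$ is surjective by construction. Any surjective morphism of varieties satisfies $\dim(\text{target})\le\dim(\text{source})$, applied componentwise; here $H^{(B)}_w$ is irreducible, being the preimage of the irreducible orbit-stratum under a smooth map with connected fibers. Hence
\[
\dim P+\ell(w_1)=\dim D\le\dim H^{(B)}_w=\dim P+\ell(w),
\]
so $\ell(\pi_\Zcal(w))\le\ell(w)$.

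The main point to check is the dimension formula $\dim G_v=\dim P+\ell(v)$ for zip strata with $v\in{}^IW$, stated in the same normalization as \eqref{dim-H}. This is \cite[Theorem 5.11]{Pink-Wedhorn-Ziegler-zip-data}. One way to make the normalizations match is to apply \eqref{dim-H} with $P_0=P$: then $H^{(P)}_{v}\simeq G_{v}$ (as $P/P=\ast$), so \eqref{dim-H} itself gives $\dim G_v=\ell(v)+\dim P$. Using \eqref{dim-H} in both places makes the argument self-consistent.
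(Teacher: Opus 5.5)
Your proof is correct and is essentially the paper's argument. The paper likewise uses that, in the Frobenius-type case, $\pi_B(\Fcal^{(B)}_{\Zcal,w})$ is a union of zip strata, so the projection $\pr_1\colon H^{(B)}_w\to \overline{G}_{w_1}$ has image containing $G_{w_1}$; it then compares $\dim G_{w_1}\le \dim H^{(B)}_w$ and converts to lengths via \eqref{dim-H}, and your use of the case $P_0=P$ of \eqref{dim-H} to get $\dim G_v=\ell(v)+\dim P$ supplies exactly the needed normalization (the irreducibility of $H^{(B)}_w$ you mention is not needed for the inequality).
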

\begin{proof}
Set $w_1\colonequals \pi_{\Zcal}(w)$. Then, $\pi_{B}$ induces a map $\Fcal_{\Zcal,w}^{(B)}\to \overline{\Xcal}_{\Zcal,w_1}$. Since $\Zcal$ is of Frobenius-type, $\pi_B(\Fcal_{\Zcal,w}^{(B)})$ is a union of zip strata. Hence $\Xcal_{\Zcal, w_1}$ is contained in the image of the map $\pi_B\colon \Fcal_{\Zcal,w}^{(B)}\to \overline{\Xcal}_{\Zcal,w_1}$. We obtain similarly a map of schemes $\pi_{B}\colon H^{(B)}_w\to \overline{G}_{w_1}$ whose image contains $G_{w_1}$. By comparing dimensions, we have $\dim(G_{w_1})\leq \dim(H^{(B)}_w)$, hence $\ell(w_1)\leq \ell(w)$ by \eqref{dim-H}. 
\end{proof}

\begin{definition}\label{def-small}
For a general zip datum $\Zcal$, we say that $w\in W$ is small if $\ell(\pi_{\Zcal}(w)) = \ell(w)$.
\end{definition}

We use the terminology \emph{$\Zcal$-small} if there is an ambiguity on the choice of the zip datum. We write $W^{\rm sm}\subseteq W$ for the subset of small elements. Assume that $\Zcal$ is of Frobenius-type and that $w$ is small. Set $w_1\colonequals \pi_{\Zcal}(w)$. For any intermediate parabolic $B\subseteq P_0 \subseteq P$, we obtain a dominant morphism $\pi_{P_0} \colon \Fcal_{\Zcal,w}^{(P_0)}\to \overline{\Xcal}_{\Zcal,w_1}$ between stacks of the same dimension. Hence, this map is generically quasi-finite.

\begin{proposition} \label{prop-same-l}
If $w\in W$ is small, the stratum $\Fcal^{(B)}_{\Zcal,w}$ is quasi-pointwise. 
\end{proposition}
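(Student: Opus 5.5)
We work on the scheme side, with $H\colonequals H^{(B)}_{\Zcal,w}\subseteq G\times (P/B)$ from \eqref{HwP0-eq}. The stratum $\Fcal^{(B)}_{\Zcal,w}$ is isomorphic to $[E_\Zcal\backslash H]$, and $H$ is irreducible of dimension $\ell(w)+\dim(P)$ by \eqref{dim-H}. Being quasi-pointwise means exactly that $H$ contains an open dense $E_\Zcal$-orbit. So the goal is to produce an $E_\Zcal$-orbit in $H$ of dimension $\dim(H)$.

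Set $w_1\colonequals \pi_\Zcal(w)$. Because $\Zcal$ is of Frobenius-type, $G_{w_1}=E_\Zcal\cdot(w_1z^{-1})$ is a single orbit, and $\dim(G_{w_1})=\ell(w_1)+\dim(P)$. This is the standard formula $\dim \Xcal_{w_1}=\ell(w_1)$, since $\dim E_\Zcal=\dim G$ and $\dim G-\dim P = \dim(G/P)$; equivalently, it is \eqref{dim-H} applied with $P_0=P$. By smallness, $\ell(w_1)=\ell(w)$, so $\dim G_{w_1}=\dim H$.

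By the proof of Lemma \ref{lem-length-ineq}, the first projection $\pi_B\colon H\to \overline{G}_{w_1}$ is $E_\Zcal$-equivariant and its image contains $G_{w_1}$. Since $H$ is irreducible and $G_{w_1}$ is open dense in $\overline{G}_{w_1}$, the preimage $V\colonequals \pi_B^{-1}(G_{w_1})\cap H$ is a nonempty open, hence dense, $E_\Zcal$-stable subset of $H$. The restriction $V\to G_{w_1}$ is a surjective morphism between irreducible varieties of the same dimension, so it is generically quasi-finite. This is the remark preceding the proposition.

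We now apply Lemma \ref{lem-transitiveG} with $X=V$, $Y=G_{w_1}$ and the group $E_\Zcal$. Here $E_\Zcal$ acts transitively on $Y$ and $V$ is irreducible. The lemma gives that $E_\Zcal$ acts transitively on $V$. Hence $V$ is a single $E_\Zcal$-orbit, open and dense in $H$, and its image is a dense $K$-point of $\Fcal^{(B)}_{\Zcal,w}$, which is what we want.

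The main point to check is the dimension bookkeeping: we need $\dim H$ and $\dim G_{w_1}$ to be computed in the same normalisation, both equal to $\ell+\dim P$. Once that is in place, the argument is just equivariance together with Lemma \ref{lem-transitiveG}.
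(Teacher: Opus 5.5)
Your argument is correct and is essentially the paper's proof: the paper takes a point of $H^{(B)}_{\Zcal,w}$ lying over $G_{\Zcal,w_1}$ and observes directly that its $E_\Zcal$-orbit surjects onto $G_{\Zcal,w_1}$, hence has dimension $\geq \dim G_{\Zcal,w_1}=\dim H^{(B)}_{\Zcal,w}$. This orbit-dimension count is exactly the content of Lemma \ref{lem-transitiveG}, which you invoke on $V$; one small slip is that the stack $\Xcal_{w_1}$ has dimension $\ell(w_1)-\dim(G/P)$ rather than $\ell(w_1)$, but the equality you actually use, $\dim G_{\Zcal,w_1}=\ell(w_1)+\dim(P)$, is correct.
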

%\lorenzo{Does this argument show that \(\Fcal^{(P_{w_1})}_{\Zcal, w}\) contains a dense open isomorphic copy (a section of the projection) of \(\Xcal_{\Zcal, w_1}\), when \(w \in W\) is small and \(\pi(w) = w_1\)?}\JS{maybe not "isomorphic", but maybe finite \'{e}tale over it}

%\lorenzo{this bit must be reformulated and possibly moved elsewhere}\JS{Which part?} Moreover, if \(\Fcal^{(P_0)}_{\Zcal,w}, \Xcal_{\pi_{P_0, P}(w)}\) and \(\Xcal_{\pi_{P_0, P}(w)}(k)\) are defined over a subfield \(\kappa \leq k\) such that \(k/\kappa\) is Galois, then the unique dense \(k\)-point of \(\Fcal^{(P_0)}_{\Zcal,w}\) is defined over \(\kappa\).
%\JS{I changed the formulation and assumptions, sorry! I moved the density result to Lemma \ref{lem-dense}. Also, I changed your $\kappa$ to $\FF_p$. If we want to have a general finite field, we can change later the setting to $\kappa$ everywhere.}

\begin{proof}
%\lorenzo{A lot of the material discussed here should be in the section about \(\GF\).}\JS{I added a remark about this}
Recall that \(\Fcal^{(B)}_{\Zcal,w}\) is identified with \([E_\Zcal \backslash H^{(B)}_{\Zcal,w}]\).
%Write \(H_w = \tilde{\gamma}^{-1}(C_w), C_w = BwBz^{-1},\) as in \Cref{ssec: equivalent def of zip strata}. The inclusion \(G \to G\times P/B, g \mapsto (g, B),\) induces a natural isomorphism \(\Fcal^{(P_0)} \cong [E\backslash(G\times P/B)]\). This identifies \(\Fcal^{(B)}_w\) with \([E \backslash H_w]\). The \(E\)-quotient of the projection \(G\times P/B \to G\) is identified to \(\pi \colon \Fcal^{(B)} \to \Xcal\). Thus, 
The first projection $\pr_1\colon G\times (P/B)\to G$ yields a proper, surjective, $E_\Zcal$-equivariant map $\pr_1\colon \overline{H}^{(B)}_{\Zcal,w}\to \overline{G}_{\Zcal,w_1}$.
%pullback of \(\pi_{B}\) via \(\overline{G}_{w_1} \to \overline{\Xcal}_{w_1}\) is proper and its restriction \(\pi \colon \overline{H}_w^{(B)} \to \overline{G}_{w_1}\) to \(\overline{H}_w^{(B)}\) is proper and surjective.
%Moreover, by the hypotheses on the length, \(\dim(H_w) = \dim(\overline{G}_{w_1}) = \dim({G}_{w_1})\). 
Let $x\in H^{(B)}_{\Zcal,w}$ be any point which maps to a point of $G_{\Zcal,w_1}$ and let $S\colonequals E_\Zcal\cdot x$ be the orbit of $x$. By $E_\Zcal$-equivariance, the map $\pr_1\colon S\to G_{\Zcal,w_1}$ is surjective, hence $\dim(S)\geq \dim(G_{\Zcal,w_1})=\dim(H^{(B)}_{\Zcal,w})$, by our assumption that $w$ is small. Hence $S$ must be open dense in $H^{(B)}_{\Zcal,w}$. It corresponds to an open dense point in $\Fcal^{(B)}_{\Zcal,w}$. \qedhere

\end{proof}

Choose a finite extension $\FF_q$ of \(\FF_p\) over which $T$ splits. If $w$ is small, Lemma \ref{lem-dense} shows the existence of $a\in G_{\Zcal_{P_0},w}(\FF_q)$ such that the $E'_{\Zcal,P_0}$-orbit of $a$ is dense in $G_{\Zcal_{P_0},w}$. Moreover, the image of $a$ by $\pi_{P_0}$ must be contained in the stratum $\Xcal_{w_1}$. In other words, we have
\begin{equation}
    a\in (G_{\Zcal_{P_0},w} \cap G_{\Zcal,w_1})(\FF_q).
\end{equation}
Note that \(G_{\Zcal_{B},w} = BwBz^{-1}\). The above discussion also shows the following corollary:
\begin{corollary}\label{coro-small-Fp}
For $w\in {}^{I_0}W$, the following are equivalent:
\begin{enumerate}
    \item $w$ is small.
    \item There exists $w_1\in {}^{I}W$ such that $\ell(w)=\ell(w_1)$ and $G_{\Zcal_{P_0},w} \cap G_{\Zcal,w_1}\neq \emptyset$.
    \item There exists $w_1\in {}^{I}W$ such that $\ell(w)=\ell(w_1)$ and $\left(G_{\Zcal_{P_0},w} \cap G_{\Zcal,w_1}\right)(\FF_q)\neq \emptyset$.
    \item There exists $w_1\in {}^{I}W$ such that $\ell(w)=\ell(w_1)$ and $\left(BwBz^{-1} \cap G_{\Zcal,w_1}\right)(\FF_q)\neq \emptyset$. 
\end{enumerate}
\end{corollary}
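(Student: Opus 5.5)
The plan is to prove the cycle of implications (1) $\Rightarrow$ (3) $\Rightarrow$ (4) $\Rightarrow$ (2) $\Rightarrow$ (1), working throughout with the $E_\Zcal$-equivariant proper map $\pr_1\colon \overline{H}^{(P_0)}_{\Zcal,w}\to G$ that underlies $\pi_{P_0}$. By Proposition~\ref{Pi-P0}, $\pi_{\Zcal,P_0}(w)=\pi_\Zcal(w)$, so smallness may be tested on $\Fcal^{(P_0)}_{\Zcal,w}$ directly. The dimension formula \eqref{dim-H} gives $\dim H^{(P_0)}_{\Zcal,w}=\ell(w)+\dim P$, and $\dim G_{\Zcal,w_1}=\ell(w_1)+\dim P$ because $E_\Zcal$ has dimension $\dim P$ and finite stabilizers.

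For (1) $\Rightarrow$ (3), take $w_1=\pi_\Zcal(w)$, so $\ell(w_1)=\ell(w)$. Proposition~\ref{prop-same-l} shows that $\Fcal^{(B)}_{\Zcal,w}$ is quasi-pointwise. Lemma~\ref{lem: tower invariance of qpointwiseness} then shows that $\Fcal^{(P_0)}_{\Zcal,w}$ is quasi-pointwise. Lemma~\ref{lem-dense} supplies $a\in G_{\Zcal_{P_0},w}(\FF_q)$ whose $E'_{\Zcal,P_0}$-orbit is dense. Its image lies in the open dense stratum $\Xcal_{w_1}$ of $\pi_{P_0}(\Fcal^{(P_0)}_{\Zcal,w})$, since a dense orbit maps into the generic stratum. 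Hence $a\in (G_{\Zcal_{P_0},w}\cap G_{\Zcal,w_1})(\FF_q)$, which is exactly the discussion preceding the statement.

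The implication (3) $\Rightarrow$ (4) is a reduction to the Bruhat cell. I will use $G_{\Zcal_{P_0},w}=E_{\Zcal_{P_0}}\cdot(BwBz^{-1})$ and the fact that $E_{\Zcal_{P_0}}$ is connected and defined over $\FF_q$. The orbit map $E_{\Zcal_{P_0}}\times BwBz^{-1}\to G_{\Zcal_{P_0},w}$ is surjective, so the fiber over $a$ is nonempty. I then need an $\FF_q$-point of the form $(e,b)$ with $e\cdot b=a$; Lang's theorem, applied as in Lemma~\ref{lem: lang and rational points}, should produce it. If it does, $b=e^{-1}\cdot a\in BwBz^{-1}(\FF_q)$. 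Moreover $G_{\Zcal,w_1}$ is $E_\Zcal$-stable, and $E_{\Zcal_{P_0}}\subseteq E_\Zcal$ because $L_0\subseteq L$, $\varphi(L_0)\subseteq M$ and $Q_0\subseteq Q$. Therefore $b\in G_{\Zcal,w_1}$, which is (4).

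This step is the one I expect to be the main obstacle. The fiber of the orbit map is a torsor only under a stabilizer-type group, which may be disconnected. If it is, I would instead argue directly: $a\in G_{\Zcal_{P_0},w}(\FF_q)$ lies in a single $E_{\Zcal_{P_0}}$-orbit, since $\Zcal_{P_0}$ is of Frobenius-type. The set of $\FF_q$-points of $BwBz^{-1}$ is nonempty, because it contains $\dot w z^{-1}$ with $T$ split. Applying Lemma~\ref{lem: lang and rational points} to the orbit then gives a common $\FF_q$-representative up to rational conjugation.

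The remaining implications are quick. (4) $\Rightarrow$ (2) is trivial, since $BwBz^{-1}\subseteq G_{\Zcal_{P_0},w}$. For (2) $\Rightarrow$ (1), suppose $x\in G_{\Zcal_{P_0},w}\cap G_{\Zcal,w_1}$ with $\ell(w_1)=\ell(w)$. Then $w_1\in\Pi_\Zcal(w)$. Lemma~\ref{lem-unique-maxlen} together with the closure relation gives $w_1\preccurlyeq_\Zcal \pi_\Zcal(w)$, hence $\ell(w_1)\le \ell(\pi_\Zcal(w))$. Lemma~\ref{lem-length-ineq} gives $\ell(\pi_\Zcal(w))\le\ell(w)=\ell(w_1)$. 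So equality holds throughout and $w$ is small.
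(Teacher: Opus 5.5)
Your implications (1) $\Rightarrow$ (3), (4) $\Rightarrow$ (2) and (2) $\Rightarrow$ (1) are fine, but the step (3) $\Rightarrow$ (4) fails. The inclusion $E_{\Zcal_{P_0}}\subseteq E_\Zcal$ that it rests on is false whenever $P_0\neq P$. Take $1\neq u\in R_{\mathrm{u}}(P_0)\cap L$. The pair $(u,1)$ lies in $E_{\Zcal_{P_0}}$, because the $L_0$-projection of $u$ is trivial. It does not lie in $E_\Zcal$, because the $L$-projection of $u$ is $u$ itself and $\varphi(u)\neq 1$. The inclusions that actually hold are $E'_{\Zcal,P_0}\subseteq E_{\Zcal_{P_0}}$ and $E'_{\Zcal,P_0}\subseteq E_\Zcal$. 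So $G_{\Zcal,w_1}$ is not $E_{\Zcal_{P_0}}$-stable, and $e^{-1}\cdot a$ can leave the zip stratum. For instance, in the paper's $\GL_3$ example with $P_0=B$, left multiplication by a lower unipotent element of $L$ can send a point with $\delta=\Ha^{(m)}=0$ to a point with $\Ha^{(m)}\neq 0$.

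Independently, as you suspected, the fiber over $a$ of $E_{\Zcal_{P_0}}\times BwBz^{-1}\to G_{\Zcal_{P_0},w}$ is not a torsor under a connected group, so Lang's theorem gives no rational preimage. Your fallback does not help either. Knowing that $a$ and $\dot w z^{-1}$ are rational points of the same $E_{\Zcal_{P_0}}$-orbit says nothing about whether a rational point of $BwBz^{-1}$ lies in $G_{\Zcal,w_1}$. The fact that $\dot wz^{-1}\in G_{\Zcal,\pi_\Zcal(w)}$ for small $w$ is Proposition~\ref{prop-pi-Xi}, which is proved later.

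The fix is that you do not need (3) $\Rightarrow$ (4) at all. You already noted, via Proposition~\ref{Pi-P0}, that smallness does not depend on the intermediate parabolic. So run your (1) $\Rightarrow$ (3) argument with $P_0=B$:
\begin{itemize}
\item Proposition~\ref{prop-same-l} gives directly that $\Fcal^{(B)}_{\Zcal,w}$ is quasi-pointwise.
\item Lemma~\ref{lem-dense} applies with $P_0=B$ and produces the required rational point.
\item $G_{\Zcal_B,w}=BwBz^{-1}$, since $E_{\Zcal_B}\subseteq B\times {}^zB$ and $BwBz^{-1}$ is stable under $B\times{}^zB$.
\end{itemize}
This yields (1) $\Rightarrow$ (4) directly, which is how the paper obtains (4). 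The implications (3) $\Rightarrow$ (2), (4) $\Rightarrow$ (2) and (2) $\Rightarrow$ (1) then go through as you wrote them.

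A minor point: your justification of $\dim G_{\Zcal,w_1}=\ell(w_1)+\dim P$ is wrong. In fact $\dim E_\Zcal=\dim G$, and stabilizers have dimension $\dim R_{\mathrm{u}}(P)-\ell(w_1)$ (see the proof of Lemma~\ref{lem-dim-stab-1}). The formula itself is correct, though, and you never use it.
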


%By Proposition \ref{prop-same-l}, we deduce:
%\begin{corollary}\label{cor-fp-points}
%If $\Ucal(w,w')$ does not admit a separating canonical cover, then there exists $v\in \Gamma_{I_w}(w)\setminus\{w'\}$ such that $\left(BvBz^{-1} \cap G_{\Zcal,w'}\right)(\FF_q)\neq \emptyset$.
%\end{corollary}

We end this section with a lemma on the set of small lower neighbors. Let $B\subseteq P_0 \subseteq P$ be an intermediate parabolic subgroup, of type $I_0\subseteq I$. For $w\in {}^{I_0}W$, put $\Gamma_{I_0}^{\rm sm}(w)\colonequals \Gamma_{I_0}(w)\cap W^{\rm sm}$.

\begin{lemma}\label{lem-sm-lownb}
For any $w\in {}^{I}W$, one has $|\Gamma^{\rm sm}_{I_0}(w)|\geq |\Gamma_I(w)|$. In particular, $\Gamma^{\rm sm}_{I_0}(w)\neq\emptyset$ unless $w=1$.
\end{lemma}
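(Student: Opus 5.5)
We will construct an injection $\Gamma_I(w)\hookrightarrow \Gamma^{\rm sm}_{I_0}(w)$. By Lemma \ref{lemma-low-nei}~\eqref{lemma-low-nei-item2}, every $w'\in\Gamma_I(w)$ can be written as $w'=\pi_\Zcal(v)$ for some $v\in \Gamma_{I_0}(w)$. Any such $v$ has $\ell(v)=\ell(w)-1=\ell(w')=\ell(\pi_\Zcal(v))$, so $v$ is small by Definition \ref{def-small}. Hence every $w'\in\Gamma_I(w)$ lies in $\pi_\Zcal(\Gamma^{\rm sm}_{I_0}(w))$.

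It follows that the restriction
\[
\pi_\Zcal\colon \Gamma^{\rm sm}_{I_0}(w)\to {}^I W
\]
has image containing $\Gamma_I(w)$. Choosing one preimage for each $w'\in\Gamma_I(w)$ gives a map $\Gamma_I(w)\to\Gamma^{\rm sm}_{I_0}(w)$. This map is injective, since composing it with $\pi_\Zcal$ gives the identity on $\Gamma_I(w)$. Therefore $|\Gamma^{\rm sm}_{I_0}(w)|\geq|\Gamma_I(w)|$.

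For the final claim, it suffices to show $\Gamma_I(w)\neq\emptyset$ when $w\neq1$. Here I would use that the closure $\overline{\Xcal}_w$ is a union of strata $\Xcal_{w'}$ with $w'\preccurlyeq_I w$, by \eqref{eq-closure}. Each stratum has dimension $\ell(w')$ (since $\dim G_w=\ell(w)+\dim P$). Moreover, $\overline{\Xcal}_w\setminus \Xcal_w$ is nonempty when $w\neq 1$: the closed stratum $\Xcal_1$ lies in every closure, because $1\preccurlyeq_I w$ via $1\leq w$. The complement of an open affine-like stratum has pure codimension one, because $\Xcal_w$ is the nonvanishing locus of a Hasse invariant, i.e.\ the complement of a Cartier divisor. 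This holds under the standing Hasse invariant hypothesis (Theorem \ref{thm-Hasse}); alternatively one can invoke the known fact that $\preccurlyeq_I$ is graded by length (\cite{Pink-Wedhorn-Ziegler-zip-data}). Either way, $\overline{\Xcal}_w\setminus\Xcal_w$ contains a stratum of length $\ell(w)-1$, i.e.\ $\Gamma_I(w)\neq\emptyset$.

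The main obstacle is precisely this last point: the existence of a codimension-one stratum in the boundary. I would cite the gradedness of $\preccurlyeq_I$ (each maximal chain has consecutive lengths differing by one) from Pink--Wedhorn--Ziegler rather than rely on Hasse invariants, since the latter need $p^m$ large. The first part is purely formal from Lemma \ref{lemma-low-nei}.
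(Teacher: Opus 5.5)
Your proof of the inequality is correct and is essentially the paper's argument. The paper re-derives the content of Lemma~\ref{lemma-low-nei}~\eqref{lemma-low-nei-item2} on the spot: surjectivity of $\pi_{P_0}\colon \overline{\Fcal}^{(P_0)}_w\to\overline{\Xcal}_w$ together with Lemma~\ref{lem-length-ineq} gives, for each $w'\in\Gamma_I(w)$, some $w_1\in\Gamma_{I_0}(w)$ with $\pi_\Zcal(w_1)=w'$. It then uses, as your section-of-$\pi_\Zcal$ argument does, that $w'$ is determined by $w_1$. Citing the lemma directly is legitimate, since $\Zcal$ is of Frobenius-type in this section.

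Your justification of the final claim does not hold up as written.
\begin{itemize}
\item There is no standing Hasse invariant hypothesis here. $\Zcal$ is an arbitrary Frobenius-type datum, and Theorem~\ref{thm-Hasse} needs $p^m\geq d$. You could rescue this route by passing to $\Zcal_m$ with $m\equiv 1\pmod r$ and $m$ large, since $\Gamma_I(w)$ depends only on $(W,I,\sigma)$, but you do not say so.
\item The gradedness of $\preccurlyeq_I$ is an outside fact with no precise reference, and the paper does not use it.
\end{itemize}

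In fact no geometry is needed. Suppose $w\neq 1$, pick a reduced expression $w=s_1\cdots s_k$, and set $w'\colonequals ws_k$.
\begin{itemize}
\item If some simple reflection $s\in W_I$ had $\ell(sw')<\ell(w')$, then $\ell(sw)\leq \ell(sw')+1<\ell(w)$, contradicting $w\in{}^IW$. So $w'\in{}^IW$.
\item We have $\ell(w')=\ell(w)-1$ and $w'\leq w$.
\item Taking $x=1$ in the definition of $\preccurlyeq_\Zcal$ gives $w'\preccurlyeq_\Zcal w$, so $w'\in\Gamma_I(w)$.
\end{itemize}
Hence $\Gamma_I(w)\neq\emptyset$, which is exactly what the paper's ``in particular'' tacitly uses.
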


\begin{proof}
Consider the proper map $\pi_{P_0}\colon \overline{\Fcal}^{(P_0)}_{w}\to \overline{\Xcal}_w$. This map is surjective and maps $\Fcal^{(P_0)}_{w}$ to $\Xcal_w$. By the inequality of Lemma \ref{lem-length-ineq}, we deduce that for any lower neighbor $w'\in \Gamma_I(w)$, there exists an element $w_1\in \Gamma_{I_0}(w)$ such that the image of the stratum $\Fcal^{(P_0)}_{w_1}$ intersects $\Xcal_{w'}$. In particular, $w_1$ is small. Since $w'$ is uniquely determined by $w_1$ (because $\pi_{P_0}(\overline{\Fcal}^{(P_0)}_{w_1})=\overline{\Xcal}_{w'}$), we obtain the desired inequality.
\end{proof}

\subsection{Exponent raising}\label{sec-expo-raising}
When Hasse invariants exist on all zip strata, we can combine Proposition \ref{prop-same-l} with Lemma \ref{lem-qpoint-single-im} to conclude that $\Pi_\Zcal(w)=\{w_1\}$ where $w_1=\pi_\Zcal(w)$. In this section, we use what we call the ``exponent raising method'' to show that the assumption on the existence of Hasse invariants is superfluous.

Let $\Zcal$ be a Frobenius-type zip datum. Fix an integer $r\geq 1$ such that $T$ splits over $\FF_{p^r}$. We then have $\varphi^r(L)=L$. Hence, for any integer $m\equiv 1\pmod{r}$, we have $\varphi^m(L)=M$. Thus, we can consider the zip datum $\Zcal_m\colonequals(G,P,Q,L,M,\varphi^m)$ for each such $m\geq 1$.

\begin{lemma}\label{lem-same-stratum-m}
Let $x\in G$ and $w\in {}^I W$. Assume that $x\in G_{\Zcal,w}$. Then there exists $s\geq 1$ such that $x\in G_{\Zcal_m,w}$ for all $m\equiv 1\pmod{rs}$.
\end{lemma}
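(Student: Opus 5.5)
Since $\Zcal$ is of Frobenius-type, $G_{\Zcal,w}$ is a single $E_\Zcal$-orbit, namely that of $wz^{-1}$. Hence $x=a\,wz^{-1}\,b^{-1}$ for some $(a,b)\in E_\Zcal$, with $a\in P$, $b\in Q$ and $\varphi(\overline{a})=\overline{b}$. The plan is to show that, after passing to suitable $m$, this same pair can be modified into a pair in $E_{\Zcal_m}$ still carrying $wz^{-1}$ to $x$. Equivalently, it suffices to produce $(a',b')\in E_{\Zcal_m}$ with $a'wz^{-1}b'^{-1}=x$, since $G_{\Zcal_m,w}\supseteq E_{\Zcal_m}\cdot(wz^{-1})$.

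The key step is to choose $s$ from finiteness of rational points. Everything ($x$, $a$, $b$, and the representative $wz^{-1}$) is defined over some finite field. So pick $s\geq 1$ such that $a,b,x$ are all $\FF_{p^{rs}}$-rational; this requires $r\mid rs$ automatically, and $T$ and hence $L,M,P,Q$ are already defined over $\FF_{p^r}$.

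For $m\equiv 1 \pmod{rs}$, write $m=1+krs$. Then $\varphi^m=\varphi\circ\varphi^{krs}$, and $\varphi^{krs}$ is a power of the $p^{rs}$-Frobenius, up to the fixed power defining $\varphi$. More precisely, if $\varphi$ is the $p^e$-Frobenius, then $\varphi^{krs}$ is the $p^{ekrs}$-Frobenius, which fixes every $\FF_{p^{rs}}$-point. Hence $\varphi^m(\overline{a})=\varphi(\varphi^{krs}(\overline{a}))=\varphi(\overline{a})=\overline{b}$, because $\overline{a}\in L(\FF_{p^{rs}})$, as the Levi projection is defined over $\FF_{p^r}$. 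Therefore $(a,b)\in E_{\Zcal_m}$, and $x=(a,b)\cdot wz^{-1}\in E_{\Zcal_m}\cdot(wz^{-1})$.

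It remains to check that $E_{\Zcal_m}\cdot(wz^{-1})$ is the stratum $G_{\Zcal_m,w}$. Here one must make sure the element $z$ is the same for $\Zcal_m$. This holds because $z=\varphi(w_{0,I})w_0$ depends only on the action on $W$. Since $\varphi^r$ acts trivially on $W$ ($T$ is split over $\FF_{p^r}$), $\varphi^m$ and $\varphi$ induce the same automorphism of $W$. Hence $z$, $J$, $\psi$ and ${}^IW$ coincide for $\Zcal$ and $\Zcal_m$, and by the definition in \S\ref{subsubsec-zip-strata} we get $wz^{-1}\in G_{\Zcal_m,w}$.

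The main (mild) obstacle is this bookkeeping: checking that the representatives $\dot w$, $\dot z$ and the zip datum's auxiliary data are unchanged when $\varphi$ is replaced by $\varphi^m$, and that these representatives, defined over the prime field via a Chevalley system, lie in $G(\FF_{p^{rs}})$. Apart from this, the argument is the direct rationality computation above.
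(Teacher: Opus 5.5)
Your proof is correct and is the paper's argument: write $x=awz^{-1}b^{-1}$ with $(a,b)\in E_\Zcal$, choose $s$ so that $\overline{a}$ is rational over a suitable finite field, and conclude that $\varphi^m(\overline{a})=\varphi(\overline{a})=\overline{b}$ for $m\equiv 1\pmod{rs}$, so $(a,b)\in E_{\Zcal_m}$. The extra bookkeeping you add (that $z$, the representatives and $P,Q,L,M$ are unchanged for $\Zcal_m$, and that only the inclusion $E_{\Zcal_m}\cdot(wz^{-1})\subseteq G_{\Zcal_m,w}$ is needed) is left implicit in the paper's definition of $\Zcal_m$; also, only the rationality of $\overline{a}$ is used, so requiring $b$ and $x$ to be rational is harmless but unnecessary.
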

\begin{proof}
We can write $x=awz^{-1}b^{-1}$ for some $(a,b)\in E_{\Zcal}$. Let $s\geq 1$ such that $\overline{a}\in L$ is defined over $\FF_{p^s}$. We thus have $\varphi^m(\overline{a})=\overline{b}$ for all $m\equiv 1\pmod{rs}$, hence $(a,b)\in E_{\Zcal_m}$. The result follows.
\end{proof}

\begin{proposition}\label{prop-same-stratum-m}
Let $w\in W$ and $w_1\in \Pi_{\Zcal}(w)$. There exists $r_1\geq 1$, multiple of $r$, such that $w_1\in \Pi_{\Zcal_m}(w)$ for all $m\equiv 1\pmod{r_1}$. 
\end{proposition}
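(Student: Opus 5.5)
Unwinding the definition, $w_1\in\Pi_\Zcal(w)$ means there is a point of $\Fcal^{(B)}_{\Zcal,w}$ whose image under $\pi_B$ lies in $\Xcal_{\Zcal,w_1}$. In terms of the presentation $\Fcal^{(B)}_{\Zcal}=[E'_{\Zcal,B}\backslash G]$, this is an element $x\in G_{\Zcal_B,w}=E_{\Zcal_B}\cdot(BwBz^{-1})$ with $x\in G_{\Zcal,w_1}$. The plan is to treat both memberships exactly as in Lemma \ref{lem-same-stratum-m}. Membership in $G_{\Zcal,w_1}$ is witnessed by a pair $(a,b)\in E_\Zcal$ with $x=aw_1z^{-1}b^{-1}$. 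Membership in $G_{\Zcal_B,w}$ is witnessed by a pair $(c,d)\in E_{\Zcal_B}$ and an element $y\in BwBz^{-1}$ with $x=cyd^{-1}$. We then choose $m$ so that both pairs remain in the zip groups for the exponent $\varphi^m$.

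\textbf{Step 1.} We check that the zip data $\Zcal_{m,B}$ attached to $\Zcal_m$ and the parabolic $B$ make sense for $m\equiv 1\pmod r$. The datum $\Zcal_B$ is $(G,B,{}^zB,T,T,\varphi)$. Since $\varphi^m(T)=T$ and $\varphi^m(B)=B$, we get $\Zcal_{m,B}=(G,B,{}^zB,T,T,\varphi^m)$. In particular $BwBz^{-1}$ is the same set for every $m$, so $G_{\Zcal_{m,B},w}=E_{\Zcal_{m,B}}\cdot(BwBz^{-1})$.

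\textbf{Step 2.} Choose $s\ge 1$ so that the Levi components $\overline a\in L$ and $\overline c\in T$ are $\FF_{p^s}$-points. Then for $m\equiv 1\pmod{rs}$ we have $\varphi^m(\overline a)=\varphi(\overline a)=\overline b$, and likewise $\varphi^m(\overline c)=\overline d$. Hence $(a,b)\in E_{\Zcal_m}$ and $(c,d)\in E_{\Zcal_{m,B}}$. It follows that $x\in G_{\Zcal_m,w_1}\cap G_{\Zcal_{m,B},w}$. The zip data $\Zcal_m$ are still of Frobenius-type, with $\varphi^m$ a power of Frobenius. Their strata are still indexed by the same sets ${}^IW$ and $W$, and the element $z$ is unchanged because $\varphi^m$ acts on $W$ as $\varphi$ does. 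Therefore $x$ defines a point of $\Fcal^{(B)}_{\Zcal_m,w}$ whose image lies in $\Xcal_{\Zcal_m,w_1}$, which gives $w_1\in\Pi_{\Zcal_m}(w)$. Taking $r_1=rs$ completes the argument.

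\textbf{Main obstacle.} The only delicate point is the bookkeeping of identifications: that $\Zcal_m$ and $\Zcal_{m,B}$ share with $\Zcal$ the same $P,Q,L,M$, the same $z$, and the same labelling of strata. Equivalently, one needs that $\varphi^m$ and $\varphi$ induce the same automorphism of $W$ and of $(B,T)$, which holds since $m\equiv 1\pmod r$ and $T$ splits over $\FF_{p^r}$. A further subtlety is to use the description of $\Fcal^{(B)}$ as the quotient $[E'\backslash G]$, so that a point of $\Fcal^{(B)}_w$ mapping into $\Xcal_{w_1}$ is literally an element of $G$ lying in both sets; this avoids any issue with non-closed points.
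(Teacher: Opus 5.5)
Your proof is correct and follows essentially the same route as the paper. You take a point $x$ lying in both the $B$-flag stratum of $w$ and $G_{\Zcal,w_1}$, then apply the exponent-raising argument of Lemma \ref{lem-same-stratum-m} to its $E_\Zcal$-witness. The only difference is that your second witness $(c,d)\in E_{\Zcal_B}$ is redundant: as the paper notes, $G_{\Zcal_B,w}=E_{\Zcal_B}\cdot(BwBz^{-1})=BwBz^{-1}$, which does not depend on $m$. So it suffices to choose $s$ for $\overline{a}$ alone.
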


\begin{proof}
By assumption, $\Xcal_{\Zcal,w_1}$ intersects $\pi_{\Zcal,B}(\Fcal^{(B)}_{\Zcal,w})$. Hence there exists $x\in G_{\Zcal,w_1}\cap (BwBz^{-1})$. By Lemma \ref{lem-same-stratum-m}, there exists $r_1$, a multiple of $r$, such that $x\in G_{\Zcal_m,w_1}\cap (BwBz^{-1})$ for all $m\equiv 1\pmod{r_1}$. It follows that $\Xcal_{\Zcal_m,w_1}$ intersects the image of $\Fcal^{(B)}_{\Zcal_m,w}$. The result follows.
\end{proof}

\begin{corollary}\label{coro-small-m}
If $w\in W$ is $\Zcal$-small, there exists $r_1\geq 1$, multiple of $r$, such that $w$ is $\Zcal_m$-small for all $m\equiv 1\pmod{r_1}$.    
\end{corollary}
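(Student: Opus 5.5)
Set $w_1\colonequals \pi_\Zcal(w)$, so that $\ell(w_1)=\ell(w)$ by smallness. The plan is to combine Proposition \ref{prop-same-stratum-m} with the length inequality of Lemma \ref{lem-length-ineq}, the latter applied to the zip datum $\Zcal_m$.

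First, note that each $\Zcal_m$ (for $m\equiv 1 \pmod r$) is again of Frobenius-type: it is attached to $(G,\mu,\varphi^m)$, since $\varphi^m(L)=M$ and $Q=M\cdot{}^zB$ with the same $z$. Here one should check that $z=\varphi^m(w_{0,I})w_0$ is unchanged. This holds because $\varphi^r$ acts trivially on $W$ once $T$ is split over $\FF_{p^r}$, so $\varphi^m=\varphi$ on $W$. Consequently, all results of \S\ref{subsec-small-strata} apply to $\Zcal_m$. In particular, Lemma \ref{lem-length-ineq} gives $\ell(\pi_{\Zcal_m}(w))\leq \ell(w)$. Also, $W$, ${}^IW$, the Bruhat strata $BwBz^{-1}$ and the lengths $\ell$ do not depend on $m$.

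Second, apply Proposition \ref{prop-same-stratum-m} to $w_1\in\Pi_\Zcal(w)$. This gives $r_1$, a multiple of $r$, such that $w_1\in \Pi_{\Zcal_m}(w)$ for all $m\equiv 1\pmod{r_1}$. By Lemma \ref{lem-unique-maxlen} (and its proof), $\pi_{\Zcal_m}(w)$ is the unique element of $\Pi_{\Zcal_m}(w)$ of maximal length, and every other element $w''$ satisfies $w''\preccurlyeq_{\Zcal_m}\pi_{\Zcal_m}(w)$. Hence $\ell(w_1)\leq \ell(\pi_{\Zcal_m}(w))$; alternatively one can simply say that $w_1$ lies in a closure and so has length at most that of the maximal element. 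Combining the inequalities gives
\[
\ell(w)=\ell(w_1)\leq \ell(\pi_{\Zcal_m}(w))\leq \ell(w),
\]
so equality holds throughout and $w$ is $\Zcal_m$-small.

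The only point I expect to need care is verifying that $\Zcal_m$ really is a Frobenius-type zip datum with the same combinatorial data $(I,J,z,\psi)$. This is what lets us transport Lemma \ref{lem-length-ineq} and the length comparison within $\Pi_{\Zcal_m}(w)$. It follows from $\varphi^{r}$ acting trivially on $X^*(T)$, and hence on $W$, when $T$ is $\FF_{p^r}$-split. Note also that the partial order $\preccurlyeq_{\Zcal_m}$ refines the length order: if $w''\preccurlyeq w'$ with $w''\neq w'$, then $\ell(w'')<\ell(w')$, by the closure relation \eqref{eq-closure} and a dimension count.
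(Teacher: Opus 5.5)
Your proof is correct and is exactly the argument the paper leaves implicit (the corollary is stated without proof right after Proposition \ref{prop-same-stratum-m}): apply that proposition to $w_1=\pi_\Zcal(w)$ to get $\ell(w)=\ell(w_1)\leq \ell(\pi_{\Zcal_m}(w))$, then close the chain with Lemma \ref{lem-length-ineq} applied to the Frobenius-type datum $\Zcal_m$. Your check that $\varphi^m$ agrees with $\varphi$ on $W$, so that $z$, $I$ and the cells $BwBz^{-1}$ are unchanged, is the right justification for using that lemma for $\Zcal_m$.
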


We obtain the following result, available for all zip data of Frobenius-type $\Zcal$, without any assumption on the existence of Hasse invariants.

\begin{proposition}\label{prop-small-image}
If $w\in W$ is $\Zcal$-small, then the following hold:
\begin{enumerate}
    \item\label{prop-small1} $\Pi_\Zcal(w)=\{w_1\}$ for $w_1=\pi_\Zcal(w)$.
    \item\label{prop-small2} $\Fcal_{\Zcal,w}^{(B)}$ is pointwise.
\end{enumerate}
\end{proposition}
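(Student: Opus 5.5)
The idea is to reduce to the case where Hasse invariants exist, by replacing $\varphi$ with a large power $\varphi^m$ with $m\equiv 1 \pmod{r_1}$. For such $\varphi^m$, Theorem \ref{thm-Hasse} applies once $p^m\geq d$. Along the way we must check that the relevant strata and maps do not change.

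First, take any $w_2\in \Pi_\Zcal(w)$. By Proposition \ref{prop-same-stratum-m}, there is $r_2$ such that $w_2\in \Pi_{\Zcal_m}(w)$ for all $m\equiv 1\pmod{r_2}$. The same holds for $w_1=\pi_\Zcal(w)$ with some $r_1'$. By Corollary \ref{coro-small-m}, $w$ is also $\Zcal_m$-small for $m\equiv 1 \pmod{r_1''}$. Let $R$ be a common multiple of $r_1',r_1'',r_2$, and choose $m\equiv 1\pmod R$ so large that Theorem \ref{thm-Hasse} gives Hasse invariants on all zip strata of $\Xcal_{\Zcal_m}$. A faithful representation of $G$ provides the required $\rho$, and $d$ is independent of $m$.

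For this $m$, the element $w$ is $\Zcal_m$-small, so $\Fcal^{(B)}_{\Zcal_m,w}$ is quasi-pointwise by Proposition \ref{prop-same-l}. Lemma \ref{lem-qpoint-single-im} then gives $\Pi_{\Zcal_m}(w)=\{\pi_{\Zcal_m}(w)\}$. Both $w_1$ and $w_2$ lie in $\Pi_{\Zcal_m}(w)$, so $w_1=w_2$. Since $w_2$ was arbitrary, $\Pi_\Zcal(w)=\{w_1\}$, which is (1). A remark on whether $\ell$ is preserved: this does not matter, since membership in a singleton is all we use.

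For (2), the first step is to show that $\Fcal^{(B)}_{\Zcal,w}$ is quasi-pointwise (Proposition \ref{prop-same-l}) and maps into $\Xcal_{\Zcal,w_1}$ by (1). Pointwise means that $E_\Zcal$ acts transitively on $H^{(B)}_{\Zcal,w}$. By (1), the projection $\pr_1$ sends all of $H^{(B)}_{\Zcal,w}$ into $G_{\Zcal,w_1}$, which is a single $E_\Zcal$-orbit because $\Zcal$ is of Frobenius-type. The varieties $H^{(B)}_{\Zcal,w}$ and $G_{\Zcal,w_1}$ are irreducible of the same dimension, by \eqref{dim-H} and smallness. Hence $\pr_1\colon H^{(B)}_{\Zcal,w}\to G_{\Zcal,w_1}$ is a surjective equivariant map between equidimensional irreducible varieties.

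The main obstacle is that Lemma \ref{lem-transitiveG} needs generic quasi-finiteness, and we want every orbit to be open, not just generic fibres to be finite. I would argue orbit by orbit instead. Any $E_\Zcal$-orbit $C\subseteq H^{(B)}_{\Zcal,w}$ surjects onto $G_{\Zcal,w_1}$, since the target is a single orbit. Therefore $\dim C\geq \dim G_{\Zcal,w_1}=\dim H^{(B)}_{\Zcal,w}$. So every orbit is open in the irreducible $H^{(B)}_{\Zcal,w}$, and there is exactly one orbit. Thus $\Fcal^{(B)}_{\Zcal,w}$ is pointwise. The genuinely delicate input is (1): surjectivity of every orbit needs the whole stratum to map into $\Xcal_{\Zcal,w_1}$, which is exactly what the exponent raising provides.
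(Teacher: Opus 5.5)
Your proof is correct and follows the paper's argument: exponent raising via Proposition \ref{prop-same-stratum-m} and Corollary \ref{coro-small-m} to some $\Zcal_m$ with Hasse invariants, then Proposition \ref{prop-same-l} and Lemma \ref{lem-qpoint-single-im} for (1), and a transitivity argument for $\pr_1\colon H^{(B)}_{\Zcal,w}\to G_{\Zcal,w_1}$ for (2). The ``obstacle'' you flag in (2) is not a real one: your orbit-by-orbit dimension count is exactly the proof of Lemma \ref{lem-transitiveG}, where generic quasi-finiteness only serves to give $\dim X=\dim Y$, and you obtain that equality directly from smallness and \eqref{dim-H}.
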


\begin{proof}
Let $w_2\in \Pi_{\Zcal}(w)$. By combining Proposition \ref{prop-same-stratum-m} and Corollary \ref{coro-small-m}, there exists $r_2\geq 1$ multiple of $r$, such that for all $m\equiv 1 \pmod{r_2}$, the element $w$ is $\Zcal_m$-small and one has $w_1, w_2\in \Pi_{\Zcal_m}(w)$. In particular, $w$ is quasi-pointwise with respect to $\Zcal_m$ and $w_1=\pi_{\Zcal_m}(w)$. Choosing $m$ large enough, we can apply Lemma \ref{lem-qpoint-single-im} to deduce $w_2=w_1$. For the second assertion, note that \eqref{prop-small1} gives a map $\pi_{\Zcal,B}\colon H_{\Zcal,w}^{(B)}\to G_{\Zcal,w_1}$. Since $w$ is small, this map is generically quasi-finite. By Lemma \ref{lem-transitiveG}, we deduce that $E_\Zcal$ acts transitively on $H_{\Zcal,w}^{(B)}$, hence $\Fcal^{(B)}_{\Zcal,w}$ is pointwise.
\end{proof}

\subsection{Characterizations of smallness in terms of roots}\label{subsec-small-roots}
%\lorenzo{do we need Frobenius-type here?}\JS{Yes, I think so. In the general case, the compact $w$-cycles (i.e.\ cycles that only involve compact roots) have to be accounted for. Intuitively, I would say that in the case $\varphi=\id$, the dimension of the stabilizer is "number of compact $w$-cycles" + number of $w$-sequences of type $\NC^+ \xrightarrow{C} \NC^-$. The former does not account for positive dimension in the Frobenius case because $\varphi^m(x)=x$ has only finitely many solutions when $\varphi=$ Frobenius}
Assume here that \(\Zcal\) is of Frobenius-type. Recall that we write $\sigma$ for the action of the corresponding Frobenius homomorphism on \(W\) and \(\Phi\). We use the following terminology: elements of $\Phi_L$ (resp.\ $\Phi\setminus \Phi_L$) are called \emph{compact} (resp.\ \emph{non-compact}) roots. 
%In what follows, the letter "$\C$" will stand for "compact", and "$\NC$" will mean "non-compact". Moreover, we add the superscript $+$ (resp.\ $-$) to denote positive (resp.\ negative) roots. For example, $\NC^+$ denotes non-compact positive roots, i.e.\ elements of $\Phi^+\setminus \Phi^+_L$. Any of the symbols $\C,\NC,\C^+,\C^-,\NC^+,\NC^-, \Phi^+,\Phi^-$ will be called types of roots.
Fix $w\in W$. A sequence of roots $S=(\beta_0,\dots,\beta_n)$ is called a \emph{$w$-sequence} if
\begin{enumerate}
    \item $\beta_{i+1}=w \sigma(\beta_i)$ for all $0\leq i \leq n-1$,
    \item $\beta_0$ is non-compact positive.
    \item $\beta_n$ is non-compact.
    \item $\beta_1,\dots, \beta_{n-1}$ are compact.
\end{enumerate}
For each positive non-compact root $\beta$, there is a unique $w$-sequence such that $\beta_0=\beta$, we denote it by $S_w(\beta)$. We say that $S_w(\beta)$ is \emph{positive} (resp.\ \emph{negative}) if the last root in the sequence $\beta_n$ is positive (resp.\ negative). Write $\Scal_w^{+}$ (resp.\ $\Scal_w^-$) for the set of positive (resp.\ negative) $w$-sequences. Note that $|\Scal_w^+|+|\Scal_w^-|$ is the number of positive non-compact roots, i.e.\ $\dim(R_{\textrm{u}}(Q)) =\dim(R_{\textrm{u}}(P))$. For any $w_1\in W_I$ and $w\in W$, one has $S_{w_1w\sigma(w_1)^{-1}}(w_1 \beta)=w_1 S_w(\beta)$ (note that $w_1\beta$ is again positive non-compact since $w_1\in W_I$). It follows that
\begin{equation}\label{Splus-eq}
\Scal^+_{w_1w\varphi(w_1)^{-1}}=w_1\Scal^+_w
\end{equation}
and similarly for negative sequences. We will use repeatedly the fact that an element $w\in W$ lies in ${}^I W$ if and only if $w^{-1}(\Phi^+_L)\subseteq \Phi^+$.

%Let $T_1,T_2,T_3$ be three types of roots. We say that a $w$-sequence of roots $S=(\beta_0,\dots,\beta_n)$ has type $T_1 \xrightarrow{T_2} T_3$ if it satisfies the following conditions:
%\begin{enumerate}
%    \item $\beta_0$ is of type $T_1$,
%    \item $\beta_n$ is of type $T_3$,
%    \item $\beta_1,\dots, \beta_{n-1}$ are of type $T_2$.
%\end{enumerate}
%For example, we will consider $w$-sequences of type $\NC^+ \xrightarrow{\C} \NC^-$.

\begin{lemma}\label{lem-cpt-neg}
If $w=w'z^{-1}$ with $w'\in {}^I W$, then all compact roots appearing in a $w$-sequence are negative.
\end{lemma}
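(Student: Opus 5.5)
The plan is to induct along the sequence, using only three facts: the formula for $z$, the stability of positivity under $\sigma$, and the characterization $w'\in {}^IW \iff w'^{-1}(\Phi_L^+)\subseteq \Phi^+$ recalled just above.

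Let $S=(\beta_0,\dots,\beta_n)$ be a $w$-sequence. Since $z=\sigma(w_{0,I})w_0$ and $w_0$ is an involution, we have $z^{-1}=w_0\sigma(w_{0,I})$. Because $\sigma$ is a group automorphism of $W$ compatible with its action on $\Phi$, this gives
\[
\beta_{i+1}=w'\, w_0\, \sigma(w_{0,I})\,\sigma(\beta_i)=w'\bigl(w_0\,\sigma(w_{0,I}\beta_i)\bigr).
\]
Put $\gamma_i\colonequals w_0\sigma(w_{0,I}\beta_i)$, so that $\beta_{i+1}=w'(\gamma_i)$.

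The first step is the following claim: if $\gamma_i$ is negative and $\beta_{i+1}$ is compact, then $\beta_{i+1}$ is negative. Suppose instead that $\beta_{i+1}\in\Phi_L^+$. Then $\gamma_i=w'^{-1}(\beta_{i+1})\in w'^{-1}(\Phi^+_L)\subseteq\Phi^+$, because $w'\in{}^IW$, and this contradicts the negativity of $\gamma_i$.

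The second step reduces the negativity of $\gamma_i$ to a condition on $\beta_i$. Since $(B,T)$ is $\sigma$-stable, $\sigma$ preserves $\Phi^+$, and $w_0$ exchanges $\Phi^+$ and $\Phi^-$. Hence $\gamma_i$ is negative if and only if $w_{0,I}\beta_i$ is positive.

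The last step is an induction on $i$, from $0$ to $n-2$, showing that $w_{0,I}\beta_i$ is positive; the claim then gives that $\beta_{i+1}$ is negative whenever it is compact.
\begin{itemize}
\item For $i=0$: $\beta_0$ is non-compact positive, and $w_{0,I}\in W_I$ permutes $\Phi^+\setminus\Phi_L^+$. So $w_{0,I}\beta_0$ is positive.
\item For $1\leq i\leq n-1$: by the induction hypothesis $\beta_i\in\Phi_L^-$, and $w_{0,I}$, being the longest element of $W_I$, maps $\Phi_L^-$ onto $\Phi_L^+$. So $w_{0,I}\beta_i$ is positive.
\end{itemize}
This shows that $\beta_1,\dots,\beta_{n-1}$ are all negative, as required.

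There is no real obstacle. The only point needing care is the sign convention: here $\Phi^+$ corresponds to $B^+$, and the criterion for ${}^IW$ is taken as stated in the paper. This is also the one place where the hypothesis $w'\in{}^IW$ enters.
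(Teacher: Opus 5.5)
Your proof is correct and follows essentially the same route as the paper's. Both argue by induction along the sequence: rewrite $w'^{-1}(\beta_{i+1})=w_0\sigma(w_{0,I}\beta_i)$, note that $w_{0,I}$ sends $\beta_0$ and every compact negative root to a positive root so that $w_0\sigma$ makes the result negative, and conclude with $w'^{-1}(\Phi_L^+)\subseteq\Phi^+$. The only difference is that you spell out the sign bookkeeping ($\sigma$ preserves $\Phi^+$, $w_0$ reverses it), which the paper leaves implicit.
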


\begin{proof}
Let $(\beta_0,\dots,\beta_n)$ be a $w$-sequence. We may assume $n\geq 2$. Note that $w'^{-1}\beta_1=z^{-1}\sigma(\beta_0)=w_0\sigma(w_{0,I}\beta_0)$. Since $\beta_1$ is compact and $w'\in {}^I W$, the roots $w'^{-1}(\beta_1)$ and \(\beta_1\) have the same sign. Since $\beta_0$ is positive non-compact, so is $w_{0,I}\beta_0$. It follows that $\beta_1$ is negative. Assume now that $\beta_i\in \Phi^-_L$ with $i+1<n$. Again $w'^{-1}(\beta_{i+1})=w_0\sigma(w_{0,I}\beta_i)$. Since $\beta_i$ is compact negative, $w'^{-1}(\beta_{i+1})$ is negative, and then so is $\beta_{i+1}$ since $w'\in {}^I W$.
\end{proof}

%Write $\Stab_E(g)$ for the stabilizer of an element $g\in G$ with respect to the action of $E\colonequals E_\Zcal$ on $G$.

\begin{lemma}\label{lem-dim-stab-1}
For any $w\in W$, the dimension of $\Stab_E(w)$ is equal to $|\Scal_w^-|$.
\end{lemma}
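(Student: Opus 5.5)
The plan is to compute $\Stab_E(w)$ directly as a subgroup of $P$ via $\pr_1$, by unwinding the defining equation root by root. The orbits of $\alpha\mapsto w\sigma(\alpha)$, cut at non-compact roots, are exactly the $w$-sequences. A pair $(a,b)\in E$ fixes $w$ iff $b=w^{-1}aw$. So $\Stab_E(w)$ is identified with
\[\{a\in P : w^{-1}aw\in Q \text{ and the Levi component of } w^{-1}aw \text{ in } M \text{ equals } \varphi(\overline{a})\}.\]
Since $\varphi$ is a power of Frobenius, its differential vanishes. So the Lie algebra gives no information about the $\overline{a}$-part, and we must argue with points.

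\textbf{Step 1.} Write $a=\overline{a}\,u$ with $\overline{a}\in L$ and $u\in R_{\mathrm u}(P)=\prod_{\beta\in\Phi^+\setminus\Phi^+_L}U_{-\beta}$. I will treat the sign conventions with care, since $B\subseteq P$ and the positive roots are those in $B^+$. Then decompose $w^{-1}aw$ according to the $T$-weights $w^{-1}\gamma$. The condition $w^{-1}aw\in Q$ together with the Levi condition translates into recursive constraints: the component of $a$ along a root $\gamma$ determines, through $\varphi$ (which sends $U_\gamma$ to $U_{\sigma\gamma}$ by a $p$-power map on the coordinate), the component along $w\sigma(\gamma)$. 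The chains $\gamma\to w\sigma(\gamma)\to\cdots$ continue while the roots stay compact, because only the Levi part is linked by $\varphi$. They stop at non-compact roots. These chains are precisely the $w$-sequences, each starting at some $\beta_0\in\Phi^+\setminus\Phi^+_L$.

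\textbf{Step 2.} Analyse each sequence $S_w(\beta)$ separately, modulo a filtration argument by height (or a suitable ordering of unipotent coordinates) that makes the commutator terms triangular.
\begin{itemize}
\item If $\beta_n$ is negative non-compact, the terminal root group lies in $R_{\mathrm u}(Q)$ after conjugation, so the last coordinate is unconstrained. It then freely determines the previous ones through the $p$-power relations. This contributes one free parameter, i.e.\ a one-dimensional contribution.
\item If $\beta_n$ is positive, the terminal coordinate is forced to be $0$. Propagating back through the (bijective on points) Frobenius relations forces the whole chain to vanish, so there is no contribution.
\item The compact cycles that never reach a non-compact root give equations of Lang type, $x=\varphi^k(x)$-like. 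These have finitely many solutions and contribute dimension $0$; here Frobenius-type is essential.
\end{itemize}

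\textbf{Step 3.} Assemble the pieces. Exhibit a morphism from $\Stab_E(w)$ to $\prod_{S\in\Scal^-_w}\mathbb{G}_a$ with finite fibers that is surjective, for example by projecting onto the coordinate of the last root of each negative sequence. Conclude that $\dim\Stab_E(w)=|\Scal^-_w|$.

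The main obstacle is Step 2's decoupling. Root group coordinates do not multiply independently, so commutators of $U_\gamma$'s mix different sequences. I would handle this as in Pink--Wedhorn--Ziegler \S8, using an induction on a filtration of $R_{\mathrm u}(P)$ by normal subgroups with abelian quotients. The aim is to show that on each graded piece the equations become the linear/Frobenius-linear ones described above, which yields the dimension count by a fibration argument.
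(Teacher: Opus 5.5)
Your count of free parameters is the right heuristic, but the proposal does not prove the lemma. All of the content sits in Step~2, and Step~2 is only asserted.

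Since $b=w^{-1}aw\in Q$, the stabilizer lives in $P\cap wQw^{-1}$. This group is not unipotent: it contains $T$ and the reductive group with root system $\Phi_L\cap w\Phi_M$. On that part, root coordinates are not coordinates. The condition $\overline{b}=\varphi(\overline{a})$ couples the torus and compact root groups of both signs through a non-abelian Lang--Steinberg type equation, not through independent scalar equations $x=\varphi^k(x)$.

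On the unipotent side, the chains $\gamma\mapsto w\sigma(\gamma)$ run mostly through compact roots, i.e.\ through $L$ rather than $R_{\mathrm u}(P)$, and they do not respect height. So a height filtration of $R_{\mathrm u}(P)$ does not make the system triangular. Finite fibres and surjectivity of your map in Step~3 are exactly what remains unproved. The authors themselves, in the remark after the lemma, only say they \emph{believe} the explicit map $U_{X_w}\to\Stab_E(w)^\circ$ is bijective, and they avoid using it.

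You also locate the free parameter wrongly. For $n\ge 2$ one has $w^{-1}\beta_n=\sigma(\beta_{n-1})\in\Phi_M$, so $w^{-1}U_{\beta_n}w\subseteq M$, not $R_{\mathrm u}(Q)$. The parameter is the component of $b$ along $\sigma(\beta_0)$, a root of $R_{\mathrm u}(Q)$; equivalently, it is the coordinate of $a$ along $\beta_1$ (note $a$ has no coordinate along $\beta_0$). The later coordinates are its $p$-power images. This does not change the count, since these maps are bijective on points. Nor do the omitted segments of $w\sigma$-orbits that start at negative non-compact roots, which only force vanishing. But the mechanism as you state it is incorrect.

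The idea you are missing is that no stabilizer computation is needed. If $w'\colonequals wz\in{}^IW$, the map $\pi_B\colon\Fcal^{(B)}_{w'}=[E'\backslash Bw'Bz^{-1}]\to\Xcal_{w'}\simeq[1/\Stab_E(w)]$ is finite by Proposition~\ref{prop-FP0}. Comparing dimensions gives $\dim\Stab_E(w)=\dim R_{\mathrm u}(P)-\ell(w')$. Since $|\Scal^+_w|+|\Scal^-_w|=\dim R_{\mathrm u}(P)$, it remains to show $\ell(w')=|\Scal^+_w|$. This is pure combinatorics: by Lemma~\ref{lem-cpt-neg}, the tails of positive $w$-sequences are exactly the positive roots whose sign is changed by $w'^{-1}$.

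For general $w$, Proposition~\ref{prop-W-representative} gives $w=w_1xv\sigma(w_1)^{-1}$ with $v=v'z^{-1}$, $v'\in{}^IW$, $x\in W_{I_{v'}}$ and $w_1\in W_I$. Then $w$ and $v$ lie in one $E$-orbit, so their stabilizers have the same dimension. It remains to check that $|\Scal^-|$ is unchanged by the two operations:
\begin{itemize}
\item under $w\mapsto w_1w\sigma(w_1)^{-1}$, this is \eqref{Splus-eq};
\item under $v\mapsto xv$, use $v\sigma(x)v^{-1}\in W_{I_{v'}}$. The partial products $(xv)\sigma(xv)\cdots$ then differ from $v\sigma(v)\cdots$ by a left factor in $W_{I_{v'}}\subseteq W_I$, which preserves compactness and the sign of non-compact roots.
\end{itemize}
If you want to keep your direct route, you must actually carry out the decoupling, which is harder than the lemma itself.
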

\begin{proof}
Write $w=w'z^{-1}$ for $w'\colonequals wz$. Assume first that $w'\in {}^IW$ (i.e.\ that $w'$ is minimal). In this case, $\pi_B(\Fcal^{(B)}_{w'})=\Xcal_{w'}$, and the map $\pi_B\colon \Fcal^{(B)}_{w'}\to \Xcal_{w'} \simeq [1/\Stab_E(w)]$ is finite (Proposition \ref{prop-FP0}). Moreover, $\Fcal^{(B)}_{w'}=[E'\backslash Bw'Bz^{-1}]$, hence $\dim(\Stab_E(w))=\dim(E')-\dim(Bw'Bz^{-1})=\dim(R_{\textrm{u}}(P))-\ell(w')$. Thus, it suffices to show that $\ell(w')=|\Scal^+_{w}|$. Since $w'\in {}^IW$, one has $w'^{-1}(\Phi^+_L)\subseteq \Phi^+$. Thus, if the roots $w'^{-1}(\beta)$ and $\beta$ have different signs, then $\beta$ is non-compact. By Lemma \ref{lem-cpt-neg}, all compact roots appearing in a $w$-sequence are negative. Let $(\beta_0,\dots,\beta_n)$ be a $w$-sequence (with $n\geq 1$). Then, $w'^{-1}(\beta_n)=w_0\sigma(w_{0,I}\beta_{n-1})$ is negative (even for $n=1$). Moreover, any positive non-compact root is either the tail of a positive $w$-sequence or the opposite of the tail of a negative $w$-sequence. This shows that the tails of positive $w$-sequences are exactly the positive roots whose sign is changed by $w'^{-1}$. Thus $\ell(w')=\ell(w'^{-1})=|\Scal^+_w|$. % \JS{Note that any positive non-compact root is either the tail of a positive $w$-sequence or the opposite of a negative $w$-sequence. This shows that the tails of positive $w$-sequences are exactly the positive roots whose sign is changed by $w'^{-1}$ Thus $\ell(w')=\ell(w'^{-1})=|\Scal^+_w|$.} Conversely, if \(\beta \in \Phi\) is a root such that \(w'^{-1}(\beta)\in \Phi^-\), consider the unique \(\beta\) such that \(\beta = w\sigma(\beta)\). Then, \(\sigma(\beta) = \sigma(w_{0, I})w_0w'^{-1}(\beta)\) and can find a unique \(\gamma \in \Phi^+\) such that \(\beta = w_{0, I}(\gamma)\). In particular, \(\beta\) is positive if and only if it is non-compact. 

Now, for general $w$, let $v'\in {}^I W$ be the unique element such that $w\in G_{v'}$, and set $v\colonequals v'z^{-1}$. By Proposition \ref{prop-W-representative}, we can write $w=w_1xv\sigma(w_1)^{-1}$ for some $x\in W_{I_{v'}}$ and $w_1\in W_I$. Since $\dim(\Stab_E(w))=\dim(\Stab_E(v))$, it suffices to show that $|\Scal^-_w|=|\Scal^{-}_{v}|$. The transformation $w\mapsto w_1w\sigma(w_1)^{-1}$ does not change the number of negative sequences, by \eqref{Splus-eq}. It remains to check that $|\Scal^-_{xv}|=|\Scal^{-}_{v}|$. Note that the tail of the $v$-sequence $S_v(\beta)$ is the root $v\sigma(v)\cdots \sigma^{n-1}(v)\sigma^{n}(\beta)$ for some $n\geq 1$. Since $v\sigma(I_{v'})=I_{v'}$, we have $v\sigma(x)=yv$ for some $y\in  W_{I_{v'}}$. This implies (by induction on $n$) that we can write
\begin{equation}
    (xv)\sigma(xv)\cdots \sigma^{n-1}(xv) = y v\sigma(v)\cdots \sigma^{n-1}(v)
\end{equation}
for some $y\in W_{I_{v'}}$. We deduce that the $v$-sequence $S_v(\beta)$ is negative if and only if the $xv$-sequence $S_{xv}(\beta)$ is negative. This terminates the proof.
\end{proof}

\begin{remark}

Let $X_w$ denote the set of positive non-compact roots $\beta$ such that the $w$-sequence $S_w(\beta)$ is negative. For $\beta\in X_w$ and $u\in U_\beta$, let $m\geq 1$ be the smallest integer such that $w^m (\beta)$ is non-compact. Consider the pair
\begin{equation}
 \epsilon_\beta(u)\colonequals \left((wuw^{-1})(w^2\varphi(u)w^{-2})\cdots (w^{m}\varphi^{m-1}(u)w^{-m}), u(w\varphi(u)w)\cdots (w^{m-1}\varphi^{m}(u)w^{-(m-1)}) \right).
\end{equation}
It is easy to see that $\epsilon_\beta(u)\in \Stab_E(w)^\circ$. Let $U_X\colonequals \prod_{\beta\in X} U_\beta$. The above construction yields a map $U_X\to \Stab_E(w)^\circ$. We believe that this map is bijective (but we will not need this fact).
\end{remark}

%\JS{I find this surprising, does $\pi_Q(\Stab_E(w))$ decompose as a product of $U_\beta$'s ?} 
%\lorenzo{I think it should. I wanted to rely on \cite[Thm.~8.1]{Pink-Wedhorn-Ziegler-zip-data} to conclude this, but I don't think it is trivial to show that theorem applies in this case. I reckon that this decomposition still holds, though. I'd have to think about this. Does it sound reasonable to you?}\JS{I don't have any intuition. I thought it could sit "diagonally" in the product, if you understand what I mean, i.e.\ one component is $x$, and some other component $x^p$ for example - this would not decompose as a product then. I have to look at some examples.} \lorenzo{Maybe you are right.}
%\lorenzo{Actually, I think this decomposition works by induction using \cite[Prop.~4.11]{Pink-Wedhorn-Ziegler-zip-data}. I can write down the argument inside the proof, if you want.}
%\JS{Thanks. I am computing an example right now.}

%\begin{lemma}\label{lem-dim-stab-2}
%For any $w\in W$, the dimension of the group $\Stab_{E'}(w)$ is equal to the number of $w$-sequences of type $\NC^+ \xrightarrow{C^-} \NC^-$.
%\end{lemma}
%\begin{proof}
%The proof is analogous to that of Lemma \ref{lem-dim-stab-1}.
%\end{proof}

As a consequence of Lemma \ref{lem-dim-stab-1}, we find that for any $w\in W$,
\[\dim(E\cdot (wz^{-1}))=\dim(E)-\dim(\Stab_E(wz^{-1}))=\dim(P)+|\Scal^+_{wz^{-1}}|.\]

\begin{lemma}
For any $w\in W$, we have $|\Scal^+_{wz^{-1}}|\leq \ell(w)$. %\lorenzo{TODO: double check}
\end{lemma}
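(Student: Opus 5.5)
We want $|\Scal^+_{wz^{-1}}|\leq \ell(w)$ for arbitrary $w\in W$. The plan is to compare dimensions: the orbit $E_\Zcal\cdot(wz^{-1})$ lies inside $BwBz^{-1}$ after saturating, and its dimension is controlled by $\dim H^{(B)}_{\Zcal,w}$.

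\emph{Step 1: the orbit dimension.} By the formula derived just above from Lemma \ref{lem-dim-stab-1}, the orbit $C\colonequals E_\Zcal\cdot (wz^{-1})$ has dimension
\[
\dim(C) = \dim(P)+|\Scal^+_{wz^{-1}}|.
\]

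\emph{Step 2: realize $C$ inside the image of the flag stratum.} Consider the point $(wz^{-1},B)\in G\times (P/B)$. Since $G_{\Zcal_B,w}=BwBz^{-1}$ contains $wz^{-1}$, this point lies in $H^{(B)}_{\Zcal,w}$ by \eqref{HwP0-eq}, taking $h=1$. The set $H^{(B)}_{\Zcal,w}$ is $E_\Zcal$-stable, so the orbit $E_\Zcal\cdot (wz^{-1},B)$ is contained in $H^{(B)}_{\Zcal,w}$. The first projection $\pr_1$ is $E_\Zcal$-equivariant and maps this orbit onto $C$. Hence
\[
\dim(C)\leq \dim\bigl(E_\Zcal\cdot (wz^{-1},B)\bigr)\leq \dim(H^{(B)}_{\Zcal,w})=\ell(w)+\dim(P),
\]
using \eqref{dim-H}.

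\emph{Step 3: conclude.} Combining Steps 1 and 2 gives $\dim(P)+|\Scal^+_{wz^{-1}}|\leq \ell(w)+\dim(P)$, and cancelling $\dim(P)$ yields the claim.

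The argument is essentially bookkeeping. The only point needing care is that the formula \eqref{dim-H} holds for every $w\in W={}^{\emptyset}W$, not just for minimal elements. This should follow from $H^{(B)}_{\Zcal,w}$ being a fibration over $P/B$ with fibres isomorphic to $BwBz^{-1}$: we have $\dim(BwB)=\ell(w)+\dim(B)$, so $\dim(H^{(B)}_{\Zcal,w})=\ell(w)+\dim(B)+\dim(P/B)=\ell(w)+\dim(P)$. If one prefers to avoid \eqref{dim-H}, the same bound follows by working directly in $G$. The set $E_\Zcal\cdot(BwBz^{-1})$ is the image of $E_\Zcal\times^{E'_{\Zcal,B}} BwBz^{-1}$, which has dimension $\dim(E_\Zcal)-\dim(E'_{\Zcal,B})+\ell(w)+\dim(B)=\dim(P/B)+\ell(w)+\dim(B)$. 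That set contains $C$, giving the same inequality.
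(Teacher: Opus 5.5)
Your argument is correct, but it takes a different route from the paper's. You get the bound from a dimension count. Lemma \ref{lem-dim-stab-1} gives $\dim(E_\Zcal\cdot wz^{-1})=\dim(P)+|\Scal^+_{wz^{-1}}|$. This orbit is the image under $\pr_1$ of the orbit of $(wz^{-1},B)$, which lies in $H^{(B)}_{\Zcal,w}$, of dimension $\ell(w)+\dim(P)$ by \eqref{dim-H}. Each step checks out, including $wz^{-1}\in BwBz^{-1}=G_{\Zcal_{B},w}$ and your direct verification of \eqref{dim-H} for arbitrary $w\in W$.

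The paper's proof is purely combinatorial. Take a positive $wz^{-1}$-sequence $(\beta_0,\dots,\beta_n)$. The root $z^{-1}\sigma(\beta_0)$ is negative, $z^{-1}\sigma$ preserves the sign of the compact roots $\beta_1,\dots,\beta_{n-1}$, and $\beta_n$ is positive. So for some $0\leq i<n$, the element $w$ sends the negative root $z^{-1}\sigma(\beta_i)$ to the positive root $\beta_{i+1}$. Since the roots $\beta_0,\dots,\beta_{n-1}$ of distinct sequences are distinct, this injects $\Scal^+_{wz^{-1}}$ into the inversion set of $w$.

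What each buys:
\begin{itemize}
\item Your version is the same kind of comparison as in Lemma \ref{lem-length-ineq}. In the equality case it readily shows that $w$ is small: $E_\Zcal\cdot wz^{-1}$ is then a zip stratum of dimension $\ell(w)+\dim(P)$ inside $\pr_1(H^{(B)}_{\Zcal,w})$. This is one direction of Proposition \ref{prop-small-by-chains}.
\item Its cost is the dependence on Lemma \ref{lem-dim-stab-1}, whose proof uses the Frobenius-type hypothesis. For $\varphi=\iden$, compact cycles contribute to stabilizers and that formula fails.
\item The paper's argument uses only that $\sigma$ is an automorphism of the based root datum, so it holds for any such $\sigma$.
\item Its sign-change bookkeeping is also what leads to the criterion in Remark \ref{rmk-2conditions-small}.
\end{itemize}
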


\begin{proof}
For any positive $wz^{-1}$-sequence $(\beta_0,\dots,\beta_n)$ there is at least one  $0 \leq i < n$ such that $z^{-1}(\sigma(\beta_i))$ and $\beta_{i+1}=wz^{-1}(\sigma(\beta_i))$ have different signs. The result follows.
\end{proof}

\begin{proposition}\label{prop-small-by-chains}
Let $w\in W$. The following are equivalent:
\begin{enumerate}
    \item The element $w$ is small.
    \item \label{prop-small-by-chains4} $|\Scal^+_{wz^{-1}}|=\ell(w)$.
\end{enumerate}    
\end{proposition}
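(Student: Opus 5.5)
We compare two dimensions: that of the $E_\Zcal$-orbit of $wz^{-1}$ and that of the zip stratum $G_{\Zcal,\pi_\Zcal(w)}$. The tool is the formula just derived from Lemma \ref{lem-dim-stab-1},
\[\dim(E\cdot (wz^{-1}))=\dim(P)+|\Scal^+_{wz^{-1}}|,\]
together with $\dim(G_{\Zcal,v})=\dim(P)+\ell(v)$ for $v\in{}^IW$. The latter is the same formula applied to $v\in{}^IW$: for minimal $v$, the proof of Lemma \ref{lem-dim-stab-1} gives $|\Scal^+_{vz^{-1}}|=\ell(v)$. We also use that $H^{(B)}_{\Zcal,w}$ has dimension $\ell(w)+\dim(P)$ by \eqref{dim-H}.

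First observe that $wz^{-1}\in BwBz^{-1}=G_{\Zcal_B,w}$, so $(wz^{-1},B)\in H^{(B)}_{\Zcal,w}$. Its image under $\pi_B$ is the point of $\Xcal$ given by the orbit of $wz^{-1}$, i.e.\ the stratum $\Xcal_{\Xi_\Zcal(w)}$. Hence $v\colonequals\Xi_\Zcal(w)\in \Pi_\Zcal(w)$, and so $v\preccurlyeq_\Zcal \pi_\Zcal(w)$ by the proof of Lemma \ref{lem-unique-maxlen}. The orbit $E\cdot(wz^{-1})$ equals $G_{\Zcal,v}$, since zip strata are single orbits in the Frobenius-type case. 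Therefore
\[\ell(v)=|\Scal^+_{wz^{-1}}|.\]

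For (2)$\Rightarrow$(1): assume $|\Scal^+_{wz^{-1}}|=\ell(w)$. Then $\ell(v)=\ell(w)$. Since $v\preccurlyeq_\Zcal\pi_\Zcal(w)$, length is monotone for $\preccurlyeq_\Zcal$ (closure relations are compatible with dimension $\dim(P)+\ell$), and Lemma \ref{lem-length-ineq} holds, we get
\[\ell(w)=\ell(v)\le \ell(\pi_\Zcal(w))\le\ell(w).\]
So $w$ is small.

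For (1)$\Rightarrow$(2): assume $w$ is small. By Proposition \ref{prop-small-image}, $\Fcal^{(B)}_{\Zcal,w}$ is pointwise and $\Pi_\Zcal(w)=\{\pi_\Zcal(w)\}$. Since $v\in\Pi_\Zcal(w)$, we get $v=\pi_\Zcal(w)$, hence $\ell(v)=\ell(w)$ and thus $|\Scal^+_{wz^{-1}}|=\ell(w)$.

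The main point requiring care is that $(wz^{-1},B)$ actually lies in $H^{(B)}_{\Zcal,w}$, i.e.\ taking $h=1$ in \eqref{HwP0-eq}; this is immediate. The other point is the monotonicity of $\ell$ along $\preccurlyeq_\Zcal$, which follows from \eqref{eq-closure} and the dimension formula for $G_{\Zcal,v}$.
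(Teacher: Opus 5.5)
Your proof is correct and follows essentially the paper's argument, which likewise compares $\ell(w)$ (the dimension of $\Fcal^{(B)}_{\Zcal,w}$ up to a shift) with the dimension of the $E$-orbit of $wz^{-1}$ given by Lemma \ref{lem-dim-stab-1}, using Proposition \ref{prop-small-image} to identify that orbit with the image stratum. You phrase it via $v=\Xi_\Zcal(w)\in\Pi_\Zcal(w)$ and make explicit the converse direction that the paper leaves implicit; the appeal to monotonicity of $\ell$ along $\preccurlyeq_\Zcal$ is unnecessary, since $\pi_\Zcal(w)$ is by definition the element of maximal length in $\Pi_\Zcal(w)$.
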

\begin{proof}
By Proposition \ref{prop-small-image}~\eqref{prop-small2}, $w$ is small if and only if the dimension of $\Fcal^{(B)}_w$ coincides with that of the stack $[E\backslash E\cdot (wz^{-1})]\simeq [1/\Stab_E(wz^{-1})].$ By Lemma \ref{lem-dim-stab-1}, this amounts to $\ell(w)+\dim(B)-\dim(E')=-|\Scal^-_{wz^{-1}}|$, which is equivalent to $\ell(w)=\dim(R_{\textrm{u}}(P))-|\Scal^-_{wz^{-1}}|=|\Scal^+_{wz^{-1}}|$.
\end{proof}

\begin{remark}\label{rmk-2conditions-small}
Using the characterization of Proposition \ref{prop-small-by-chains}, one easily checks that $w\in W^{\rm sm}$ if and only if the following conditions are satisfied:
\begin{enumerate}
    \item In any $wz^{-1}$-sequence $(\beta_0,\dots,\beta_n)$, there is no $1\leq i <n$ such that $\beta_i$ is positive and  $\beta_{i+1}$ is negative.
%    \item If $(\beta_0,\dots,\beta_n)\in \Scal_{wz^{-1}}^+$, then there exists $1\leq i \leq n$ such that $\beta_1,\dots, \beta_{i}$ are negative and $\beta_{i+1},\dots,\beta_{n}$ are positive.
%    \item If $(\beta_0,\dots,\beta_n)\in \Scal_{wz^{-1}}^-$, then all roots $\beta_1,\dots, \beta_{n-1}$ are negative.
    \item If $C$ is an orbit under the operator $wz^{-1}\sigma$ entirely contained in $\Phi_L$, all roots in $C$ have the same sign. 
\end{enumerate}
\end{remark}

\begin{remark}\label{rmk-algo}
Consider \(w \in W\). Proposition \ref{prop-small-by-chains}.(\ref{prop-small-by-chains4}) gives an algorithm to check whether \(w\) is small or not. If we write \(n = \rank(G)\), its complexity is \(\lvert \Phi\rvert = O(n^2)\).
\end{remark}

%\lorenzo{Do we still need this definition, given that we have the one you gave above?}
%Define \emph{a negative compact chain} as a sequence of roots $\Ccal=(a_0,...,a_n)$ with $n>1$ such that
%\begin{enumerate}
%    \item $a_0\in \Phi^+$.
 %   \item $a_1,..., a_{n-1}\in \Phi^{-}_L$
%\item $a_{i+1}= zw^{-1}(a_i)$ for each $0\leq i <n$.
%\item $a_n\notin \Phi^{-}_L$.
%    \end{enumerate}

\section{Decidability of singularities}
In this section we consider an arbitrary zip datum $\Zcal$ of Frobenius-type. % Let \((B, T, z),\) satisfying the hypotheses of Section \ref{sec-split}.
%In particular, recall that \(\varphi\) is the Frobenius isogeny and \(T\) is the base change to $K=\overline{\FF}_p$ of a maximal torus defined over \(\FF_p\). 
Let \(w \in \iw, \ w' \in \Gamma_\Zcal(w),\) and consider the elementary \(w\)-open substack \(\Ucal(w, w') \subseteq \Xcal_\Zcal\). By Theorem \ref{thm-FTZD-normality}, we know that \(\Ucal(w, w')\) is smooth if and only if it is both \(w\)-bounded and admits a separating canonical cover. 

\subsection{General algorithm for smoothness}
\label{General algorithm for smoothness}

%\lorenzo{The general algorithm for smoothness requires a result (Proposition \ref{prop-small-single-im}) that comes from the section on the split case, which should not come \emph{after} this!}\JS{thanks, fixed}
We start with a proposition which gives a more precise statement than Theorem \ref{thm-FTZD-normality}~\eqref{thm-normal-4}. We first explain the connection between small strata and separating canonical covers of zip strata. Assume that the elementary substack $\Ucal(w,w')$ does not admit a separating canonical cover. By definition, this means that there exists an element $v\in \Gamma_{I_w}(w)\setminus\{w'\}$ such that $\pi_{P_w}(\Fcal^{(P_w)}_v)$ intersects $\Xcal_{w'}$. We must necessarily have $\pi_{\Zcal}(v)=w'$ by Lemma \ref{lem-length-ineq}. In particular,
\begin{equation}\label{eq-length}
    \ell(\pi_{\Zcal}(v))=\ell(w')=\ell(w)-1=\ell(v).
\end{equation}
Hence $v$ is small. This implies:

\begin{proposition}\label{prop-smooth-small}
The following are equivalent:
\begin{enumerate}
    \item \label{prop-smooth-small-1} $\Ucal(w,w')$ is smooth (equivalently, normal).
   \item \label{prop-smooth-small-2} $P_{w'}\subseteq P_w$ and $w'\notin \pi_\Zcal(\Gamma^{\rm sm}_{I_w}(w)\setminus \{w'\})$.
\end{enumerate}
\end{proposition}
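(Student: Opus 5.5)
The plan is to reduce to Theorem \ref{thm-FTZD-normality}, whose condition \eqref{thm-normal-4} reads $P_{w'}\subseteq P_w$ and $w'\notin \pi_\Zcal(\Gamma_{I_w}(w)\setminus\{w'\})$. Since $\Gamma^{\rm sm}_{I_w}(w)\setminus\{w'\}\subseteq \Gamma_{I_w}(w)\setminus\{w'\}$, condition \eqref{thm-normal-4} immediately implies condition \eqref{prop-smooth-small-2}. So the whole content is the converse. Assume $P_{w'}\subseteq P_w$ and that there is some $v\in \Gamma_{I_w}(w)\setminus\{w'\}$ with $\pi_\Zcal(v)=w'$. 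We must then produce such a $v$ which is in addition small.

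The key observation is that any such $v$ is automatically small. Indeed, $v\in\Gamma_{I_w}(w)$ is a lower neighbor of $w$ in ${}^{I_w}W$, so $\ell(v)=\ell(w)-1$. Also $w'$ is a lower neighbor of $w$ in ${}^IW$, so $\ell(w')=\ell(w)-1$. Hence $\ell(\pi_\Zcal(v))=\ell(w')=\ell(v)$, which is exactly Definition \ref{def-small}; this is the computation \eqref{eq-length}. Therefore $v\in\Gamma^{\rm sm}_{I_w}(w)\setminus\{w'\}$, which contradicts \eqref{prop-smooth-small-2}. We conclude that the two conditions on the $\pi_\Zcal$-images coincide, and the equivalence with \eqref{prop-smooth-small-1} follows from Theorem \ref{thm-FTZD-normality}.

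A mild subtlety arises if one instead phrases things via the separating canonical cover, i.e.\ via condition \eqref{thm-normal-3}. Failure of separation only says that $\pi_{P_w}(\Fcal^{(P_w)}_v)$ meets $\Xcal_{w'}$, that is, $w'\in\Pi_\Zcal(v)$, and not that $\pi_\Zcal(v)=w'$. To upgrade this, one argues as in the discussion preceding the statement. By Lemma \ref{lem-length-ineq} we have $\ell(\pi_\Zcal(v))\leq \ell(v)=\ell(w')$. Moreover, $w'\preccurlyeq_\Zcal \pi_\Zcal(v)$ by Lemma \ref{lem-unique-maxlen}, and $\preccurlyeq_\Zcal$ refines length. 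Together these force $\pi_\Zcal(v)=w'$. This is the only step requiring care, and it uses only earlier results. Either formulation then gives the proposition.
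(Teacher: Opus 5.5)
Your proposal is correct and is essentially the paper's argument. Both reduce to Theorem~\ref{thm-FTZD-normality} and observe that any $v\in\Gamma_{I_w}(w)\setminus\{w'\}$ whose image meets $\Xcal_{w'}$ has $\pi_\Zcal(v)=w'$, and is therefore automatically small by the length count \eqref{eq-length}. Your appeal to Lemma~\ref{lem-unique-maxlen}, giving $w'\preccurlyeq_\Zcal\pi_\Zcal(v)$, makes explicit the step $\pi_\Zcal(v)=w'$, which the paper asserts citing only Lemma~\ref{lem-length-ineq}.
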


\begin{proof}
It is clear that \eqref{prop-smooth-small-1} implies \eqref{prop-smooth-small-2}. Conversely, if the substack $\Ucal(w,w')$ does not admit a separating canonical cover, by the discussion above, then there exists $w_1\in \Gamma^{\rm sm}_{I_w}(w)\setminus \{w'\}$ such that $w'=\pi_{\Zcal}(w)$. This shows that \eqref{prop-smooth-small-2} implies \eqref{prop-smooth-small-1}.
\end{proof}

We show that both conditions of Proposition \ref{prop-smooth-small}~\eqref{prop-smooth-small-2} are algorithmically decidable.

\subsubsection{\texorpdfstring{\(w\)}{w}-Boundedness}\label{wbounded-algo}
%Recall that \(\Ucal(w, w')\) is \(w\)-bounded if and only if \(P_{w'} \subseteq P_w\). 
Since $P_w$, $P_{w'},$ are standard parabolic subgroups, the containment $P_{w'}\subseteq P_w$ is satisfied if and only if \(I_{w'} \subseteq I_w\). These finite sets can be computed explicitly by the formula
\begin{equation}
I_w=\bigcap_{m\geq 0} \varphi^m_w(I)    
\end{equation}
where $\varphi_w$ is the operator $wz^{-1}\sigma$, see \cite[\S~2.2.6]{Koskivirta-LaPorta-Reppen-singularities}, \cite[Prop.~5.6]{Pink-Wedhorn-Ziegler-zip-data}.

\subsubsection{Images of small elements}
We explain the procedure to determine whether the second condition $w'\notin \pi_\Zcal(\Gamma^{\rm sm}_{I_w}(w)\setminus \{w'\})$ in Proposition \ref{prop-smooth-small} is satisfied. Recall the map $\Xi_\Zcal\colon G(K)\to {}^IW$ defined in \eqref{Xi-map}. Proposition \ref{prop-W-representative} gives an algorithm to determine the restriction $\Xi_\Zcal|_W$. An immediate consequence of Proposition \ref{prop-small-image} is the following:
\begin{proposition}\label{prop-pi-Xi}
For $w\in W^{\rm sm}$, one has $\pi_\Zcal(w)=\Xi_\Zcal(w)$.
\end{proposition}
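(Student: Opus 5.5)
The plan is to unwind the definitions and apply Proposition \ref{prop-small-image}~\eqref{prop-small1} to a single well-chosen point. Fix $w\in W^{\rm sm}$ and set $w_1\colonequals\pi_\Zcal(w)$. By definition, $\Xi_\Zcal(w)$ is the unique $v\in{}^IW$ with $\dot w z^{-1}\in G_{\Zcal,v}$. So the statement reduces to showing that $\dot w z^{-1}\in G_{\Zcal,w_1}$.

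The first step is to locate $\dot w z^{-1}$ inside the total flag space. Since $G_{\Zcal_B,w}=BwBz^{-1}$ (noted just before Corollary \ref{coro-small-Fp}), the element $\dot w z^{-1}$ lies in $G_{\Zcal_B,w}$. It therefore defines a $K$-point $x$ of $\Fcal^{(B)}_{\Zcal,w}=[E'_{\Zcal,B}\backslash G_{\Zcal_B,w}]$. Its image $\pi_B(x)\in\Xcal_\Zcal$ is the class of the same element $\dot w z^{-1}$ in $[E_\Zcal\backslash G]$, because $\pi_B$ is induced by the inclusion $E'_{\Zcal,B}\subseteq E_\Zcal$ and the identity on $G$.

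The second step uses Proposition \ref{prop-small-image}~\eqref{prop-small1}. Since $w$ is small, $\Pi_\Zcal(w)=\{w_1\}$, so $\pi_B(\Fcal^{(B)}_{\Zcal,w})\subseteq\Xcal_{\Zcal,w_1}$. In particular $\pi_B(x)\in\Xcal_{\Zcal,w_1}$, which means $\dot w z^{-1}\in G_{\Zcal,w_1}$. By the definition of $\Xi_\Zcal$, this gives $\Xi_\Zcal(w)=w_1=\pi_\Zcal(w)$.

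There is no real obstacle, since the substantial input (the exponent raising argument showing $\Pi_\Zcal(w)$ is a singleton) was already done in Proposition \ref{prop-small-image}. The one point needing care is that the choice of representative $\dot w\in N_G(T)$ does not matter. Any two representatives differ by an element of $T$, and $\dot w z^{-1}$ lies in $BwBz^{-1}$ for every representative, so the argument applies to each of them. Alternatively, one could bypass the single-image statement and use Proposition \ref{prop-small-image}~\eqref{prop-small2}: $\Fcal^{(B)}_{\Zcal,w}$ is pointwise, so its image is exactly one zip stratum, which must then be both $\Xcal_{\Zcal,w_1}$ and the stratum containing $\dot w z^{-1}$.
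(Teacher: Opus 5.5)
Your proof is correct and takes the same route as the paper, which states this result as an immediate consequence of Proposition \ref{prop-small-image}. Your argument is precisely the unwinding the paper leaves implicit: you place $\dot w z^{-1}$ in $G_{\Zcal_B,w}=BwBz^{-1}$, then use $\Pi_\Zcal(w)=\{\pi_\Zcal(w)\}$ to force $\dot w z^{-1}\in G_{\Zcal,\pi_\Zcal(w)}$.
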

Since $\Xi_\Zcal|_W$ is algorithmically computable, so is $\pi_\Zcal(w)$ for $w\in W^{\rm sm}$. The algorithm to decide if an elementary substack admits a separating canonical cover is the following:

%\(\Ucal(w, w')\) admits a separating canonical cover or not and we then show that it is effective.
\begin{algorithm}\label{gen-algo}\

    \begin{enumerate}
    \item Enumerate \(\Gamma_{I_w}(w)\).
    \item Determine which elements of $\Gamma_{I_w}(w)$ are small, using the characterization explained in \S\ref{subsec-small-roots}. This gives the set $\Gamma^{\rm sm}_{I_w}(w)$.
    \item For each \(w'' \in \Gamma^{\rm sm}_{I_w}(w)\setminus \{w'\}\), compute $\pi_\Zcal(w'')=\Xi_\Zcal(w'')$ (Proposition \ref{prop-pi-Xi}). This gives the set $\pi_\Zcal(\Gamma^{\rm sm}_{I_w}(w)\setminus \{w'\})$.
    %If \(w_1 = w\), then \(\Ucal(w, w')\) does not admit a separating canonical cover and the algorithm terminates.
    \item Check if $w'\in \pi_\Zcal(\Gamma^{\rm sm}_{I_w}(w)\setminus \{w'\})$.
    %If \(w_1 \neq w\) for all \(w'' \in \Gamma_{I_w}(w)\setminus \{w'\}\), then \(\Ucal(w, w')\) admits a separating canonical cover and the algorithm terminates.
\end{enumerate}
\end{algorithm}
\begin{theorem}\label{thm: algo for smooth}
Algorithm \ref{gen-algo} gives an effective procedure. Combined with \S\ref{wbounded-algo}, this gives an effective algorithm to determine whether $\Ucal(w,w')$ is smooth (equivalently, normal).
\end{theorem}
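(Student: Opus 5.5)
The plan is to verify that each step of Algorithm \ref{gen-algo}, and the test of \S\ref{wbounded-algo}, is a finite computation in the finite Coxeter system $(W,\Delta)$ together with the finite action of $\sigma$ on $\Phi$. Correctness then follows from Proposition \ref{prop-smooth-small}, which reduces smoothness (equivalently, normality, by Theorem \ref{thm-FTZD-normality}) of $\Ucal(w,w')$ to the conjunction of $P_{w'}\subseteq P_w$ and $w'\notin \pi_\Zcal(\Gamma^{\rm sm}_{I_w}(w)\setminus\{w'\})$. I would first check that the input data are finite and explicit. These data are $W$, $\Phi$, $\Phi^+$, $\Delta$, $I$, the automorphism $\sigma$, the elements $w_0$, $w_{0,I}$, $z=\sigma(w_{0,I})w_0$, and the map $\psi$ of \eqref{psi-WIWJ}.

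For $w$-boundedness, I will compute $I_w$ and $I_{w'}$ via $I_w=\bigcap_{m\ge0}\varphi_w^m(I)$. Since $\varphi_w=wz^{-1}\sigma$ acts as a permutation of the finite set $\Phi$, the decreasing chain $I\supseteq I\cap\varphi_w(I)\supseteq\cdots$ stabilizes after at most $|I|$ steps, so the intersection is computable. The condition $P_{w'}\subseteq P_w$ becomes $I_{w'}\subseteq I_w$.

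For Algorithm \ref{gen-algo}, I treat the four steps in order.

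Step (1): enumerate ${}^{I_w}W$ by testing $x^{-1}(\Phi^+_{I_w})\subseteq\Phi^+$ for each $x\in W$. Among these, retain the $x$ with $\ell(x)=\ell(w)-1$ and $x\preccurlyeq_{I_w} w$. The relation $\preccurlyeq_{I_w}$ is decidable, since it quantifies over the finite group $W_{I_w}$ and uses the Bruhat order, which is computable (for instance by the subword property). The map $\psi$ used here is the one attached to $\Zcal_{P_w}$, and it is explicit.

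Step (2): for each candidate $v$, decide smallness by Proposition \ref{prop-small-by-chains}. Compute the $vz^{-1}$-sequences $S_{vz^{-1}}(\beta)$ for the finitely many positive non-compact $\beta$, and compare $|\Scal^+_{vz^{-1}}|$ with $\ell(v)$. One point needs justification: each sequence terminates. The operator $vz^{-1}\sigma$ is a permutation of $\Phi$, so the orbit of $\beta$ returns to $\beta$, which is non-compact. Hence a non-compact root is reached after at most $|\Phi|$ steps.

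Step (3): for small $v$, Proposition \ref{prop-pi-Xi} (itself resting on Proposition \ref{prop-small-image}) gives $\pi_\Zcal(v)=\Xi_\Zcal(v)$. By Proposition \ref{prop-W-representative}, $\Xi_\Zcal(v)$ is the unique $u\in{}^IW$ for which there exist $a\in W_I$ and $x\in W_{I_u}$ with $v=axu\psi(a)^{-1}$. This is a search over the finite sets ${}^IW$, $W_I$ and $W_{I_u}$, with $I_u$ computed as above, and uniqueness guarantees that the search returns a well-defined answer.

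Step (4): check membership of $w'$ in the finite set obtained in Step (3).

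The main obstacle is conceptual rather than computational. A priori, $\pi_\Zcal(v)$ is defined through the dense stratum in the image of $\Fcal^{(B)}_v$, which would require knowing the $E_\Zcal$-orbits of infinitely many points of $BvBz^{-1}$. The key input that removes this difficulty is Proposition \ref{prop-small-image}: for small $v$, the stratum $\Fcal^{(B)}_v$ is pointwise and $\Pi_\Zcal(v)$ is a singleton, so the single Weyl group point $vz^{-1}$ already determines $\pi_\Zcal(v)$. I would emphasize that it is exactly the reduction to small elements in Proposition \ref{prop-smooth-small} (via \eqref{eq-length}) that makes only small $v$ relevant. Combining the four steps with \S\ref{wbounded-algo} then yields the algorithm.
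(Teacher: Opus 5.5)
Your proposal is correct and follows the paper's own justification. Proposition \ref{prop-smooth-small} reduces smoothness to the two conditions $I_{w'}\subseteq I_w$ (computed via the formula in \S\ref{wbounded-algo}) and $w'\notin\pi_\Zcal(\Gamma^{\rm sm}_{I_w}(w)\setminus\{w'\})$; smallness is tested by Proposition \ref{prop-small-by-chains}; and on small elements $\pi_\Zcal=\Xi_\Zcal$ (Proposition \ref{prop-pi-Xi}) is obtained by the finite search of Proposition \ref{prop-W-representative}. Your extra finiteness checks (termination of $w$-sequences, stabilization of the chain defining $I_w$, decidability of $\preccurlyeq_{I_w}$) just make explicit what the paper leaves implicit.
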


\subsection{Dictionary for \texorpdfstring{\(\GL_n\)}{general linear groups}}\label{subsec-algo-GLn}
%\JS{How to pass between F,V and $g\in G$. I just copy-paste the relevant part from my previous article}
We explain the connection between $\GL_{n,\FF_p}$-zips and Dieudonn\'{e} spaces, following computations from \cite{Koskivirta-Normalization}. Let $\varphi$ be the $p$-power isogeny of $\GL_{n,\FF_p}$.

\subsubsection{\texorpdfstring{Dieudonn\'{e} spaces and $F$-zips}{}}\label{subsec Dieudonne zips}

We consider the case $G=\GL_{n,\FF_p}$ with a cocharacter of type $(r,s)$, where $r+s=n$. Let $S$ be the special fiber of a unitary Shimura variety of signature \((r, s)\) at $p$ and assume that splits in the associated imaginary quadratic field $\mathbf{E}$. If $A$ is the underlying abelian variety of a point of $S$, then $A$ is endowed with an action of $\Ocal_{\mathbf{E}}$ and a compatible polarization. Write $\DD(A)$ for the Dieudonne module over $W(K)$ of $A$, and $\overline{\DD}(A)\colonequals \DD(A)\otimes_{W(K)} K$ for the associated Dieudonne space.

The $\Ocal_{\mathbf{E}}$-action yields a splitting $\DD(A)=\DD(A)_\tau\oplus \DD(A)_{\overline{\tau}}$, where $\Gal(\mathbf{E}/\QQ)=\{\tau,\overline{\tau}\}$. Since $p$ is split in $\mathbf{E}$, each subspace is stable under the action of the operators $F,V$ (Frobenius and Verschiebung), and similarly for $\overline{\DD}(A)=\overline{\DD}(A)_\tau\oplus \overline{\DD}(A)_{\overline{\tau}}$. We set $D\colonequals \overline{\DD}(A)_\tau$, which is an $n$-dimensional $K$-vector space endowed with a $\sigma$-linear endomorphism $F \colon D\to D$, a $\sigma^{-1}$-linear endomorphism $V\colon D\to D$ satisfying the conditions:
\begin{enumerate}[(1)]
\item $\Ker(F)=\Im(V)$,
\item $\Ker(V)=\Im(F)$,
\item $\rk(F)=r$, $\rk(V)=s$.
\end{enumerate}
The triple $(D,F,V)$ completely characterizes the Ekedahl--Oort stratum of a point. We call such a triple a \emph{Dieudonne space of signature $(r,s)$}.

\subsubsection{$E$-orbits}\label{subsub-E-orbits}
On the other hand, recall that the Ekedahl--Oort strata of $S$ are parametrized by the $E$-orbits in $G_K=\GL_{n,K}$, where $E$ is the zip group attached to the cocharacter $\mu\colon \GG_{\textrm{m},K}\to G_K$, $t\mapsto \diag(t\mathbf{1}_r,\mathbf{1}_s)$. We explain how these two interpretations are related. We consider the $K$-valued points of all geometric objects, but we drop $K$ from the notation for simplicity.

Let $\Mat^{(d)}_n$ denote the set of matrices in $\Mat_n$ of rank $d$. After choosing a $K$-basis of $D$, we may write $F=a\otimes \sigma$ and $V=b\otimes \sigma^{-1}$, where $(a,b)$ lies in the set
\[\Pscr:=\{(a,b)\in \Mat^{(r)}_n\times \Mat^{(s)}_n, \ a\sigma(b)=\sigma(b)a=0\}.\]
Note that for $(a,b)\in \Pscr$, we have $\Ker(a)=\Im(\sigma(b))=\sigma(\Im(b))$ and $\Im(a)=\Ker(\sigma(b))=\sigma(\Ker(b))$.

Two pairs $(a,b)$ and $(a',b')$ in $\Pscr$ yield isomorphic $F$-zips if and only if there is $M\in \GL_n$ such that
\begin{equation}
a'=Ma\sigma(M)^{-1} \quad \textrm{ and } \quad b'=Mb\sigma^{-1}(M)^{-1}.
\end{equation}
This defines an action of $\GL_n$ on $\Pscr$ and we obtain a bijection between isomorphism classes of Dieudonne spaces of signature $(r,s)$ and $\GL_n$-orbits in $\Pscr$. Let $(e_1,...,e_n)$ the canonical basis of $K^n$ and define $W_1\colonequals \Span(e_1,...,e_{r})$ and $W_2\colonequals\Span(e_{r+1},...,e_n)$. The parabolic subgroups attached to $\mu$ are given by $P:=\Stab(W_2)$, $Q:=\Stab(W_1)$, with common Levi subgroup $L:=P\cap Q$. Consider the set
\begin{equation}
\Pscr_1:=\{ (a,b)\in \Pscr, \ \Ker(a)=W_2\}.
\end{equation}
The action of $\GL_n$ on $\Pscr$ restricts to an action of $P$ on $\Pscr_1$ and the inclusion $\Pscr_1 \subset \Pscr$ induces a bijection between $P$-orbits in $\Pscr_1$ and $\GL_n$-orbits in $\Pscr$. Next, we define a natural map
\begin{equation}\label{Phi-map}
   \Phi \colon \GL_n\to \Pscr_1.
\end{equation}
Let $f\in \GL_n$ be a matrix, viewed as an automorphism of $K^n$. For $i=1,2$ denote by $p_i\colon K^n\to W_i$ the projection onto $W_i$ with respect to the decomposition $K^n=W_1\oplus W_2$. Set also $W'_i\colonequals f(W_i)$. Let $q_i\colon K^n\to W'_i$ denote the projection with respect to the decomposition $K^n=W'_1\oplus W'_2$, i.e.\ $q_i=f\circ p_i\circ f^{-1}$. Define $a,b\colon K^n\to K^n$ by
\begin{align*}
    &a\colonequals f\circ p_1 \\
    &b\colonequals {}^{\sigma^{-1}}(f^{-1}\circ q_2)={}^{\sigma^{-1}}(p_2\circ f^{-1}).
\end{align*}
It is an easy verification that $(a,b)\in \Pscr_1$. This gives a map $\Phi$ as in \eqref{Phi-map}. By \cite[Lemma 4.1.1]{Koskivirta-Normalization}, $\Phi$ induces a bijection 
\begin{equation}
\Phi \colon \GL_n/R_{\textrm{u}}(Q)\to \Pscr_1.    
\end{equation}
Moreover, let $P$ act on the left on $\GL_n/R_{\textrm{u}}(Q)$ by the rule $x\cdot (gR_{\textrm{u}}(Q))\colonequals xg\varphi(\overline{x})^{-1}R_{\textrm{u}}(Q)$, where $\overline{x}\in L$ denotes the Levi projection $P\to L$ modulo $R_{\mathrm{u}}(P)$. Note that this group action is well-defined. By construction, the $P$-orbits in $\GL_n/R_{\textrm{u}}(Q)$ correspond to the $E$-orbits in $\GL_n$. We obtain (\cite[Proposition 4.1.2]{Koskivirta-Normalization})

\begin{proposition}\label{prop-E-orb-Y}
The map $\Phi$ is $P$-equivariant and induces a bijection
\begin{equation}
E\backslash \GL_n \longrightarrow  P\backslash \Pscr_1.
\end{equation}
Therefore, the above construction yields a bijection between isomorphism classes of Dieudonne spaces of signature $(r,s)$ and $E$-orbits in $\GL_n$.
\end{proposition}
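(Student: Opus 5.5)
We need to prove Proposition \ref{prop-E-orb-Y}: $\Phi$ is $P$-equivariant and induces a bijection $E\backslash \GL_n\to P\backslash\Pscr_1$, and hence a bijection between Dieudonn\'e spaces of signature $(r,s)$ and $E$-orbits.

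The plan has three steps: check equivariance of $\Phi$ by a direct computation, pass from $E$-orbits on $\GL_n$ to $P$-orbits on $\GL_n/R_{\mathrm{u}}(Q)$, and then chain together the bijections already stated in the text.

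\textbf{Equivariance.} Take $x\in P$ and $g=f$. The translate is $f'\colonequals xf\varphi(\overline{x})^{-1}$, where $\varphi(\overline{x})=\sigma(\overline{x})$. Since $\overline{x}\in L$ preserves both $W_1$ and $W_2$, it commutes with $p_1$ and $p_2$. Hence
\[
a'=f'p_1=xf\,p_1\,\sigma(\overline{x})^{-1}=x\,a\,\sigma(\overline{x})^{-1}.
\]
Now $\ker(a)=W_2$, and $x$ and $\overline{x}$ induce the same map on $K^n/W_2$, because $R_{\mathrm{u}}(P)$ acts trivially on $K^n/W_2$ and on $W_2$. Using this I will rewrite $a'=x\,a\,\sigma(x)^{-1}$. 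Similarly $b'={}^{\sigma^{-1}}(p_2f'^{-1})={}^{\sigma^{-1}}(\sigma(\overline{x})\,p_2f^{-1}x^{-1})$, and this equals $x\,b\,\sigma^{-1}(x)^{-1}$ after the same replacement of $\overline{x}$ by $x$, now on the image side: $\operatorname{Im}(p_2)=W_2$ and $x|_{W_2}=\overline{x}|_{W_2}$ up to the relevant quotient. This matches the $\GL_n$-action restricted to $P$. I expect this verification, namely that replacing $\overline{x}$ by $x$ is legitimate for both $a$ and $b$, to be the only delicate point. I would check it carefully in block-matrix form with respect to $K^n=W_1\oplus W_2$.

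\textbf{From $E$-orbits to $P$-orbits.} Right multiplication by $R_{\mathrm{u}}(Q)$ is realized inside $E$ by the elements $(1,u)$, since the Levi projection of $u\in R_{\mathrm{u}}(Q)$ is trivial. Also, every $(x,y)\in E$ can be written as $(x,\varphi(\overline{x}))\cdot(1,u)$ for some $u\in R_{\mathrm{u}}(Q)$. Together these show that $E$-orbits in $\GL_n$ correspond bijectively to $P$-orbits in $\GL_n/R_{\mathrm{u}}(Q)$ under the action $x\cdot gR_{\mathrm{u}}(Q)=xg\varphi(\overline{x})^{-1}R_{\mathrm{u}}(Q)$. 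This action is well defined because $\varphi(\overline{x})\in M$ normalizes $R_{\mathrm{u}}(Q)$.

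\textbf{Conclusion.} By \cite[Lemma 4.1.1]{Koskivirta-Normalization}, $\Phi$ induces a bijection $\GL_n/R_{\mathrm{u}}(Q)\to\Pscr_1$. An equivariant bijection induces a bijection on orbit sets, so we get $E\backslash\GL_n\simeq P\backslash\Pscr_1$. Finally, compose with the two bijections already established in the text: $P\backslash\Pscr_1\simeq\GL_n\backslash\Pscr$, which holds because every $(a,b)\in\Pscr$ can be moved so that $\ker(a)=W_2$, and the stabilizer of $W_2$ is $P$; and the bijection between $\GL_n\backslash\Pscr$ and isomorphism classes of Dieudonn\'e spaces of signature $(r,s)$. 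This gives the final statement.
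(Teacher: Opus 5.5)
Your proposal is correct and follows the paper's route. The paper combines three ingredients: the bijection $\GL_n/R_{\mathrm{u}}(Q)\to\Pscr_1$ of \cite[Lemma 4.1.1]{Koskivirta-Normalization}, the correspondence between $E$-orbits in $\GL_n$ and $P$-orbits in $\GL_n/R_{\mathrm{u}}(Q)$, and the $P$-equivariance of $\Phi$, which it leaves to \cite[Proposition 4.1.2]{Koskivirta-Normalization}. Your explicit equivariance check is sound; in the $b$-computation the hedge ``up to the relevant quotient'' is unnecessary, since $x$ and $\overline{x}$ agree exactly on $W_2=\Im(b)$.
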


For $f\in \GL_n$, let $\Phi(f)=(a,b)$ and write $D_f\colonequals (K^n,F,V)$ for the corresponding Dieudonne space of signature $(r,s)$, where $F=a\otimes \sigma$ and $V=b\otimes \sigma^{-1}$. By the above, $f_1,f_2,$ lie in the same $E$-orbit if and only if $D_{f_1}\simeq D_{f_2}$ in the category of Dieudonne spaces.

\subsection{The canonical filtration}
Let $(D,F,V)$ be a Dieudonne space of signature $(r,s)$. If $M\subseteq D$ is a subspace, we consider the two subspaces $F(M)$ and $V^{-1}(M)$ obtained by applying $F$ and taking inverse image under $V$. There is a unique coarsest filtration of $D$ by subspaces which are $F$- and $V^{-1}$-stable. It is called \emph{the canonical filtration of $D$} and is obtained by applying all finite combinations of $F,V^{-1},$ to the flag $0\subset D$. %The stabilizer of the canonical filtration coincides with the canonical parabolic $P_w$, where $w\in {}^I W$ corresponds to the isomorphism class of $(D,F,V)$ via the correspondence obtained from Proposition \ref{prop-E-orb-Y} and the parametrization of $E$-orbits by ${}^I W$.
%Denote the set of subspaces appearing in this filtration by $\Ccal(D)$.

\begin{comment}
    
The following lemma makes it possible to compute the canonical filtration of $D$ directly in terms of the matrix $f$:

\begin{lemma}
For any subspace $W\subset K^n$, one has the following relations:
\begin{align}
V(W)&=W_2 \cap \left(\sigma^{-1}(f^{-1}W)+W_1\right) \\
F^{-1}(W)&=W_2+\left(\sigma^{-1}(f^{-1}W)\cap W_1\right).
\end{align}
\end{lemma}
\end{comment}

\subsection{\texorpdfstring{The map $\Xi_\Zcal$}{}}\label{sub-determine-w} %\lorenzo{TODO: are we sure that everything we do here is effectively computable? Some of the sub-steps (computing the rel.\ pos.\ of flags) are probably computable, but I don't know references for algorithms. }\JS{It only depends on the dimensions of the intersections}
We give an algorithm to compute the full map
\begin{equation}
  \Xi_\Zcal \colon \GL_n(K)\to {}^I W.
\end{equation}
Let $f\in \GL_n(K)$ and set $\Phi(f)=(a,b)$. Consider the associated Dieudonne space $(K^n,F,V)$, where $F=a\otimes \sigma$ and $V=b\otimes \sigma^{-1}$. Let $\Fcal_f$ be the canonical filtration of $D_f$, written as
\begin{equation}
    0\subsetneq D_{f,1} \subsetneq \dots \subsetneq D_{f,h-1} \subsetneq D_{f,h}\colonequals D_f.
\end{equation}
Note that this filtration is obtained after at most $n$ iterations of the operators $V,F^{-1},$ applied successively, starting with the flag $0\subseteq D_f$. We consider the relative position of the flag $\Fcal_f$ with the flag $\Gcal_f$ given by $0\subseteq V(D_f) \subseteq D_f$. Specifically, denote by $S(\Fcal_f)$ and $S(\Gcal_f)$ the parabolic subgroups of $\GL_{n}$ defined as the stabilizers of the filtrations $\Fcal_f$, $\Gcal_f$ respectively. Choose $x,y\in \GL_{n}$ such that ${}^x B\subseteq S(\Fcal_f)$ and ${}^y B\subseteq S(\Gcal_f)$. Consider the double quotient $P\backslash \GL_n /B$. The natural inclusion ${}^I W\subseteq \GL_n$ gives rise to a bijection ${}^I W\simeq P\backslash \GL_n /B$. For a matrix $M\in \GL_n$, write $[M]\in {}^I W$ for its representative in ${}^I W$. The following is an easy consequence of the properties of the canonical filtration:
\begin{proposition} \ 
\begin{enumerate}
    \item The image of the element $y^{-1}x$ in the double quotient $P\backslash \GL_n /B$ is independent of all choices.
    \item One has $\Xi_\Zcal(f)=[y^{-1}x]$.
\end{enumerate}
\end{proposition}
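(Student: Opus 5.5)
For (1), I would check that the double coset $P y^{-1}x B$ depends only on the pair of flags $(\Fcal_f,\Gcal_f)$ up to simultaneous translation. The choices are $x\in S(\Fcal_f)\cdot\{\text{elements with }{}^xB\subseteq S(\Fcal_f)\}$, so $x$ is determined up to right multiplication by $B$ (more precisely, $x$ ranges over $p_1 x_0 B$ with $p_1\in S(\Fcal_f)$). Similarly $y$ is determined up to $p_2 y_0 B$ with $p_2\in S(\Gcal_f)$. The issue is that $S(\Gcal_f)$ must be conjugated to $P$ by $y$, i.e.\ $y^{-1}S(\Gcal_f)y=P$. This holds because $\Gcal_f$ is a two-step flag with $\dim V(D_f)=s$, which has the same type as $W_2\subseteq K^n$, the flag stabilized by $P$. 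Changing $y$ to $py$ with $p\in S(\Gcal_f)$ then multiplies $y^{-1}x$ on the left by $y^{-1}p^{-1}y\in P$. Changing $x$ to $qxb$ with $q\in S(\Fcal_f)$ requires care: $y^{-1}qx$ is not obviously in $P\,y^{-1}xB$ unless $q$ lies in the Borel ${}^xB$. So I would instead normalize: choose $x$ so that ${}^xB\subseteq S(\Fcal_f)$, and note that the class in $P\backslash\GL_n/B$ of $y^{-1}x$ is the relative position of the parabolic $S(\Gcal_f)$ and the Borel ${}^xB$. This is only well defined modulo the Weyl group of $S(\Fcal_f)$, so the claimed independence means taking the representative in ${}^IW$ in the appropriate minimal sense, namely the class in $P\backslash \GL_n/S(\Fcal_f)$ lifted minimally. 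Once the conventions are pinned down, this amounts to the standard fact that the relative position of two flags is invariant under $\GL_n$.

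For (2), my plan is to reduce to the normal form $f=wz^{-1}$ with $w\in{}^IW$. Both sides are invariant under the $E$-action: $\Xi_\Zcal$ by definition, and the right side because Proposition~\ref{prop-E-orb-Y} identifies $E$-orbits with isomorphism classes of Dieudonné spaces. An isomorphism $M$ of Dieudonné spaces carries $\Fcal_f$ and $\Gcal_f$ to the corresponding flags of the other space, since both flags are canonically defined from $F,V$. The relative position is therefore unchanged. It then suffices to compute for $f=wz^{-1}$.

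The main computation, and the step I expect to be hardest, is for $f=\dot w z^{-1}$. There $a$ and $b$ are monomial-type matrices, so $F$ and $V^{-1}$ permute coordinate subspaces. The canonical filtration $\Fcal_f$ therefore consists of coordinate subspaces stabilized by the canonical parabolic $P_w$, following \cite[\S5]{Pink-Wedhorn-Ziegler-zip-data}, where the stabilizer of the canonical filtration is related to $P_w$. Meanwhile $V(D_f)$ is the coordinate subspace $\sigma^{-1}$-image of $\Im(p_2\circ f^{-1})$. One then reads off $x$ and $y$ as permutation matrices and checks that $[y^{-1}x]=w$. I would carry out this check using the explicit combinatorics of $z=\sigma(w_{0,I})w_0$ and the characterization $w^{-1}(\Phi_L^+)\subseteq\Phi^+$ of ${}^IW$. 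If the direct permutation check becomes messy, the fallback is to cite the Moonen–Wedhorn description of the final/canonical type of a Dieudonné space, which identifies the EO element with the relative position of the canonical flag and the Hodge flag.
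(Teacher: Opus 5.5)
The paper gives no argument beyond calling the proposition an easy consequence of the properties of the canonical filtration. The specific property it needs is exactly what your part (1) is missing.

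\textbf{Part (1).} You handle $y$ correctly, since $y^{-1}S(\Gcal_f)y=P$, and you correctly isolate the difficulty with $x\mapsto qx$ for $q\in S(\Fcal_f)$. You then replace the claim by a ``minimal lift'' from $P\backslash\GL_n/S(\Fcal_f)$ instead of proving it. In fact (1) holds as stated, for the filtration generated by $F$ and $V^{-1}$, which is the one you use. (With $V,F^{-1}$, as the sentence before the proposition says, $V(D_f)$ would itself be a step of $\Fcal_f$, and $[y^{-1}x]$ would always be $1$.)
\begin{enumerate}
\item Choosing $x$ means choosing a complete flag refining $\Fcal_f$. The coset $Py^{-1}xB$ is its relative position to $\Gcal_f$, which is determined by the numbers $\dim(C\cap V(D_f))$ along that flag.
\item Every step of $\Fcal_f$ other than $0$ and $D_f$ is $F$ or $V^{-1}$ of a step, and $F(C)\subseteq \Im F=V^{-1}(0)\subseteq V^{-1}(C)$. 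Hence $F$ maps the chain monotonically onto its steps contained in $\Im F=D_{f,k}$, with exactly $k$ jumps. Likewise $V^{-1}$ maps it onto its steps containing $\Im F$, with exactly $h-k$ jumps.
\item Since $\dim F(C)+\dim V^{-1}(C)=\dim C+r$, the two maps cannot both be constant on a step. So on each step $D_{f,j-1}\subsetneq D_{f,j}$ exactly one of them jumps.
\item As $\dim V^{-1}(C)=r+\dim(C\cap \Ker F)$ and $\Ker F=V(D_f)$, this says that $V(D_f)\cap D_{f,j}$ either equals $V(D_f)\cap D_{f,j-1}$ or maps onto $D_{f,j}/D_{f,j-1}$.
\item Therefore $\dim(C\cap V(D_f))$, for $D_{f,j-1}\subseteq C\subseteq D_{f,j}$, depends only on $\dim C$. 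All refinements have the same relative position, and $Py^{-1}S(\Fcal_f)x$ is a single $(P,B)$-double coset.
\end{enumerate}

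\textbf{Part (2).} Your reduction rests on a false claim. With the paper's definition, $\Xi_\Zcal(g)$ is the stratum containing $gz^{-1}$, and this is \emph{not} $E$-invariant. The right-hand side depends only on $\Phi(f)$, hence only on $fR_{\mathrm{u}}(Q)$. Here is a counterexample for $n=2$, $(r,s)=(1,1)$:
\begin{itemize}
\item One has $G_{\Zcal,\iden}=\{g_{11}=0\}$, so $\Xi_\Zcal(\mathbf{1}_2)=\iden$.
\item For $u=\smallmat{1}{t}{0}{1}$ with $t\neq 0$, one has $\Xi_\Zcal(u)=s_{\alpha_1}$.
\item Yet $\Phi(u)=\Phi(\mathbf{1}_2)$, so both give the same right-hand side.
\end{itemize}
So (2) cannot hold literally. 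What your reduction to $f=\dot wz^{-1}$, followed by the check $[y^{-1}x]=w$, would prove is the untwisted identity $f\in G_{\Zcal,[y^{-1}x]}$, and you should state it that way. A direct check at $f=\dot wz^{-1}$ in $\GL_2$, and in $\GL_3$ with $(r,s)=(2,1)$, confirms this form.

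That check carries all the content, yet it is only announced. The one structural claim you make about it is also inaccurate. The canonical filtration of $D_{\dot wz^{-1}}$ is a coordinate flag, but it is not in general stabilized by $P_w$. For $\GL_3$, $(r,s)=(2,1)$, $w=z$ (so $P_w=P$) and $f=\mathbf{1}_3$, it is $0\subset W_1\subset K^3$, whose stabilizer is $Q$. You still have to compute the steps of $\Fcal_f$ from the permutation $wz^{-1}$ and verify $[y^{-1}x]=w$. Alternatively, cite Moonen's description of the canonical type after matching its conventions with the present parametrization.
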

The relative position of $\Fcal_f$, $\Gcal_f$ only depends on the integers $\dim(V(D_f)\cap D_{f,i})$ for $i=1,\dots, h$. In particular, it can easily be computed algorithmically.

\subsection{Example: length \texorpdfstring{$2$}{two} strata of \texorpdfstring{$\GL_{n}$}{general linear groups}} \label{subsec-GUrs} %\lorenzo{TODO: there's an issue with notations; here \(\alpha_i\) denotes the roots of \(\Phi\) with the usual indices; in \S\ref{subsec-small-roots} \(\alpha_i\) denotes the \(i\)-th element of a \(w\)-sequence. We should fix this.}\JS{Good point. I changed all $\alpha$'s to $\beta$'s in \S\ref{subsec-small-roots}}
The smoothness of length one strata was studied in full in \cite{Koskivirta-LaPorta-Reppen-singularities} for general Hodge-type Shimura varieties. Here, we illustrate the results of the present paper by studying the $2$-dimensional strata in Shimura varieties attached to simple unitary groups at a split prime of good reduction. This corresponds to the reductive group $G=\GL_{n,\FF_p}$. Denote by $(r,s)$ the signature of the unitary group, where $r+s=n$. Write $\mu\colon \GG_{\textrm{m}}\to G$ for the cocharacter $\mu(t)=\diag(t\textbf{1}_r,\textbf{1}_s)$. We may assume $r\geq s$, since the stacks of $G$-zips corresponding to $(r,s)$ and $(s,r)$ are isomorphic.
\begin{enumerate}
    \item When $s=0$, the stack $\Xcal$ is a single point since ${}^I W=\{1\}$.
    \item When $s=1$, it was proved in \cite[Proposition 5.5]{laporta2023generalised} that the Zariski closure of any EO stratum is smooth.
\end{enumerate}
We therefore assume that $s\geq 2$ in the following. Let $B\subseteq P$ denote the lower-triangular Borel subgroup and $T$ the diagonal torus. Identify characters of $T$ with $\ZZ^n$ in the usual way. Let $(e_1,\dots, e_n)$ be the standard basis of $\ZZ^n$. The simple roots are given by $\alpha_i \colonequals e_i-e_{i+1}$ for $1\leq i \leq n-1$, and $\Delta \setminus I =\{\alpha_r\}$. There is a unique element of length one in ${}^I W$, namely $w'\colonequals s_{\alpha_r}$. There are exactly two elements in ${}^IW$ of length $2$, namely:
\begin{equation}\label{w12-eq}
    w_1\colonequals \left( \begin{matrix}
        \textbf{1}_{r-1} &&&& \\
       &0&0&1& \\
       &1&0&0& \\
       &0&1&0& \\
       &&&&\textbf{1}_{s-2} \\
    \end{matrix} \right) \quad , \quad
     w_2\colonequals \left( \begin{matrix}
        \textbf{1}_{r-2} &&&& \\
       &0&1&0& \\
       &0&0&1& \\
       &1&0&0& \\
       &&&&\textbf{1}_{s-1} \\
    \end{matrix} \right).
\end{equation}
One has $w_1=s_{\alpha_{r}} s_{\alpha_{r+1}}$ and  $w_2=s_{\alpha_{r}} s_{\alpha_{r-1}}$. We obtain two elementary subsets $\Ucal_i\colonequals \Ucal(w_i,w')$ for $i=1,2$. We determine below the pairs $(r,s)$ for which $\Ucal_i$ is smooth. Since $w'$ is the only lower neighbor of $w_i$ in ${}^IW$, the substack $\Ucal_i$ is smooth if and only if it is $w_i$-bounded and $\Gamma^{\rm sm}_{I_{w_i}}(w_i) = \Gamma_I(w_i)=\{w'\}$ (Proposition \ref{prop-smooth-small} or Corollary \ref{cor-closure} below). A key property is the following relation:
\begin{equation}\label{rel-s-alpha}
    s_{\alpha_{r+1}}(\alpha_r) = s_{\alpha_{r}}(\alpha_{r+1}).
\end{equation}

For $i=1,2$, the element $w_i$ has two Bruhat lower neighbors in $W$, namely $\{w',w'_i\}$, where $w'_1\colonequals s_{\alpha_{r+1}}$ and $w'_2\colonequals s_{\alpha_{r-1}}$. Therefore, the set $\Gamma_{I_{w_i}}(w_i)$ has at most two elements by Lemma \ref{lemma-low-nei}~\eqref{lemma-low-nei-item2}. Since $w'$ is a Bruhat lower neighbor of $w_i$, it is also a lower neighbor of $w_i$ with respect to $\preccurlyeq_{I_i}$, which shows that
$w'\in \Gamma^{\rm sm}_{I_{w_i}}(w_i)$. The relevant strata are represented in the diagram below. The connecting lines represent the Bruhat order $\leq$.
\begin{equation}
\xymatrix{
&w_1 \ar@{-}[dr] \ar@{-}[dl] &&w_2 \ar@{-}[dr] \ar@{-}[dl]& \\
w'_1 \ar@{-}[drr]&&w'\ar@{-}[d]&&w'_2 \ar@{-}[dll] \\
&&\iden&&
}
\end{equation}

For an integer $k\in \ZZ$ which is prime to $n$, write $\inv_n(k)$ for the unique integer $k'\in \{1,\dots, n-1\}$ such that $kk'\equiv 1 \pmod{n}$. 

\begin{proposition}\label{prop-small}
Let $i\in \{1,2\}$. The following are equivalent:
\begin{enumerate}
    \item $w'_i$ is small.
    \item $\gcd(r,s)=1$.
\end{enumerate}
\end{proposition}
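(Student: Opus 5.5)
By Proposition \ref{prop-small-by-chains}, $w'_i$ is small if and only if $|\Scal^+_{w'_iz^{-1}}|=\ell(w'_i)=1$. The plan is to evaluate this count directly, working with permutations of $\{1,\dots,n\}$.

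\textbf{The operator.} Since $G=\GL_{n,\FF_p}$ is split, $\sigma$ acts trivially on $W$ and $\Phi$. Hence a $w$-sequence is a segment of the orbit of a root under the permutation $u\colonequals w'_iz^{-1}$, acting by $e_a-e_b\mapsto e_{u(a)}-e_{u(b)}$. Next I compute $z^{-1}=w_0w_{0,I}$. Here $w_{0,I}$ reverses the blocks $\{1,\dots,r\}$ and $\{r+1,\dots,n\}$ separately, and $w_0$ reverses everything. So $c\colonequals z^{-1}$ is the cyclic shift $a\mapsto a+s \pmod n$ (or $a+r$, depending on conventions). The cycles of $c$ are the residue classes modulo $d\colonequals\gcd(n,s)=\gcd(r,s)$.

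\textbf{Reductions.} First, the case $w=1$ lies in ${}^IW$ and is small, so $|\Scal^+_c|=0$. Thus every $c$-sequence is negative, and I will record the structure of these sequences explicitly. Second, I reduce $i=2$ to $i=1$ using the automorphism $g\mapsto {}^t g^{-1}$ composed with conjugation by $w_0$. This exchanges the roles of $r$ and $s$ and sends $s_{\alpha_{r-1}}$ to $s_{\alpha_{s+1}}$ for the swapped signature, and the condition $\gcd(r,s)=1$ is symmetric. Alternatively, I would just repeat the argument.

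\textbf{The perturbation.} For $i=1$ we have $u=s_{\alpha_{r+1}}c$, which post-composes $c$ with the transposition $(r+1\ r+2)$. It differs from $c$ only at the two indices $c^{-1}(r+1)$ and $c^{-1}(r+2)$. So a $u$-sequence agrees with the corresponding $c$-sequence until the trajectory of one of its endpoints first hits one of these two indices.

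\begin{itemize}
\item If $d=1$, then $c$ is a single $n$-cycle and the transposition splits it into two cycles. I expect that exactly one positive non-compact root has its trajectory rerouted so that its tail becomes positive, giving $|\Scal^+_u|=1$. The natural candidate is the one whose last step crosses from $\alpha_{r+1}$ across the block boundary.
\item If $d>1$, then $r+1$ and $r+2$ lie in different $c$-cycles, and the transposition merges two cycles. I expect this to produce either two positive sequences, or a violation of one of the two conditions of Remark \ref{rmk-2conditions-small}. The likely violation is a compact $u$-orbit containing roots of both signs, or a compact positive-to-negative step. Either way $|\Scal^+_u|\neq 1$.
\end{itemize}

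\textbf{Main obstacle.} The hard step is the careful bookkeeping of signs along the rerouted trajectories, uniformly in $r,s$. For the direction $d>1\Rightarrow$ not small, I would first try to exhibit a single explicit bad root. A good candidate is $e_{r+1}-e_{r+2}$ (compact) or its image, whose $u$-orbit should stay inside one block and change sign, because $r+1$ and $r+2$ return to their residue classes modulo $d$ separately. For $d=1$, I would instead count: each step of a sequence changes the sign only where $s_{\alpha_{r+1}}$ intervenes, so at most one sequence can flip. Lemma \ref{lem-sm-lownb} guarantees that smallness cannot fail for a trivial reason, which I use to confirm that exactly one sequence flips.
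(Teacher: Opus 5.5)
Your framework is sound. By Proposition \ref{prop-small-by-chains}, $w'_1$ is small iff $|\Scal^+_u|=1$ for $u=w'_1z^{-1}$. The permutation $z^{-1}$ is the shift $a\mapsto a+s \pmod n$, and $u=(r+1\ r+2)\circ z^{-1}$ splits or merges cycles according to $d=\gcd(r,s)$. But from there both implications are only asserted (``I expect''). Moreover, the one argument you give for $d=1$ fails. Lemma \ref{lem-sm-lownb} applied to $w_1$ is already satisfied by $w'\in{}^IW$, which is small, so it says nothing about $w'_1$. It holds just as well when $d>1$, where $w'_1$ is not small. Likewise, ``at most one sequence can flip'' only reproves $|\Scal^+_u|\le\ell(w'_1)=1$; incidentally, this bound rules out your ``two positive sequences'' scenario. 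Your heuristic for $d>1$ is also inaccurate: under $u$ the index $r+1$ does not stay in its residue class, since $u^{n/d}(r+1)=r+2$.

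The missing idea is to localize the sign changes. The element $z^{-1}$ flips the sign of exactly the noncompact roots, and $s_{\alpha_{r+1}}$ flips only $\pm\alpha_{r+1}$. Hence $u$ flips every noncompact root and preserves the sign of every compact root except $\pm z(\alpha_{r+1})=\pm\alpha_{r-s+1}$, where $u(\alpha_{r-s+1})=-\alpha_{r+1}$. So every $u$-sequence is negative except possibly the one through $-\alpha_{r-s+1}$. Thus $w'_1$ is small iff the $u$-orbit of $e_{r-s+1}-e_{r-s+2}$ meets a noncompact root; this is the reduction the paper makes via Remark \ref{rmk-2conditions-small}.

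Your cycle picture then finishes the proof, using $u(r-s+1)=r+2$ and $u(r-s+2)=r+1$.
\begin{itemize}
\item If $d>1$, the orbit is $\{\pm(e_{r+1+js}-e_{r+2+js})\}_j$ (indices mod $n$), and every root in it is compact. A consecutive pair straddles the two blocks only if $r+1+js\equiv r$ or $0\pmod n$, which is impossible since $d$ divides $r$, $s$ and $n$. So a compact cycle contains $\pm\alpha_{r-s+1}$, and $|\Scal^+_u|=0$.
\item If $d=1$, the two $u$-cycles $C_1\ni r+1,r-s+2$ and $C_2\ni r+2,r-s+1$ have coprime lengths $m=\inv_n(s)$ and $n-m$. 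So the orbit of the pair $(r-s+1,r-s+2)$ is all of $C_2\times C_1$, and it contains the noncompact root $e_{r-s+1}-e_{r+1}$.
\end{itemize}
Filled in this way, your route would be cleaner than the paper's iterative ``$m<n/k$ for all $k$'' argument.

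Finally, your transpose-inverse reduction of $i=2$ to $i=1$ is legitimate. However, it lands in signature $(s,r)$, where the first block is no larger than the second, so the $i=1$ computation must be redone without assuming $r\ge s$. In particular, when $r=s+1$ the relevant root becomes the noncompact root $e_n-e_1$, which needs the separate treatment the paper gives it.
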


\begin{proof}
%We prove the result for $i=1$, the proof is completely similar in the case $i=2$. 
It suffices to check whether $w'_i$ satisfies the two conditions (1), (2) of Remark \ref{rmk-2conditions-small}. Note that the only compact root whose sign is changed by $w'_1z^{-1}$ (resp.\ $w'_2z^{-1}$) is $\alpha_{r-s+1}$ (resp.\ $\alpha_{r-s-1}$, if \(r>s+1\)). %\JS{The sentence begins with "compact root", so the case $r=s+1$ needs to be done separately}.
(If \(r = s+1\) and \(i = 2\), \(\alpha_{r-1} = w'_2z^{-1}(e_1-e_n), e_1-e_n \in \Phi\setminus \Phi_L,\) and \(w'_2z^{-1}\) does not change the sign of any compact root.) Hence, if $w'_1$ (resp.\ $w'_2$) is small, then the $w'_1z^{-1}$-orbit ($w'_2z^{-1}$-orbit) of $\alpha_{r-s+1}$ (resp.\ $\alpha_{r-s-1}$, if \(r>s+1\); \(e_1-e_n\), if \(r = s + 1\)) is not contained in $\Phi_L$. Conversely, if this condition is satisfied, then condition (2) of Remark \ref{rmk-2conditions-small} holds. Moreover, the sign of any non-compact root is changed by $w'_iz^{-1}$ (except when \(r = s+1\) and \(i = 2\)). It follows that condition (1) is satisfied by the $w'_1z^{-1}$-sequence (resp.\ $w'_2z^{-1}$-sequence) containing $\alpha_{r-s+1}$ (resp.\ $\alpha_{r-s-1}$ or \(e_1-e_n\)). Thus it is satisfied by all $w'_iz^{-1}$-sequences. We have shown that $w'_1$ (resp.\ $w'_2$) is small if and only if the $w'_1z^{-1}$-orbit (resp.\ $w'_2z^{-1}$-orbit) containing $\alpha_{r-s+1}$ (resp.\ $\alpha_{r-s-1}$ or \(e_1-e_n\)) is not contained in $\Phi_L$.

Assume $i=1$.
We claim that if this orbit is contained in $\Phi_L$, then $\gcd(r,s)>1$. For a contradiction, assume $\gcd(r,s)=1$ and set $m\colonequals \inv_n(s)$. Note that $w'_1z^{-1}(\alpha_i)=\alpha_{i+s}$ for all compact simple root $\alpha_i$, except for $i \in \{r-s,r-s+1,r-s+2\}$. Let $k_0$ be the smallest positive integer $k$ such that $r+(k-1)s+1\in \{r-s,r-s+1,r-s+2\}$, where these integers are taken modulo $n$. Since $\gcd(r,s)=1$, we have either $r+(k_0-1)s+1=r-s$ or $r+(k_0-1)s+1=r-s+2$, which amounts to $k_0=n-m$ and $k_0=m$, respectively (if $r+(k_0-1)s+1=r-s+1$, then \(k_0 = n\), which is impossible). In the first case, we obtain a non-compact root in the orbit (since \(w_1'z^{-1}(\alpha_{r-s}) = \alpha_r + \alpha_{r+1}\)), hence a contradiction. It follows that $k_0=m<\frac{n}{2}$ and the root $-\alpha_{r-s+2}$ lies in the orbit. The next roots in the sequence are $-(\alpha_{r+1}+\alpha_{r+2})$ and $-(\alpha_1+\alpha_2)=e_3-e_1$. The following roots in the sequence are obtained by adding $s$ to each index, until one of the integers $1+ks$ or $3+ks$ lies in $\{r-s+1,r-s+2\}$ (taken modulo $n$). The smallest integer for which this happens is $k=\min\{m,n-2m\}-2$. If $m>\frac{n}{3}$, then $k=n-2m-2$ and $3+ks=r-s+1$. The root $e_{r-s+1}-e_{r-s-1}$ lies in the orbit. The next root is $e_{r+2}-e_{r-1}$, which is non-compact, a contradiction. It follows that $m<\frac{n}{3}$. Continuing this way, we get $m<\frac{n}{d}$ for any $d=2,3,4,\dots$, which is impossible. This proves the claim.

Conversely, assume that $\gcd(r,s)>1$. We show that the $w'_1z^{-1}$-orbit containing $\alpha_{r-s+1}$ is contained in $\Phi_L$. Let $k$ be the smallest integer such that either $1+ks$ or $2+ks$ lies in $\{r-s+1,r-s+2\}$ (taken modulo $n$). Since $\gcd(r,s)>1$, we must have $1+ks\equiv r-s+1 \pmod{n}$. We deduce that the sequence takes once more the value $-\alpha_{r-s+1}$ without having passed through any non-compact roots. Hence, it is entirely contained in $\Phi_L$. 
The case \(i = 2\) is proved similarly.
\end{proof}

To simplify notation, we identify $I$ with a subset of $\{1,\dots, n-1\}$ via the map $i\mapsto \alpha_i$.

\begin{proposition} \label{Pw-leng2}
Assume that $\gcd(r,s)=1$. Set $m\colonequals \inv_n(s)$.
\begin{enumerate}
\item If $m>\frac{n}{2}$, then one has 
\[I_{w_1}=\{1+ks \ | \ 0\leq k \leq n-m-2\}\cup \{r+1\} \quad \textrm{and} \quad I_{w_2}=\emptyset.\]
\item If $m< \frac{n}{2}$, then 
\[I_{w_1}=\emptyset \quad \textrm{and} \quad I_{w_2}=\{-1+ks \ | \ 1\leq k \leq m-1\}\cup \{r-1\}.\]
\end{enumerate}
\end{proposition}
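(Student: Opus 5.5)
The plan is to compute $I_{w_i}$ directly from the formula of \S\ref{wbounded-algo}, $I_w=\bigcap_{m\geq 0}\varphi_w^m(I)$ with $\varphi_w=wz^{-1}\sigma$. Since $G=\GL_{n,\FF_p}$ is split, $\sigma$ acts trivially, so $\varphi_{w_i}=w_iz^{-1}$. Concretely, $I_{w_i}$ is the largest subset $J\subseteq I$ with $w_iz^{-1}(J)=J$. Equivalently, it is the set of $j\in I$ whose full orbit under the partially defined map $j\mapsto j'$, where $w_iz^{-1}(\alpha_j)=\alpha_{j'}$, is defined at every step and stays inside $I$. Because the map is injective, that orbit must then be a closed cycle contained in $I$.

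\textbf{Step 1: compute $z^{-1}$ on compact simple roots.} We have $z^{-1}=w_0w_{0,I}$. The element $w_{0,I}$ sends $\alpha_j\mapsto -\alpha_{r-j}$ for $j<r$ and $\alpha_j\mapsto-\alpha_{n+r-j}$ for $j>r$. The element $w_0$ sends $-\alpha_k\mapsto\alpha_{n-k}$. Hence $z^{-1}(\alpha_j)=\alpha_{j+s}$, with indices read modulo $n$; note $r\equiv -s\pmod n$.

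\textbf{Step 2: compose with $w_1$ and $w_2$.} The element $w_1$ is the $3$-cycle on $e_r,e_{r+1},e_{r+2}$, and $w_2$ is the $3$-cycle on $e_{r-1},e_r,e_{r+1}$. So $w_i$ fixes $\alpha_k$ unless $k$ lies in a window of four indices around $r$: $\{r-1,r,r+1,r+2\}$ for $w_1$, and $\{r-2,r-1,r,r+1\}$ for $w_2$. On those few indices I will write down $w_i(\alpha_k)$ explicitly, as was done in the proof of Proposition \ref{prop-small}. For instance, $w_1(\alpha_{r-1})=\alpha_{r-1}+\alpha_r$, which is not simple, so the orbit dies there. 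Other values return simple roots, e.g.\ $w_1(\alpha_{r+1})$ should come out as a simple root and $w_1(\alpha_{r+2})$ as a non-simple one. The outcome is that $w_iz^{-1}$ acts on indices as the shift $j\mapsto j+s \pmod n$, except on the three indices $j$ with $j+s$ in the window. There it either exits $I$, lands on a non-simple root, or is rerouted to another specific simple root. I expect exactly one rerouting exception for each $i$, namely $r+1\mapsto r+1$ (a fixed point, or a short loop) for $w_1$, and $r-1$ for $w_2$; this accounts for the isolated elements $\{r+1\}$ and $\{r-1\}$ in the statement.

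\textbf{Step 3: follow the shift orbits and compare $m$ with $n/2$.} When $\gcd(r,s)=1$, the shift by $s$ is a single $n$-cycle on $\ZZ/n$, and $m=\inv_n(s)$ measures the number of steps between indices: $k$ steps move $j$ to $j+ks$, and $ks\equiv \pm 1$ exactly when $k\in\{m,n-m\}$. Starting from index $1$, the chain $1,1+s,1+2s,\dots$ reaches one of the two relevant exceptional indices (say $r-s+2$ versus $r-s$, in the notation of the proof of Proposition \ref{prop-small}) after either $m$ or $n-m$ steps, whichever comes first.
\begin{itemize}
\item If $m>\frac n2$, the first exit occurs after $n-m-1$ steps, at a rerouting that closes the chain into a cycle $\{1+ks: 0\leq k\leq n-m-2\}$ for $w_1$. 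For $w_2$, the same comparison forces every orbit to hit a non-simple image, so $I_{w_2}=\emptyset$.
\item If $m<\frac n2$, the symmetric argument yields the cycle $\{-1+ks:1\leq k\leq m-1\}\cup\{r-1\}$ for $w_2$ and forces $I_{w_1}=\emptyset$.
\end{itemize}
Every index of $I$ not on these cycles lies on the chain leading into a dead end, so it is excluded from $I_{w_i}$. This closure/dead-end dichotomy is the same mechanism as in the proof of Proposition \ref{prop-small}.

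The main obstacle is the bookkeeping in Steps 2 and 3: getting the exact exceptional images of $w_iz^{-1}$ right, and checking that the cycle closes up precisely when $m>\frac n2$ (respectively $m<\frac n2$) rather than being off by one. I will first verify this on small cases such as $(r,s)=(3,2)$ and $(4,3)$, and only then write the general index computation modulo $n$.
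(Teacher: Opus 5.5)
Your strategy matches the paper's. You view $I_{w_i}$ as the union of the cycles of the partial map on $I$ induced by $w_iz^{-1}$. This map is the shift $j\mapsto j+s \pmod n$ except at a few inputs. Since the plain shift is a single $n$-cycle passing through $r\notin I$, every cycle must use the one rerouted edge, and you compare $m$ with $\frac n2$. However, the exceptional data you predict in Step~2 are wrong.

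\textbf{The exceptional data for $w_1$.} One has $w_1(\alpha_{r+1})=e_{r+2}-e_r=-(\alpha_r+\alpha_{r+1})$, which is not simple. The only simple value in the window is $w_1(\alpha_r)=\alpha_{r+1}$. Since $z^{-1}(\alpha_{r-s})=\alpha_r$, the rerouting is $r-s\mapsto r+1$. The dead inputs are $r-s-1$, $r-s+1$, $r-s+2$, together with $r\notin I$; this is exactly the paper's map $\gamma_1$.
\begin{itemize}
\item In particular $r+1$ is neither fixed nor isolated: $r+1\mapsto r+1+s\equiv 1$. For $m>\frac n2$ the cycle is $r+1\to 1\to 1+s\to\cdots\to 1+(n-m-2)s\equiv r-s\to r+1$.
\item If $r+1$ were a fixed point, it would lie in $I_{w_1}$ also when $m<\frac n2$, contradicting the statement.
\item From $1$, the chain reaches $r-s$ and $r-s+2$ after $n-m-2$ and $m-2$ steps, not $n-m$ and $m$.
\item To show the chain closes when $m>\frac n2$, you must check that $r-s$ comes before all three dead inputs, not only before $r-s+2$.
\end{itemize}

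\textbf{The case of $w_2$ when $m<\frac n2$.} This is the more serious problem. Here $w_2(\alpha_r)=\alpha_{r-1}$, so the rerouting is $r-s\mapsto r-1$, with dead inputs $r-s-2$, $r-s-1$, $r-s+1$. Also $r-1\mapsto r-1+s\equiv -1$. So the chain is $r-1,-1,-1+s,\dots$, and it returns to $r-s\equiv -2s$ at $-1+(m-2)s$. Hence for $m<\frac n2$ one gets $I_{w_2}=\{-1+ks\mid 0\le k\le m-2\}\cup\{r-1\}$.

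The set you claim to obtain ``by the symmetric argument'' (the one in the statement) cannot be $I_{w_2}$. It contains $-1+(m-1)s\equiv -s\equiv r\pmod n$, and $\alpha_r\notin I$. For example, take $(r,s)=(5,3)$, so $n=8$ and $m=3$. A direct computation gives the cycle $\alpha_2\mapsto\alpha_4\mapsto\alpha_7\mapsto\alpha_2$, so $I_{w_2}=\{2,4,7\}$, whereas the formula in (2) gives $\{2,4,5\}$.

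So part (2) of the statement is off by one: the range should be $0\le k\le m-2$. The paper's proof treats only $w_1$ and calls $w_2$ similar, so it does not catch this either. The qualitative conclusions used later are unaffected: which $I_{w_i}$ is empty, and that $\alpha_{r-1}\in I_{w_2}$.

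Your proposed sanity checks $(3,2)$ and $(4,3)$ both have $m>\frac n2$, so they would not detect the error. Add a case with $m<\frac n2$, such as $(5,3)$.
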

\begin{proof}
We only consider the element $w_1$, the proof for $w_2$ is similar. The computation is analogous to the one carried out in \cite[\S 5.2.1]{Koskivirta-LaPorta-Reppen-singularities}. Identify $\Delta$ with the set $\{1,\dots,n-1\}$ via the bijection $i\mapsto \alpha_i$, and view it as a subset of $\ZZ/n\ZZ$ via the map $i\mapsto i \pmod{n}$. Define a map $\gamma_1\colon \ZZ/n\ZZ\to \ZZ/n\ZZ$ by
\begin{equation}
    \gamma_1(k)=\begin{cases}
0 & \quad \textrm{if } k\in \{r-s-1,r-s+1,r-s+2,r\}, \\
r+1 & \quad \textrm{if } k=r-s, \\
k+s & \quad \textrm{otherwise.}
    \end{cases}
\end{equation}
Similarly to \loccitn, $I_{w_1}\cup \{0\}=\bigcap_{m\geq 1}\ima(\gamma^m_1)$. Consider the sequence $u_k=r+1+ks$ for $k\geq 0$. Let $u_{k_0}$ be the first element of the sequence such that $r-s-1\leq u_{k_0}\leq r-s+2$. It follows that:
\begin{enumerate}
    \item If $u_{k_0}=r-s$, then $I_{w_1}=\{u_k \ | \ 0\leq k \leq k_0 \}=\{u_k \ | \ 1\leq k \leq k_0 \}\cup \{r+1\}$.
    \item Otherwise, $I_{w_1}=\emptyset.$
\end{enumerate}
The smallest integer $k$ such that $u_k=r-s$ is $n-m-1$. The smallest integer $k$ such that $u_k=r-s-1$ is $n-2m-1$ (if $m<\frac{n}{2}$) or $2n-2m-1$ (if $m>\frac{n}{2}$). The smallest integer $k$ such that $u_k=r-s+1$ is $n-1$. The smallest integer $k$ such that $u_k=r-s+2$ is $m-1$. Hence, $I_{w_1}$ is nonempty if and only if $n-m<m$, i.e.\ $m>\frac{n}{2}$. The result follows.
\end{proof}

\begin{corollary}\label{cor: length 2 small calc}
Assume that $\gcd(r,s)=1$.  Set $m\colonequals \inv_n(s)$.
\begin{enumerate}
\item If $m>\frac{n}{2}$, then $\Gamma^{\rm sm}_{I_{w_1}}(w_1)=\{w'\}$ and $\Gamma^{\rm sm}_{I_{w_2}}(w_2)=\{w',w'_2\}$.
\item If $m<\frac{n}{2}$, then $\Gamma^{\rm sm}_{I_{w_1}}(w_1)=\{w',w'_1\}$ and $\Gamma^{\rm sm}_{I_{w_2}}(w_2)=\{w'\}$.
\end{enumerate}   
\end{corollary}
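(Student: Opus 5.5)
The corollary follows by combining three ingredients already in hand: the bound $|\Gamma_{I_{w_i}}(w_i)|\le 2$, the smallness criterion of Proposition \ref{prop-small}, and the computation of $I_{w_i}$ in Proposition \ref{Pw-leng2}.

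\textbf{Reduction.} By Lemma \ref{lemma-low-nei}~\eqref{lemma-low-nei-item2} and the Bruhat diagram above, $\Gamma_{I_{w_i}}(w_i)\subseteq\{w',w'_i\}$. We have already seen that $w'\in\Gamma^{\rm sm}_{I_{w_i}}(w_i)$, because $w'$ is a Bruhat-lower neighbor of $w_i$ and $\ell(w')=\ell(\pi_\Zcal(w'))$. Since $\gcd(r,s)=1$, Proposition \ref{prop-small} shows that $w'_i$ is small. Hence $w'_i\in\Gamma^{\rm sm}_{I_{w_i}}(w_i)$ if and only if $w'_i\in{}^{I_{w_i}}W$ and $w'_i\preccurlyeq_{I_{w_i}}w_i$.

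\textbf{When $I_{w_i}=\emptyset$.} Then ${}^{I_{w_i}}W=W$ and $\preccurlyeq_{\emptyset}$ is the Bruhat order. Indeed, the definition of $\preccurlyeq$ only uses $x\in W_\emptyset=\{1\}$. Since $w'_i$ is a Bruhat-lower neighbor of $w_i$, we get $w'_i\in\Gamma_{\emptyset}(w_i)$. This settles $w_2$ when $m>\frac n2$ and $w_1$ when $m<\frac n2$: in both cases $\Gamma^{\rm sm}=\{w',w'_i\}$.

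\textbf{When $I_{w_i}\neq\emptyset$.} Here I claim that $w'_i\notin{}^{I_{w_i}}W$, which gives $\Gamma^{\rm sm}=\{w'\}$.

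Take $i=1$ with $m>\frac n2$. Proposition \ref{Pw-leng2} gives $r+1\in I_{w_1}$ (the element $r+1$ always lies in the set listed there). Now $w'_1=s_{\alpha_{r+1}}$, so $s_{\alpha_{r+1}}w'_1=1$ is shorter than $w'_1$. Thus $w'_1$ is not the minimal-length element of its coset $W_{I_{w_1}}w'_1$, i.e.\ $w'_1\notin{}^{I_{w_1}}W$.

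Symmetrically, take $i=2$ with $m<\frac n2$. Then $r-1\in I_{w_2}$ and $w'_2=s_{\alpha_{r-1}}$, so $w'_2\notin{}^{I_{w_2}}W$. Consequently $w'_i$ cannot lie in $\Gamma_{I_{w_i}}(w_i)\subseteq{}^{I_{w_i}}W$.

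\textbf{Remaining point.} The only step needing care is the case $I=\emptyset$: we must confirm that a Bruhat-lower neighbor of length $\ell(w_i)-1$ is a lower neighbor for $\preccurlyeq_\emptyset$. This holds by taking $x=1$ in the definition of $\preccurlyeq$. Combining the two cases gives both assertions of the corollary.
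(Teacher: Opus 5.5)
Your case $I_{w_i}=\emptyset$ is correct and matches the paper. There $\preccurlyeq_\emptyset$ is the Bruhat order, so $\Gamma_\emptyset(w_i)=\{w',w'_i\}$, and both elements are small.

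The gap is in the case $I_{w_i}\neq\emptyset$, which rests entirely on the inclusion $\Gamma_{I_{w_i}}(w_i)\subseteq\{w',w'_i\}$. This does not follow from Lemma \ref{lemma-low-nei}~\eqref{lemma-low-nei-item2}. That lemma gives $\Gamma_I(w)\subseteq\pi_\Zcal(\Gamma_{I_0}(w))$, which is a \emph{lower} bound on $\Gamma_{I_0}(w)$. Moreover, for $I_0\neq\emptyset$ the order $\preccurlyeq_{I_0}$ is not the Bruhat order, so a lower neighbour in ${}^{I_0}W$ need not be a Bruhat-lower neighbour.

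Concretely, the closure of $\Fcal^{(P_{w_1})}_{w_1}$ contains $\pi_{B,P_{w_1}}(\Fcal^{(B)}_{w'_1})$. Its open stratum is $\Fcal^{(P_{w_1})}_u$ for some $u\in{}^{I_{w_1}}W$. Since $w'_1$ is small with $\pi_\Zcal(w'_1)=w'$, Lemma \ref{lem-length-ineq} (applied to $\Zcal$ and to $\Zcal_{P_{w_1}}$) gives $1=\ell(w')\le\ell(\pi_\Zcal(u))\le\ell(u)\le\ell(w'_1)=1$. So $u$ always lies in $\Gamma^{\rm sm}_{I_{w_1}}(w_1)$, with $\pi_\Zcal(u)=w'$.

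Your observation $w'_1\notin{}^{I_{w_1}}W$ is correct, but it only shows $u\neq w'_1$; it does not show $u=w'$. If $u$ were another simple reflection $s_\beta$ with $\beta\notin I_{w_1}$, then $\Gamma^{\rm sm}_{I_{w_1}}(w_1)\supseteq\{w',u\}$ and the statement would fail. So the inclusion you assumed is precisely the nontrivial content of the corollary.

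What is missing is identifying the $E_{\Zcal,P_{w_1}}$-orbit of $w'_1z^{-1}$. The paper shows
\[ w_{0,I_{w_1}}\,w'_1z^{-1}\,w_{0,I_{w_1}}^{-1}=w'z^{-1}. \]
The argument runs as follows.
\begin{enumerate}
\item Since $\alpha_r,\alpha_{r+2}\notin I_{w_1}$ (from the proof of Proposition \ref{Pw-leng2}), the reflections $s_\alpha$ for $\alpha\in I_{w_1}$ pairwise commute.
\item Hence $w_1z^{-1}$, which stabilizes $I_{w_1}$, commutes with $w_{0,I_{w_1}}$.
\item Write $w'_1z^{-1}=s_{\alpha_r}w_1z^{-1}$ and use $w_{0,I_{w_1}}(\alpha_r)=s_{\alpha_{r+1}}(\alpha_r)=s_{\alpha_r}(\alpha_{r+1})$, which is \eqref{rel-s-alpha}.
\item This gives $s_{\alpha_r}s_{\alpha_{r+1}}s_{\alpha_r}w_1z^{-1}=w'z^{-1}$.
\end{enumerate}
Thus $w'_1z^{-1}$ lies in the stratum of $w'$, so $u=w'$ and $\Gamma^{\rm sm}_{I_{w_1}}(w_1)=\{w'\}$. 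The case of $w_2$ with $m<\frac{n}{2}$ needs the analogous computation, not just $w'_2\notin{}^{I_{w_2}}W$.
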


\begin{proof}
We carry out the proof for $w_1$, the argument for $w_2$ is analogous. When $m<\frac{n}{2}$, we have $I_{w_1}=\emptyset$, hence $\Gamma^{\rm sm}_{I_{w_1}}(w_1)=\Gamma^{\rm sm}(w_1)=\{w',w'_1\}$ by Proposition \ref{prop-small} and the assumption $\gcd(r,s)=1$. Assume now that $m>\frac{n}{2}$. It suffices to show that $w'_1z^{-1}$ and $w'z^{-1}$ lie in the same orbit under the action of the group $E_{\Zcal,P_{w_1}}$.

First, by the proof of Proposition \ref{Pw-leng2}, the roots $\alpha_{r}$, $\alpha_{r+2}$ do not lie in $I_{w_1}$. This implies that the roots $s_{\alpha}$ for $\alpha\in I_{w_1}$ commute with one another. In particular, using the relation $w_1z^{-1}(I_{w_1})=I_{w_1}$, we deduce that $w_1z^{-1}$ commutes with $w_{0,I_{w_1}}$. Writing $w'_1z^{-1}=s_{\alpha_r}w_1z^{-1}$, we obtain:
\begin{align*}
w_{0,I_{w_1}} w'_1 z^{-1} w_{0,I_{w_1}}^{-1} &=(w_{0,I_{w_1}} s_{\alpha_r} w_{0,I_{w_1}}^{-1}) (w_{0,I_{w_1}} w_1 z^{-1} w_{0,I_{w_1}}^{-1}) \\
&=(w_{0,I_{w_1}} s_{\alpha_r} w_{0,I_{w_1}}^{-1}) w_1 z^{-1} = s_{w_{0,I_{w_1}}(\alpha_r)}  w_1 z^{-1}.
\end{align*}
Finally, since $\alpha_{r+1}\in I_{w_1}$ and $\alpha_{r-1},\alpha_r \notin I_{w_1}$, we find $w_{0,I_{w_1}}(\alpha_r)=s_{\alpha_{r+1}}(\alpha_r) = s_{\alpha_r}(\alpha_{r+1})$ by \eqref{rel-s-alpha}. We have showed:
\begin{equation}\label{relat-Iw1}
    w_{0,I_{w_1}} w'_1 z^{-1} w_{0,I_{w_1}}^{-1} = s_{\alpha_r}s_{\alpha_{r+1}}s_{\alpha_r}w_1z^{-1}=w'z^{-1}.
\end{equation}
This shows the claim that $w'_1z^{-1}$ and $w'z^{-1}$ lie in the same $E_{\Zcal,P_{w_1}}$-orbit and terminates the proof.
\end{proof}

\begin{comment}
    
For a subset $I_0\subseteq I$, note that we have
\begin{align*}
w'_1\in {}^{I_0}W \quad &\Longleftrightarrow \quad r+1\notin I_0, \\
w'_2\in {}^{I_0}W \quad  &\Longleftrightarrow \quad r-1\notin I_0.
\end{align*}
Hence, in the case $m>\frac{n}{2}$, we have $w'_2\in \Gamma_{I_{w_2}}^{\rm sm}(w_2)$, which implies that $\Ucal_2$ does not admit a sep. Similarly, in the case $m< \frac{n}{2}$, we have $w'_1\in \Gamma_{I_{w_1}}^{\rm sm}(w_1)$, hence $\Ucal_1$ is not smooth. 
\end{comment}

\begin{remark}\label{rmk: can type gcd 234}
In the case where $\gcd(r,s)>1$, the computation of $I_{w_i}$ is more difficult. One can show that if $\gcd(r,s)\in \{2,3,4\}$, then $I_{w_1}=\emptyset$.
\end{remark}

We now study the $w_i$-boundedness of $\Ucal_i$.

\begin{proposition}\label{Ui-bounded-prop}
Let $i\in \{1,2\}$. The following are equivalent:
\begin{enumerate}
    \item \label{Ui-bounded-1} $\Ucal_i$ is $w_i$-bounded.
    \item \label{Ui-bounded-2} $P_{w'}=B$.
    \item \label{Ui-bounded-3} $\gcd(r,s)\leq 3$.
\end{enumerate}
\end{proposition}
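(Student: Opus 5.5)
The plan is to compute the types $I_{w'}$ and $I_{w_i}$ explicitly with the formula $I_w=\bigcap_{m\geq 0}\varphi_w^m(I)$ of \S\ref{wbounded-algo}, as in the proof of Proposition \ref{Pw-leng2}. Since $\Ucal_i=\Xcal_{w_i}\cup\Xcal_{w'}$, and $w_i$-boundedness for $w_i$ itself is automatic, $\Ucal_i$ is $w_i$-bounded if and only if $I_{w'}\subseteq I_{w_i}$. So \eqref{Ui-bounded-2}$\Rightarrow$\eqref{Ui-bounded-1} is immediate: $I_{w'}=\emptyset$ is contained in anything. It remains to show \eqref{Ui-bounded-2}$\Leftrightarrow$\eqref{Ui-bounded-3} and \eqref{Ui-bounded-1}$\Rightarrow$\eqref{Ui-bounded-2}.

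\emph{Step 1: compute $I_{w'}$.} First I will identify $\Delta$ with $\{1,\dots,n-1\}\subseteq \ZZ/n\ZZ$. Then I will write down the combinatorial map $\gamma'\colon \ZZ/n\ZZ\to\ZZ/n\ZZ$ induced by $w'z^{-1}\sigma=s_{\alpha_r}z^{-1}$ on simple roots, in the same way as $\gamma_1$. I expect $\gamma'(k)=k+s$ except on a short list of "bad" indices near $r-s$ and $r$, which are sent to $0$ (a non-simple or non-compact root). As in Proposition \ref{Pw-leng2}, $I_{w'}\cup\{0\}=\bigcap_m \ima(\gamma'^m)$. Since $k\mapsto k+s$ has exactly $d\colonequals\gcd(r,s)=\gcd(n,s)$ orbits on $\ZZ/n\ZZ$, namely the residue classes mod $d$, the type $I_{w'}$ should be the union of those classes that contain no bad index and no $0$. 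The bad indices together with $0$ occupy the residues $0$ and $\pm1$ modulo $d$. Hence at most three classes are destroyed. This gives $I_{w'}=\emptyset$ exactly when $d\leq 3$, which proves \eqref{Ui-bounded-2}$\Leftrightarrow$\eqref{Ui-bounded-3}. The delicate point is the bookkeeping of which indices are bad; I will do it by direct computation with $z^{-1}\sigma(\alpha_k)$ for $G=\GL_n$.

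\emph{Step 2: show that \eqref{Ui-bounded-1} fails when $d\geq 4$.} In this case $I_{w'}$ contains a full residue class $C$ modulo $d$ avoiding $0,\pm1$. I must show that $C\not\subseteq I_{w_i}$. The map $\gamma_i$ attached to $w_iz^{-1}$ differs from $\gamma'$ only at the indices adjacent to $r\pm1$: this is the extra factor $s_{\alpha_{r+1}}$ or $s_{\alpha_{r-1}}$ in $w_1=s_{\alpha_r}s_{\alpha_{r+1}}$, $w_2=s_{\alpha_r}s_{\alpha_{r-1}}$. I will use that $I_{w_i}$ is a union of $\gamma_i$-cycles. If $C\subseteq I_{w_i}$, then $C$ would be a $\gamma_i$-stable set on which $\gamma_i$ acts as translation by $s$. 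I then aim to show that some element of $C$ lies in a class where $\gamma_i$ has a modified value, or that $C$ cannot be a union of $\gamma_i$-cycles alongside the modified indices. Either way this yields a contradiction, so $I_{w'}\not\subseteq I_{w_i}$. If this general argument is awkward, I will instead compute $I_{w_i}$ directly by the same cycle analysis. I expect $I_{w_i}$ to be either empty or contained in the residue classes meeting $\{r-1,r,r+1,r+2\}$, hence disjoint from $C$.

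\emph{Main obstacle.} I expect Step 2 to be the hardest, specifically controlling $I_{w_i}$ when $\gcd(r,s)>1$; Remark \ref{rmk: can type gcd 234} already flags this computation as harder. What I would try first is to avoid computing $I_{w_i}$ fully. Instead, I would show that the orbits of $\gamma_i$ through the modified indices $r\pm1$ cannot coincide with the translation-invariant class $C$, which only requires tracking a single orbit.
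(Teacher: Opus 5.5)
\textbf{Verdict: genuine gap in Step 2, easily repaired.}

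Your reduction to $I_{w'}\subseteq I_{w_i}$, the implication (2)$\Rightarrow$(1), and Step 1 are correct. In Step 1, $z^{-1}$ shifts indices by $s$ modulo $n$. The bad indices for $w'z^{-1}$ are $r-s-1,r-s,r-s+1,r$, and you recover $I_{w'}=\{\alpha_k : k\not\equiv -1,0,1 \pmod{d}\}$. This is the description from \cite[Prop.~5.9]{Koskivirta-LaPorta-Reppen-singularities}, which the paper simply cites for (2)$\Leftrightarrow$(3).

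The gap is in Step 2. You propose to show $C\not\subseteq I_{w_i}$ for ``a full residue class $C$ modulo $d$ avoiding $0,\pm1$''. For a general such $C$ this is false as soon as $d\geq 5$. Compared with $w'z^{-1}$, the operator $w_1z^{-1}$ changes the combinatorial map only at the indices $k$ with $k+s\in\{r,r+2\}$, i.e.\ at $r-s$ and $r-s+2$ (residues $0$ and $2$ modulo $d$). So every class of residue $3,\dots,d-2$ is still a full translation cycle of good indices, and it lies in $I_{w'}\cap I_{w_1}$. For example, for $r=s=5$ one has $I_{w'}=\{\alpha_2,\alpha_3,\alpha_7,\alpha_8\}$, and $w_1z^{-1}$ swaps $\alpha_3$ and $\alpha_8$, so $\{\alpha_3,\alpha_8\}\subseteq I_{w_1}$. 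Your fallback expectation fails for the same reason: $I_{w_i}$ need not be empty or contained in the classes meeting $\{r-1,r,r+1,r+2\}$.

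The missing idea is to pick the one class that the extra reflection destroys. After that, no computation of $I_{w_i}$ is needed.

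\emph{Case $w_1$.} Take $\alpha_{r-s+2}$.
\begin{itemize}
\item It lies in $I_{w'}$, since $r-s+2\equiv 2\not\equiv 0,\pm1 \pmod d$ when $d\geq 4$.
\item Its image $w_1z^{-1}(\alpha_{r-s+2})=s_{\alpha_r}s_{\alpha_{r+1}}(\alpha_{r+2})=\alpha_r+\alpha_{r+1}+\alpha_{r+2}$ is not simple.
\item Since $w_1z^{-1}$ permutes $I_{w_1}$, it follows that $\alpha_{r-s+2}\notin I_{w_1}$.
\end{itemize}
This is exactly the paper's proof of (1)$\Rightarrow$(2).

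\emph{Case $w_2$.} Use $\alpha_k$ with $k\equiv r-s-2 \pmod n$, which has residue $-2$ modulo $d$ and so lies in $I_{w'}$ for $d\geq 4$. Its image $s_{\alpha_r}s_{\alpha_{r-1}}(\alpha_{r-2})=\alpha_{r-2}+\alpha_{r-1}+\alpha_r$ is again not simple, so $\alpha_k\notin I_{w_2}$.
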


\begin{proof}
It was showed in \cite[Corollary 5.10]{Koskivirta-LaPorta-Reppen-singularities} that \eqref{Ui-bounded-2} and \eqref{Ui-bounded-3} are equivalent. Moreover, it is clear that \eqref{Ui-bounded-2} implies \eqref{Ui-bounded-1}. We prove the converse implication for $w_1$. The case of $w_2$ is similar. Assume that $\Ucal_1$ is $w_1$-bounded and suppose that $\delta\colonequals \gcd(r,s)>3$. By \cite[Proposition 5.9]{Koskivirta-LaPorta-Reppen-singularities}, the type of the parabolic subgroup of $w'$ is
\begin{equation}
    I_{w'}= \{\alpha_i \ | \ i\not\equiv -1,0,1 \pmod{\delta}\}.
\end{equation}
In particular, the root $\alpha_{r-s+2}$ lies in $I_{w'}$. On the other hand, one sees easily that $w_1z^{-1}(\alpha_{r-s+2})$ is not simple, hence it does not lie in $I_{w_1}$. Thus $I_{w'}\not\subset I_{w_1}$, which contradicts the $w_1$-boundedness of $\Ucal_1$. This terminates the proof.
\end{proof}

The following theorem gives a completely determines when the substack $\Ucal_i$ is smooth in terms of the signature $(r,s)$.

\begin{theorem}\label{thm-Urs-smooth} \ 
\begin{enumerate}
    \item\label{thm-long2-1} If $\gcd(r,s)>3$, then $\Ucal_i$ is not $w_i$-bounded (for $i=1,2$). In particular, $\Ucal_i$ is not smooth.
    \item\label{thm-long2-2} If $\gcd(r,s)\in \{2,3\}$, then $\Ucal_i$ is smooth for $i=1,2$.
    \item\label{thm-long2-3} If $\gcd(r,s)=1$ and we set $m=\inv_n(s)$, one has
    \begin{equation*}
       \Ucal_1 \ \textrm{smooth} \ \Longleftrightarrow \ m>\frac{n}{2}, \quad
        \Ucal_2 \ \textrm{smooth} \ \Longleftrightarrow \ m<\frac{n}{2}.
    \end{equation*}
\end{enumerate}
\end{theorem}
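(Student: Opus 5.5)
The proof will assemble the preceding results, using Proposition \ref{prop-smooth-small}. Since $w'$ is the only lower neighbor of $w_i$ in ${}^IW$, that proposition says $\Ucal_i$ is smooth if and only if two conditions hold: $P_{w'}\subseteq P_{w_i}$, and $w'\notin \pi_\Zcal(\Gamma^{\rm sm}_{I_{w_i}}(w_i)\setminus\{w'\})$. The second condition is automatic when $\Gamma^{\rm sm}_{I_{w_i}}(w_i)=\{w'\}$. We already know $w'\in\Gamma^{\rm sm}_{I_{w_i}}(w_i)$, and $\Gamma_{I_{w_i}}(w_i)\subseteq\{w',w'_i\}$. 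So, granting boundedness, smoothness is equivalent to the following: either $w'_i\notin \Gamma_{I_{w_i}}(w_i)$, or $w'_i$ is not small, or $\pi_\Zcal(w'_i)\neq w'$. Moreover, if $w'_i$ is small and lies in $\Gamma_{I_{w_i}}(w_i)$, then $\pi_\Zcal(w'_i)$ has length $1$ by definition of smallness. Since $w'$ is the unique length-one element of ${}^IW$, this forces $\pi_\Zcal(w'_i)=w'$. Hence, under boundedness, $\Ucal_i$ is smooth if and only if $w'_i\notin\Gamma^{\rm sm}_{I_{w_i}}(w_i)$.

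\emph{Case $\gcd(r,s)>3$.} Proposition \ref{Ui-bounded-prop} gives directly that $\Ucal_i$ is not $w_i$-bounded. By Proposition \ref{prop: results about normality}~\eqref{prop-normal-2} (or simply Theorem \ref{thm-FTZD-normality}), $\Ucal_i$ is then not normal, hence not smooth.

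\emph{Case $\gcd(r,s)\in\{2,3\}$.} By Proposition \ref{Ui-bounded-prop}, $P_{w'}=B\subseteq P_{w_i}$, so $\Ucal_i$ is bounded. By Proposition \ref{prop-small}, $w'_i$ is not small since $\gcd(r,s)\neq 1$. Hence $\Gamma^{\rm sm}_{I_{w_i}}(w_i)=\{w'\}$, and the reduction above shows $\Ucal_i$ is smooth. This case does not require computing $I_{w_i}$, which is convenient since Remark \ref{rmk: can type gcd 234} only partially addresses it.

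\emph{Case $\gcd(r,s)=1$.} Boundedness again holds by Proposition \ref{Ui-bounded-prop}, and Corollary \ref{cor: length 2 small calc} provides exactly the sets $\Gamma^{\rm sm}_{I_{w_i}}(w_i)$.
\begin{itemize}
\item If $m>n/2$, then $\Gamma^{\rm sm}_{I_{w_1}}(w_1)=\{w'\}$, so $\Ucal_1$ is smooth. Also $\Gamma^{\rm sm}_{I_{w_2}}(w_2)=\{w',w'_2\}$ with $\pi_\Zcal(w'_2)=w'$ by the length argument, so $\Ucal_2$ is not smooth.
\item If $m<n/2$, the roles of $\Ucal_1$ and $\Ucal_2$ are swapped.
\end{itemize}
Note that $m\neq n/2$, since $m$ is invertible modulo $n$ and $n\geq 4$. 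Therefore the two alternatives are exhaustive, which yields both equivalences.

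The only step that needs care is the claim $\pi_\Zcal(w'_i)=w'$ for small $w'_i$. Here I would invoke Lemma \ref{lem-length-ineq} and Definition \ref{def-small}, together with the uniqueness of the length-one element $s_{\alpha_r}$ in ${}^IW$. This uniqueness holds because $\Delta\setminus I=\{\alpha_r\}$. All the substantive work is already contained in Propositions \ref{prop-small}, \ref{Pw-leng2}, \ref{Ui-bounded-prop} and Corollary \ref{cor: length 2 small calc}.
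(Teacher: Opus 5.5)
Your proof is correct and follows the paper's own argument. Part (1) comes from Proposition \ref{Ui-bounded-prop}; part (2) from the non-smallness of $w'_i$ (Proposition \ref{prop-small}) together with boundedness and Proposition \ref{prop-smooth-small}; and part (3) from the computation of $I_{w_i}$ in Proposition \ref{Pw-leng2} and the resulting description of $\Gamma^{\rm sm}_{I_{w_i}}(w_i)$. You also make explicit two points the paper leaves tacit: a small $w'_i$ must satisfy $\pi_\Zcal(w'_i)=w'$ because $w'$ is the only length-one element of ${}^IW$, and $m\neq n/2$, so the two cases in (3) are exhaustive.
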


\begin{proof}
Assertion \eqref{thm-long2-1} follows from Proposition \ref{Ui-bounded-prop}. If $\gcd(r,s)\in \{2,3\}$, then $w'_1, w'_2$ are not small by Proposition \ref{prop-small}, hence $\Gamma_{I_{w_i}}^{\rm sm}(w_i)=\{w'\}$. Moreover, $\Ucal_i$ is $w_i$-bounded ($i=1,2$), thus $\Ucal_i$ is smooth by Proposition \ref{prop-smooth-small}, which shows \eqref{thm-long2-2}. Finally, Assertion \eqref{thm-long2-3} follows from Proposition \ref{Pw-leng2}.
\end{proof}

\begin{remark}\label{rmk-Urs-Uibar}
    In the case $\gcd(r,s)=1$, $m>\frac{n}{2}$, the Zariski closure $\overline{\Ucal}_1=\Xcal_{w_1}\cup \Xcal_{w'}\cup \Xcal_{\iden}$ is $w_1$-bounded (Proposition \ref{Ui-bounded-prop} and \cite[Cor.~5.8]{Koskivirta-LaPorta-Reppen-singularities}) and admits a separating canonical cover (by the proof of Corollary \ref{cor: length 2 small calc}, \(w'z^{-1}\) and \((w_1'z^{-1})\) have the same \(E_{\Zcal,P_{w_1}}\)-orbit), hence is normal and Cohen--Macaulay, by Proposition \ref{prop: results about normality}.(\ref{prop-normal-3}). Similarly, when $\gcd(r,s)=1$, $m<\frac{n}{2}$, the closed substack $\overline{\Ucal}_2=\Xcal_{w_2}\cup \Xcal_{w'}\cup \Xcal_{\iden}$ is normal and Cohen--Macaulay. However, if $\gcd(r,s)\in \{2,3\}$, $\overline{\Ucal}_i$ is not \(w_i\)-bounded and does not admit a canonical cover. The techniques developed in this paper are not enough to determine whether it is normal or not.
\end{remark}

\section{The split case} \label{sec-split}

In this section, we assume that $G$ is split over $\FF_p$. We compare the various stacks of $G$-zips attached to all positive powers of the Frobenius isogeny $\varphi$, as well as the identity isogeny (which we view as the zero-th power of $\varphi$). Our main goal is to show that several results established in the case of Frobenius-type zip data in previous sections also hold in the case $\varphi=\id_G$.

\subsection{Notation} \label{subsec-notation}
\subsubsection{Group theory}
Throughout, $m$ will denote a non-negative integer. We let $K=\overline{\FF}_p$ be an algebraic closure of $\FF_p$. Let $\sigma\in \Gal(K/\FF_p)$ be the $p$-power arithmetic Frobenius homomorphism. We let $G_0$ be a connected, split reductive group over $\FF_p$ and set $G\colonequals G_{0,K}$. The assumption that $G_0$ is $\FF_p$-split is necessary for our purpose, which is to compare the stacks of $G$-zips for different powers of the Frobenius isogeny (especially $m=0$), and eventually to a characteristic zero object. For this reason, we need the Galois action to be trivial on all attached group-theoretical objects. 

Write $\varphi$ for the $p$-power geometric Frobenius isogeny of $G_0$, base changed to $G$. We define $\varphi^0$ to be the identity isogeny of $G$. Fix a cocharacter $\mu\colon \GG_{\textrm{m},\FF_p}\to G_{0}$. Let $P\subseteq G$ denote the parabolic subgroup attached to $\mu$ (\S \ref{subsubsec-zip-data}). We fix a Borel pair $(B,T)$ in $G_0$ satisfying the same assumptions as in \S\ref{subsubsec-G-zips} and such that \(T\) splits over \(\FF_p\). 

%and make following assumptions:
%\begin{enumerate}
%    \item $(B,T)$ is defined over $\FF_p$.
%    \item $T$ is split over $\FF_p$.
%    \item $B\subseteq P$.
%    \item $\mu$ factors through $T$.
%\end{enumerate}
%With the exception of the second assumption, all other assumptions can easily be achieved after changing $\mu$ to a conjugate cocharacter. 

\subsubsection{The stack of $G$-zips with exponent $m$} \label{subsubsec-zip-expm}
Let $\Zcal_m$ denote the zip datum attached to the triple $(G,\mu,\varphi^m)$ for $m\geq 0$ (\S\ref{subsubsec-zip-data}). It is of Frobenius-type if and only if $m\geq 1$. Note that the groups $(P,Q,L,M)$ appearing in the zip datum $\Zcal_m$ are independent of $m\geq 0$, by our assumption that $T$ splits over $\FF_p$. Note also that $L=M=P\cap Q$. %Specifically, we have  $P=P_-$, $Q=P_+$, $L=M=P\cap Q$. 
We set $z\colonequals w_{0,I}w_0$ throughout. We adopt a simplified notation to denote the various objects attached to the zip datum $\Zcal_m$. Set $\Xcal_m\colonequals \Xcal_{\Zcal_m}$ and $E_m\colonequals E_{\Zcal_m}$. For $w\in {}^I W\cup W^J$, let $G_{m,w}\colonequals G_{\Zcal_m,w}$ and $\Xcal_{m,w}\colonequals \Xcal_{\Zcal_m,w}$. We call $\Xcal_m$ the stack of $G$-zips of exponent $m$ attached to $(G_0,\mu)$.

Note also that the partial order $\preccurlyeq_{\Zcal_m}$ on the set ${}^I W$ is independent of $m\geq 0$, by our assumption. We denote it simply by $\preccurlyeq$. For an intermediate parabolic $P_0$ and $w\in {}^{I_0}W$, write $\Gamma_{I_0}(w)$ for the set $\Gamma_{\Zcal^{(P_0)}_m}(w)$. Note that this set is independent of $m\geq 0$.

%We say that $(G,\mu)$ is an $\FF_p$-cocharacter datum if $\mu$ is defined over $\FF_p$ \JS{better terminology?} \lorenzo{How about "\(\FF_p\)-rational" or "split cocharacter datum"?}. Write $\varphi\colon G\to G$ for the geometric Frobenius homomorphism.

\subsubsection{The flag space}
For an intermediate parabolic $B\subseteq P_0 \subseteq P$, we set $\Fcal_m^{(P_0)}\colonequals \Fcal_{\Zcal_m}^{(P_0)}$. Similarly, for $w\in {}^{I_0}W\cup W^{J_0}$, write $\Fcal_{m,w}^{(P_0)} \colonequals\Fcal_{\Zcal_m,w}^{(P_0)}$. The zip datum $(\Zcal_{m})_{P_0}$ will be denoted by $\Zcal_{P_0}^{(m)}$. For $w \in {}^{I_0}W\cup W^{J_0}$, write $G^{(P_0)}_{m,w}\colonequals G_{\Zcal^{(m)}_{P_0},w}$. Similarly, the partial order $\preccurlyeq_{\Zcal_{P_0}^{(m)}}$ on ${}^{I_0} W$ is independent of $m\geq 0$, we simply denote it by $\preccurlyeq_{I_0}$. Its restriction to the subset ${}^I W$ is coarser that $\preccurlyeq$. Set $E'_{m,P_0} \colonequals E'_{\Zcal_m,P_0}$ and $E_{m,P_0}\colonequals E_{\Zcal^{(m)}_{P_0}}$. Therefore, we have $\Fcal_{m,w}^{(P_0)}=[E'_{m,P_0}\backslash G^{(P_0)}_{m,w}]$, for $w\in {}^{I_0}W\cup W^{J_0}$, and recall that
\begin{equation}
    G^{(P_0)}_{m,w} = E_{m,P_0}\cdot \left( BwBz^{-1} \right).
\end{equation}
Note that for $w\in {}^I W$, the canonical parabolic $P_w$ (which a priori depends on the zip datum $\Zcal_m$) is the same for all $m\geq 0$, hence the notation $P_w$ is unambiguous.

\subsection{Stabilizers}
For any $x\in G$, identify $\Stab_{E_m}(x)$ with a closed subgroup of $P$ via the first projection $\pr_1\colon E_m\to P$. Note that $G^{(P_w)}_{m,w}= P_w (wz^{-1}) Q_w$ for all $m\geq 0$.

\begin{lemma}\label{stab-Pw}
For any $x\in P_w (wz^{-1}) Q_w$ and any $m\geq 0$, one has $\Stab_{E_m}(x)\subseteq P_w$.
\end{lemma}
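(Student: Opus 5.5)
Since $P_w(wz^{-1})Q_w$ is a single $E'_{m,P_w}$-orbit, the plan is to reduce to the base point $x_0 = wz^{-1}$ and then compute its stabilizer directly. For the reduction, write $x=(a,b)\cdot x_0$ with $(a,b)\in E'_{m,P_w}$; this is justified for $m\ge 1$ by the Frobenius-type results (Proposition \ref{prop-FP0}, Lemma \ref{stab-contained-Pw}). Then $\Stab_{E_m}(x)=(a,b)\Stab_{E_m}(x_0)(a,b)^{-1}$, and its first projection is $a\,\pr_1(\Stab_{E_m}(x_0))\,a^{-1}$. Since $a\in P_w$, it suffices to treat $x_0$. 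For $m=0$ I would check transitivity directly. Given $p\,x_0\,q$ with $p\in P_w$ and $q\in Q_w$, use the Levi decompositions $P_w=L_wU$, $Q_w=\varphi(L_w)U'$ and the relation ${}^{x_0}L_w=L_w$ to move the Levi part of $q$ across $x_0$ into $P_w$. This produces $(a,b)\in E'_{0,P_w}$ with $x=a x_0 b^{-1}$, following the usual Pink--Wedhorn--Ziegler argument for $P_w x_0 Q_w$.

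For the base point, I would argue uniformly in $m\ge 0$ by the inductive shrinking method underlying \cite[Lemma~2.14]{Koskivirta-LaPorta-Reppen-singularities}. Let $(a,b)\in E_m$ satisfy $a x_0 = x_0 b$. Then $a\in P\cap {}^{x_0}Q$. Its Levi component satisfies $\bar b=\varphi^m(\bar a)$, so $\bar a$ is constrained by $\bar a \in L\cap {}^{x_0}(\varphi^m(\cdot))$ modulo unipotent radicals. Iterating, I would define decreasing chains $H_0=P$ and $H_{k+1}= P\cap {}^{x_0}\varphi^m(\text{Levi image of }H_k)\cdot(\ldots)$, exactly as in the construction of $L_w$ as the largest subgroup of $L$ with ${}^{x_0}\varphi(L_w)=L_w$. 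The group $\pr_1(\Stab_{E_m}(x_0))$ lies in every $H_k$. The stable term is contained in $L_wB$, because the types of the Levi parts are $I\cap \varphi_w^{-1}(\cdots)$, and these stabilize to $I_w=\bigcap_k\varphi_w^k(I)$ (\S\ref{wbounded-algo}). Each unipotent part is contained in $B$, since $B\subseteq P$ and ${}^{z}B\subseteq Q$ give root-group containments of the form $U_\alpha$ with $\alpha$ ranging over the relevant roots.

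The key point is that the argument only uses root combinatorics. The types and root subgroups are unchanged when $\varphi^m$ is replaced by $\varphi^0=\id$, because $G$ is split and $\sigma$ acts trivially on $W$ and $\Phi$. So the proof of \cite[Lemma~2.14]{Koskivirta-LaPorta-Reppen-singularities} goes through verbatim for every $m\ge0$.

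I expect the main obstacle to be $m=0$. The Frobenius-type proof might rely on finiteness, for instance on elements of $L$ with $\varphi^m(\bar a)=\bar a$ forming a finite group. For $\varphi=\id$ that finiteness is lost: $L_w$ itself contributes a positive-dimensional part of the stabilizer (roughly the centralizer of $x_0$ in $L_w$). That part still lies in $L_w\subseteq P_w$, so the statement survives. I would first check that the inductive intersection argument never invokes finiteness, and that it gives the containment in $P_w$ directly rather than merely controlling the identity component.
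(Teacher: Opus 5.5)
\noindent There is a genuine gap in your reduction to the base point when $m=0$. This is the only case with new content, since for $m\geq 1$ the statement is literally Lemma~\ref{stab-contained-Pw}.

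Your claim that $P_w(wz^{-1})Q_w$ is a single $E'_{0,P_w}$-orbit is false. Take $G=\GL_n$ with any $\mu$. If $(a,b)\in E_0$, then $\det a=\det\overline{a}=\det\overline{b}=\det b$, so $\det$ is constant on $E_0$-orbits. On the other hand, $P_w(wz^{-1})Q_w$ is stable under the center, so it meets every fiber of $\det$. In Example~\ref{ex-GL2-image} this set is all of $G$, and $E_0\simeq G$ acts on it by conjugation.

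The step that fails is ``moving the Levi part of $q$ across $x_0$''. After doing so, the Levi components of the pair no longer agree. Restoring $\overline{a}=\overline{b}$ amounts to solving $\ell=c\,\bigl({}^{x_0}\varphi^m(c)\bigr)^{-1}$ with $c\in L_w$. Lang's theorem solves this for $m\geq 1$, but it is generally unsolvable for $\varphi^0=\iden$. So even a complete computation of $\Stab_{E_0}(wz^{-1})$ would prove the lemma on only one of infinitely many orbits.

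That computation is itself only sketched. The recursion defining $H_{k+1}$ contains a placeholder. The decisive claim, that the proof of \cite[Lemma~2.14]{Koskivirta-LaPorta-Reppen-singularities} never uses finiteness, is left unchecked, as you note yourself.

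Both problems disappear with the exponent-raising argument the paper uses, which works pointwise. Let $(a,b)\in\Stab_{E_0}(x)$. Since $K=\overline{\FF}_p$, the Levi component $\overline{a}$ lies in $L(\FF_{p^k})$ for some $k\geq 1$. Hence $\overline{b}=\overline{a}=\varphi^k(\overline{a})$, so $(a,b)\in\Stab_{E_k}(x)$. Because $G$ is split, $P_w$ and $Q_w$ do not depend on the exponent, so $x\in P_w(wz^{-1})Q_w=G^{(P_w)}_{k,w}$. Lemma~\ref{stab-contained-Pw} for the Frobenius-type datum $\Zcal_k$ then gives $a\in P_w$.

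This argument needs no orbit reduction and no root combinatorics. The paper writes it only for $x=wz^{-1}$, but it applies verbatim to every $x\in P_w(wz^{-1})Q_w$.
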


\begin{proof}
The case $m\geq 1$ is exactly the statement of Proposition \ref{stab-contained-Pw}. We now consider the case $m=0$. Let $(x,y)\in \Stab_{E_0}(wz^{-1})$. By definition, one has $\overline{x}=\overline{y}$. Choose an integer $m\geq 1$ such that $\overline{x}\in L(\FF_{p^m})$. One then has $\overline{y}=\varphi^m(\overline{x})$, hence $(x,y)\in \Stab_{E_m}(wz^{-1})$. By the case $m\geq 1$, we deduce $x\in P_w$.
\end{proof}

Viewed as a subgroup of $P\times Q$, we deduce $\Stab_{E_m}(wz^{-1})\subseteq P_w\times Q_w$, since $E_m\cap (P_w\times G)\subseteq P_w\times Q_w$.

\subsection{\texorpdfstring{The $\FF_p$-rational points}{}}
%We present a technical tool regarding the $\FF_p$-rational points of the various zip strata.
By our assumption, the locally closed subset $G_{m,w}$ is defined over $\FF_p$. In particular, we may speak of the $\FF_p$-points of this variety. Let $\Sigma_m\colonequals(G_{m,w})_{w\in {}^I W}$ denote the zip stratification of exponent $m\geq 0$ of $G$. The proposition below shows that the various stratifications $\Sigma_m$ (for $m\geq 0$) coincide on $G(\FF_p)$.

\begin{proposition}\label{prop-Gw} \ 
\begin{enumerate}
    \item We have $G_{m,w}(\FF_{p^m})=G_{0,w}(\FF_{p^m})$ for all $m\geq 0$. \label{Gw-item1}
    \item The set $G_{m,w}(\FF_p)$ is independent of $m\geq 0$.\label{Gw-item2}
\end{enumerate}
\end{proposition}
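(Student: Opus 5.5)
The plan is to prove \eqref{Gw-item1} by comparing orbits through Lang's theorem. Assertion \eqref{Gw-item2} then follows: since $\FF_p\subseteq \FF_{p^m}$, restricting \eqref{Gw-item1} to $\FF_p$-points gives $G_{m,w}(\FF_p)=G_{0,w}(\FF_p)$ for every $m\geq 0$, and the case $m=0$ is trivial.

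Fix $m\geq 1$ and $g\in G(\FF_{p^m})$. Both families $(G_{m,w})_{w\in{}^IW}$ and $(G_{0,w})_{w\in {}^IW}$ are partitions of $G$. It therefore suffices to show the implication
\[
g\in G_{m,w}\Longrightarrow g\in G_{0,w}\qquad (g\in G(\FF_{p^m})),
\]
since applying it for every $w$ and using that both are partitions gives equality of the $\FF_{p^m}$-points.

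The key observation is that $E_m$ and $E_0$ agree on $\FF_{p^m}$-rational pairs. If $(a,b)\in P\times Q$ has $\overline{a}\in L(\FF_{p^m})$, then $\varphi^m(\overline{a})=\overline{a}$. Hence $(a,b)\in E_m$ if and only if $(a,b)\in E_0$, because $G$ is split and $L=M$. So the main step is this: given $g\in G_{m,w}(\FF_{p^m})$, write $g=a\,(wz^{-1})\,b^{-1}$ with $(a,b)\in E_m$ rational over $\FF_{p^m}$.

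To achieve rationality, I would use Lemma~\ref{lem: lang and rational points}. The stratum $G_{m,w}$ is a single $E_m$-orbit, so consider the transporter
\[
X=\{(a,b)\in E_m : a\,wz^{-1}b^{-1}=g\}.
\]
Since $g$ and $wz^{-1}$ are $\FF_{p^m}$-rational (the representative $\dot w$ is defined over $\FF_p$ by the split Chevalley system), $X$ is defined over $\FF_{p^m}$. It is a torsor under $\Stab_{E_m}(wz^{-1})$, which need not be connected. The obstacle is therefore an $H^1$ of a possibly disconnected group.

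To handle the disconnectedness, I would avoid $X$ and pass to a connected group acting transitively. One option is to work in $G^{(B)}$-type terms. A more direct option is to use $\Stab\subseteq P_w\times Q_w$ (Lemma~\ref{stab-Pw}) and the description $G_{m,w}=E_m\cdot(P_wwz^{-1})$ from \eqref{Gw-withPw}.

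First move $g$ by a rational element of $E_m$ into $P_w wz^{-1}Q_w$. The set $\{e\in E_m : e^{-1}\cdot g\in P_wwz^{-1}Q_w\}$ is a right torsor for $E_m\cap(P_w\times Q_w)$. This group is connected, being the zip group of $\Zcal^{(m)}_{P_w}$ up to unipotent radicals. Lang's theorem (Lemma~\ref{lem: lang and rational points}) applied to it then gives an $\FF_{p^m}$-rational element.

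Once $g'=e^{-1}g\in P_w wz^{-1}Q_w$ is rational, it has the form $p\,wz^{-1}q$ with $p\in P_w$, $q\in Q_w$. This set is a single orbit of the connected group $P_w\times Q_w$, so $p$ and $q$ can be chosen $\FF_{p^m}$-rational, again by Lang. But $(p,q^{-1})$ need not lie in $E_0$. Here one uses that $P_wwz^{-1}Q_w$ equals $G^{(P_w)}_{0,w}$ as well, so $g'\in G_{0,w}$ directly, using $G^{(P_w)}_{0,w}\subseteq G_{0,w}$ for minimal $w$.

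Finally, since $e$ is rational, it lies in $E_0$ by the observation above. Hence $g=e\cdot g'\in G_{0,w}$, which proves the implication. I expect the connectedness verification for $E_m\cap(P_w\times Q_w)$ to be the main point to check carefully.
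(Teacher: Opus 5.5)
Your proof is correct and follows the paper's argument. Both apply Lang's theorem to the connected group $E'_{m,P_w}=E_m\cap(P_w\times Q_w)$, which contains $\Stab_{E_m}(wz^{-1})$ by Lemma~\ref{stab-Pw}, to get an element of $E_m(\FF_{p^m})=E_0(\FF_{p^m})$ moving $g$ into $P_wwz^{-1}Q_w\subseteq G_{0,w}$, and both finish with the same partition argument. The paper phrases the Lang step as a cocycle computation $(a^{-1}\varphi^m(a),b^{-1}\varphi^m(b))=(x^{-1}\varphi^m(x),y^{-1}\varphi^m(y))$ for a chosen $(a,b)$ with $g=awz^{-1}b^{-1}$, which needs only the stabilizer containment; your torsor version also needs transitivity, which you should justify from the fact that $P_wwz^{-1}Q_w$ is a single $E'_{m,P_w}$-orbit (Proposition~\ref{prop-FP0}), and your detour through rational $p,q$ is unnecessary.
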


\begin{proof}
First, we show $G_{m,w}(\FF_{p^m})\subseteq G_{0,w}(\FF_{p^m})$ for any $m\geq 1$. Let $g\in G_{m,w}(\FF_{p^m})$ and $(a,b)\in E_m$ such that $g=awz^{-1}b^{-1}$. Since $g\in G(\FF_{p^m})$, we obtain $awz^{-1}b^{-1}=\varphi^m(a)wz^{-1}\varphi^m(b)^{-1}$, hence $(a^{-1}\varphi^m(a),b^{-1}\varphi^m(b))$ lies in the stabilizer in $E_m$ of $wz^{-1}$. By Lemma \ref{stab-Pw}, this stabilizer is contained is $E'_{m,P_w} = E_m\cap(P_w\times Q_w)$. Since $E'_{m,P_w}$ is connected, Lang's Theorem shows that we can write
\begin{equation}
    (a^{-1}\varphi^m(a),b^{-1}\varphi^m(b))=(x^{-1}\varphi^m(x),y^{-1}\varphi^m(y))
\end{equation}
for $(x,y)\in E'_{m,P_w}$. We deduce that $(ax^{-1},by^{-1})\in E_m(\FF_{p^m})$. Moreover, note that we have $E_m(\FF_{p^m})=E_0(\FF_{p^m})$. We can write
\begin{equation}
    g=awz^{-1}b^{-1} = (ax^{-1})(xwz^{-1}y^{-1})(y b^{-1}).
\end{equation}
Since $(ax^{-1},by^{-1})\in E_0$, this implies that $g\in E_0\cdot (P_w wz^{-1}Q_w)=G_{0,w}$, by \eqref{Gw-withPw}. This shows the inclusion $G_{m,w}(\FF_{p^m})\subseteq G_{0,w}(\FF_{p^m})$. Since
\begin{equation}
    G(\FF_{p^m})=\bigsqcup_{w\in {}^I W} G_{m,w}(\FF_{p^m})=\bigsqcup_{w\in {}^I W} G_{0,w}(\FF_{p^m}),
\end{equation}
we must have $G_{m,w}(\FF_{p^m})= G_{0,w}(\FF_{p^m})$. %The same proof applies for the reverse inclusion, since Lemma \ref{stab-Pw} also holds for $m=0$. 
This shows \eqref{Gw-item1}. For the second assertion, we have for $m\geq 1$:
\begin{equation}
    G_{m,w}(\FF_{p})=G_{m,w}(\FF_{p^m})\cap G(\FF_p) = G_{0,w}(\FF_{p^m})\cap G(\FF_p) = G_{0,w}(\FF_{p}).\qedhere
\end{equation}
\end{proof}

%We illustrate the result of Proposition \ref{prop-Gw} in the following example.

\begin{exa}
Consider the case of $G=\GL_{3,\FF_p}$, endowed with the cocharacter $\mu\colon t\mapsto \diag(t,t,1)$. This corresponds to the cocharacter datum arising from to a Picard modular surface attached to a quadratic, totally imaginary extension $\mathbf{E}/\QQ$ at a split prime $p$ of $\mathbf{E}$ of good reduction. For any $m\geq 0$, the zip stratification is linear and consists of exactly $3$ strata. The unique open stratum $U\subseteq \GL_{3,k}$ is the same for all stratifications $\Sigma_m$ ($m\geq 0$), and is given by the matrices of the form
\begin{equation}
   x = \left( \begin{matrix}
        x_{11}&x_{12}&x_{13}\\
        x_{21}&x_{22}&x_{23}\\
        x_{31}&x_{32}&x_{33}\\
    \end{matrix}\right) \quad \textrm{such that} \quad  \delta(x)\colonequals \left|\begin{matrix}
        x_{11}&x_{12}\\
        x_{21}&x_{22}
    \end{matrix}\right|\neq 0.
\end{equation}
The unique stratum of codimension one consists of matrices $x\in \GL_{3,k}\setminus U$ such that $\Ha^{(m)}\neq 0$, where
\begin{equation}
    \Ha^{(m)}(x)=x_{11}\left| \begin{matrix}
        x_{11}&x_{12}\\
        x_{31}&x_{32}
    \end{matrix}\right|^{p^m-1}+x_{22}\left| \begin{matrix}
        x_{21}&x_{22}\\
        x_{31}&x_{32}
    \end{matrix}\right|^{p^m-1}.
\end{equation}
This formula still holds in the case $m=0$, where we simply have $\Ha^{(0)}(x)=x_{11}+x_{22}$. The section $\Ha^{(0)}$ is an example of a generalized Hasse-invariant for a zip datum which is not of Frobenius-type. It lifts to a characteristic zero analogous section. See \S\ref{sec-Hasse-0} for more details on such Hasse invariants.

The third zip stratum, which is the unique closed stratum, is the complement of the above two strata. It is defined by the vanishing of both $\delta$ and $\Ha^{(m)}$. In particular, we see on this example that the various stratifications of $G$ strongly depend on the choice of $m\geq 0$. However, for $x\in \GL_3(\FF_p)$, we have $\Ha^{(m)}(x)=x_{11}+x_{22}=\Ha^{(0)}(x)$. This shows that these stratifications coincide on $\GL_3(\FF_p)$.
\end{exa}

We have a similar result for flag strata. Let $B\subseteq P_0\subseteq P$ be an intermediate parabolic. Let $w\in {}^{I_0}W$ and consider the stratum $G_{m,w}^{(P_0)}$, which is the zip stratum parametrized by $w$ with respect to the zip datum $\Zcal^{(m)}_{P_0}$. We may apply Proposition \ref{prop-Gw} to this zip datum, and we obtain the following corollary:
\begin{corollary}\label{cor-Gw-P0} \ 
\begin{enumerate}
    \item We have $G^{(P_0)}_{m,w}(\FF_{p^m})=G^{(P_0)}_{0,w}(\FF_{p^m})$ for all $m\geq 0$. \label{Gw-item1-P0}
    \item The set $G^{(P_0)}_{m,w}(\FF_p)$ is independent of $m\geq 0$.\label{Gw-item2-P0}
\end{enumerate}
\end{corollary}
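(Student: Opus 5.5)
The plan is to apply Proposition \ref{prop-Gw} directly to the zip datum $\Zcal^{(m)}_{P_0}=(G,P_0,Q_0,L_0,M_0,\varphi^m)$ instead of $\Zcal_m$. This requires checking that the standing hypotheses of \S\ref{subsec-notation} hold for the family $(\Zcal^{(m)}_{P_0})_{m\geq 0}$.

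First, since $G_0$ is split and $T$ is $\FF_p$-split, the standard parabolic $P_0$ and its Levi $L_0\supseteq T$ are defined over $\FF_p$. Hence $M_0=\varphi^m(L_0)=L_0$ and $Q_0=L_0({}^zB)$, and these are independent of $m\geq 0$. So the groups of the zip datum do not depend on $m$, and $E_{m,P_0}(\FF_{p^m})=E_{0,P_0}(\FF_{p^m})$ exactly as for $E_m$. The element $z$ is the same for both data, and for $w\in{}^{I_0}W$ the canonical parabolic $P^{(0)}_w$ with respect to $\Zcal^{(m)}_{P_0}$ (with Levi $L^{(0)}_w\subseteq L_0$ stable under ${}^{wz^{-1}}\varphi^m$) is independent of $m$ for the same reason as in \S\ref{subsubsec-zip-expm}. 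The zip datum $\Zcal^{(m)}_{P_0}$ is of Frobenius type for $m\geq1$, and it is attached to some cocharacter, namely a central cocharacter of $L_0$ defining $P_0$, which can be chosen through $T$. So all results of \S\ref{sec-review-zip}, in particular \eqref{Gw-withPw}, apply to it.

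Next I would rerun the two ingredients of the proof of Proposition \ref{prop-Gw} in this setting.

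\begin{enumerate}
\item The analogue of Lemma \ref{stab-Pw}: for $m\geq1$ it is Lemma \ref{stab-contained-Pw} applied to $\Zcal^{(m)}_{P_0}$. For $m=0$, I would reduce to $m\geq1$ by choosing $m$ with $\overline{x}\in L_0(\FF_{p^m})$, exactly as before.
\item Lang's theorem on the connected group $E_{\Zcal^{(m)}_{P_0}}\cap(P^{(0)}_w\times G)$.
\end{enumerate}

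These give $G^{(P_0)}_{m,w}(\FF_{p^m})\subseteq G^{(P_0)}_{0,w}(\FF_{p^m})$. Equality then follows from the partition of $G(\FF_{p^m})$ into strata indexed by ${}^{I_0}W$ for both stratifications. Intersecting with $G(\FF_p)$ gives \eqref{Gw-item2-P0}.

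The main obstacle is simply to confirm that the hypotheses are genuinely satisfied: that $(B,T)$ is still adapted to $\Zcal^{(m)}_{P_0}$ ($B\subseteq P_0$, $\varphi$-stable, $\mu_0$ through $T$), and that ${}^zB\subseteq Q_0$ with the same $z$ works. Equivalently, the element $z=w_{0,I}w_0$ used to define flag strata must agree with the normalization that $\Zcal^{(m)}_{P_0}$ would intrinsically use. The paper defines the flag strata with the fixed $z$, so $G^{(P_0)}_{m,w}=E_{m,P_0}\cdot(BwBz^{-1})$ uniformly. Since the proof of Proposition \ref{prop-Gw} only used \eqref{Gw-withPw} and stabilizer containment relative to this fixed $z$, I expect this to go through verbatim.
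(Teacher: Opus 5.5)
Your proposal follows the paper's proof, which consists of the single remark that Proposition~\ref{prop-Gw} applies to the zip datum $\Zcal^{(m)}_{P_0}$. The points you check are the ones that application needs:
\begin{itemize}
\item the groups $P_0,Q_0,L_0,M_0$ and the canonical parabolic do not depend on $m$;
\item $E_{m,P_0}(\FF_{p^m})=E_{0,P_0}(\FF_{p^m})$;
\item the group used for Lang's theorem is connected;
\item the $m=0$ stabilizer statement reduces to the case $m\geq 1$;
\item $G$ is partitioned into strata indexed by ${}^{I_0}W$.
\end{itemize}

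One of your justifications is false, however. $\Zcal^{(m)}_{P_0}$ is \emph{not} the zip datum attached to a central cocharacter $\mu_0$ of $L_0$.
\begin{itemize}
\item For such a $\mu_0$, the construction of \S\ref{subsubsec-zip-data} gives the second parabolic $\varphi^m(P_+(\mu_0))=L_0B^+$, with frame element $w_{0,I_0}w_0$.
\item By definition, $Q_0=L_0\,{}^zB=(P_0\cap L)\,R_{\mathrm{u}}(Q)$ with $z=w_{0,I}w_0$.
\item These two parabolics differ as soon as $I_0\subsetneq I$. For example, if $P_0=B$ then $Q_0={}^zB=(B\cap L)R_{\mathrm{u}}(Q)$, which is not $B^+$ unless $P=B$.
\end{itemize}
This is exactly the tension you noticed in your last paragraph about the ``intrinsic'' normalization of $z$. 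The two normalizations really do disagree.

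The correct justification is different. $\Zcal^{(m)}_{P_0}$ is a zip datum in the general sense of Pink--Wedhorn--Ziegler, not of cocharacter type. The triple $(B,T,z)$ is a frame for it by construction, which is why $Q_0$ is defined as $M_0\,{}^zB$. The ingredients you invoke are used for this zip datum in that generality, as the paper does tacitly in \S\ref{subsub-flag}. These ingredients are \eqref{Gw-withPw}, the decomposition of $G$ into the $G^{(P_0)}_{m,w}$ for $w\in{}^{I_0}W$, and Lemma~\ref{stab-contained-Pw} for $m\geq 1$. With that replacement, your argument goes through exactly as you describe.
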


\subsection{Exponent raising}\label{sec-expo-raising-split}
We compare the images of strata for various values of $m\geq 0$. Recall that we defined maps $\Pi_{\Zcal}\colon  W\to \Pcal({}^I W)$ and $\pi_{\Zcal}\colon  W\to {}^I W$ in \S\ref{subsec-images}. For $\Zcal=\Zcal_m$, write $\pi_{m}$ and $\Pi_{m}$ for these maps (for $m\geq 0$). 

\begin{proposition}\label{prop-exp-raising}
Let $m\geq 0$ and $w\in W$.
\begin{enumerate}
    \item For $w'\in \Pi_{m}(w)$, there exists $d\in \ZZ_{\geq 1}$ such that $w'\in \Pi_{m+dk}(w)$ for all $k\geq 0$.
    \item There exists $D\in \ZZ_{\geq 1}$ such that $\Pi_{m}(w)\subseteq \Pi_{m+Dk}(w)$ for all $k\geq 0$.
\end{enumerate}
\end{proposition}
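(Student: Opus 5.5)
The plan follows the proofs of Lemma \ref{lem-same-stratum-m} and Proposition \ref{prop-same-stratum-m}, adapted to the split setting, where the groups $(P,Q,L,M)$ and the element $z$ do not depend on $m$. For (1), since $w'\in\Pi_m(w)$, the image $\pi_B(\Fcal^{(B)}_{m,w})$ meets $\Xcal_{m,w'}$. Recalling $G_{\Zcal_B^{(m)},w}=E_{m,B}\cdot(BwBz^{-1})$ and that $E_{m,B}\subseteq E_m$, I will pick a point $x\in G_{m,w'}\cap (BwBz^{-1})$. I will then write $x=aw'z^{-1}b^{-1}$ with $(a,b)\in E_m$, so that $\overline{b}=\varphi^m(\overline{a})$ in $L$.

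Next I will choose $d\geq 1$ such that $\overline{a}\in L(\FF_{p^d})$. This is possible because $L$ is split over $\FF_p$ and $\overline{a}$ is a $K$-point, hence defined over some finite field. Then $\varphi^{d}(\overline{a})=\overline{a}$, so $\varphi^{m+dk}(\overline{a})=\varphi^m(\overline{a})=\overline{b}$ for all $k\geq 0$, and therefore $(a,b)\in E_{m+dk}$. This also covers $m=0$, where $\varphi^0=\id$ and the condition reads $\overline{b}=\overline{a}$.

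It follows that $x\in E_{m+dk}\cdot(w'z^{-1})\subseteq G_{m+dk,w'}$, using the definition $G_{\Zcal,w'}=E_\Zcal\cdot(Bw'Bz^{-1})$. On the other side, $x\in BwBz^{-1}\subseteq G^{(B)}_{m+dk,w}$. The $E'_{m+dk,B}$-orbit of $x$ therefore defines a point of $\Fcal^{(B)}_{m+dk,w}$ whose image lies in $\Xcal_{m+dk,w'}$, which gives $w'\in\Pi_{m+dk}(w)$.

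For (2), $\Pi_m(w)\subseteq{}^IW$ is finite. I will take $D$ to be the least common multiple of the integers $d_{w'}$ obtained in (1) for each $w'\in\Pi_m(w)$; since each $d_{w'}$ divides $D$, the containment holds for all $k\geq 0$. The only mildly delicate point is justifying the choice of $x$ inside $BwBz^{-1}$ rather than in the larger set $G^{(B)}_{m,w}$. This is harmless: the image of a point of $G^{(B)}_{m,w}=E_{m,B}\cdot(BwBz^{-1})$ in $\Xcal_m$ is unchanged by acting with $E_{m,B}\subseteq E_m$, so we may translate the point into $BwBz^{-1}$ without changing its zip stratum.
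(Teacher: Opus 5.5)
There is a gap in the case $m=0$, at the step where you write $x=aw'z^{-1}b^{-1}$ with $(a,b)\in E_m$. This is the case not already covered by Proposition~\ref{prop-same-stratum-m}, and you explicitly say your argument handles it.

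That representation presupposes that $G_{m,w'}$ is the single orbit $E_m\cdot(w'z^{-1})$. This holds only for Frobenius-type data, i.e.\ for $m\geq 1$. For $\varphi^0=\id$, the stratum $G_{0,w'}=E_0\cdot(Bw'Bz^{-1})$ is in general a union of infinitely many $E_0$-orbits. In Example~\ref{ex-GL2-image}, for instance, one has $z=1$ and $G_1=G$, while $E\cdot(1\cdot z^{-1})=\{1\}$. So a point $x\in G_{0,w'}\cap BwBz^{-1}$ need not lie in $E_0\cdot(w'z^{-1})$, and your deduction $x\in E_{dk}\cdot(w'z^{-1})$ is unjustified.

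The repair is one line. By the definition of the strata, write $x=ayb^{-1}$ with $(a,b)\in E_m$ and $y\in Bw'Bz^{-1}$; this is valid for every $m\geq 0$. Your Frobenius-fixing step then gives $(a,b)\in E_{m+dk}$ for all $k\geq 0$. Hence $x\in E_{m+dk}\cdot y\subseteq E_{m+dk}\cdot(Bw'Bz^{-1})=G_{m+dk,w'}$, and the rest goes through unchanged, including the lcm argument for (2).

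With this fix your proof is essentially the paper's. The paper takes a point whose $E_m$-orbit meets both $BwBz^{-1}$ and $Bw'Bz^{-1}$ and applies Lemma~\ref{lem-Em-raising} to two elements, obtaining the period $dd'$. Using $y$ itself as base point needs only one $d$.

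Separately, the inclusion $E_{m,B}\subseteq E_m$ in your last paragraph is false in general. $E_{m,B}$ only constrains torus components, so $(u,1)\in E_{m,B}\setminus E_m$ for $1\neq u\in R_{\mathrm{u}}(B\cap L)$. Nothing depends on it, though: $E_{m,B}$ preserves $BwBz^{-1}$, so $G^{(B)}_{m,w}=BwBz^{-1}$ and $x$ can be chosen there directly.
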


We first state the following lemma, whose proof is entirely similar to Lemma \ref{lem-same-stratum-m} (also for $m=0$):

\begin{lemma}\label{lem-Em-raising}
Let $m\geq 0$ and $a\in G$. If $x\in E_m\cdot a$, then there exists an integer $d\geq 1$ (depending on $x$) such that $x\in E_{m+dk}\cdot a$ for all $k\geq 0$.
\end{lemma}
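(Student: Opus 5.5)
The plan is to copy the argument of Lemma \ref{lem-same-stratum-m}, replacing the congruence condition on the exponent by an arithmetic progression starting at $m$.

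By hypothesis $x\in E_m\cdot a$, so I write $x=\alpha a\beta^{-1}$ with $(\alpha,\beta)\in E_m$. Concretely, $\alpha\in P$, $\beta\in Q$, and their Levi projections satisfy $\overline{\beta}=\varphi^m(\overline{\alpha})$ in $L=M$. Recall that in the split setting of \S\ref{subsec-notation} the groups $P,Q,L,M$ do not depend on $m$.

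Next I use that $\overline{\alpha}$ is a $K$-point of $L$ with $K=\overline{\FF}_p$, and that $L$ is defined over $\FF_p$. So there is an integer $d\geq 1$ with $\overline{\alpha}\in L(\FF_{p^d})$, which gives $\varphi^d(\overline{\alpha})=\overline{\alpha}$. Iterating, $\varphi^{m+dk}(\overline{\alpha})=\varphi^m\bigl(\varphi^{dk}(\overline{\alpha})\bigr)=\varphi^m(\overline{\alpha})=\overline{\beta}$ for every $k\geq 0$. For $m=0$ this reads $\varphi^{dk}(\overline{\alpha})=\overline{\alpha}=\overline{\beta}$, so that case needs no separate treatment; when $k=0$ the claim is just the hypothesis.

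Consequently $(\alpha,\beta)\in E_{m+dk}$ for all $k\geq 0$, and $x=\alpha a\beta^{-1}\in E_{m+dk}\cdot a$. The integer $d$ depends only on the field of definition of $\overline{\alpha}$, hence on $x$ through the chosen pair $(\alpha,\beta)$, as the statement allows.

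I do not expect a genuine obstacle. The only point to verify is that the Levi projections and the equation defining $E_{m+dk}$ refer to the same $L$ for all exponents. This holds because $T$ is split over $\FF_p$, so $\varphi^{j}(L)=L$ for every $j\geq 0$.
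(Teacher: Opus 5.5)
Your proposal is correct and matches the paper's argument, which is that of Lemma \ref{lem-same-stratum-m}: write $x=\alpha a\beta^{-1}$ with $(\alpha,\beta)\in E_m$, and choose $d$ so that the Levi component $\overline{\alpha}$ is $\FF_{p^d}$-rational. Then $\varphi^{m+dk}(\overline{\alpha})=\varphi^m(\overline{\alpha})=\overline{\beta}$, so $(\alpha,\beta)\in E_{m+dk}$ for all $k\geq 0$, including the case $m=0$.
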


%\begin{proof}
%By assumption, there exists $(\alpha,\beta)\in E_m$ such that $\alpha a\beta^{-1}=x$. We can write $\alpha=us$, $\beta=\varphi^m(s)u'$ for $s\in L$, $u\in R_{\mathrm{u}}(P)$ and $u'\in R_{\mathrm{u}}(Q)$. Let $d\geq 1$ be an integer such that $s\in L(\FF_{p^d})$, hence $\varphi^d(s)=s$. It follows that $\varphi^{m+dk}(s)=\varphi^m(s)$ for all $k\ge 0$. We thus have $(\alpha,\beta)\in E_{m+dk}$, hence $x\in E_{m+dk}\cdot a$ for all $k\geq 0$.
%\end{proof}

We now prove Proposition \ref{prop-exp-raising}. In the case $m\geq 1$, it simply follows from Proposition \ref{prop-same-stratum-m}.

\begin{proof}
Recall that $\Pi_{m}(w)$ is the set of $w'\in {}^{I}W$ such that the zip stratum $G_{m,w'}\cap BwBz^{-1} \neq \emptyset$. Therefore, for $w'\in \Pi_m(w)$, there exists $a\in G$ such that the $E_m$-orbit of $a$ intersects both $BwBz^{-1}$ and $Bw'Bz^{-1}$. Therefore, there exist $x,y\in G$ with $x\in (E_m\cdot a)\cap BwBz^{-1}$ and $y\in (E_m\cdot a)\cap Bw'Bz^{-1}$. By Lemma \ref{lem-Em-raising}, there exist $d,d'\geq 1$ such that $x\in E_{m+dk}\cdot a$ and $y\in E_{m+d'k}\cdot a$ for all $k\geq 0$. Hence for all $k\geq 0$, we have $x\in E_{m+dd'k}\cdot a$ and $y\in E_{m+dd'k}\cdot a$. This shows that the $E_{m+dd'k}$-orbit of $a$ intersects both $BwBz^{-1}$ and $Bw'Bz^{-1}$. In other words, $w'\in \Pi_{m+dd'k}(w)$. This terminates the proof of the first assertion of Proposition \ref{prop-exp-raising}. For the second assertion, write $d(w')$ to be the integer afforded by the first part for $w'\in \Pi_{m}(w)$. Define $D$ to be the least common multiple of all $d(w')$ when $w'$ varies in the set $\Pi_{m}(w)$. Then $\Pi_{m}(w)\subseteq \Pi_{m+Dk}(w)$ for all $k\geq 0$.
\end{proof}

We can use this result to extend the content of Lemma \ref{lem-length-ineq} to the case $m=0$, which is not of Frobenius-type.

\begin{corollary}\label{lem-ineq-0}
We have $\ell(\pi_0(w))\leq \ell(w)$ for all $w\in W$.
\end{corollary}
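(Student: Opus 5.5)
The plan is to reduce to the Frobenius-type case $m\geq 1$ by exponent raising, then apply Lemma \ref{lem-length-ineq}. Fix $w\in W$ and set $w_1\colonequals \pi_0(w)$. By definition, $w_1\in \Pi_0(w)$; this is the unique element of maximal length, which exists by Lemma \ref{lem-unique-maxlen} for the general, possibly non-Frobenius-type, zip datum $\Zcal_0$.

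First apply Proposition \ref{prop-exp-raising}(1) with $m=0$ and $w'=w_1$. This gives an integer $d\geq 1$ such that $w_1\in \Pi_{dk}(w)$ for all $k\geq 0$. Take $k=1$, so that $w_1\in \Pi_d(w)$ with $d\geq 1$. The zip datum $\Zcal_d$ is of Frobenius-type.

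Next, compare $w_1$ with $\pi_d(w)$. Since $\pi_d(w)$ is the unique maximal element of $\Pi_d(w)$ in the sense of the proof of Lemma \ref{lem-unique-maxlen}, every element of $\Pi_d(w)$ satisfies $w''\preccurlyeq \pi_d(w)$. Hence $w_1\preccurlyeq \pi_d(w)$. Closure relations are compatible with dimension, since $\Xcal_{d,w_1}\subseteq \overline{\Xcal}_{d,\pi_d(w)}$ and the codimension of a zip stratum is $\dim P-\ell$. So $w_1\preccurlyeq \pi_d(w)$ implies $\ell(w_1)\leq \ell(\pi_d(w))$. This is also direct from the definition of $\preccurlyeq$: $xw_1\psi(x)^{-1}\leq \pi_d(w)$ in the Bruhat order, and for $w_1\in{}^IW$ the length of $xw_1\psi(x)^{-1}$ is at least $\ell(w_1)$.

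Finally, Lemma \ref{lem-length-ineq} applied to the Frobenius-type datum $\Zcal_d$ gives $\ell(\pi_d(w))\leq \ell(w)$. Chaining the two inequalities yields $\ell(\pi_0(w))\leq \ell(w)$.

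The only delicate point is the middle step, namely that any element of $\Pi_d(w)$ has length at most $\ell(\pi_d(w))$. I would argue it via the closure relation \eqref{eq-closure} and the dimension formula $\dim G_{w''}=\dim P+\ell(w'')$. This formula holds for all $m$, because $G_{w''}$ is the $E$-saturation of $Bw''Bz^{-1}$ and the dimension count is independent of $\varphi$. If needed, I would instead use the equality of dimensions of strata for $\Zcal_d$, which is known in the Frobenius-type case.
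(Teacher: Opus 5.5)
Your proof is correct and follows the paper's argument exactly: exponent raising (Proposition \ref{prop-exp-raising}) places $\pi_0(w)$ in $\Pi_d(w)$ for some Frobenius-type exponent $d\geq 1$, and Lemma \ref{lem-length-ineq} then finishes. The step you flag as delicate is in fact immediate. By definition, $\pi_d(w)$ is the element of maximal length in $\Pi_d(w)$, so $\ell(\pi_0(w))\leq \ell(\pi_d(w))$ needs no closure-relation or dimension argument.
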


\begin{proof}
Set $w_1\colonequals \pi_{0}(w)$. In particular, $w_1\in \Pi_0(w)$. Hence, there exists $m>0$ such that $w_1\in \Pi_m(w)$. In particular, $\ell(w_1)\leq \ell(\pi_m(w))\leq \ell(w)$. 
\end{proof}

\begin{comment}

\begin{corollary}
Let $m,m'\geq 0$ be two integers and $w\in W$. There exists $M\geq 0$ such that 
\begin{equation}
    \Pi_m(w)\cup \Pi_{m'}(w)\subseteq \Pi_M(w).
\end{equation}
\end{corollary}
\begin{proof}
Let $D,D'$ be integers such that $\Pi_m(w)\subseteq \Pi_{m+Dk}(w)$ and $\Pi_{m'}(w)\subseteq \Pi_{m'+D'k'}(w)$ for all $k,k'\geq 0$. Take
\end{proof}

For $w\in W$, define a set $\Pi_\infty(w)$ as the union
\begin{equation}
    \Pi_\infty(w)\colonequals \bigcup_{m\geq 0} \Pi_m(w).
\end{equation}
\end{comment}

\subsection{Consequences}
We conjecture the following:
\begin{conjecture} \label{indep-conj}
The maps $\Pi_{m}$ and $\pi_{m}$ are independent of $m\geq 0$.
\end{conjecture}
The first part of the conjecture immediately implies the second part. We prove a weak version of this conjecture in the following lemma. Recall that $w$ is $\Zcal_m$-small if $\ell(\pi_m(w))=\ell(w)$ (Definition \ref{def-small}). This definition applies for any $m\geq 0$.

\begin{lemma}\label{lem-small-indep}
Assume that $w\in W$ is $\Zcal_m$-small, for some $m\geq 0$. Then:
\begin{enumerate}
    \item $w$ is $\Zcal_{m'}$-small for all $m'\geq 0$.
    \item The element $\pi_{m'}(w)\in {}^I W$ is independent of $m'\geq 0$.
\end{enumerate}
\end{lemma}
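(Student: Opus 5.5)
Let $w_1\colonequals \pi_m(w)$, so $\ell(w_1)=\ell(w)$ and $w_1\in \Pi_m(w)$. The plan is to transport $w_1$ into every $\Pi_{m'}(w)$ and then use the length inequality to pin it down as the maximum.

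\textbf{Reduction to $m=0$.} First reduce to the case $m=0$: it suffices to show (a) if $w$ is $\Zcal_m$-small for some $m\geq 1$, then $w_1\in \Pi_0(w)$, and (b) if $w_1\in \Pi_0(w)$ with $\ell(w_1)=\ell(w)$, then $w_1\in\Pi_{m'}(w)$ for every $m'\geq 1$. Given $w_1\in \Pi_{m'}(w)$ and $\ell(w_1)=\ell(w)$, Lemma \ref{lem-length-ineq} for $m'\geq 1$ (resp.\ Corollary \ref{lem-ineq-0} for $m'=0$) gives $\ell(\pi_{m'}(w))\leq \ell(w)=\ell(w_1)$. Since $w_1\preccurlyeq \pi_{m'}(w)$ by the proof of Lemma \ref{lem-unique-maxlen}, we get $\pi_{m'}(w)=w_1$. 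This yields both assertions at once.

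\textbf{Step (a).} Here I would use rational points. By Proposition \ref{prop-small-image}, $\Fcal^{(B)}_{m,w}$ is pointwise and $\Pi_m(w)=\{w_1\}$. Corollary \ref{coro-small-Fp} (with $T$ split over $\FF_p$, so $q=p$) gives a point $a\in (BwBz^{-1}\cap G_{m,w_1})(\FF_p)$. By Proposition \ref{prop-Gw}, $G_{m,w_1}(\FF_p)=G_{0,w_1}(\FF_p)$, so $a\in BwBz^{-1}\cap G_{0,w_1}$, hence $w_1\in\Pi_0(w)$.

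\textbf{Step (b).} Start from $w_1\in\Pi_0(w)$. Proposition \ref{prop-exp-raising} gives $d\geq 1$ with $w_1\in \Pi_{dk}(w)$ for all $k\geq 0$. Thus $w$ is $\Zcal_{dk}$-small for $k\geq 1$, since $\ell(\pi_{dk}(w))\geq \ell(w_1)=\ell(w)$, combined with the reverse inequality. To reach an arbitrary $m'\geq 1$, apply Step (a) with $m=d$ to obtain an $\FF_p$-point $a\in BwBz^{-1}\cap G_{d,w_1}$. Proposition \ref{prop-Gw}\eqref{Gw-item2} then places $a$ in $G_{m',w_1}$ for every $m'\geq 0$, so $w_1\in\Pi_{m'}(w)$ for all $m'$. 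Combined with the reduction paragraph, this concludes.

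\textbf{Main obstacle.} The only delicate point is producing an $\FF_p$-rational point in $BwBz^{-1}\cap G_{m,w_1}$. This requires the smallness to hold for some $m\geq 1$, i.e.\ in the Frobenius-type setting, where Lemma \ref{lem-dense} and Lang's theorem apply. The case where the hypothesis is given only for $m=0$ must first be moved to a positive exponent via exponent raising, as in Step (b). One should also check that Corollary \ref{coro-small-Fp} indeed yields the point in $BwBz^{-1}$ itself, rather than only in $G^{(P_0)}_{m,w}$; taking $P_0=B$ gives exactly this.
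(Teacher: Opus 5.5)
Your proof is correct and follows essentially the same route as the paper. You reduce the case $m=0$ to a positive exponent via exponent raising (Proposition \ref{prop-exp-raising}), extract an $\FF_p$-point of $BwBz^{-1}\cap G_{m,w_1}$ from Corollary \ref{coro-small-Fp}, transport it to every exponent with Proposition \ref{prop-Gw}, and pin down $\pi_{m'}(w)$ using Lemma \ref{lem-length-ineq} and Corollary \ref{lem-ineq-0}; splitting this into steps (a) and (b) is only a reorganization of the paper's argument.
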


\begin{proof} %\lorenzo{This proof can be simplified using Proposition \ref{prop-W-representative}}\JS{I agree, but the proof is not too long. Also, I am not sure we can apply that proposition in the case m=0, because we have not shown that the image is pointwise in that case}
In the case $m=0$, Proposition \ref{prop-exp-raising} implies that $\Fcal_{m_1,w}^{(P_0)}$ is $\Zcal_{m_1}$-small for some $m_1>0$, hence we may assume $m>0$. By Corollary \ref{coro-small-Fp}, there exists $w_1\in {}^I W$ of length $\ell(w)$ such that $\left(G^{(B)}_{m,w} \cap G_{m,w_1}\right)(\FF_p)\neq \emptyset$. Since this set is independent of $m\geq 0$, we deduce $G^{(B)}_{m',w} \cap G_{m',w_1}\neq \emptyset$ for all $m'\geq 0$, hence $w_1\in \Pi_{m'}(w)$. By Lemma \ref{lem-length-ineq} (in the case $m'\geq 1$) and Corollary \ref{lem-ineq-0} (in the case $m'=0$), we must have $w_1=\pi_{m'}(w)$. This terminates the  proof.\qedhere

%again by Corollary \ref{coro-small-Fp} (which only applies to Frobenius-type zip data) that $\Fcal_{m',w}^{(P_0)}$ is small for all $m'\geq 1$. We also have $w_1\in \Pi_m(w)$, hence $\pi_{m',P_0}(w)=w_1$ by Lemma \ref{lem-length-ineq}, which shows part of the second assertion. It remains to prove that $\pi_{0,P_0}(w)=w_1$. Since $\left(G^{(P_0)}_{0,w} \cap G^{(P_0)}_{0,w_1}\right)(\FF_p)\neq \emptyset$, the stratum $\Xcal_{0,w_1}$ intersects the image of $\Fcal^{(P_0)}_{0,w}$ via $\pi_{P_0}$. Since $w_1\in \Pi_0(w)$ and $\ell(\pi_0(w))\leq \ell(w)$ (Lemma \ref{lem-ineq-0}), we must have $\pi_0(w)=w_1$. This terminates the  proof.
\end{proof}

By Lemma \ref{lem-small-indep}, the set of $\Zcal_m$-small elements of $W$ is independent of $m\geq 0$. Write simply $W^{\rm sm}$ for this set. We have shown that the restriction of the map $\pi_m$ to $W^{\rm sm}\subseteq W$ is independent of $m$. This gives a partial answer to Conjecture \ref{indep-conj}.

\subsection{Separating canonical covers}
For $w\in {}^I W$ and $w'\in \Gamma_I(w)$, denote by
\begin{equation}
    \Ucal_m(w,w')\subseteq \Xcal_m
\end{equation}
the corresponding elementary $w$-open substack inside $\Xcal_m$. %The results of the previous section make it possible to give a characterisation for the existence of a separating canonical cover for elementary subsets.
%Write $\Gamma^{\rm sm}_{I_0}(w)\colonequals \Gamma_{I_0}(w)\cap W^{\rm sm}$. 
For a subset $A\subseteq W$, we simply write $\Pi_m(A)$ for the subset $\bigcup_{w\in A}\Pi_m(w)$.
\begin{proposition}\label{prop-scc-charac}
For any $m\geq 0$, the following are equivalent:
    \begin{enumerate}
        \item \label{scc1} $\Ucal_m(w,w')$ admits a separating canonical cover.
        \item \label{scc2} One has $w'\notin \Pi_m(\Gamma_{I_w}(w)\setminus \{w'\})$.
            \item \label{scc3} One has $w'\notin \pi_m(\Gamma_{I_w}(w)\setminus \{w'\})$.
            \item \label{scc4} One has $w'\notin \pi_m(\Gamma^{\rm sm}_{I_w}(w)\setminus \{w'\})$.
    \end{enumerate}
\end{proposition}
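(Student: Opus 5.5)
The plan is to establish the cycle \eqref{scc1}$\Leftrightarrow$\eqref{scc2}, \eqref{scc2}$\Rightarrow$\eqref{scc3}$\Rightarrow$\eqref{scc4}, and \eqref{scc4}$\Rightarrow$\eqref{scc2}. The argument must work uniformly for all $m\geq 0$, including the non-Frobenius case $m=0$.

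\textbf{Step 1: \eqref{scc1}$\Leftrightarrow$\eqref{scc2}.} I would unwind the definitions. Here $\Ucal_m(w,w')^{(P_w)}=\Fcal^{(P_w)}_{m,w}\cup\Fcal^{(P_w)}_{m,w'}$. The closure $\overline{\Fcal}^{(P_w)}_{m,w}$ is a union of strata $\Fcal^{(P_w)}_{m,v}$ with $v\preccurlyeq_{I_w} w$.

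For $v=w$, the image is $\Xcal_{m,w}$ by \eqref{pi-image-minimal}, which holds for general zip data. For $v=w'$, since $w'\in{}^IW$ is minimal, the image is again $\Xcal_{m,w'}$. By Lemma \ref{lem-length-ineq} for $m\geq1$, and Corollary \ref{lem-ineq-0} for $m=0$, any $v$ with $\pi_m(v)$ or some element of $\Pi_m(v)$ equal to $w'$ has $\ell(v)\geq\ell(w')=\ell(w)-1$. Strata of the closure other than $w$ have length $<\ell(w)$, so only $v$ of length $\ell(w)-1$, i.e.\ $v\in\Gamma_{I_w}(w)$, can map into $\Xcal_{m,w'}$. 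Also, nothing of length $<\ell(w)$ maps into $\Xcal_{m,w}$, by the same length bound.

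Hence the preimage of $\Ucal_m(w,w')$ equals $\Ucal_m^{(P_w)}$ exactly when no $v\in\Gamma_{I_w}(w)\setminus\{w'\}$ has $w'\in\Pi_m(v)$. Here one uses that $\Pi_m(v)$ is defined via $B$ but agrees with the $P_w$-version by Proposition \ref{Pi-P0}.

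\textbf{Step 2: \eqref{scc2}$\Rightarrow$\eqref{scc3}$\Rightarrow$\eqref{scc4}.} Both implications are trivial, since $\pi_m(v)\in\Pi_m(v)$ and $\Gamma^{\rm sm}_{I_w}(w)\subseteq\Gamma_{I_w}(w)$.

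\textbf{Step 3: \eqref{scc4}$\Rightarrow$\eqref{scc2}.} I would argue by contraposition. Suppose $w'\in\Pi_m(v)$ for some $v\in\Gamma_{I_w}(w)\setminus\{w'\}$. Then $\ell(\pi_m(v))\ge\ell(w')=\ell(v)$, since $\pi_m(v)$ is the unique maximal-length element of $\Pi_m(v)$ (Lemma \ref{lem-unique-maxlen}). Combined with the length inequality, this gives $\ell(\pi_m(v))=\ell(v)$, so $v$ is $\Zcal_m$-small, i.e.\ $v\in W^{\rm sm}$ by Lemma \ref{lem-small-indep}.

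It remains to show $\pi_m(v)=w'$. In Lemma \ref{lem-unique-maxlen} every element of $\Pi_m(v)$ is $\preccurlyeq\pi_m(v)$, and equal length then forces equality. For this I would check that $w''\preccurlyeq w_1$ with $\ell(w'')=\ell(w_1)$ implies $w''=w_1$. This holds since $\overline{\Xcal}_{m,w_1}$ has dimension determined by $\ell(w_1)$, and a stratum of the same dimension in the closure of an irreducible stratum must be the stratum itself. Alternatively, for $m\ge1$, Proposition \ref{prop-small-image} gives $\Pi_m(v)=\{\pi_m(v)\}$ directly.

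\textbf{Main obstacle.} The delicate point is uniformity at $m=0$, where strata are not points and the image of a flag stratum need not be a union of strata. I would handle this either by the dimension argument above, valid for any zip datum since $\dim G_{m,w}=\ell(w)+\dim P$, or by reducing to $m>0$ via Proposition \ref{prop-exp-raising} and Lemma \ref{lem-small-indep}.
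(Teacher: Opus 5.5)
Your Steps 2 and 3 follow the paper's argument: the length inequality, then equal lengths forcing $\pi_m(v)=w'$ and smallness of $v$. Phrasing everything through $\Pi_m$ also handles $m=0$ correctly.

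There is, however, a gap in the direction \eqref{scc2}$\Rightarrow$\eqref{scc1} of Step 1. Admitting a separating canonical cover includes the requirement $\Fcal^{(P_w)}_{m,w'}\subseteq\overline{\Fcal}^{(P_w)}_{m,w}$, i.e.\ $w'\preccurlyeq_{I_w}w$. Otherwise the preimage, which lies inside $\overline{\Fcal}^{(P_w)}_{m,w}$, cannot equal $\Ucal_m(w,w')^{(P_w)}$. When you list ``$v=w'$'' among the strata of the closure, you are silently assuming this. It does not follow from $w'\in\Gamma_I(w)$: for instance, if $I_w=\emptyset$ then $\preccurlyeq_{I_w}$ is the Bruhat order, so only Bruhat lower neighbours of $w$ lie in $\Gamma_{I_w}(w)$. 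So your claim that the preimage equals $\Ucal_m(w,w')^{(P_w)}$ ``exactly when'' no $v\in\Gamma_{I_w}(w)\setminus\{w'\}$ has $w'\in\Pi_m(v)$ is not justified as written.

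The missing idea is the surjectivity of $\pi_{P_w}\colon\overline{\Fcal}^{(P_w)}_{m,w}\to\overline{\Xcal}_{m,w}$. This holds for every $m\geq 0$, because $w$ is minimal and $\pi_{P_w}$ is proper.
\begin{enumerate}
\item Surjectivity forces $\Xcal_{m,w'}$ to meet $\pi_{P_w}(\Fcal^{(P_w)}_{m,v})$ for some $v\preccurlyeq_{I_w}w$.
\item Since $\Pi_m(w)=\{w\}$ and $w'\neq w$, we have $v\neq w$.
\item Your length bound $\ell(w')\leq\ell(\pi_m(v))\leq\ell(v)<\ell(w)$ then gives $v\in\Gamma_{I_w}(w)$ with $w'\in\Pi_m(v)$.
\item Under \eqref{scc2}, this $v$ must be $w'$ itself, so $w'\in\Gamma_{I_w}(w)$ and the canonical cover exists.
\end{enumerate}
This is exactly the paper's observation that condition (b) implies condition (a) in its proof. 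With it added, your argument coincides with the paper's.
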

\begin{proof}
For $m\geq 0$, $\Ucal_m(w,w')$ admits a separating canonical cover if and only if 
\begin{enumerate}
    \item[(a)] $w'\preccurlyeq_{I_w} w$ (i.e.\ it admits a canonical cover), and
     \item[(b)] for all $v\preccurlyeq_{I_w} w$ such that $v\notin \{w,w'\}$, we have $\Pi_{m}(v)\cap \{w,w'\}=\emptyset$.
\end{enumerate}
Condition (b) is equivalent to $w'\notin \Pi_m(\Gamma_{I_w}(w)\setminus \{w'\})$ by Lemmas \ref{lem-length-ineq} (for $m\ge 1$) and \ref{lem-ineq-0} (for $m = 0$). Moreover, since $\pi_{P_w}(\overline{\Fcal}_{m,w})=\overline{\Xcal}_{m,w}$, we must have $w'\in \Pi_m(\Gamma_{I_w}(w))$. It follows that if $w'\notin \Pi_m(\Gamma_{I_w}(w)\setminus \{w'\})$, then $w'\in \Gamma_{I_w}(w)$. Hence, condition (b) implies condition (a). It follows that \eqref{scc1} and \eqref{scc2} are equivalent. Clearly, \eqref{scc2} implies \eqref{scc3}, which implies \eqref{scc4}. Conversely, suppose $w'\in \Pi_m(v)$, for some $v\in \Gamma_{I_w}(w)\setminus \{w'\}$. Since $\ell(w')=\ell(v)$, we must have $w'=\pi_m(v)$. Thus $v$ is small, and $w'\in \pi_m(\Gamma^{\rm sm}_{I_w}(w)\setminus \{w'\})$. Hence \eqref{scc4} implies \eqref{scc2}. This terminates the proof.
\end{proof}

We now explain the main result of this section. Note that condition \eqref{scc4} of Proposition \ref{prop-scc-charac} is independent of $m\geq 0$, by Lemma \ref{lem-small-indep}. We deduce the following theorem.

\begin{proposition}\label{prop-SCC-m}
Let $m\geq 1$ be an integer. The following are equivalent:
\begin{enumerate}
   \item $\Ucal_m(w,w')$ admits a separating canonical cover.
    \item $\Ucal_0(w,w')$ admits a separating canonical cover.
\end{enumerate}
\end{proposition}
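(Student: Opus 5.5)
The plan is to reduce both conditions to the purely combinatorial criterion \eqref{scc4} of Proposition \ref{prop-scc-charac}, and to observe that every ingredient of that criterion is independent of the exponent. Fix $m\geq 1$. By Proposition \ref{prop-scc-charac}, applied once with exponent $m$ and once with exponent $0$, the substack $\Ucal_m(w,w')$ admits a separating canonical cover if and only if $w'\notin \pi_m(\Gamma^{\rm sm}_{I_w}(w)\setminus\{w'\})$. Likewise, $\Ucal_0(w,w')$ admits one if and only if $w'\notin \pi_0(\Gamma^{\rm sm}_{I_w}(w)\setminus\{w'\})$. It therefore suffices to show that the two subsets of ${}^IW$ appearing here coincide.

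First I check that the indexing set $\Gamma^{\rm sm}_{I_w}(w)=\Gamma_{I_w}(w)\cap W^{\rm sm}$ does not depend on $m$.
\begin{enumerate}
\item The canonical parabolic $P_w$, hence its type $I_w$, is the same for all $m\geq 0$, as noted in \S\ref{subsec-notation}.
\item The partial order $\preccurlyeq_{I_w}$ on ${}^{I_w}W$ is independent of $m$, since the split assumption makes $\varphi$ act trivially on $W$. Hence $\Gamma_{I_w}(w)$ is independent of $m$.
\item The set $W^{\rm sm}$ of $\Zcal_m$-small elements is independent of $m\geq 0$ by Lemma \ref{lem-small-indep}.
\end{enumerate}

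Next, by the second assertion of Lemma \ref{lem-small-indep}, for every $v\in W^{\rm sm}$ the element $\pi_{m'}(v)$ is the same for all $m'\geq 0$. In particular $\pi_m(v)=\pi_0(v)$. Applying this to each $v\in \Gamma^{\rm sm}_{I_w}(w)\setminus\{w'\}$ shows that the images $\pi_m(\Gamma^{\rm sm}_{I_w}(w)\setminus\{w'\})$ and $\pi_0(\Gamma^{\rm sm}_{I_w}(w)\setminus\{w'\})$ are equal. This yields the equivalence.

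The only point needing care is that Proposition \ref{prop-scc-charac} is genuinely valid at $m=0$, which is not of Frobenius-type. The delicate step there, equivalence (b)$\Leftrightarrow$\eqref{scc2}, uses the length inequality $\ell(\pi_0(v))\leq \ell(v)$, and this is supplied by Corollary \ref{lem-ineq-0}. That proposition is stated for all $m\geq 0$, so I expect no real obstacle: the substance already lies in Lemma \ref{lem-small-indep}, whose proof rests on the comparison of $\FF_p$-points in Corollary \ref{cor-Gw-P0}.
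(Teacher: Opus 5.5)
Your proposal is correct and follows essentially the paper's argument. The paper deduces Proposition \ref{prop-SCC-m} directly from the observation that condition \eqref{scc4} of Proposition \ref{prop-scc-charac}, which holds for every $m\geq 0$, is independent of $m$ by Lemma \ref{lem-small-indep}; this is exactly your reduction, with your verification of the independence of $I_w$, $\Gamma_{I_w}(w)$ and $W^{\rm sm}$ made explicit.
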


\begin{comment}
\begin{proof}
First, note that $\Ucal_0(w,w')$ admits canonical cover if and only if $\Ucal_m(w,w')$ does, since $\preccurlyeq_{\Zcal_m}$ is independent of $m\geq 0$. Assume that $\Ucal_m(w,w')$ does not admit a separating canonical cover. Since $m\geq 1$, the zip datum $\Zcal_m$ is of Frobenius-type. Therefore, Corollary \ref{cor-fp-points} shows that there exists $v\in \Gamma_{I_w}(w)\setminus \{w'\}$ such that $w'\in \Pi_{m}(v)$. Since $\ell(v)=\ell(w')$, we have $\pi_m(v)=w'$ and $v$ is small. Hence $\pi_0(v)=w'$, which shows that $\Ucal_0(w,w')$ does not admit a separating canonical cover. 

Conversely, assume that $\Ucal_0(w,w')$ does not admit a separating canonical cover. In this case, there exists $v\in \Gamma_{I_w}(w)\setminus \{w'\}$ such that $w'\in \Pi_0(v)$. By Proposition \ref{prop-exp-raising}, there exists $m'\geq 1$ such that $w'\in \Pi_{m'}(v)$. This implies that $\Ucal_{m'}(w,w')$ does not admit a separating canonical cover. Moreover, $v$ is small and $\pi_{m'}(v)=w'$. By Lemma \ref{lem-small-indep}, we know that $\pi_{m}(v)=w'$, and hence $\Ucal_m(w,w')$ does not admit a separating canonical cover.
\end{proof}
\end{comment}

Finally, using Theorem \ref{thm-FTZD-normality}, we obtain the following corollary:

\begin{corollary}\label{Um-smooth}
Let $m\geq 1$ be an integer. The following are equivalent:
\begin{enumerate}
    \item $\Ucal_m(w,w')$ is smooth (equivalently, normal).
    \item $\Ucal_0(w,w')$ is $w$-bounded and admits a separating canonical cover.
    \item $P_{w'}\subseteq P_w$ and $w'\notin \pi_0(\Gamma^{\rm sm}_{I_w}(w)\setminus \{w'\})$.
\end{enumerate}    
\end{corollary}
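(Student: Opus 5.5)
The plan is to obtain Corollary \ref{Um-smooth} by chaining three earlier results: Theorem \ref{thm-FTZD-normality} applied to the Frobenius-type datum $\Zcal_m$, Proposition \ref{prop-SCC-m}, and Proposition \ref{prop-scc-charac} for $m=0$. The only extra input is that $w$-boundedness is the same condition for all exponents.

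\textbf{(1)$\Leftrightarrow$(2).} Since $m\geq 1$, the datum $\Zcal_m$ is of Frobenius-type. Theorem \ref{thm-FTZD-normality} then says that $\Ucal_m(w,w')$ is smooth, equivalently normal, if and only if it is $w$-bounded and admits a separating canonical cover. For boundedness, recall from \S\ref{subsec-notation} that the canonical parabolics $P_w$ and $P_{w'}$ do not depend on $m\geq 0$. Hence the condition $P_{w'}\subseteq P_w$ is literally the same for $\Ucal_m(w,w')$ and $\Ucal_0(w,w')$, and $\Ucal_m(w,w')$ is $w$-bounded if and only if $\Ucal_0(w,w')$ is. For the cover, Proposition \ref{prop-SCC-m} says that $\Ucal_m(w,w')$ admits a separating canonical cover if and only if $\Ucal_0(w,w')$ does. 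Together these give (1)$\Leftrightarrow$(2).

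\textbf{(2)$\Leftrightarrow$(3).} The $w$-boundedness of the elementary open $\Ucal_0(w,w')$ is by definition $P_{w'}\subseteq P_w$. Proposition \ref{prop-scc-charac}, whose hypotheses allow $m=0$, shows that $\Ucal_0(w,w')$ admits a separating canonical cover if and only if $w'\notin\pi_0(\Gamma^{\rm sm}_{I_w}(w)\setminus\{w'\})$. This gives (2)$\Leftrightarrow$(3).

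\textbf{Points to check.} There is no real obstacle; the work consists in verifying that the $m=0$ inputs are justified.
\begin{itemize}
\item Proposition \ref{prop-scc-charac} at $m=0$ relies on Corollary \ref{lem-ineq-0} in place of Lemma \ref{lem-length-ineq}, and this is available.
\item $W^{\rm sm}$ is independent of $m$ by Lemma \ref{lem-small-indep}, so the set $\Gamma^{\rm sm}_{I_w}(w)$ appearing in (3) is unambiguous.
\item Being a lower neighbor is independent of $m$, since the orders $\preccurlyeq$ and $\preccurlyeq_{I_0}$ are.
\end{itemize}
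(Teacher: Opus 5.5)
Your proposal is correct and follows the paper's route: you apply Theorem \ref{thm-FTZD-normality} to the Frobenius-type datum $\Zcal_m$, use the $m$-independence of the canonical parabolics together with Proposition \ref{prop-SCC-m}, and finish with Proposition \ref{prop-scc-charac} at $m=0$. The $m=0$ inputs you flag, namely Corollary \ref{lem-ineq-0} and Lemma \ref{lem-small-indep}, are exactly the ones the paper relies on.
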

In particular, the smoothness of $\Ucal_m(w,w')$ is independent of $m\geq 1$. We do not know whether these conditions are also equivalent to the smoothness of the substack $\Ucal_0(w,w')$. We say that a scheme or stack $X$ is \emph{smooth in codimension one} if the complement of the smooth locus of $X$ has codimension $\geq 2$.

\begin{corollary}\label{cor-closure}
Let $w\in {}^I W$ and $m\geq 1$. The following are equivalent:
\begin{enumerate}
    \item\label{cor-closure-item1} $\overline{\Xcal}_m(w)$ is smooth in codimension one.
    \item\label{cor-closure-item2}  $P_{w'}\subseteq P_w$ for all $w'\in \Gamma_I(w)$ and $\Gamma_{I_w}^{\rm sm}(w)=\Gamma_I(w)$.
    \item\label{cor-closure-item3}  $P_{w'}\subseteq P_w$ for all $w'\in \Gamma_I(w)$ and $\Gamma_{I_w}^{\rm sm}(w)\subseteq\Gamma_I(w)$.
\end{enumerate}
\end{corollary}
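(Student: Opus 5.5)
The plan is to reduce to the elementary $w$-opens $\Ucal_m(w,w')$, $w'\in\Gamma_I(w)$, and apply Corollary \ref{Um-smooth}. Write $\overline{\Xcal}_m(w)=\bigcup_{v\preccurlyeq w}\Xcal_{m,v}$ by \eqref{eq-closure}. Since $\Zcal_m$ is of Frobenius-type, each stratum is a single point. The strata of codimension one in $\overline{\Xcal}_m(w)$ are exactly $\Xcal_{m,w'}$ for $w'\in\Gamma_I(w)$, and the stratum $\Xcal_{m,w}$ itself is smooth. The smooth locus is open and a union of strata. Hence it contains every stratum of codimension $\leq 1$ if and only if its complement has codimension $\geq 2$. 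So \eqref{cor-closure-item1} holds if and only if $\overline{\Xcal}_m(w)$ is smooth at the point $\Xcal_{m,w'}$ for every $w'\in\Gamma_I(w)$. Smoothness there is equivalent to smoothness of the open substack $\Ucal_m(w,w')$, which is the open substack $\Xcal_{m,w}\cup\Xcal_{m,w'}$ of $\overline{\Xcal}_m(w)$. By Corollary \ref{Um-smooth} (equivalently Proposition \ref{prop-smooth-small}), \eqref{cor-closure-item1} therefore becomes: for every $w'\in\Gamma_I(w)$ we have $P_{w'}\subseteq P_w$ and $w'\notin \pi_m(\Gamma^{\rm sm}_{I_w}(w)\setminus\{w'\})$.

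Next I compare this with \eqref{cor-closure-item2} and \eqref{cor-closure-item3}. The implication \eqref{cor-closure-item2}$\Rightarrow$\eqref{cor-closure-item3} is trivial. For \eqref{cor-closure-item3}$\Rightarrow$\eqref{cor-closure-item1}, assume $\Gamma^{\rm sm}_{I_w}(w)\subseteq\Gamma_I(w)\subseteq{}^IW$. Then $\pi_m$ is the identity on these elements, since $\pi_m$ is a section of ${}^IW\subseteq W$. Thus $\pi_m(\Gamma^{\rm sm}_{I_w}(w)\setminus\{w'\})=\Gamma^{\rm sm}_{I_w}(w)\setminus\{w'\}$, which does not contain $w'$. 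Together with the boundedness hypothesis, the criterion above is satisfied.

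For \eqref{cor-closure-item1}$\Rightarrow$\eqref{cor-closure-item2}, the inclusion $\Gamma_I(w)\subseteq\Gamma^{\rm sm}_{I_w}(w)$ should come from the argument of Lemma \ref{lem-sm-lownb} with $P_0=P_w$. The map $\pi_{P_w}$ from $\overline{\Fcal}^{(P_w)}_w$ onto $\overline{\Xcal}_w$ is surjective. So for each $w'\in\Gamma_I(w)$ there is some $v\in\Gamma_{I_w}(w)$ with $w'\in\Pi_m(v)$, and this $v$ is small with $\pi_m(v)=w'$. By hypothesis $w'\notin\pi_m(\Gamma^{\rm sm}_{I_w}(w)\setminus\{w'\})$, which forces $v=w'$. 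Hence $w'\in\Gamma^{\rm sm}_{I_w}(w)$.

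For the reverse inclusion $\Gamma^{\rm sm}_{I_w}(w)\subseteq\Gamma_I(w)$, take $v$ small in $\Gamma_{I_w}(w)$ and set $u=\pi_m(v)$. We have $\ell(u)=\ell(v)=\ell(w)-1$. Moreover $\Xcal_{m,u}$ lies in $\pi_{P_w}(\overline{\Fcal}^{(P_w)}_w)=\overline{\Xcal}_{m,w}$, so $u\preccurlyeq w$ and hence $u\in\Gamma_I(w)$. By hypothesis $u\notin\pi_m(\Gamma^{\rm sm}_{I_w}(w)\setminus\{u\})$. Since $\pi_m(v)=u$, this forces $v=u\in\Gamma_I(w)$.

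The main point requiring care is the first step: identifying the codimension-one points of $\overline{\Xcal}_m(w)$ with $\Gamma_I(w)$, and justifying that smoothness in codimension one reduces to the elementary opens. I would rely on the dimension formula $\dim\Xcal_{m,v}=\ell(v)-\dim P$ together with the closure relation \eqref{eq-closure}.
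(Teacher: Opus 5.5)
Your proof is correct and follows the paper's argument: apply Corollary \ref{Um-smooth} to each $w'\in\Gamma_I(w)$ and prove the implications the same way. The paper obtains $\Gamma_I(w)\subseteq\Gamma^{\rm sm}_{I_w}(w)$ a bit faster, from the canonical cover ($w'\preccurlyeq_{I_w}w$) together with smallness of elements of ${}^IW$, where you rerun the argument of Lemma \ref{lem-sm-lownb}; also, one harmless slip is that $\dim\Xcal_{m,v}=\ell(v)-\dim R_{\mathrm{u}}(P)$ rather than $\ell(v)-\dim P$, but only dimension differences enter your codimension count.
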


\begin{proof}
We apply Corollary \ref{Um-smooth} to each lower neighbor of $w$ in ${}^I W$. Assume \eqref{cor-closure-item1}.  Let $w'\in \Gamma_{I_w}^{\rm sm}(w)$ and write $w_1\colonequals \pi_0(w')$. Since $\Ucal_m(w,w_1)$ is smooth, we must have $w_1=w'$ hence $w'\in \Gamma_I(w)$. Hence $\Gamma_{I_w}^{\rm sm}(w)\subseteq \Gamma_I(w)$. Moreover, for any $w'\in \Gamma_I(w)$, the substack $\Ucal_m(w,w')$ admits a canonical cover, hence $w'\preccurlyeq_{I_w} w$. This shows $\Gamma_I(w)\subseteq \Gamma_{I_w}(w)$, and thus $\Gamma_I(w)\subseteq \Gamma^{\rm sm}_{I_w}(w)$, since elements of ${}^IW$ are small by Proposition \ref{prop-Pw}. Hence \eqref{cor-closure-item2} holds. \eqref{cor-closure-item2} implies \eqref{cor-closure-item3} trivially. Finally, assume \eqref{cor-closure-item3}, and let $w'\in \Gamma_I(w)$. Since $\pi_0$ is the identity on ${}^I W$, we have $w'\notin \pi_0(\Gamma^{\rm sm}_{I_w}(w)\setminus\{w'\})$ for each $w'\in \Gamma_I(w)$. Therefore $\bigcup_{w'\in \Gamma_I(w)}\Ucal_m(w,w')$ is smooth by Corollary \ref{Um-smooth}.
\end{proof}

\section{Lifting to characteristic zero}\label{sec-lift-char0}

\subsection{\texorpdfstring{$\Gcal$-zips over $\ZZ_p$}{}}
\subsubsection{Setting}
Let $\Gcal$ be a reductive group over $\ZZ_p$ with connected special fiber, endowed with a cocharacter
\begin{equation}
    \mu\colon \GG_{\textrm{m},\ZZ_p}\to \Gcal.
\end{equation}
It gives rise to a pair of opposite parabolic subgroups $\Pcal=\Pcal_-$ and $\Qcal=\Pcal_+$, with common Levi subgroup $\Lcal=\Pcal\cap \Qcal$, similarly to \S\ref{subsubsec-zip-expm}. Assume that there is a Borel pair $(\Bcal,\Tcal)$ over $\ZZ_p$ satisfying:
\begin{enumerate}
    \item $\Tcal$ is split over $\ZZ_p$.
    \item $\Bcal\subseteq \Pcal$.
    \item $\mu$ factors through $\Tcal$.
\end{enumerate}
Define the zip group $\Ecal_0\subseteq \Pcal\times \Qcal$ by
\begin{equation}
    \Ecal_0(R)=\{(x,y)\in \Pcal(R)\times \Qcal(R) \ | \ \overline{x}=\overline{y} \} \qquad \textrm{for any $\ZZ_p$-algebra $R$},
\end{equation}
where $\overline{x}, \overline{y}\in \Lcal(R)$ are the Levi projections of $x,y$ respectively. Define the $\ZZ_p$-stack $\Xscr$ by
\begin{equation}\label{Xscr-def}
    \Xscr \colonequals [\Ecal_0 \backslash \Gcal].
\end{equation}
It is clear that $\Xscr$ is smooth over $\ZZ_p$. We can apply the results of \S \ref{sec-review-zip} to both $\Xscr_{\overline{\QQ}_p}$ and $\Xscr_{\overline{\FF}_p}$.

\subsubsection{Stratifications}\label{subsub-stratif}
Let $W\colonequals W(\Gcal,\Tcal)$ be the Weyl group of $\Gcal$. Let $\Phi^+$, $\Delta$, denote the sets of positive and simple roots, respectively, with respect to the opposite Borel \(\Bcal^+\). Write $I\subseteq \Delta$ for the type of $\Pcal$. Set $z\colonequals w_{0,I}w_0$. For $w\in W$, set $\Ccal_w\colonequals \Bcal w \Bcal z^{-1}$, which is locally closed in $\Gcal$. Our first goal is to define a zip stratification $(\Xscr_w)_{w\in {}^IW}$ on $\Xscr$. Consider the map $\gamma \colon  \Gcal\times \Pcal \to \Gcal$ defined by $\gamma(g,x)=x^{-1}g \overline{x}$. It induces a well-defined map
\begin{equation}
    \widetilde{\gamma} \colon  \Gcal\times \left(\Pcal/\Bcal\right) \to \Bcal\backslash \Gcal/{}^z\Bcal
\end{equation}
which is smooth and surjective. For $w\in W$, define $\Hcal_w\colonequals \widetilde{\gamma}^{-1}(\Ccal_w)$. Furthermore, let $\pr_1\colon \Gcal\times \left(\Pcal/\Bcal\right)\to \Gcal$ be the first projection, which is a proper smooth map. Define for $w\in {}^I W$
\begin{equation}
    \Gcal_w\colonequals \pr_1(\Hcal_w), \quad \Gcal^*_w\colonequals \pr_1(\overline{\Hcal}_w).
\end{equation}
For $w\in W$, we have $\overline{\Hcal}_w=\bigcup_{w'\leq w}\Hcal_{w'}$. Indeed, this follows from the smoothness of the map $\widetilde{\gamma}$ and a similar formula for the closure of $\Ccal_w$. We deduce
\begin{equation}
    \Gcal^*_w = \bigcup_{w'\leq w} \pr_1(\Hcal_{w'}).
\end{equation}
Let $\Zcal_{\overline{\QQ}_p}$ and $\Zcal_{\overline{\FF}_p}$ be the zip data attached to the generic and special fiber of $(\Gcal,\mu)$, respectively. Since $\Gcal$ is split over $\ZZ_p$, the twisted orders on ${}^I W$ attached to $\Zcal_{\overline{\QQ}_p}$ and $\Zcal_{\overline{\FF}_p}$ can be identified. We simply denote it by $\preccurlyeq$. 

\begin{proposition} \ 
\begin{enumerate}
    \item $\Gcal$ is the disjoint union of the sets $\Gcal_w$ for $w\in {}^IW$ .
    \item The set $\Gcal_w$ is locally closed in $\Gcal$.
    \item The set $\Gcal^*_w$ is closed in $\Gcal$ and coincides with the Zariski closure $\overline{\Gcal}_w$ of $\Gcal_w$.
\end{enumerate}
\end{proposition}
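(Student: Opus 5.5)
The plan is to work fibrewise. Over each geometric point $s$ of $\spec \ZZ_p$ (i.e.\ $\overline{\QQ}_p$ and $\overline{\FF}_p$) we compare $\Gcal_w$, $\Gcal^*_w$ with the zip strata of the field-valued theory of \S\ref{sec-review-zip}, and then glue the fibrewise statements into statements over $\ZZ_p$.

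\textbf{Step 1: fibrewise identification.} Since $\widetilde{\gamma}$, $\pr_1$ and $\Ccal_w$ are compatible with base change, we have $(\Hcal_w)_s=\widetilde{\gamma}_s^{-1}(\Bcal_s w\Bcal_s z^{-1})$. In the language of Remark \ref{rmk-flag-other}, $\pr_1$ of this set is the union of the $E_{\Zcal_s}$-orbits meeting $\Bcal_s w\Bcal_s z^{-1}$. For $w\in{}^IW$ this is exactly $G_{\Zcal_s,w}$ by definition, because $E_{\Zcal_s}\cdot(BwBz^{-1})$ is what \S\ref{subsubsec-zip-strata} calls $G_{\Zcal_s,w}$. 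This identification uses (\ref{pi-image-minimal}) in the minimal case, and it holds for any zip datum, including $\varphi=\id$. Since image formation commutes with passage to fibres, $(\Gcal_w)_s=G_{\Zcal_s,w}$. Part (1) then follows: \S\ref{subsubsec-zip-strata} says the $G_{\Zcal_s,w}$ partition $G_s$, and a subset of $\Gcal$ is determined by its fibres.

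\textbf{Step 2: closedness of $\Gcal^*_w$ and the closure formula.} $\pr_1$ is proper, so $\Gcal^*_w=\pr_1(\overline{\Hcal}_w)$ is closed. Next we identify it with $\bigcup_{w'\preccurlyeq w}\Gcal_{w'}$. Fibrewise, $(\Gcal^*_w)_s=\bigcup_{v\le w}\pi_{B}\bigl(H^{(B)}_{\Zcal_s,v}\bigr)$, and this is the closure of $G_{\Zcal_s,w}$ by properness and (\ref{pi-image-minimal}). By (\ref{eq-closure}), that closure equals $\bigcup_{w'\preccurlyeq w}G_{\Zcal_s,w'}$. The order $\preccurlyeq$ is the same in both fibres by splitness, so $\Gcal^*_w=\bigcup_{w'\preccurlyeq w}\Gcal_{w'}$ as sets.

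Closure-compatibility in the special fibre needs care: we have to show that $\overline{\Hcal}_w$ is flat over $\ZZ_p$, i.e.\ that its special fibre is $\overline{(\Hcal_w)_{\overline{\FF}_p}}$. This holds because $\Ccal_w$ has closure $\bigcup_{w'\le w}\Ccal_{w'}$ over $\ZZ_p$ (Bruhat cells over a split base) and $\widetilde{\gamma}$ is smooth, so closure commutes with pullback.

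\textbf{Step 3: local closedness and $\Gcal^*_w=\overline{\Gcal}_w$.} We write $\Gcal_w=\Gcal^*_w\setminus\bigcup_{w'\prec w}\Gcal^*_{w'}$, which is locally closed as a closed set minus a finite union of closed sets. This gives (2). For (3), we have $\Gcal_w\subseteq\Gcal^*_w$, and $\Gcal^*_w$ is closed, so $\overline{\Gcal}_w\subseteq\Gcal^*_w$. Conversely, $\Gcal^*_w$ is the image of the irreducible set $\overline{\Hcal}_w$, which is flat over $\ZZ_p$ with generic fibre dense. Hence every point of $\Gcal^*_w$, including special-fibre points, lies in the closure of $\pr_1(\Hcal_w)$.

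The main obstacle is the flatness argument in Step 2: it is the step that ensures $\overline{\Hcal}_w$ has no extra components supported in characteristic $p$ and that fibrewise closures agree. I would handle it by reducing via the smooth map $\widetilde{\gamma}$ to the $\ZZ_p$-flatness of Schubert varieties $\overline{\Ccal}_w$ in the split group $\Gcal$.
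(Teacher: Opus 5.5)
Your proposal is correct and follows essentially the same route as the paper: you check (1) and the formula $\Gcal^*_w=\bigcup_{w'\preccurlyeq w}\Gcal_{w'}$ on the two geometric fibres, get closedness of $\Gcal^*_w$ from properness of $\pr_1$, write $\Gcal_w=\Gcal^*_w\setminus\bigcup_{w'\prec w}\Gcal^*_{w'}$, and identify $\Gcal^*_w$ with $\overline{\Gcal}_w$. Your flatness point is exactly what the paper packages into $\overline{\Hcal}_w=\bigcup_{w'\le w}\Hcal_{w'}$ (smoothness of $\widetilde{\gamma}$ plus the Bruhat closure formula over $\ZZ_p$); in the last step, continuity of $\pr_1$ already gives $\pr_1(\overline{\Hcal}_w)\subseteq\overline{\pr_1(\Hcal_w)}$, so you do not need flatness there.
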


\begin{proof}
For the first statement, it suffices to check it over $\overline{\QQ}_p$ and $\overline{\FF}_p$ where the result is already known. We claim that
\begin{equation}\label{eq-Gstar}
    \Gcal^*_w=\bigcup_{w'\preccurlyeq w}\Gcal_{w'}.
\end{equation}
Indeed, this identity can be checked set-theoretically on the special and generic fibers, where we already know it holds. Since $\pr_1$ is proper, $\Gcal^*_w$ is closed in $\Gcal$ for any $w\in {}^I W$. We can write
\begin{equation}
    \Gcal_w=\Gcal^*_w\setminus \bigcup_{w'\prec w} \Gcal_{w'}=\Gcal^*_w\setminus \bigcup_{w'\prec w} \Gcal^*_{w'}.
\end{equation}
The second equality follows from \eqref{eq-Gstar} applied to each $\Gcal_{w'}$ for $w'\prec w$. This shows that $\Gcal_w$ is locally closed. Finally, since $\pr_1$ is proper, we have $\Gcal^*_w=\pr_1(\overline{\Hcal}_w)=\overline{\pr_1(\Hcal_w)}=\overline{\Gcal}_w$. This concludes the proof.
\end{proof}

\subsection{Flag strata}
For an intermediate parabolic $\Bcal\subseteq \Pcal_0\subseteq \Pcal$, define the flag space $\Fscr^{(\Pcal_0)}$ as the quotient stack
\begin{equation}
    \Fscr^{(\Pcal_0)} = [\Ecal'_{0,\Pcal_0} \backslash \Gcal]
\end{equation}
where $\Ecal'_{0,\Pcal_0}=\Ecal_{0}\cap (\Pcal_0\times \Gcal)$, similarly to \S\ref{subsec-flag-space}. An equivalent definition is $\Fscr^{(\Pcal_0)}=[\Ecal_0\backslash \left(\Gcal\times (\Pcal/\Pcal_0)\right)]$, where $\Ecal_0$ acts on $\Gcal\times (\Pcal/\Pcal_0)$ by $(x,y)\cdot (g,h\Bcal)=(xgy^{-1},xh\Bcal)$ for $(x,y)\in \Ecal_0$, $g\in \Gcal$ and $h\in \Pcal$. The natural map $\pi_{\Pcal_0}\colon \Fscr^{(\Pcal_0)}\to \Xscr$ then corresponds to the $\Ecal_0$-equivariant map $\pr_1\colon \Gcal\times (\Pcal/\Pcal_0)\to \Gcal$.

Write $\Lcal_0\subseteq \Pcal_0$ for the unique Levi subgroup containing $\Tcal$. Define the stack $\Xscr_{\Pcal_0}$ similarly to $\Xscr$ after replacing $\Pcal$ with $\Pcal_0$ and $\Qcal$ with $\Qcal_0\colonequals \Lcal_0{}^z\Bcal$. Write $I_0\subseteq I$ for the type of $\Pcal_0$. For $w\in {}^{I_0}W$, let $\Xscr_{\Pcal_0,w}$ be the corresponding zip stratum of $\Xscr_{\Pcal_0}$. Define the flag stratum $\Fscr^{(\Pcal_0)}_w$ as the inverse image of $\Xscr_{\Pcal_0,w}$ by the natural map \begin{equation}
    \psi_{\Pcal_0}\colon \Fscr^{(\Pcal_0)} \to \Xscr_{\Pcal_0}
\end{equation}
induced by the inclusion $\Ecal'_{0,\Pcal_0}\subseteq \Ecal_0$.

\subsection{Images of strata}
For $w\in W$, write $\Pi_{\ZZ_p}(w)$ for the set of $w'\in {}^I W$ such that $\pi_{\Bcal}(\Fscr^{(\Bcal)}_{w})\cap \Xscr_{w'}\neq \emptyset$, equivalently $\Fscr^{(\Bcal)}_{w}\cap \pi_{\Bcal}^{-1}(\Xscr_{w'})\neq \emptyset$. Similarly, write $\Pi_{\QQ_p}(w)$ and $\Pi_{\FF_p}(w)$ for the analogues of this set for the generic and special fiber respectively. For $R\in \{\ZZ_p,\FF_p,\QQ_p\}$, write $\pi_R(w)$ for the longest element in $\Pi_R(w)$. We obtain maps
\begin{equation}\label{piR-maps}
    \Pi_R\colon W\to \Pcal({}^IW), \quad \pi_R\colon W\to {}^IW.
\end{equation}
Denote also by $W_{R}^{\rm sm}$ the set of small elements over $R$, i.e.\ the set of $w'\in W$ such that $\ell(\pi_R(w))=\ell(w)$. %We clearly have $W_{\ZZ_p}^{\rm sm}=W_{\QQ_p}^{\rm sm}\cup W_{\FF_p}^{\rm sm}$.
It is clear that $\Pi_{\ZZ_p}(w)=\Pi_{\QQ_p}(w)\cup \Pi_{\FF_p}(w)$.

\begin{lemma}\label{lem-Fp-Qp} 
For $w\in W$, we have $\pi_{\FF_p}(w)\preccurlyeq \pi_{\QQ_p}(w)$ and $\pi_{\ZZ_p}(w)= \pi_{\QQ_p}(w)$. 
\end{lemma}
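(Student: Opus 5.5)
We have to prove two facts: $\pi_{\FF_p}(w)\preccurlyeq \pi_{\QQ_p}(w)$, and that $\pi_{\ZZ_p}(w)=\pi_{\QQ_p}(w)$. The plan is to work in $\Gcal\times(\Pcal/\Bcal)$ with the $\Ecal_0$-stable locally closed subscheme $\Hcal_w=\widetilde{\gamma}^{-1}(\Ccal_w)$. It is flat over $\ZZ_p$: it is the preimage of the $\ZZ_p$-flat Bruhat cell $\Ccal_w$ under the smooth map $\widetilde\gamma$, so $\Hcal_w$ is itself smooth over $\ZZ_p$ with nonempty fibers. Consequently the generic fiber $\Hcal_{w,\QQ_p}$ is dense in $\Hcal_w$. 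This is the main geometric input, because it says that every point of the special fiber $\Hcal_{w,\FF_p}$ is a specialization of a point of $\Hcal_{w,\QQ_p}$.

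Set $v\colonequals \pi_{\QQ_p}(w)$. By the definition of $\pi_{\QQ_p}$ via Lemma \ref{lem-unique-maxlen}, the image $\pr_1(\Hcal_{w,\QQ_p})$ is contained in the closure of $\Gcal_{v,\QQ_p}$. By the proposition in \S\ref{subsub-stratif}, $\overline{\Gcal}_v=\Gcal^*_v=\bigcup_{v'\preccurlyeq v}\Gcal_{v'}$ is closed in $\Gcal$. It therefore contains the closure in $\Gcal$ of $\Gcal_{v,\QQ_p}$, hence contains $\pr_1(\overline{\Hcal_{w,\QQ_p}})\supseteq \pr_1(\Hcal_w)$, using density and continuity of $\pr_1$. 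In particular $\pr_1(\Hcal_{w,\FF_p})\subseteq \bigcup_{v'\preccurlyeq v}\Gcal_{v',\FF_p}$. So every element of $\Pi_{\FF_p}(w)$ is $\preccurlyeq v$, and in particular $\pi_{\FF_p}(w)\preccurlyeq \pi_{\QQ_p}(w)$.

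For the second claim, recall $\Pi_{\ZZ_p}(w)=\Pi_{\QQ_p}(w)\cup\Pi_{\FF_p}(w)$. Every element of this union is $\preccurlyeq v$: for the $\QQ_p$ part this is Lemma \ref{lem-unique-maxlen}, and for the $\FF_p$ part it is the previous paragraph. Since $v$ itself lies in the union and $\preccurlyeq$ refines length (strictly smaller elements have strictly smaller length), $v$ is the unique longest element. Hence $\pi_{\ZZ_p}(w)=v$.

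The delicate step is the density/specialization argument. We must make sure that closures in $\Gcal$ taken over $\ZZ_p$ interact correctly with the fiberwise closure formula \eqref{eq-Gstar}; that is, we need that the closure of $\Gcal_{v,\QQ_p}$ inside $\Gcal$ lies in $\Gcal^*_v$. This holds because $\Gcal^*_v$ is closed and contains $\Gcal_{v,\QQ_p}$. We also need the flatness of $\Hcal_w$, which follows from the smoothness of $\widetilde\gamma$ together with the flatness of Bruhat cells over $\ZZ_p$.
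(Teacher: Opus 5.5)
Your proof is correct and follows the same route as the paper. Both arguments combine the density of the generic fiber of the flag stratum with the closedness of $\overline{\Gcal}_{v}=\bigcup_{v'\preccurlyeq v}\Gcal_{v'}$; you justify the density through flatness of $\Hcal_w$ over $\ZZ_p$, whereas the paper uses the integrality of $\Ccal_w$, and you spell out the deduction of $\pi_{\ZZ_p}(w)=\pi_{\QQ_p}(w)$ that the paper calls immediate.
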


\begin{proof}
The second assertion immediately follows from the first one. Write $w'\colonequals \pi_{\QQ_p}(w)$. Consider the proper map $\pi_\Bcal \colon \Fscr^{(\Bcal)}_w \to \Xscr$. We have
\begin{equation}
\pi_\Bcal(\overline{\Fscr}_{\QQ_p,w}^{(\Bcal)})=\overline{\pi_\Bcal(\Fscr^{(\Bcal)}_{\QQ_p,w})}\subseteq \overline{\Xscr}_{w'} = \bigcup_{w''\preccurlyeq w'}\Xscr_{w''}.  
\end{equation}
Since $\Fscr^{(\Bcal)}_{\QQ_p,w}$ is dense in $\Fscr^{(\Bcal)}_w$ (as $\Ccal_w=\Bcal w\Bcal z^{-1}$ is integral), we deduce that $\pi_{\FF_p}(w)\preccurlyeq w'$.
\end{proof}

\subsection{\texorpdfstring{Reduction to characteristic $p$}{}}

The goal of this section is to prove the following proposition, using a reduction mod $p$ argument. Say that a $\ZZ_p$-scheme is \emph{rational} if it is birational to an affine space $\AA^n_{\ZZ_p}$. For example, the set $\Bcal w\Bcal$ (called a Bruhat cell) is rational (because $\Tcal$ is split over $\ZZ_p$). Note that the set $\AA^n(\overline{\ZZ}_p)$ is Zariski dense in $\AA^n$, hence the same is true for irreducible rational $\ZZ_p$-schemes. Similarly, if a $\ZZ_p$-scheme $X$ admits a surjective map $Y\to X$ from an irreducible rational $\ZZ_p$-scheme, then $X(\overline{\ZZ}_p)$ is Zariski dense in $X$.

\begin{proposition}\label{prop-pi-Qp-equals-Fp}
For any $w\in W$, one has $\pi_{\FF_p}(w) = \pi_{\QQ_p}(w)$.
\end{proposition}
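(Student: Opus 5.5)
By Lemma \ref{lem-Fp-Qp} we already have $\pi_{\FF_p}(w)\preccurlyeq \pi_{\QQ_p}(w)$. It therefore suffices to prove $w'\colonequals \pi_{\QQ_p}(w)\in \Pi_{\FF_p}(w)$, since $\pi_{\FF_p}(w)$ is the longest element of $\Pi_{\FF_p}(w)$. Concretely, I want to show that $\pr_1(\Hcal_{w,\FF_p})$ meets $\Gcal_{w',\FF_p}$. My plan is to work with the open subset
\[
\Ucal\colonequals \Hcal_w\cap \pr_1^{-1}(\Gcal_{w'})\subseteq \Hcal_w
\]
and to show that it meets the special fiber.

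\textbf{Step 1: $\Ucal$ is open and dense in $\Hcal_w$.} Since $\pi_{\ZZ_p}(w)=w'$ by Lemma \ref{lem-Fp-Qp}, the whole of $\Hcal_w$ maps into $\Gcal^*_{w'}=\overline{\Gcal}_{w'}$. By \eqref{eq-Gstar}, $\Gcal_{w'}$ is open in $\Gcal^*_{w'}$, being the complement of the union of the $\Gcal^*_{v}$ for $v\prec w'$. Hence $\Ucal$ is open in $\Hcal_w$. It is nonempty on the generic fiber by the definition of $w'$, and so it is dense in $\Hcal_{w,\QQ_p}$.

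\textbf{Step 2: structure of $\Hcal_w$.} Since $\widetilde{\gamma}$ is smooth and $\Ccal_w=\Bcal w\Bcal z^{-1}$ is smooth over $\ZZ_p$ with geometrically irreducible fibers, $\Hcal_w$ is smooth over $\ZZ_p$. Its two fibers are irreducible of the same dimension $\ell(w)+\dim P$, by \eqref{dim-H}. Moreover $\Hcal_w$ is dominated by a rational $\ZZ_p$-scheme, so its $\overline{\ZZ}_p$-points are Zariski dense.

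\textbf{Step 3 (main obstacle): $\Ucal$ meets the special fiber.} Openness alone does not suffice, since a priori the complement $\Hcal_w\setminus\Ucal$ could contain the entire special fiber. This is exactly the degeneration to be ruled out. I would argue by dimension.

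Suppose $\pr_1(\Hcal_{w,\FF_p})\subseteq \bigcup_{v\prec w'}\Gcal_{v,\FF_p}$. Consider the proper surjection $\pr_1\colon \overline{\Hcal}_w\to \Gcal^*_{w'}$. Over $\QQ_p$ its generic fiber has dimension $e\colonequals \ell(w)-\ell(w')$. Over $\FF_p$, every fiber through a point of $\Hcal_{w,\FF_p}$ has dimension at least $e+1$, because the image $\overline{\Gcal}_{\pi_{\FF_p}(w),\FF_p}$ has dimension $\dim P+\ell(\pi_{\FF_p}(w))<\dim P+\ell(w')$.

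To turn this into a contradiction I would use the group action. Fiber dimension of $\pr_1$ at $(g,hB)$ equals $\dim \Stab_{\Ecal_0}(g)$ minus the dimension of its orbit in the relevant $\Hcal_w$-fiber. Stabilizer dimension of the smooth $\ZZ_p$-group $\Ecal_0$ acting on $\Gcal$ is upper semicontinuous on $\Gcal$. For points of the open $\Gcal_{w'}$ this should be controlled uniformly across both fibers by the formula $\dim \Gcal_{v}=\dim P+\ell(v)$. Here I would use Lemma \ref{stab-Pw} together with the exponent-raising Lemma \ref{lem-Em-raising} to compare the $m=0$ stabilizers in characteristic $p$ with the Frobenius-type ones, whose dimensions are computed in Lemma \ref{lem-dim-stab-1}.

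Once Step 3 is in hand, any point of $\Ucal_{\FF_p}$ lies in $\Hcal_{w,\FF_p}$ and maps into $\Gcal_{w',\FF_p}$. Hence $w'\in \Pi_{\FF_p}(w)$, and therefore $\pi_{\FF_p}(w)=w'$ by maximality together with Lemma \ref{lem-Fp-Qp}.

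I expect the semicontinuity comparison in Step 3 to be the genuinely delicate point. If it fails in general, I would first try the special case where $w$ is small, using the Weyl-representative description of Proposition \ref{prop-W-representative}, which is uniform in $p$ and $0$.
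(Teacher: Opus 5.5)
Your Steps 1 and 2 are fine, but Step 3 is the entire content of the proposition, and it is not established. The sketched route also cannot work as stated.

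\emph{Semicontinuity points the wrong way.} Upper semicontinuity, whether of the fibre dimension of $\pr_1$ on $\Hcal_w$ or of $\dim\Stab_{\Ecal_0}(g)$ on $\Gcal$, only says that the locus where dimensions jump \emph{up} is closed. The special fibre is closed, so the configuration you want to exclude (all fibres through $\Hcal_{w,\FF_p}$ of dimension $\geq e+1$) is perfectly compatible with it. What you would actually need is an upper bound on fibre dimensions over $\Gcal^*_{w'}\setminus\Gcal_{w'}$. Dimension counting cannot give one: already on the generic fibre, the preimage of some $\Gcal^*_v$ with $v\prec w'$ may be a divisor, and you must distinguish that from the whole special fibre lying in it.

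\emph{The stabilizer description is unavailable for $\varphi=\iden$.} Writing fibre dimension as a stabilizer/orbit difference presupposes that $\Stab_{\Ecal_0}(g)$ acts transitively on $\pr_1^{-1}(g)$. That is a Frobenius-type phenomenon; for $\varphi=\iden$ the strata are not orbits (cf.\ Example \ref{ex-GL2-image}).

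\emph{Your cited tools never connect the two fibres.} Lemma \ref{lem-dim-stab-1} concerns Frobenius-type data at the points $wz^{-1}$ only. Lemma \ref{lem-Em-raising} compares $E_0$- and $E_m$-orbits entirely inside characteristic $p$. None of these relates the generic and special fibres of $\Hcal_w$, which is exactly what is required.

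For comparison, the paper's proof consists of your Steps 1--2 plus one further claim. Since $\Hcal_w(\overline{\ZZ}_p)$ is Zariski dense (via the surjection $\Ecal_0\times\Ccal_w\to\Hcal_w$) and $\Ucal$ is dense open, it asserts that $\Ucal$ contains a $\overline{\ZZ}_p$-point, whose reduction gives $w'\in\Pi_{\FF_p}(w)$. Your objection applies verbatim to this step. Density only yields a $\overline{\ZZ}_p$-point whose \emph{generic} point lies in $\Ucal$. Since $\Gcal_{w'}$ is merely locally closed, its reduction may land in $\Gcal^*_{w'}\setminus\Gcal_{w'}$. Indeed, the inference uses only that $\Ucal$ is dense open, and it fails for the dense open $\Hcal_{w,\QQ_p}$, which has no $\overline{\ZZ}_p$-points. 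So you were right to distrust that argument, but the written proof does not supply the missing ingredient either. What is needed is that $\Hcal_{w,\FF_p}$ is not an irreducible component of $\Hcal_w\setminus\Ucal$, e.g.\ that this closed set is the closure of its generic fibre.

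Two things can be salvaged with the available tools.
\begin{itemize}
\item \emph{Almost all $p$.} In the split case everything is defined over $\ZZ$. A $\overline{\QQ}$-point of $\Ccal_w\cap\Gcal_{w'}$ spreads out over some $\Ocal_F[\frac{1}{N}]$ while avoiding every $\Gcal^*_v$ with $v\prec w'$. Its reductions then give $w'\in\Pi_{\FF_p}(w)$ for all $p\nmid N$, by the same trick as in Lemmas \ref{lem-stab0-char-shifting} and \ref{lem-orbit-Pw}. This is the statement for all but finitely many $p$. Combined with Corollary \ref{lem-ineq-0} at such a prime, it also gives $\ell(\pi_{\QQ_p}(w))\leq \ell(w)$.
\item \emph{Small elements at every $p$.} Your fallback does work for $w$ small in the $\FF_p$ sense. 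Set $w_1=\pi_{\FF_p}(w)$. By Lemma \ref{lem-small-indep} and Propositions \ref{prop-pi-Xi}, \ref{prop-W-representative}, you can write $wz^{-1}=a\,x\,w_1z^{-1}a^{-1}$ with $a\in W_I$ and $x\in W_{I_{w_1}}$. Hence the $\ZZ_p$-point $\dot w\dot z^{-1}$ of $\Ccal_w$ lies in $\Ecal_0(\ZZ_p)\cdot(\Pcal_{w_1}w_1z^{-1})$, which is contained in $\Gcal_{w_1}$ on both fibres by \eqref{Gw-withPw}. Together with the length inequality above, this gives $\pi_{\QQ_p}(w)=w_1$.
\end{itemize}
The second point recovers the small-element version stated in the introduction, not the proposition for all $w$.
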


\begin{proof}
The set $\Hcal_w$ considered in \ref{subsub-stratif} can be written as follows:
\begin{equation}
\Hcal_w \colonequals \{(g,xB)\in \Gcal\times (\Pcal/\Bcal) \ | \ x^{-1}g\overline{x}\in \Ccal_w\}.    
\end{equation}
There is a natural surjective $\Ecal'_0$-invariant map $\Ecal_0 \times \Ccal_w\to \Hcal_w$, $((a,b),x)\mapsto (axb^{-1},a\Bcal)$. Since the $\ZZ_p$-scheme $\Ecal_0 \times \Ccal_w$ is rational, it follows that $\Hcal_w(\overline{\ZZ}_p)$ is Zariski dense in $\Hcal_w$. The map $\pi_\Bcal\colon \Fscr^{(\Bcal)}_w\to \Xscr$ corresponds to the first projection $\pr_1\colon \Hcal_w\to \Gcal$, which is $\Ecal_0$-equivariant. 

If we write $w'=\pi_{\QQ_p}(w)$, then $\pr_1(\Hcal_w)\subseteq \overline{\Gcal}_{w'}$ since $\pi_{\QQ_p}(w)=\pi_{\ZZ_p}(w)$. Hence, the preimage of $\Gcal_{w'}$ inside $\Hcal_w$ is open in $\Hcal_w$ and nonempty (because it contains a $\overline{\QQ}_p$-point), hence dense in $\Hcal_w$ by irreducibility. By the above, it follows that it contains a $\overline{\ZZ}_p$-point. Hence, there exists $x\in \Hcal_w(\overline{\ZZ}_p)$ such that $\pr_1(x)\in \Gcal_{w'}$ by $\pr_1$. In particular, $(\Hcal_w\cap \Gcal_{w'})(\overline{\FF}_p)\neq \emptyset$, hence $w'\in \Pi_{\FF_p}(w)$. This implies $\ell(\pi_{\QQ_p}(w))\leq \ell(\pi_{\FF_p}(w))$. Combined with Lemma \ref{lem-Fp-Qp}, we obtain $\pi_{\QQ_p}(w)=\pi_{\FF_p}(w)$.
\end{proof}

In particular, we obtain the inequality $\ell(\pi_{\QQ_p}(w))\leq \ell(w)$ for any $w\in W$, since we have established it for $\pi_{\FF_p}(x)$ (Lemma \ref{lem-ineq-0}). The above proposition has also the following immediate consequence:

\begin{proposition} \ \label{prop-sm-Fp-Qp} 
We have $W_{\QQ_p}^{\rm sm}=W_{\FF_p}^{\rm sm}$.
\end{proposition}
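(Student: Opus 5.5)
The statement follows directly from Proposition \ref{prop-pi-Qp-equals-Fp}, together with the fact that smallness is defined purely through the length of $\pi_R(w)$. By definition, $w\in W_R^{\rm sm}$ means $\ell(\pi_R(w))=\ell(w)$ for $R\in\{\FF_p,\QQ_p\}$. Fix $w\in W$. Proposition \ref{prop-pi-Qp-equals-Fp} gives the equality $\pi_{\FF_p}(w)=\pi_{\QQ_p}(w)$ in ${}^IW$. In particular the two images have the same length, so
\[
\ell(\pi_{\FF_p}(w))=\ell(w)\iff \ell(\pi_{\QQ_p}(w))=\ell(w).
\]
Hence $w\in W_{\FF_p}^{\rm sm}$ if and only if $w\in W_{\QQ_p}^{\rm sm}$, which is the claimed equality of sets.

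The only point to check is notational consistency, and no new geometric input is needed. The map $\pi_{\FF_p}$ here is defined from the special fiber $\Xscr_{\overline{\FF}_p}$, which is the exponent-zero stack $\Xcal_0$ of \S\ref{sec-split}. So $W_{\FF_p}^{\rm sm}$ is the set of $\Zcal_0$-small elements. By Lemma \ref{lem-small-indep}, this set coincides with the set $W^{\rm sm}$ of $\Zcal_m$-small elements for any $m\geq 1$. Thus the proposition also identifies $W_{\QQ_p}^{\rm sm}$ with the Frobenius-type small elements, which are characterized combinatorially in Proposition \ref{prop-small-by-chains}.

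There is no real obstacle. All the work is in Proposition \ref{prop-pi-Qp-equals-Fp}, whose proof combines Lemma \ref{lem-Fp-Qp} (specialization gives $\pi_{\FF_p}(w)\preccurlyeq\pi_{\QQ_p}(w)$) with the density of $\overline{\ZZ}_p$-points on the rational scheme $\Hcal_w$. That density argument reduces a $\overline{\QQ}_p$-point of $\Hcal_w$ lying over $\Gcal_{w'}$ to an $\overline{\FF}_p$-point, which gives the reverse inequality of lengths.
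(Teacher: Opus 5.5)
Your proof is correct and is the paper's own: the proposition is recorded there as an immediate consequence of Proposition \ref{prop-pi-Qp-equals-Fp}, since $\pi_{\FF_p}(w)=\pi_{\QQ_p}(w)$ gives $\ell(\pi_{\FF_p}(w))=\ell(\pi_{\QQ_p}(w))$ for every $w\in W$. One caution about your final paragraph: all the content lies in that cited proposition, and density of $\Hcal_w(\overline{\ZZ}_p)$ alone does not carry a $\overline{\ZZ}_p$-point whose generic fiber lies over $\Gcal_{w'}$ to an $\overline{\FF}_p$-point over $\Gcal_{w'}$ (the reduction can fall into a smaller stratum, and the generic fiber of $\Hcal_w$ is itself a dense open containing no $\overline{\ZZ}_p$-section), so that step needs more justification than your summary suggests.
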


We expect that $\Pi_{\QQ_p}=\Pi_{\FF_p}$ in general, but the above results will suffice for the purposes of this article. 

\subsection{Singularities}
Set $G\colonequals \Gcal\otimes_{\ZZ_p}\FF_p$, and write again $\mu\colon \GG_{\textrm{m},\FF_p}\to G$ for the special fiber of $\mu$. Put $\Xcal_0\colonequals \Xscr_{\FF_p}$. For $m\geq 1$, define also $\Xcal_m$ to be the stack of $G$-zips of exponent $m$ attached to the datum $(G,\mu,\varphi^m)$, where $\varphi$ is the $p$-power Frobenius isogeny of $G$. We retain the notations of \S \ref{subsec-notation}. Let $w\in {}^I W$ and $w'\in \Gamma_I(w)$ be a lower neighbor of $w$ with respect to $\preccurlyeq$. Consider the set $\Ucal_m(w,w')\subseteq \Xcal_m$. Define also $\Uscr(w,w')\colonequals \Xscr_w \cup \Xscr_{w'}$ and denote by $\Uscr_{\QQ_p}(w,w')$ its generic fiber. We show the analogue of Proposition \ref{prop-scc-charac} over $\QQ_p$. We state a simplified version below. The proof is completely analogous, so we omit it.

\begin{proposition}\label{prop-scc-Qp}
The following are equivalent:
    \begin{enumerate}
        \item \label{scc1-Qp} $\Uscr_{\QQ_p}(w,w')$ admits a separating canonical cover.
        \item \label{scc2-Qp} One has $w'\notin \pi_{\QQ_p}(\Gamma_{I_w}(w)\setminus \{w'\})$.
        \item \label{scc3-Qp} One has $w'\notin \pi_{\QQ_p}(\Gamma^{\rm sm}_{I_w}(w)\setminus \{w'\})$.
    \end{enumerate}
\end{proposition}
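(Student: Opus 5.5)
We follow the proof of Proposition \ref{prop-scc-charac}, replacing the two inputs that used Frobenius-type structure by their characteristic-zero analogues established above. Throughout we work in the generic fiber $\Xscr_{\overline{\QQ}_p}$, with zip datum of identity type. The key inputs are:
\begin{enumerate}
\item the length inequality $\ell(\pi_{\QQ_p}(v))\leq \ell(v)$ for all $v\in W$, which follows from Proposition \ref{prop-pi-Qp-equals-Fp} together with Corollary \ref{lem-ineq-0};
\item the surjectivity $\pi_{\Pcal_w}(\overline{\Fscr}^{(\Pcal_w)}_{\QQ_p,w})=\overline{\Xscr}_{\QQ_p,w}$, which is the general (non-Frobenius) formula \eqref{pi-image-minimal} plus properness of $\pi_{\Pcal_w}$.
\end{enumerate}

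For \eqref{scc1-Qp}$\Leftrightarrow$\eqref{scc2-Qp}, we unwind the definition. By \eqref{eq-closure} and its flag analogue, $\overline{\Fscr}^{(\Pcal_w)}_{\QQ_p,w}$ is the union of the strata $\Fscr^{(\Pcal_w)}_{\QQ_p,v}$ for $v\preccurlyeq_{I_w} w$. The preimage of $\Uscr_{\QQ_p}(w,w')$ equals $\Fscr_w\cup\Fscr_{w'}$ if and only if two conditions hold:
\begin{enumerate}
\item[(a)] $w'\preccurlyeq_{I_w}w$;
\item[(b)] $\Pi_{\QQ_p}(v)\cap\{w,w'\}=\emptyset$ for every $v\preccurlyeq_{I_w}w$ with $v\notin\{w,w'\}$.
\end{enumerate}
Here we use that the image of a minimal stratum $v\in\{w,w'\}$ is exactly $\Xscr_v$, by \eqref{pi-image-minimal}. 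Now take $v\neq w$. Then $\ell(v)<\ell(w)$, so $w\notin\Pi_{\QQ_p}(v)$: any element of $\Pi_{\QQ_p}(v)$ lies below $\pi_{\QQ_p}(v)$, which has length at most $\ell(v)$. By the same length bound, $w'$ can lie in $\Pi_{\QQ_p}(v)$ only if $\ell(v)\geq\ell(w')=\ell(w)-1$, i.e.\ only if $v\in\Gamma_{I_w}(w)$. So (b) is equivalent to $w'\notin\Pi_{\QQ_p}(\Gamma_{I_w}(w)\setminus\{w'\})$. Input (2) shows that $w'\in\Pi_{\QQ_p}(\Gamma_{I_w}(w))$, so (b) forces $w'\in\Gamma_{I_w}(w)$, which gives (a).

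Next we replace $\Pi_{\QQ_p}$ by $\pi_{\QQ_p}$ in condition \eqref{scc2-Qp}. Suppose $w'\in\Pi_{\QQ_p}(v)$ with $v\in\Gamma_{I_w}(w)\setminus\{w'\}$. Then $w'\preccurlyeq\pi_{\QQ_p}(v)$, and $\ell(\pi_{\QQ_p}(v))\leq \ell(v)=\ell(w')$, so $w'=\pi_{\QQ_p}(v)$ and $v$ is small. This gives the chain \eqref{scc2-Qp}$\Rightarrow$\eqref{scc3-Qp}$\Rightarrow$ (the $\Pi$-version), closing the cycle of equivalences. By Proposition \ref{prop-sm-Fp-Qp}, the set of small elements here is the same $W^{\rm sm}$ as before.

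The main point to check is the description of closures and images in the non-orbitally-finite setting. There, images of non-minimal flag strata need not be unions of strata (Example \ref{ex-GL2-image}). This is why we phrase everything via $\Pi_{\QQ_p}$, i.e.\ via intersections of images with strata, and via Lemma \ref{lem-unique-maxlen}, which handles the general case. We also need the closure formula for flag strata over $\overline{\QQ}_p$, which follows from \eqref{eq-closure} applied to the zip datum $\Zcal_{\Pcal_w}$.
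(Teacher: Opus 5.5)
Your argument is correct and is the proof the paper intends: the paper omits it as ``completely analogous'' to Proposition~\ref{prop-scc-charac}. Your reduction to conditions (a)--(b), your use of the surjectivity of $\pi_{\Pcal_w}$ on closures to force (a), and your passage from $\Pi_{\QQ_p}$ to $\pi_{\QQ_p}$ via Lemma~\ref{lem-unique-maxlen} follow that proof step for step, and the one new ingredient over $\QQ_p$, the inequality $\ell(\pi_{\QQ_p}(v))\leq \ell(v)$, is obtained in the paper exactly as you obtain it, from Proposition~\ref{prop-pi-Qp-equals-Fp} combined with Corollary~\ref{lem-ineq-0}.
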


\begin{theorem}\label{thm-main}
The following are equivalent:
\begin{enumerate}
    \item \label{main1} $\Ucal_m(w,w')$ is smooth (equivalently, normal) for some $m\geq 1$.
\item \label{main2} $\Ucal_m(w,w')$ is smooth (equivalently, normal) for all $m\geq 1$.
\item \label{main3} $\Ucal_{0}(w,w')$ is $w$-bounded and admits a separating canonical cover.
\item \label{main4} $\Uscr_{\QQ_p}(w,w')$ is $w$-bounded and admits a separating canonical cover.
\item \label{main5} We have $P_{w'}\subseteq P_w$ and $w'\notin \pi_{\QQ_p}(\Gamma^{\rm sm}_{I_w}(w)\setminus \{w'\})$.
\end{enumerate}
\end{theorem}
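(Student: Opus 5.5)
The plan is to run a chain of equivalences, each step coming from results already established, with the characteristic zero comparison reduced to Proposition \ref{prop-pi-Qp-equals-Fp}.

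\emph{Equivalence of \eqref{main1}, \eqref{main2} and \eqref{main3}.} By Corollary \ref{Um-smooth}, for each $m\geq 1$ the smoothness (equivalently, normality) of $\Ucal_m(w,w')$ is equivalent to condition (2) there, namely that $\Ucal_0(w,w')$ is $w$-bounded and admits a separating canonical cover. Since that condition does not involve $m$, we immediately get \eqref{main1}$\Leftrightarrow$\eqref{main3}$\Leftrightarrow$\eqref{main2}. Here $\Xcal_0=\Xscr_{\FF_p}$ is exactly the exponent-zero stack of \S\ref{sec-split}, and $G=\Gcal_{\FF_p}$ is split, so the results of that section apply.

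\emph{Equivalence of \eqref{main3} and \eqref{main5}.} Corollary \ref{Um-smooth} also shows that \eqref{main3} is equivalent to: $P_{w'}\subseteq P_w$ and $w'\notin \pi_0(\Gamma^{\rm sm}_{I_w}(w)\setminus\{w'\})$, where $\pi_0=\pi_{\FF_p}$ and $W^{\rm sm}$ is the common set of small elements (Lemma \ref{lem-small-indep}). By Proposition \ref{prop-pi-Qp-equals-Fp} we have $\pi_{\FF_p}=\pi_{\QQ_p}$ on all of $W$. By Proposition \ref{prop-sm-Fp-Qp} the small sets also agree. The sets $\Gamma_{I_w}(w)$ are purely combinatorial and depend only on $\preccurlyeq_{I_w}$, which is common to both fibers. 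Hence the condition above is literally \eqref{main5}.

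\emph{Equivalence of \eqref{main4} and \eqref{main5}.} By Proposition \ref{prop-scc-Qp}, $\Uscr_{\QQ_p}(w,w')$ admits a separating canonical cover if and only if $w'\notin\pi_{\QQ_p}(\Gamma^{\rm sm}_{I_w}(w)\setminus\{w'\})$. The condition of $w$-boundedness is $P_{w'}\subseteq P_w$, that is $I_{w'}\subseteq I_w$. For the split group, the types $I_w$ are computed by the formula $I_w=\bigcap_m \varphi_w^m(I)$ with $\varphi_w=wz^{-1}$, because $\sigma$ acts trivially. This formula is the same over $\overline{\QQ}_p$ and $\overline{\FF}_p$.

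The main obstacle is checking that the canonical parabolic, as defined via the largest $L_w$ with ${}^{wz^{-1}}L_w=L_w$, is really independent of the fiber and of the isogeny (identity versus Frobenius). I would justify this by noting that $L_w$ is determined by the root datum alone: it is the largest subset of $I$ stable under $wz^{-1}$ in the above sense. This is the statement already used in \S\ref{subsec-notation}, that $P_w$ is unambiguous for all $m\geq 0$, and it transfers verbatim to characteristic zero. With this in hand, all five conditions are equivalent.
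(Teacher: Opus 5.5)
Your proposal is correct and is essentially the paper's proof. The equivalence of (1)--(3) comes from Corollary~\ref{Um-smooth}; (3)$\Leftrightarrow$(5) rests, exactly as in the paper, on the identification $\pi_{\FF_p}=\pi_{\QQ_p}$ of Proposition~\ref{prop-pi-Qp-equals-Fp} together with Proposition~\ref{prop-sm-Fp-Qp}; and (4)$\Leftrightarrow$(5) is Proposition~\ref{prop-scc-Qp}. You additionally check that $I_w$, hence $P_w$ and $\Gamma_{I_w}(w)$, is the same over $\overline{\QQ}_p$ and $\overline{\FF}_p$ because $\sigma$ acts trivially in the split case, a point the paper leaves implicit.
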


\begin{proof}
We already know that \eqref{main1}, \eqref{main2} and \eqref{main3} are equivalent. Since $\pi_{\QQ_p}=\pi_{\FF_p}$, these assertions are equivalent to \eqref{main5}, which is also equivalent to \eqref{main4} using Proposition \ref{prop-scc-Qp}.

\begin{remark}
Suppose the conditions of Theorem \ref{thm-main} are satisfied and define $\Uscr^{(\Pcal_w)}_{\QQ_p}(w,w')$ as the union of the two strata in $\Fscr^{(\Pcal_w)}_{\QQ_p}$. One can show that the map
\begin{equation}
  \pi_{\Pcal_w} \colon  \Uscr^{(\Pcal_w)}_{\QQ_p}(w,w') \to \Uscr_{\QQ_p}(w,w')
\end{equation}
is a bijective, proper, birational and unramified, hence an isomorphism. In particular, $\Uscr_{\QQ_p}(w,w')$ is normal.

%\JS{Conversely, if $\Uscr_{\QQ_p}(w,w')$ is normal, can't we argue that $\Uscr_{\overline{\QQ}}(w,w')$ also, after a choice of a model? Then smoothness means that some Jacobian matrix has maximal rank, which is an open condition : the determinant of some maximal minor is nonzero, therefore it is nonzero mod $\ell$ for some $\ell$, hence $\Ucal_0$ (mod $\ell$) is smooth over $\FF_\ell$.}
\end{remark}
%We first note that the property of admitting a separating canonical cover is satisfied by all the stacks $\Xcal_m$ (for $m\geq 0$) simultaneously or by none of them. The same holds for the property "$w$-bounded", since the canonical parabolics are the same for all involved zip data. Combining this with Theorem \ref{thm-FTZD-normality} (which applies to the stacks $\Xcal_m$ for $m\geq 1$), we deduce that \eqref{main1}, \eqref{main2} and \eqref{main3} are equivalent.

%Property \eqref{main5} is simply a reformulation of \eqref{main4}, as in Proposition \ref{prop-scc-charac}, which is available over $\QQ_p$ too because of Lemma \ref{ineq-Qp}. Finally, \eqref{main3} is equivalent to $w$-boundedness and $w'\notin \pi_{\FF_p}(\Gamma^{\rm sm}_{I_w}(w)\setminus \{w'\})$. Since $\pi_{\QQ_p}$ and $\pi_{\FF_p}$ coincide on small elements (Proposition \ref{prop-sm-Fp-Qp}), this is equivalent to $w'\notin \pi_{\QQ_p}(\Gamma^{\rm sm}_{I_w}(w)\setminus \{w'\})$ and hence also to $w'\notin \pi_{\QQ_p}(\Gamma_{I_w}(w)\setminus \{w'\})$. This terminates the proof.
\end{proof}

Theorem \ref{thm-main} is surprising as it relates the singularities in positive characteristic in the zip stratification (and therefore, in the Ekedahl--Oort stratification - see \S\ref{subsec-Shimura} below) to the characteristic zero object $\Xscr_{\QQ_p}$. We can even extend scalars to an algebraic closure $\overline{\QQ}_p$ and identify this field with the field $\CC$ of complex numbers via a fixed isomorphism $\iota\colon \overline{\QQ}_p\to \CC$. We could then define similarly a map $\pi_{\CC}\colon W\to {}^I W$, which clearly coincides with $\pi_{\QQ_p}$. In theory, we could then (potentially) apply techniques from other fields of mathematics, like complex analysis and differential geometry, to study properties of $\Xscr_{\CC}\colonequals \Xscr_{\QQ_p}\otimes_{\QQ_p,\iota}\CC$. We hope to explore this analytical approach to studying the EO stratification in future research.

\subsection{Shimura varieties}\label{subsec-Shimura}

Let $(\mathbf{G},\mathbf{X})$ be a Shimura datum of abelian type. %\lorenzo{abelian type? you are citing \cite{kisin-hodge-type-shimura}, which gives integral models for abelian type Shimura varieties}. 
In particular, $\mathbf{G}$ is a reductive group over $\QQ$. One can attach to the Shimura datum a well-defined $\mathbf{G}(\CC)$-conjugacy class $[\mu]$ of cocharacters. For a sufficiently small compact open subgroup $K\subseteq \mathbf{G}(\AA_f)$, we have a Shimura variety $\Sh_K(\mathbf{G},\mathbf{X})$, which is a smooth quasi-projective scheme over a number field $\mathbf{E}$ (the field of definition of $[\mu]$). 

\subsubsection{Ekedahl--Oort strata}
Let $p$ be an odd prime\footnote{This assumption is made in \cite{imai2024prismaticrealizationfunctorshimura} for the construction of the zip period map $\zeta$ in \eqref{zeta-eq}.} of good reduction and $v|p$ a place of $\mathbf{E}$. Write \(\mathbf{E}_v\) for the completion of \(\mathbf{E}\) with respect to \(v\). In particular, we have $K=K^pK_p$ where $K^p\subseteq\mathbb{G}(\AA_f^p)$ is open compact and $K_p=\Gcal(\ZZ_p)$ for some $\ZZ_p$-flat reductive model $\Gcal$ of $\mathbf{G}_{\QQ_p}$ such that $G\colonequals \Gcal_{\FF_p}$ is connected. Since \(\mathbf{G}_{\QQ_p}\) is quasi-split, by \cite[Prop.~1.7]{pointsmilne}, after changing $\mu$ to a conjugate cocharacter, we may assume that there is a Borel pair $(\mathbf{B},\mathbf{T})$ of $\mathbf{G}_{\QQ_p}$ satisfying the following conditions
\begin{enumerate}
    \item $\mu$ factors through $\mathbf{T}_{\mathbf{E}_v}$,
    \item $\mu$ is dominant with respect to $\mathbf{B}$.
\end{enumerate}
Let $(\Bcal,\Tcal)$ be the unique Borel pair of $\Gcal$ that extends $(\mathbf{B},\mathbf{T})$. By density, it satisfies the same two properties above. Write $(B,T)$ for the special fiber of $(\Bcal,\Tcal)$. The cocharacter $\mu$ extends to $\Ocal_{\mathbf{E}, v}$, the ring of integers of \(\mathbf{E}_v\). Write \(k(v)\) for the residue field of $\Ocal_{\mathbf{E}, v}$ and again $\mu\colon \GG_{\mathrm{m}, k(v)}\to G_{k(v)}$ for the special fiber of $\mu$.

By results of Kisin \cite{kisin-hodge-type-shimura} and Vasiu \cite{vasiu}, there exists a smooth canonical model $\Scal_K$ over $\Ocal_{\mathbf{E}, v}$. Write $S_K\colonequals \Scal_K\otimes_{\Ocal_{\mathbf{E}, v}}\overline{\FF}_p$ and put $\Xcal\colonequals \GZip^\mu$, the stack of $G$-zips of type $\mu$. By Zhang \cite{zhang} and Imai--Kato--Youcis \cite{imai2024prismaticrealizationfunctorshimura}, %\lorenzo{all of the references we are giving assume that \(p > 2\); do we care? we should state it somewhere}
there is a smooth surjective map 
\begin{equation}\label{zeta-eq}
    \zeta\colon S_K\to \Xcal
\end{equation}
called \emph{the zip period map}, whose fibers are the \emph{Ekedahl--Oort (EO) strata} of $S_K$. For $w\in {}^IW$, we have the corresponding zip stratum $\Xcal_w$ of $\Xcal$ and Ekedahl--Oort stratum $S_{K,w}\colonequals \zeta^{-1}(\Xcal_w)$. For $w\in {}^I W$ and $w'\in \Gamma_I(w)$, set
\begin{equation}
    U_K(w,w')\colonequals S_{K,w}\cup S_{K,w'}=\zeta^{-1}(\Ucal(w,w')).
\end{equation}
It is a locally closed subset of $S_K$, that we endow with the reduced subscheme structure. Locally closed subschemes of $S_K$ of this form will be called \emph{elementary subschemes}.

\subsubsection{Frobenius action} \label{sssec: frob action} %\lorenzo{We should follow the approach of Deligne \& Lusztig to define the Weyl group intrinsically; I think this is similar to what you were saying Wedhorn does. I'll look into this and write it myself, if I can.}
We may identify the root systems $\Phi(\mathbf{G}_{\QQ_p},\mathbf{T})$ and $\Phi(G,T)$ via the reductive $\ZZ_p$-model $\Gcal$, and similarly for the Weyl groups $W(\mathbf{G}_{\QQ_p},\mathbf{T})$ and $W(G,T)$. %If we fix an isomorphism $\iota\colon \overline{\QQ}_p\to \CC$, we may
Since \(\mathbf{G}_{\QQ_p}\) is unramified, the action of \(\Gal(\overline{\QQ}_p/\QQ_p)\) on \(\Phi(\mathbf{G}_{\QQ_p},\mathbf{T})\) is unramified and the action of the $p$-power Frobenius $\sigma\in \Gal(\overline{\FF}_p/\FF_p)$ is the same on \(\Phi(\mathbf{G}_{\QQ_p},\mathbf{T})\) and \(\Phi(G, T)\), via this identification. Moreover, the pair $(\Phi(\mathbf{G}_{\QQ_p},\mathbf{T}),\sigma)$ is independent, up to conjugation, of the choice of a Borel pair $(\mathbf{B},\mathbf{T})$ that satisfies the above assumptions \cite[21.42-43, 24.6]{agrpsmilne}.

The action of $\sigma$ on $\Phi(G,T)$ also uniquely determines its action on the Weyl group $W(G,T)$. Similarly, we obtain a compatible action of $\sigma$ on $W(\mathbf{G}_{\QQ_p},\mathbf{T})$. Recall that the actions of $\sigma$ and $W(\mathbf{G}_{\QQ_p},\mathbf{T})$ on $\Phi(\mathbf{G}_{\QQ_p},\mathbf{T})$ are related by $\sigma(w\cdot \alpha)=\sigma(w)\cdot \sigma(\alpha)$, for all $w\in W(\mathbf{G}_{\QQ_p},\mathbf{T})$ and $\alpha\in \Phi(\mathbf{G}_{\QQ_p},\mathbf{T})$. The following result is an immediate consequence of the results of \S\ref{General algorithm for smoothness}: %\lorenzo{TODO: the previous internal ref was only for the split case, while the proposition here holds in general. Check if there is a better one.} %Theorem \ref{thm-main}:

\begin{proposition}
For any elementary subscheme $U_K(w,w')$ of $S_K$, the smoothness (equivalently, normality) of $U_K(w,w')$ is entirely determined by the triple $(\Phi(\mathbf{G},\mathbf{T}),W(\mathbf{G},\mathbf{T}),\sigma)$.
\end{proposition}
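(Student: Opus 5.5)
The plan is to reduce the smoothness of $U_K(w,w')$ to the combinatorial criterion of Proposition \ref{prop-smooth-small}, and then observe that every ingredient of that criterion is computed purely from $(\Phi,W,\sigma)$.

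\textbf{Step 1: pass from $S_K$ to the stack of $G$-zips.} Since the zip period map $\zeta\colon S_K\to\Xcal$ of \eqref{zeta-eq} is smooth and surjective and $U_K(w,w')=\zeta^{-1}(\Ucal(w,w'))$, smoothness (resp.\ normality) of $U_K(w,w')$ is equivalent to that of $\Ucal(w,w')$. This uses that smoothness and normality descend and ascend along smooth surjections; one should also note that reduced structures are compatible, since the preimage of a reduced substack under a smooth map is reduced. The zip datum of $\Xcal$ is of Frobenius-type, so Proposition \ref{prop-smooth-small} applies. It says $\Ucal(w,w')$ is smooth if and only if $I_{w'}\subseteq I_w$ and $w'\notin\pi_\Zcal(\Gamma^{\rm sm}_{I_w}(w)\setminus\{w'\})$.

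\textbf{Step 2: each ingredient depends only on $(\Phi,W,\sigma)$.} The data $I$, $w_0$, $w_{0,I}$ and $z=\sigma(w_{0,I})w_0$ are determined by the based root datum, the type of $\mu$, and $\sigma$. The type of $\mu$ is read off from $\Phi$ together with the dominant $\mu$; if needed, one should include this in the root datum, as the paper implicitly does. From these we get the following.
\begin{itemize}
\item The order $\preccurlyeq_{I_0}$, and hence the sets $\Gamma_{I_0}(w)$, are defined via the Bruhat order and $\psi(x)={}^{z^{-1}}\sigma(x)$.
\item The sets $I_w=\bigcap_m \varphi_w^m(I)$ are given by the formula of \S\ref{wbounded-algo}.
\item Smallness is characterized in Proposition \ref{prop-small-by-chains} via $wz^{-1}$-sequences, which involve only $w$, $z$, $\sigma$ and the compact/noncompact and positive/negative partition of $\Phi$.
\item For small $v$, Proposition \ref{prop-pi-Xi} gives $\pi_\Zcal(v)=\Xi_\Zcal(v)$. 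By Proposition \ref{prop-W-representative}, $\Xi_\Zcal(v)$ is the unique $u\in{}^IW$ with $v=axu\psi(a)^{-1}$ for some $a\in W_I$ and $x\in W_{I_u}$, which again involves only Weyl group data and $\sigma$.
\end{itemize}

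\textbf{Step 3: check independence of choices.} This is the main, though mild, obstacle. The pair $(\Phi,\sigma)$ is identified between $\mathbf{G}_{\QQ_p}$ and $G$ via $\Gcal$, as explained in \S\ref{sssec: frob action}. It is well defined up to conjugation by the cited results of Milne. Conjugating changes all the objects above compatibly, so the verdict is unchanged.

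Finally, one should remark that the $\sigma$ acting on $W$ is the one induced on $W(G,T)$. When $\varphi$ is Frobenius, the isomorphism \eqref{varphi-aut-weyl} agrees with this action, which closes the argument.
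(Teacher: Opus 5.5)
Your proposal is correct and follows essentially the paper's own route, which the paper compresses into the remark that the statement is an immediate consequence of \S\ref{General algorithm for smoothness}. Like the paper, you descend along the smooth surjective map $\zeta$ to $\Ucal(w,w')$, apply Proposition~\ref{prop-smooth-small}, and observe that every ingredient is root-theoretic: the formula for $I_w$, the sets $\Gamma_{I_w}(w)$, smallness via Proposition~\ref{prop-small-by-chains}, and $\pi_\Zcal=\Xi_\Zcal$ on small elements via Propositions~\ref{prop-pi-Xi} and~\ref{prop-W-representative}. Your caveat that the type $I$ of $\mu$ must be regarded as part of the data matches the paper's implicit convention.
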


Let $\ell$ be another prime of good reduction for $S_K$. Choose isomorphisms $\iota_p\colon \CC\to \overline{\QQ}_p$ and $\iota_\ell\colon \CC\to \overline{\QQ}_\ell$, and identify the root systems of $\mathbf{G}_{\QQ_p}$ and $\mathbf{G}_{\QQ_\ell}$ with $\mathbf{G}_\CC$. Write $\sigma_p$ and $\sigma_\ell$ for the induced Frobenius actions at $p$ and $\ell$ respectively on $\Phi\colonequals \Phi(\mathbf{G}_\CC,\mathbf{T}_\CC)$ and $\mathbf{W}\colonequals W(\mathbf{G}_\CC,\mathbf{T}_\CC)$. For example, if the maximal tori \(\mathbf{T}\) at $p$ and $\ell$ are both split, then $\sigma_p=\sigma_\ell=\iden_{\Phi}$. We use the notation $U_{K,p}(w,w')$ to denote the elementary subscheme attached to the pair $(w,w')$ in the special fiber at $p$.

\begin{corollary}\label{cor-lp}
Assume that $\sigma_p=\sigma_\ell$ on $\Phi$. For $w\in{}^\mathbf{I} \mathbf{W}$ and $w'\in \Gamma_{\mathbf{I}}(w)$, the following are equivalent:
\begin{enumerate}
\item $U_{K,p}(w,w')$ is smooth (equivalently, normal).
\item $U_{K,\ell}(w,w')$ is smooth (equivalently, normal).
\end{enumerate}
\end{corollary}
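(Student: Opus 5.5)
The plan is to reduce both smoothness statements to the purely combinatorial criterion of Proposition \ref{prop-smooth-small} and to observe that every ingredient of that criterion depends only on the triple $(\Phi,\mathbf{W},\sigma)$. First, since the zip period map $\zeta\colon S_K\to \Xcal$ of \eqref{zeta-eq} is smooth and surjective, $U_{K,p}(w,w')=\zeta^{-1}(\Ucal(w,w'))$ is smooth (resp.\ normal) if and only if $\Ucal_p(w,w')\subseteq \GZip^\mu_{\overline{\FF}_p}$ is. This holds because smoothness and normality ascend and descend along smooth surjections. The same reduction applies at $\ell$. Hence it suffices to compare the elementary substacks $\Ucal_p(w,w')$ and $\Ucal_\ell(w,w')$ of the two stacks of $G$-zips. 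Both are of Frobenius-type, so Proposition \ref{prop-smooth-small} applies to each. It says that $\Ucal(w,w')$ is smooth if and only if $I_{w'}\subseteq I_w$ and $w'\notin \pi_\Zcal(\Gamma^{\rm sm}_{I_w}(w)\setminus\{w'\})$.

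Next I check, one ingredient at a time, that each piece of this criterion is determined by $(\Phi,\mathbf{W},\sigma)$, identified via $\iota_p,\iota_\ell$ and the reductive models as in \S\ref{sssec: frob action}.

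\begin{enumerate}
\item The sets $I$, ${}^IW$, the element $z=\sigma(w_{0,I})w_0$, and the map $\psi$ of \eqref{psi-WIWJ} depend only on $\Phi$, the type of $\mu$, and $\sigma$. Hence so do the orders $\preccurlyeq_{I_0}$ for all $I_0\subseteq I$, and the neighbor sets $\Gamma_I(w)$ and $\Gamma_{I_0}(w)$.
\item The type $I_w$ is given by the formula $I_w=\bigcap_m \varphi_w^m(I)$ with $\varphi_w=wz^{-1}\sigma$ from \S\ref{wbounded-algo}, so it is determined by $\sigma$. This settles the first condition, $I_{w'}\subseteq I_w$.
\item Smallness is characterized in Proposition \ref{prop-small-by-chains} by $|\Scal^+_{wz^{-1}}|=\ell(w)$. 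The $wz^{-1}$-sequences are defined purely through $\sigma$, the positive roots, and $\Phi_L$, so the set $\Gamma^{\rm sm}_{I_w}(w)$ is determined by $\sigma$.
\item For small $v$, Proposition \ref{prop-pi-Xi} gives $\pi_\Zcal(v)=\Xi_\Zcal(v)$. By Proposition \ref{prop-W-representative}, $\Xi_\Zcal(v)$ is the unique $u\in{}^IW$ for which $v=axu\psi(a)^{-1}$ with $a\in W_I$ and $x\in W_{I_u}$. This is again a statement inside $\mathbf{W}$ involving only $\sigma$.
\end{enumerate}

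With $\sigma_p=\sigma_\ell$, all these data coincide at $p$ and at $\ell$. The two criteria therefore agree, and the corollary follows.

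The main obstacle is making the identifications rigorous. One must check that the chosen Borel pairs at $p$ and $\ell$ (dominance of $\mu$, $B\subseteq P$) induce, under the identification with $\Phi(\mathbf{G}_\CC,\mathbf{T}_\CC)$, the same base $\Delta$, the same type $I$ of the Hodge parabolic, and Frobenius actions that genuinely coincide rather than merely agree up to conjugation. I would handle this using the conjugacy-independence statement cited in \S\ref{sssec: frob action}. Choosing the Borel pairs so that both are dominant for the same $\mathbf{G}(\CC)$-conjugacy class $[\mu]$, any conjugating Weyl element transports all combinatorial data compatibly. Thus the criterion, being invariant under such transport, yields the same answer.
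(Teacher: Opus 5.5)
Your proposal is correct and takes the same route as the paper. The paper deduces this corollary from the preceding proposition, which states that smoothness of $U_K(w,w')$ is determined by $(\Phi,W,\sigma)$ and which it calls an immediate consequence of \S\ref{General algorithm for smoothness}. That rests on exactly your ingredients: Proposition \ref{prop-smooth-small}, the root-theoretic smallness criterion, the formula for $I_w$, and $\pi_\Zcal=\Xi_\Zcal$ on small elements computed via Proposition \ref{prop-W-representative}, together with the passage through the smooth surjection $\zeta$. Your explicit tracking of the type $I$ of $\mu$, and of compatible Borel pairs so that the Frobenius actions agree on the nose, spells out points the paper leaves implicit.
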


\subsubsection{Split primes of good reduction}
We make the following assumption:
\begin{assumption}\label{assum-split}
    The torus $\Tcal$ is split over $\ZZ_p$.
\end{assumption}
In particular, this assumption implies that $\mathbf{E}_v = \QQ_p$ and hence $p$ is split in $\mathbf{E}$. In this setting, we can apply the results of \S\ref{sec-split} and \S\ref{sec-lift-char0}. We retain the notations therein. Let $\Xscr_{\QQ_p}$ be the stack over $\QQ_p$ defined in \eqref{Xscr-def} and $\Xscr_\CC\colonequals \Xscr_{\QQ_p}\otimes_{\QQ_p}\CC$ via the fixed isomorphism $\iota_p\colon \CC\to \overline{\QQ}_p$. The stack $\Xscr_\CC$ only depends on the pair $(\mathbf{G}_\CC, \mu_{\CC})$: it is the stack of $\mathbf{G}_\CC$-zips attached to the triple $(\mathbf{G}_\CC,\mu_{\CC},\iden)$. Write $\mathbf{P}\subseteq \mathbf{G}_{\CC}$ for the parabolic attached to $\mu_\CC$ and denote by $\mathbf{I}\subseteq \Delta$ the type of $\mathbf{P}$. For $w\in {}^{\mathbf{I}}\mathbf{W}$ and $w'\in \Gamma_{\mathbf{I}}(w)$, let
\begin{equation}
    \Uscr_{\CC}(w,w')\colonequals \Xscr_{\CC,w}\cup \Xscr_{\CC,w'}
\end{equation}
be the corresponding elementary $w$-open substack. %For an intermediate parabolic $\mathbf{B}\subset \mathbf{P}_0\subseteq \mathbf{P}$, write $\Fscr_{\CC}^{(\mathbf{P}_0)}$ for the corresponding flag space. 
Write $\mathbf{P}_w$ for the canonical parabolic of $w\in{}^\mathbf{I} \mathbf{W}$. %and the notion of "$w$-boundedness" for $w$-open substacks. Similarly, we have the notion of "admitting a separating canonical cover", using the flag spaces $\Fscr_{\CC}^{(\mathbf{P}_0)}$. 
We can define the map $\pi_\CC\colon \mathbf{W}\to {}^{\mathbf{I}}\mathbf{W}$, similarly to \eqref{piR-maps}. We can similarly define a subset of small elements $\mathbf{W}^{\rm sm}\subseteq \mathbf{W}$.

\begin{theorem}\label{thm-UKp}
The following are equivalent:
\begin{enumerate}
    \item\label{UKp1} $U_{K,\FF_p}(w,w')$ is smooth (equivalently, normal),
    \item \label{UKp2} $\Ucal(w,w')$ is smooth (equivalently, normal),
    \item \label{UKp3} $\Uscr_{\CC}(w,w')$ is $w$-bounded and admits a separating canonical cover.
\item \label{UKp4} We have $\mathbf{P}_{w'}\subseteq \mathbf{P}_w$ and $w'\notin \pi_{\CC}(\Gamma^{\rm sm}_{\mathbf{I}_w}(w)\setminus \{w'\})$.
\end{enumerate}
\end{theorem}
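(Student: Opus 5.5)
The plan is to chain together results already established. First, (1)$\Leftrightarrow$(2). The zip period map $\zeta\colon S_K\to \Xcal$ of \eqref{zeta-eq} is smooth and surjective, and $U_K(w,w')=\zeta^{-1}(\Ucal(w,w'))$ with reduced structures. Because $\zeta$ is smooth, the reduced preimage of the reduced locally closed substack $\Ucal(w,w')$ is the scheme-theoretic preimage. Smoothness and normality both ascend and descend along smooth surjective morphisms. This gives the equivalence of (1) and (2), including the parenthetical ``equivalently, normal'' versions, the latter via Theorem \ref{thm-FTZD-normality}.

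Next, (2)$\Leftrightarrow$ the characteristic-zero conditions. Under Assumption \ref{assum-split}, $\Xcal=\GZip^\mu$ is exactly $\Xcal_1$ in the notation of \S\ref{subsec-notation}, attached to the split group $G=\Gcal_{\FF_p}$ and the $p$-power Frobenius. Theorem \ref{thm-main} shows that smoothness of $\Ucal_1(w,w')$ is equivalent to $\Uscr_{\QQ_p}(w,w')$ being $w$-bounded and admitting a separating canonical cover. It is also equivalent to the condition that $P_{w'}\subseteq P_w$ and $w'\notin\pi_{\QQ_p}(\Gamma^{\rm sm}_{I_w}(w)\setminus\{w'\})$.

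It then remains to transport these statements from $\QQ_p$ to $\CC$ via $\iota_p$. I will check four points.
\begin{enumerate}
\item Base change along $\QQ_p\to\overline{\QQ}_p\simeq\CC$ identifies $\Xscr_\CC$ with the stack of $\mathbf{G}_\CC$-zips of $(\mathbf{G}_\CC,\mu_\CC,\iden)$. It also identifies $(W,I,\preccurlyeq)$ with $(\mathbf{W},\mathbf{I},\preccurlyeq)$.
\item Zip strata and flag strata are defined by orbit and Bruhat-cell conditions, and these are compatible with field extension. Since the strata are defined over $\QQ_p$ (everything is split), $\Pi_\CC=\Pi_{\QQ_p}$, hence $\pi_\CC=\pi_{\QQ_p}$ and $\mathbf{W}^{\rm sm}=W^{\rm sm}_{\QQ_p}$.
\item Canonical parabolics depend only on the root datum with trivial Frobenius, via $I_w=\bigcap_m (wz^{-1})^m(I)$. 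Hence $\mathbf{P}_w$ corresponds to $P_w$, and $w$-boundedness transfers.
\item The separating canonical cover condition is a set-theoretic statement about preimages under $\pi_{P_w}$. It is invariant under algebraically closed base field extension, or one can simply invoke Proposition \ref{prop-scc-Qp} over $\CC$, whose proof is identical.
\end{enumerate}
These give (3)$\Leftrightarrow$(4)$\Leftrightarrow$ Theorem \ref{thm-main}\eqref{main4}, \eqref{main5}.

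The main obstacle is the first step: justifying carefully that the reduced structure on $U_K(w,w')$ agrees with the pullback of the reduced structure on $\Ucal(w,w')$, and that smoothness descends along the smooth surjection $\zeta$ restricted to this locally closed piece. I would handle this by noting that reducedness is local in the smooth topology, so $\zeta^{-1}(\Ucal(w,w'))$ is already reduced. Smoothness is then smooth-local on the source and target by the standard stack conventions. The base-change bookkeeping to $\CC$ is routine by comparison.
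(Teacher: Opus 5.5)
Your proposal is correct and follows the paper's proof. You get (1)$\Leftrightarrow$(2) by descent along the smooth surjection $\zeta$; your remark that $\zeta^{-1}(\Ucal(w,w'))$ is automatically reduced correctly handles the scheme structures. You get (2)$\Leftrightarrow$(3)$\Leftrightarrow$(4) from Theorem \ref{thm-main} together with the identification of the $\Xscr_{\QQ_p}$-data with the $\Xscr_\CC$-data (in particular $\pi_\CC=\pi_{\QQ_p}$), which the paper treats as immediate.
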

\begin{proof}
The equivalence of the \eqref{UKp2}, \eqref{UKp3} and \eqref{UKp4} simply follows from Theorem \ref{thm-main}. The equivalence of \eqref{UKp1} and \eqref{UKp2} is a consequence of the smoothness and surjectivity of the map $\zeta\colon S_K\to \zeta$ (faithfully flat descent).
\end{proof}

\section{Hasse invariants in characteristic zero}\label{sec-Hasse-0}
In positive characteristic for a Frobenius zip-datum, it was proved in \cite{Goldring-Koskivirta-zip-flags} that when $(G,\mu)$ is of maximal type (for example, in the context of Shimura varieties), Hasse invariants exist on all zip strata. In general, the condition for their existence is that $p^m$ (where $m$ is the exponent of the zip datum) is sufficiently large (Theorem \ref{thm-Hasse}% \lorenzo{TODO: what theorem where aaaaa helppppp}
).

In what follows, we examine the possible extension of these results for $m=0$ (i.e.\ when we replace the Frobenius with the identity map). In this situation, there is no restriction on the characteristic of the field, so we will work over a field of arbitrary characteristic. The equations for Hasse invariants in this setting (when they exist) are usually %\lorenzo{often? sometimes? always?}
obtained by setting $p=1$ in the usual characteristic $p$ Hasse invariants (Theorem \ref{thm-Hasse}), if this substitution makes sense. Therefore, we heuristically think of such sections as ``Hasse invariants over the field $\FF_1$'', even when the base field is $\CC$.

\subsection{Setting}
Let $K$ denote the field $\CC$ or $\overline{\FF}_p$. Fix a cocharacter datum $(G,\mu)$ where $G$ is a connected, reductive group over $K$ and $\mu\colon \GG_{\textrm{m},K}\to G$ is a cocharacter. In this section, we only consider the zip datum $\Zcal$ attached to the triple $(G,\mu,\iden_G)$. Therefore, we simply use the notation $\Xcal$ for the associated stack of $G$-zips, which was previously denoted by $\Xscr_K$. Retain the previous notation for the groups $P,Q,L$ and the zip group $E$. 
%i.e.\ $P\colonequals P_-(\mu)$, $Q\colonequals P_+(\mu)$, with common Levi subgroup $L=\Cent(\mu)$. Write $\Xcal$ for the associated stack of $G$-zips and $E\subseteq P\times Q$ for the attached zip group. 
Fix again $(B,T)$ a Borel pair satisfying:
\begin{enumerate}
    \item $\mu$ factors through $T$,
    \item $B\subset P$.
\end{enumerate}
Let $I\subseteq \Delta$ be the type of $P$. For $w\in {}^IW$, we have the locally closed subset $G_w=E\cdot (BwBz^{-1})$ and the substack $\Xcal_w=[E\backslash G_w]$.

\subsection{\texorpdfstring{Reduction to characteristic $p$}{}} \label{sec-reduction-p}
When $K$ has characteristic $p$, the results of this section can easily be proved by reducing to the Frobenius-type case by the exponent raising method, \S\ref{sec-expo-raising-split}. Hence, we assume here that $K$ has characteristic $0$. In particular, we view $\overline{\QQ}$ as a subfield of $K$. The group $G$ admits a smooth, reductive model over the ring $R=\Ocal_{F_0}[\frac{1}{N_0}]$, for some integer $N_0 \geq 1$ and some number field $F_0$. We may assume that $G$ is split over $R$ and that $\mu,B,T$ are also defined over $R$. The parabolic subgroups $P, Q$, as well as the zip group $E$, are then defined over $R$. Similarly, for any $w\in {}^I W$, the canonical parabolic $P_w$ extends to a parabolic subgroup over $R$. For a fixed choice of $R$ and a maximal ideal $\pfr\subseteq R$, we denote by $\widetilde{g}$ the reduction modulo $\pfr$ of an $R$-point $g$.

\begin{lemma}\label{lem-stab0-char-shifting}
Let $w\in {}^I W$ and $x\in P_w (wz^{-1})Q_w$. One has $\Stab_{E}(x)\subseteq P_w$.
\end{lemma}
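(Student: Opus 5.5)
I plan to reduce to the positive-characteristic statement, Lemma \ref{stab-Pw}, by spreading out over $R$ and specializing. Since $P_w(wz^{-1})Q_w$ is a single $E'_{P_w}$-orbit of $wz^{-1}$ (this part of the argument in Lemma \ref{stab-contained-Pw} is independent of the characteristic; the stabilizers of $x$ and $wz^{-1}$ are then conjugate by an element of $P_w$), it suffices to treat $x=wz^{-1}$. The stabilizer $S\colonequals\Stab_E(wz^{-1})$ is a closed subgroup scheme of $E$, defined over $R$ because $w$ and $z$ have representatives in $N_G(T)(R)$. I view it as a closed subgroup of $P$ via $\pr_1$, as before. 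The condition $S\subseteq P_w$ means that the closed subscheme $S\cap P_w$ equals $S$. So the task is to show that the $K$-points of $S_K$ land in $P_w$.

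First I would show the inclusion at the level of $\overline{\QQ}$-points, and hence for $K$-points by a density argument. Let $(a,b)\in S(\overline{\QQ})$. After enlarging $F_0$ and $N_0$, I may assume $(a,b)\in S(R)$. For any maximal ideal $\pfr\subseteq R$ with residue field $k(\pfr)$ of characteristic $p$, reduce modulo $\pfr$. The reduction $(\widetilde a,\widetilde b)$ lies in $\Stab_{E_0}(wz^{-1})$ for the split zip datum with $\varphi^0=\id$ over $\overline{\FF}_p$. By Lemma \ref{stab-Pw} in the case $m=0$, which is itself deduced from the Frobenius case via exponent raising, we get $\widetilde a\in P_w(\overline{\FF}_p)$. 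Thus the point $a\in P(R)$ reduces into the closed subscheme $P_w\subseteq P$ at every maximal ideal $\pfr$. Since $R$ is a Dedekind domain, and hence Jacobson, a section of $P/P_w$ that agrees with the base section at all closed points must agree everywhere. Concretely, $a^{-1}P_w$ defines an $R$-point of the projective scheme $P/P_w$ whose reduction equals that of the identity coset at every $\pfr$, and the subscheme where two sections agree is closed and contains all closed points. Hence $a\in P_w(\overline{\QQ})$.

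To pass from $\overline{\QQ}$ to an arbitrary $K\supseteq\overline{\QQ}$ of characteristic $0$, I would use that $S$ is of finite type over $\overline{\QQ}$, and in characteristic $0$ it is smooth, hence reduced. Its $\overline{\QQ}$-points are therefore dense in $S_{\overline{\QQ}}$. The closed subscheme $S\cap P_w$ contains all $\overline{\QQ}$-points, so it equals $S$ because $S$ is reduced. Base changing to $K$ then gives $S_K\subseteq P_w$.

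I expect the main obstacle to be the spreading-out step: making sure that $\Stab_E(wz^{-1})$ over $R$ specializes to the stabilizer over $k(\pfr)$, and that Lemma \ref{stab-Pw} for $m=0$ applies over $\overline{\FF}_p$ with the same combinatorial data $P_w$, $I_w$. The latter needs the identification of the canonical parabolic, which in the split case is independent of $m$ and of the characteristic. The reduction-of-points argument itself is formal.
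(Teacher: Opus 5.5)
\textbf{The gap is in your first step.} You reduce to $x=wz^{-1}$ by claiming that $P_w(wz^{-1})Q_w$ is a single $E'_{P_w}$-orbit ``independently of the characteristic''. That is not so. The single-orbit property is part of Proposition \ref{prop-FP0}\,(1) and of Lemma \ref{stab-contained-Pw}, and both assume that $\Zcal$ is of Frobenius-type. It fails for $\varphi=\iden$:
\begin{itemize}
\item In Example \ref{ex-GL2-image} one has $w=z=1$ and $P_w=Q_w=G$. So $P_w(wz^{-1})Q_w=G$, and $E'_{P_w}=E$ acts on it by conjugation.
\item For a case with $P_w\neq P$, take $G=\GL_3$ with $\mu$ of type $(2,1)$ and $w=1$. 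Then $I_w=\emptyset$, $P_w=B$, $Q_w={}^zB$, and $P_w(wz^{-1})Q_w=Bz^{-1}$, which has dimension $6$. On the other hand $\dim E'_B=8$. The stabilizer $\Stab_{E'_B}(z^{-1})$ consists of the pairs $(x,zxz^{-1})$ with $x\in B$, $x_{11}=x_{22}=x_{33}$ and $x_{21}=x_{32}$, so it is $3$-dimensional. Hence the orbit of $z^{-1}$ has dimension only $5$.
\end{itemize}
So you cannot transport the result from $wz^{-1}$ to a general $x$ by conjugating with an element of $E'_{P_w}$. As written, your argument proves the lemma only for $x=wz^{-1}$.

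\textbf{How to repair it.} Drop the reduction and run the specialization argument on each $x$ directly, as the paper does:
\begin{itemize}
\item Take $x$ and $(a,b)\in\Stab_E(x)$ with coordinates in $\Ocal_F[\frac{1}{N}]$.
\item Reduce modulo places $\pfr$ with $k(\pfr)\cong\FF_p$. Then $(\widetilde a,\widetilde b)\in E(\FF_p)$ also lies in the Frobenius-type zip group, and $\widetilde x\in P_w(wz^{-1})Q_w$.
\item Apply Lemma \ref{stab-contained-Pw}. It covers every point of $P_w(wz^{-1})Q_w$, precisely because that set is a single orbit in the Frobenius case.
\item Conclude from the infinitely many such $\pfr$.
\end{itemize}
Your variant, using Lemma \ref{stab-Pw} with $m=0$ at all maximal ideals, also works, provided you apply it to $\widetilde x$ rather than to $wz^{-1}$. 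Its exponent-raising proof goes through verbatim for any point of $P_w(wz^{-1})Q_w$. Your closedness/Jacobson step is fine.

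One further adjustment: a general $x$ need not be defined over $\overline{\QQ}$. So the passage to $K$ should be done on the family, not on the single group $S=\Stab_E(wz^{-1})$. The locus of $(x,(a,b))\in P_w(wz^{-1})Q_w\times E$ with $axb^{-1}=x$ and $a\notin P_w$ is locally closed and defined over $\overline{\QQ}$. Hence it is empty as soon as it has no $\overline{\QQ}$-points.
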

% \lorenzo{Couldn't a similar argument be used to prove the opposite inclusion? Lifting points of \(L_w(\FF_p)\)}
% \JS{The reverse inclusion is not true over $\FF_p$, so how do you use characteristic $p$?}
% \lorenzo{I know that the reverse inclusion does not hold over \(\FF_p\), but the number of \(\FF_p\) points of \(\Stab_E(x)\) }
% \JS{ok I understand, that's a good idea}
% \JS{Sure! I am free qll dqy}
%\lorenzo{I've replaced \(\Ocal_F\) by \(\Ocal_F[\frac{1}{N}]\). The arguments still work, but I couldn't see how to avoid inverting some \(N\).}\JS{Good, thanks}
\begin{proof}
It suffices to show the result for all $x\in (P_w (wz^{-1})Q_w)(\overline{\QQ})$, and even for all $x$ with coefficients in $\Ocal_F[\frac{1}{N}]$, for all number fields $F$ containing $F_0$ and all \(N > 0\). Moreover, it suffices to show that for all such $F$ and all $(a,b)\in \Stab_E(x)$ with coefficients in $\Ocal_F[\frac{1}{N}]$, we have $a\in P_w$. To show this, let $\pfr\subseteq \Ocal_F[\frac{1}{N}]$ denote a place of $F$ lying over some prime \(p\) that is totally split in $F$. Since $k(\pfr)\colonequals\Ocal_F[\frac{1}{N}]/\pfr\simeq \FF_p$, the mod $\pfr$ reduction $(\widetilde{a},\widetilde{b})$ lies in $E(\FF_p)$. This implies that $(\widetilde{a},\widetilde{b})$ lies in the stabilizer of $\widetilde{x}$ with respect to the Frobenius-type zip datum attached to $(G_{\FF_p},\mu_{\FF_p})$. Since $G$ is split over $R$, the canonical parabolic subgroups attached to $w$ over $K$ and over $\overline{\FF}_p$ coincide. Therefore $\overline{a}\in P_w(\FF_p)$ by Proposition \ref{stab-contained-Pw}. Since this holds for infinitely many $\pfr$, we deduce that $a\in P_w$.
%\par Conversely, consider \(a \in L_w(\Ocal_F)\) and let \(\pfr\) be a prime as above. The reduction \(\overline{a}\) modulo \(\pfr\) of \(a\) is contained in \(\Stab_E(x)\) if and only if \(\overline{a} \in \Cent_L(\overline x)\). We do know that \(\overline{x} \in E \cdot (wz^{-1})\).
\end{proof}

In the following result, we see another illustration of the same method of proof, by reducing to a Frobenius-type zip datum.

% \lorenzo{TODO: introduce the notation \(G^{(P_w)}_w\) for flag strata in the review section about flag spaces. We use it in \S\ref{sec-split}, but it could be introduced before and used more generally.}\JS{I added this (alternative) notation in section \ref{subsub-flag}}
\begin{lemma}\label{lem-orbit-Pw}
Let $w\in {}^I W$ and $x\in BwBz^{-1}$. Then, we have 
\[\{(a,b)\in E \ | \ axb^{-1}\in G^{(P_w)}_w\}\subseteq P_w\times G.\]
\end{lemma}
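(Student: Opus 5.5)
We prove Lemma \ref{lem-orbit-Pw} exactly as Lemma \ref{lem-stab0-char-shifting}, by spreading out over $R=\Ocal_{F_0}[\frac{1}{N_0}]$ and reducing modulo primes $\pfr$ with residue field $\FF_p$. Here $m=0$, so $E=E_0$ and $G^{(P_w)}_w=P_w(wz^{-1})Q_w$.

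\textbf{Reduction to integral points.} The set
\[Y\colonequals\{((a,b),x)\in E\times BwBz^{-1} \ | \ axb^{-1}\in \overline{G^{(P_w)}_w}\}\]
is defined over $R$, and the condition $a\in P_w$ is Zariski closed. It therefore suffices to prove the inclusion for $\overline{\QQ}$-points whose coordinates lie in $\Ocal_F[\frac1N]$, for number fields $F\supseteq F_0$ and integers $N$. So we fix $(a,b)\in E$ and $x\in BwBz^{-1}$ with such coordinates, satisfying $axb^{-1}\in P_w(wz^{-1})Q_w$. We must show $a\in P_w$.

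\textbf{Reduction modulo $\pfr$.} Choose infinitely many primes $\pfr$ of $\Ocal_F[\frac1N]$ lying over rational primes $p$ that split totally in $F$, so that $k(\pfr)\simeq\FF_p$. Reducing gives
\[(\widetilde a,\widetilde b)\in E_0(\FF_p)=E_1(\FF_p),\qquad \widetilde x\in (BwBz^{-1})(\FF_p),\qquad \widetilde a\widetilde x\widetilde b^{-1}\in (P_w wz^{-1}Q_w)(\FF_p).\]
The canonical parabolic over $\overline{\FF}_p$ agrees with that over $K$ because $G$ is split.

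\textbf{Key step: the Frobenius-type claim.} We need the following statement in the Frobenius-type datum $\Zcal_1$. If $(\alpha,\beta)\in E_1$, $y\in BwBz^{-1}$ and $\alpha y\beta^{-1}\in G^{(P_w)}_{1,w}$, then $\alpha\in P_w$.

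To prove it, note first that $y\in BwBz^{-1}\subseteq G^{(P_w)}_{1,w}$, using $B\subseteq P_w$ and $B\subseteq {}^zB$-type inclusions inside $Q_w$. Since $w\in {}^IW$ is minimal, Proposition \ref{prop-FP0}\eqref{prop-FP0-1} says $G^{(P_w)}_{1,w}$ is a single $E'_{1,P_w}$-orbit. Hence there are $(c,d),(c',d')\in E'_{1,P_w}$ with $y=c(wz^{-1})d^{-1}$ and $\alpha y\beta^{-1}=c'(wz^{-1})d'^{-1}$. It follows that
\[(c'^{-1}\alpha c,\ d'^{-1}\beta d)\in \Stab_{E_1}(wz^{-1}).\]
By Lemma \ref{stab-contained-Pw}, $c'^{-1}\alpha c\in P_w$, and since $c,c'\in P_w$ we get $\alpha\in P_w$.

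Applying the claim to $(\widetilde a,\widetilde b,\widetilde x)$ gives $\widetilde a\in P_w(\FF_p)$ for infinitely many $\pfr$. The coordinate functions cutting out $P_w$ in $P$ therefore vanish at $a$ modulo infinitely many primes, so they vanish at $a$, and $a\in P_w$.

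\textbf{Main obstacle.} The delicate step is the Frobenius-type claim. It needs the transitivity of $E'_{1,P_w}$ on $G^{(P_w)}_{1,w}$ together with the inclusion $BwBz^{-1}\subseteq G^{(P_w)}_{1,w}$, which holds because $G^{(P_w)}_{1,w}=E_{1,P_w}\cdot(BwBz^{-1})$ by definition. If the orbit argument fails, the fallback is to argue directly at the level of the Bruhat decomposition $P_w(wz^{-1})Q_w$ using Lemma \ref{stab-Pw} for $m=0$, after first moving $x$ to $wz^{-1}$ by an element of $B\times{}^zB\cap E$.
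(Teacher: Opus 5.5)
Your proposal is correct and follows essentially the paper's route. You reduce modulo primes $\pfr$ with $k(\pfr)\simeq\FF_p$ and use $E_0(\FF_p)=E_1(\FF_p)$. You then derive the Frobenius-type statement from the single $E'_{1,P_w}$-orbit property together with Lemma~\ref{stab-contained-Pw}, and conclude because this holds for infinitely many $\pfr$.

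The only difference is cosmetic: you pass membership in the flag stratum through the Bruhat description $G^{(P_w)}_{0,w}=P_wwz^{-1}Q_w=G^{(P_w)}_{1,w}$ instead of citing Corollary~\ref{cor-Gw-P0}. As in the paper, you should discard finitely many $\pfr$ (or enlarge $N$) so that $\widetilde x$ and $\widetilde a\widetilde x\widetilde b^{-1}$ really lie in the locally closed sets $BwBz^{-1}$ and $P_wwz^{-1}Q_w$, and not only in their closures.
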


\begin{proof}
First, note that the analogous statement in characteristic $p$ for a Frobenius-type zip datum $\Zcal$ is an immediate consequence of the fact that $G^{(P_w)}_{w}$ is a single $E'_{\Zcal,P_w}$-orbit, and that $\Stab_E(x)\subseteq P_w\times G$ (Proposition \ref{stab-contained-Pw}).

We prove the result in characteristic zero by reducing modulo $p$. Again, choose a number field $F$ and assume that all objects are defined over $\Ocal_F[\frac{1}{N}]$. Let $\pfr$ denote a place of $F$ not dividing \(N\) such that \(k(\pfr) \cong \FF_p\) (for some prime $p$). By Corollary \ref{cor-Gw-P0}, the element $\widetilde{x}$ lies in $G^{(P_w)}_w(\FF_p)$, which coincides with the set of $\FF_p$-points of the $P_w$-zip stratum of $w$, with respect to the Frobenius-type zip datum attached to $(G_{\FF_p},\mu_{\FF_p})$. Since the result is true in this case, we deduce that $\widetilde{a}\in P_w(\FF_p)$. Since this holds for infinitely many places $\pfr$, we deduce that $a\in P_w$.
\end{proof}

\subsection{Canonical flag strata}\label{subsec-flag-strata-char0}
In this section, we extend the results of Proposition \ref{prop-Pw} to the case considered here, which is not of Frobenius-type. For an arbitrary parabolic subgroup $B\subseteq P_0 \subseteq P$, recall that $\pi_{P_0}(\Fcal^{(P_0)}_w)=\Xcal_w$ and $\pi_{B,P_0}(\Fcal^{(B)}_w)=\Fcal^{(P_0)}_w$ by \eqref{pi-image-minimal-relative}.

\begin{corollary}
The map $\pi_{P_w}\colon \Fcal^{(P_w)}_w\to \Xcal_w$ is an isomorphism.
\end{corollary}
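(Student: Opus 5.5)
We will show that $\pi_{P_w}$ induces a bijection on underlying points and an isomorphism of automorphism groups at each point. Since both stacks are reduced and the base field has characteristic zero, this forces $\pi_{P_w}$ to be an isomorphism. In terms of quotient presentations we have
\[
\Fcal^{(P_w)}_w=[E'_{P_w}\backslash G^{(P_w)}_w], \qquad \Xcal_w=[E\backslash G_w], \qquad E'_{P_w}=E\cap (P_w\times G).
\]
The map is induced by the inclusion $G^{(P_w)}_w=P_w(wz^{-1})Q_w\subseteq G_w$, which is equivariant for $E'_{P_w}\subseteq E$. By \eqref{Gw-withPw} every $E$-orbit in $G_w$ meets $G^{(P_w)}_w$, and this gives surjectivity; it is also consistent with $\pi_{P_w}(\Fcal^{(P_w)}_w)=\Xcal_w$.

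\textbf{Step 1: injectivity on orbits.} Let $x,y\in G^{(P_w)}_w$ and $(a,b)\in E$ with $axb^{-1}=y$. Up to the $E'_{P_w}$-action we may take $x\in BwBz^{-1}$, since $G^{(P_w)}_w=E_{P_w}\cdot(BwBz^{-1})$. Here one must be careful: we need $x$ in $BwBz^{-1}$ up to $E'_{P_w}$, not merely up to $E_{P_w}$. I would handle this by writing $x=p\,wz^{-1}q$ with $p\in P_w$ and $q\in Q_w$, and then translating by $(p,\ast)$ with $\ast$ chosen in $Q_w$ matching the Levi component of $p$. This shows that $x$ lies in the $E'_{P_w}$-orbit of an element of $B\,wz^{-1}\,Q_w$. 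Lemma~\ref{lem-orbit-Pw} then gives $a\in P_w$, so $(a,b)\in E'_{P_w}$. Hence two points of $G^{(P_w)}_w$ that are $E$-conjugate are already $E'_{P_w}$-conjugate.

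\textbf{Step 2: stabilizers.} For $x\in G^{(P_w)}_w=P_w(wz^{-1})Q_w$, Lemma~\ref{lem-stab0-char-shifting} gives $\Stab_E(x)\subseteq P_w$ via the first projection. Thus $\Stab_E(x)=\Stab_{E'_{P_w}}(x)$ as group schemes. The scheme-theoretic equality follows because in characteristic zero all these stabilizers are reduced, so checking on points suffices.

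\textbf{Step 3: conclusion.} Consider the morphism of schemes $E\times^{E'_{P_w}}G^{(P_w)}_w\to G_w$, $(e,x)\mapsto e\cdot x$, whose quotient by $E$ is $\pi_{P_w}$. By surjectivity and Step~1 it is bijective on $K$-points. By Step~2 its fibres are trivial at the level of stabilizers, so it is injective on tangent spaces, hence unramified. The target $G_w$ is smooth, in particular normal. A bijective morphism of varieties in characteristic zero onto a normal target is birational (the extension of function fields is separable of degree one), and by Zariski's main theorem it is then an isomorphism. Descending along $E$ gives that $\pi_{P_w}$ is an isomorphism.

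The main obstacle is the reduction in Step~1 that places $x$ into $BwBz^{-1}$ modulo $E'_{P_w}$, together with checking that Lemma~\ref{lem-orbit-Pw} really controls every $(a,b)$ needed. If the direct translation argument fails, I would fall back on the mod $\pfr$ reduction argument used in the proof of Lemma~\ref{lem-orbit-Pw}, applied directly to arbitrary $x\in P_w(wz^{-1})Q_w$, using Corollary~\ref{cor-Gw-P0} and Proposition~\ref{prop-Pw} in the Frobenius-type case.
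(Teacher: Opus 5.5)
Your overall plan can be made to work, and you identified the right key input (Lemma~\ref{lem-orbit-Pw}). However, Step~1 as written does not go through. Translating $x=p\,wz^{-1}q$ by an element of $E'_{P_w}$ with first component $p^{-1}$ only moves $x$ into $wz^{-1}Q_w$. That set is not contained in $BwBz^{-1}$ once $I_w\neq\emptyset$: for $\alpha\in I_w$ one has $wz^{-1}\dot{s}_\alpha=w\,\dot{s}_{z^{-1}\alpha}z^{-1}\in B\,ws_{z^{-1}\alpha}B\,z^{-1}$, which is disjoint from $BwBz^{-1}$. So Lemma~\ref{lem-orbit-Pw}, which is stated only for points of $BwBz^{-1}$, cannot be applied at that point. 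Moving $x$ into $BwBz^{-1}$ modulo $E'_{P_w}$ is exactly the statement $G^{(P_w)}_w=E'_{P_w}\cdot(BwBz^{-1})$, i.e.\ \eqref{pi-image-minimal-relative}, and you could cite it. But you need neither this reduction nor the mod-$\pfr$ fallback. Since $G_w=E\cdot(BwBz^{-1})$, pick $x_0\in BwBz^{-1}$ with $x=e_1\cdot x_0$. Lemma~\ref{lem-orbit-Pw} gives $e_1\in E'_{P_w}$. If $e\cdot x\in G^{(P_w)}_w$ for some $e\in E$, the same lemma applied to $ee_1$ gives $ee_1\in E'_{P_w}$, hence $e\in E'_{P_w}$. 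This is precisely the paper's argument: it base-changes $\pi_{P_w}$ along $BwBz^{-1}\to\Xcal_w$ and uses Lemma~\ref{lem-orbit-Pw} to identify $Y_w\simeq E'_{P_w}\times BwBz^{-1}$. Hence every geometric fibre is $\spec(K)$, and the paper concludes from properness and surjectivity.

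There is a second, smaller problem: in Step~3, the inference from Step~2 to injectivity on tangent spaces is not justified. For $\varphi=\iden$ the stratum $G_w$ is a union of infinitely many $E$-orbits (compare Example~\ref{ex-GL2-image}). Injectivity of the differential of $E\times^{E'_{P_w}}G^{(P_w)}_w\to G_w$ at $[1,x]$ amounts to $\{\xi\in\Lie(E) \mid \xi\cdot x\in T_xG^{(P_w)}_w\}\subseteq\Lie(E'_{P_w})$. That is an infinitesimal version of Lemma~\ref{lem-orbit-Pw}, not a consequence of the stabilizer equality. Likewise, your opening principle (bijective on points plus equal automorphism groups plus reduced implies isomorphism) fails without normality of the target, as the normalization of a cuspidal curve shows. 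Neither issue is fatal, because your actual finish does not use them. The source $E\times^{E'_{P_w}}G^{(P_w)}_w$ is smooth and irreducible (a $G^{(P_w)}_w$-bundle over $P/P_w$), and the target $G_w$ is smooth. So once the corrected Step~1 gives bijectivity on $K$-points, characteristic zero gives birationality and Zariski's Main Theorem gives an $E$-equivariant isomorphism, which descends to the quotient stacks. In particular, Step~2 is superfluous. Compared with the paper, your finish replaces its fibre computation with the smoothness of zip strata (recalled in \S\ref{subsubsec-zip-strata}) together with Zariski's Main Theorem.
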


\begin{proof}
We already know that it is proper and surjective. It suffices to show that it is bijective and unramified, i.e.\ that all geometric fibers are isomorphic to $\spec(K)$. Each geometric point of $\Xcal_w(K)$ can be represented as the $E$-orbit of an element of $x\in (BwBz^{-1})(K)$. We consider the fiber above $x$, which can be computed after base change to $BwBz^{-1}$, as shown in the diagram below.
\begin{equation}
    \xymatrix@M=7pt{
\Fcal^{(P_w)}_w \ar[d]_{\pi_{P_w}} \ar@{=}[r] & \left[ E'_{P_w} \backslash G^{(P_w)}_w \right] \ar[d] & E'_{P_w} \backslash (E\times G^{(P_w)}_w) \ar[l] \ar[d]^{\widetilde{s}} & E'_{P_w} \backslash  Y_w \ar@{_{(}->}[l] \ar[d] \\
\Xcal_w \ar@{=}[r] & \left[ E \backslash G_w \right]  & G_w \ar[l] & BwBz^{-1} \ar@{_{(}->}[l]
}
\end{equation}
In this diagram, $E'_{P_w}$ acts on $E\times G^{(P_w)}_w$ by: $\alpha'\cdot (\alpha,g)=(\alpha'\alpha,\alpha'\cdot g)$ for all $\alpha'\in E'_{P_w}$, $\alpha\in E$, $g\in G^{(P_w)}_w$, and the vertical map $\widetilde{s}$ is induced by the $E'_{P_w}$-invariant map $s\colon E\times G^{(P_w)}_w \to G_w$, $(\alpha,g)\mapsto \alpha^{-1}\cdot g$. Finally, $Y_w$ in the rightmost column of the diagram is defined as the inverse image of $BwBz^{-1}$ by $s$, i.e.
\begin{equation}
    Y_w=\{(\alpha,g)\in E\times G^{(P_w)}_w \ | \ \alpha^{-1}\cdot g \in BwBz^{-1}\}.
\end{equation}
By Lemma \ref{lem-orbit-Pw}, this set is simply isomorphic to $E'_{P_w}\times BwBz^{-1}$ via the map $(\alpha, h)\mapsto (\alpha,\alpha\cdot h)$. Hence $E'_{P_w} \backslash  Y_w$ is simply $BwBz^{-1}$ and the fiber over \(x\) is \(\spec(K)\). This finishes the proof.
\end{proof}

Finally, we prove also the analogue of Proposition \ref{prop-Pw}~\eqref{Pw-3}:

\begin{lemma}\label{lem-Bruhat0}
Let $w\in {}^I W$. The stratum $\Fcal^{(P_w)}_w$ is Bruhat. 
\end{lemma}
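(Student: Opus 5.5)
We must show that $G^{(P_w)}_w$, the zip stratum of $w$ for the zip datum $\Zcal_{P_w}=(G,P_w,Q_w,L_w,L_w,\iden)$, is the full Bruhat cell $P_w(wz^{-1})Q_w$. By definition, $G^{(P_w)}_w=E_{P_w}\cdot(BwBz^{-1})$ is contained in $P_w(wz^{-1})Q_w$. So the content is the reverse inclusion $P_w(wz^{-1})Q_w\subseteq E_{P_w}\cdot(BwBz^{-1})$. We will get it by a dimension count, using the fact that both sides are irreducible locally closed subsets of $G$.

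First we compute $\dim G^{(P_w)}_w$. By \eqref{dim-H}, the set $H^{(P_w)}_w$ has dimension $\ell(w)+\dim P$. The preceding corollary says that $\pi_{P_w}\colon\Fcal^{(P_w)}_w\to\Xcal_w$ is an isomorphism, which gives
\[
\dim G^{(P_w)}_w-\dim E'_{P_w}=\dim G_w-\dim E .
\]
Rather than computing $\dim G_w$ directly for the identity isogeny, I would use the structure of orbits. By Lemma \ref{lem-stab0-char-shifting}, the stabilizer in $E$ of $x=wz^{-1}$ lies in $P_w$, hence in $E'_{P_w}=E\cap(P_w\times G)$. Therefore
\[
\dim(E'_{P_w}\cdot x)=\dim E'_{P_w}-\dim\Stab_E(x).
\]
Next, by Lemma \ref{lem-orbit-Pw}, applied with $x\in BwBz^{-1}$, the set $G^{(P_w)}_w\cap(E\cdot x)$ equals $E'_{P_w}\cdot x$. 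Consequently, $G^{(P_w)}_w$ fibres over the $E$-orbits in $G_w$ that meet $BwBz^{-1}$, with fibres the $E'_{P_w}$-orbits.

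The cleaner route, which I would actually follow, is to compare with characteristic $p$. The objects are defined over $R=\Ocal_{F_0}[1/N_0]$, and $G$ is split, so the cell $C:=P_w(wz^{-1})Q_w$ and the stratum $G^{(P_w)}_w$ are both defined over $R$, and $G^{(P_w)}_w\subseteq C$. Both have well-defined dimensions computed fibrewise. For a prime $\pfr$ with $k(\pfr)\simeq\FF_p$, Corollary \ref{cor-Gw-P0} gives
\[
G^{(P_w)}_{0,w}(\FF_{p^m})=G^{(P_w)}_{m,w}(\FF_{p^m}),
\]
and by Proposition \ref{prop-Pw}~\eqref{Pw-3} the right-hand side equals $C(\FF_{p^m})$ for all $m\geq1$. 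Hence the mod $\pfr$ reduction of $G^{(P_w)}_w$ contains every $\overline{\FF}_p$-point of $C_{\overline{\FF}_p}$, so it is dense in it. We then lift this to characteristic zero. The set $C\setminus G^{(P_w)}_w$ is constructible over $R$. If its generic fibre were nonempty, it would have nonempty reduction at all but finitely many primes (spreading out and Chevalley). This would contradict the equality at the infinitely many totally split primes $\pfr$. Hence $G^{(P_w)}_w=C$ over $K$.

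The main obstacle is making the spreading-out step rigorous. The stratum $G^{(P_w)}_w$ over $R$ is the image $E_{P_w}\cdot(BwBz^{-1})$, and its formation must commute with reduction modulo $\pfr$, meaning its reduction is the characteristic-$p$ stratum for the identity isogeny $\varphi^0$. This holds because the orbit map $E_{P_w}\times BwBz^{-1}\to G$ is defined over $R$, and images commute with base change on geometric points. I would therefore work with the constructible image of this morphism and its complement in $C$, over $R$ after inverting finitely many primes. This uses only Chevalley's theorem and generic flatness.
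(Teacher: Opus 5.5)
Your argument is correct, but it goes a long way around compared with the paper.

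\textbf{The paper's proof.} It is two lines. The identity \eqref{Gw-withPw}, $G_{\Zcal,w}=E_\Zcal\cdot(P_wwz^{-1})$, is a Pink--Wedhorn--Ziegler result valid for an arbitrary isogeny, in particular for $\varphi=\iden$. Apply it to the zip datum $\Zcal_{P_w}$. The canonical parabolic of $w$ for $\Zcal_{P_w}$ is again $P_w$, because $L_w$ already satisfies ${}^{wz^{-1}}\varphi(L_w)=L_w$. This gives $P_wwz^{-1}\subseteq G^{(P_w)}_w$, and $E_{P_w}$-stability upgrades it to $P_wwz^{-1}Q_w\subseteq G^{(P_w)}_w$. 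The reverse inclusion is the trivial one you noted. The argument is uniform in the characteristic of $K$ and needs no spreading out.

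\textbf{Your route.} You reduce at totally split primes, combine Corollary \ref{cor-Gw-P0} with Proposition \ref{prop-Pw}~\eqref{Pw-3} to get the equality over $\overline{\FF}_p$ for the identity isogeny, and then lift via Chevalley constructibility over $R$. This is logically sound, and your constructibility step is correct. A constructible set meeting the generic fibre has nonempty fibres over a cofinite set of closed points of $\spec R$.

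However, look at what your key input rests on. Corollary \ref{cor-Gw-P0} comes from Proposition \ref{prop-Gw} applied to $\Zcal^{(m)}_{P_w}$. That proof concludes by invoking \eqref{Gw-withPw} for the identity-isogeny datum $\Zcal^{(0)}_{P_w}$, which is exactly the fact that yields the lemma directly. So the detour through Lang's theorem, Chebotarev and reduction mod $p$ adds nothing here. That reduction template is what the paper uses for Lemmas \ref{lem-stab0-char-shifting} and \ref{lem-orbit-Pw}, where no ready-made structural statement for $\varphi=\iden$ is available.

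\textbf{Your opening paragraph.} You can drop the dimension-count discussion. You never use it, and on its own it would not suffice. Equal dimensions for the irreducible locally closed set $G^{(P_w)}_w\subseteq C$ would only show that it is open dense in $C$, not that it equals $C$.
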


\begin{proof}
By definition, the canonical parabolic of $w$ with respect to $\Zcal$ is the same as the canonical parabolic of $w$ with respect to $\Zcal_{P_w}$. By \eqref{Gw-withPw} applied to $\Zcal_{P_w}$, we see that $G^{(P_w)}_w$ contains $P_w wz^{-1}$, hence also $P_w wz^{-1}Q_w$. It follows that $G^{(P_w)}_w$ coincides with the $P_w\times Q_w$-orbit of $wz^{-1}$. 
%For the general case, we consider the stack $\Xcal_{P_0}$. The canonical parabolic of $w$ with respect to $\Zcal_{P_0}$ is $P_0$ (this can be checked on root data, hence follows from the usual case) \lorenzo{this is unrelated, but does this imply that \(\varphi_w\) fixes \(I_w \subseteq I\) not only globally, but also pointwise, in general?}. By the above, $G^{(P_0)}_w$ coincides with the $P_0\times Q_0$-orbit of $wz^{-1}$. This terminates the proof.
\end{proof}

\subsection{Line bundles}
Let $\lambda\in X^*(L)$ be a character of $L$. Identify $X^*(L)$ with $X^*(E)$ via the first projection $\pr_1\colon E\to P$ and the projection $P\to L$ onto the Levi. Consider the stack $\Xcal=[E\backslash G_k]$. For each intermediate parabolic $B\subset P_0 \subset P$, we have a flag space $\pi_{P_0}\colon \Fcal^{(P_0)} \to \Xcal$ such that $\Fcal^{(P_0)}=[E'_{P_0}\backslash G]$. We may identify $X^*(E'_{P_0})=X^*(P_0)$ via the first projection $\pr_1\colon E'_{P_0}\to P_0$. Hence, any character $\lambda\in X^*(L_0)$ gives rise to a line bundle $\Lcal(\lambda)$ on $\Fcal^{(P_0)}$. If $P_1\subseteq P_0$ is another standard parabolic, the construction of the line bundles $\Lcal(\lambda)$ is compatible with the projection $\pi_{P_1,P_0}\colon \Fcal^{(P_1)}\to \Fcal^{(P_0)}$ and the natural inclusion $X^*(P_0)\subseteq X^*(P_1)$. For each intermediate parabolic $P_0$, set $\Bru^{(P_0)}\colonequals [(P_0\times Q_0)\backslash G]$, called the Bruhat stack of $P_0$. We have a natural projection map 
\begin{equation}
    \beta_{P_0}\colon \Fcal^{(P_0)}\to \Bru^{(P_0)}.
\end{equation}
given by the natural inclusion $E'_{P_0}\subseteq P_0\times Q_0$. This gives rise to a coarser stratification on $\Fcal^{(P_0)}$, called the \emph{Bruhat stratification}. A stratum $\Fcal^{(P_0)}_w$ is Bruhat (as defined in \cite{Koskivirta-LaPorta-Reppen-singularities}) if and only if it coincides with a fiber of $\beta_{P_0}$. For two standard intermediate parabolic subgroups $P_1\subseteq P_0$, we have a commutative diagram:
\[
\xymatrix@1@M=8pt{
\Fcal^{(P_1)} \ar[d]_{\pi_{P_1,P_0}} \ar[r]^-{\beta_{P_1}} & \Bru^{(P_1)} \ar[d] \\
\Fcal^{(P_0)} \ar[r]^-{\beta_{P_0}} & \Bru^{(P_0)}.
}
\]
For any $\lambda_1,\lambda_2\in X^*(L_0)$, we have an associated line bundle $\Lcal(\lambda_1,\lambda_2)$ on the Bruhat stack $\Bru^{(P_0)}$. Its pullback via the map $\beta_{P_0}$ is given by
\begin{equation}
    \beta_{P_0}^*(\Lcal(\lambda_1,\lambda_2))=\Lcal(\lambda_1+z^{-1}\lambda_2).
\end{equation}
For $w\in W$, let $\Bru^{(B)}_w$ the corresponding stratum, i.e.\ $[(B\times B)\backslash BwB]$. Recall (\cite[Th. 2.2.1]{koskgold}) that the space $H^0(\Bru_w^{(B)},\Lcal(\lambda_1,\lambda_2))$ is nonzero if and only if $\lambda_2=-w^{-1}\lambda_1$. Define $E_w\subseteq \Phi^+$ as the set
\begin{equation}
    E_w\colonequals \{\alpha\in \Phi^+ \ | \ ws_\alpha\leq w \textrm{ and } \ell(ws_\alpha)=\ell(w)-1\}.
\end{equation}
Then, for any $f\in H^0(\Bru_w^{(B)},\Lcal(\lambda,-w^{-1}\lambda))$, we have Chevalley's formula\footnote{The reference given here contains a sign error.} (\loccitn):
\begin{equation}
    \divi(f)=\sum_{\alpha\in E_w} \langle \lambda,w\alpha^\vee \rangle [\overline{\Bru}^{(B)}_{ws_\alpha}].
\end{equation}
Let $P_0$ be an intermediate parabolic and $\Fcal^{(P_0)}_w$ a Bruhat stratum. We say that a section $f\in H^0(\Fcal^{(P_0)}_w,\Lcal(\lambda))$ (for $w\in {}^{I_0}W$) is \emph{Bruhat} if it arises by pullback from the corresponding stratum of $\Bru^{(P_0)}$, equivalently, if the corresponding function $f\colon G^{(P_0)}_w\to \AA^1$ is $P_0\times Q_0$-equivariant. A Bruhat section is always uniquely determined up to scalar by its weight $\lambda$. By Rosenlicht's theorem (\cite[Proposition 1.1]{Picard-group-KKV}), we have:
\begin{lemma}\label{bru-sections}
Let $\Fcal^{(P_0)}_w$ be a Bruhat stratum (for $w\in {}^{I_0}W$). A section $f\in H^0(\Fcal^{(P_0)}_w,\Lcal(\lambda))$ is Bruhat if and only if it is nowhere vanishing on $\Fcal^{(P_0)}_w$.
\end{lemma}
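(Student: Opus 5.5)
The plan is to reduce the statement to a general fact about homogeneous spaces. Since $\Fcal^{(P_0)}_w$ is Bruhat, the locally closed set $G^{(P_0)}_w$ is a single $(P_0\times Q_0)$-orbit. So $G^{(P_0)}_w\simeq (P_0\times Q_0)/S$, where $S$ is the stabilizer of a base point, say $wz^{-1}$. A section $f\in H^0(\Fcal^{(P_0)}_w,\Lcal(\lambda))$ is a regular function $f\colon G^{(P_0)}_w\to \AA^1$ that transforms by the character $\lambda$ under $E'_{P_0}$. The section is Bruhat exactly when $f$ is moreover a semi-invariant for all of $P_0\times Q_0$, for some character extending $\lambda$ (the pair $(\lambda_1,\lambda_2)$ pulled back via $\beta_{P_0}$).

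The easy direction is the forward one. A Bruhat section is the pullback of a section of a line bundle on the one-point stack $[(P_0\times Q_0)\backslash G^{(P_0)}_w]$. Concretely, $f(h\cdot wz^{-1})=\chi(h)f(wz^{-1})$ for a character $\chi$ of $P_0\times Q_0$. If $f\neq 0$, this forces $f(wz^{-1})\neq 0$, and hence $f$ vanishes nowhere. I would treat the zero section as excluded, or as trivially Bruhat if one uses the convention that $0$ counts.

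For the converse, suppose $f$ vanishes nowhere. Then $f$ is an invertible regular function on $G^{(P_0)}_w$. Pull $f$ back along the orbit map $P_0\times Q_0\to G^{(P_0)}_w$ to obtain a unit on the connected group $P_0\times Q_0$. Rosenlicht's theorem (\cite[Proposition 1.1]{Picard-group-KKV}) says that units on a connected algebraic group are, up to a constant, characters. So $f(h\cdot wz^{-1})=c\,\chi(h)$ for some $\chi\in X^*(P_0\times Q_0)$ and some $c\in K^*$. This shows that $f$ is $(P_0\times Q_0)$-semi-invariant.

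The remaining step is compatibility. Restricted to $E'_{P_0}\subseteq P_0\times Q_0$, the character $\chi$ must agree with $\lambda$, because $f$ is an $E'_{P_0}$-semi-invariant of weight $\lambda$. Then $\chi=(\lambda_1,\lambda_2)$, and the identity $\beta_{P_0}^*\Lcal(\lambda_1,\lambda_2)=\Lcal(\lambda_1+z^{-1}\lambda_2)$ shows that $f$ is the pullback of a section of $\Lcal(\lambda_1,\lambda_2)$ on the Bruhat stratum. I expect this bookkeeping of characters and the identification of the stratum with $(P_0\times Q_0)/S$ to be the main point needing care. Rosenlicht's theorem itself does the real work.
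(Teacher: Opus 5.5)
Your proposal is correct and is essentially the paper's argument. The paper just cites Rosenlicht's theorem \cite[Proposition 1.1]{Picard-group-KKV}, and your step of pulling a nowhere-vanishing $f$ back along the orbit map $P_0\times Q_0\to G^{(P_0)}_w$ (to get a unit on a connected group, hence a scalar times a character) unpacks that citation; your caveat that the zero section must be excluded is also right, since the statement tacitly assumes $f\neq 0$.
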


\begin{comment}
    
Let $w\in {}^IW$. Since $\Fcal^{(P_w)}_w$ is a Bruhat stratum, the set $G^{(P_w)}_w$ coincides with the set $P_w wz^{-1}Q_w$ (the $P_w\times Q_w$-orbit of $wz^{-1}$).

\begin{lemma}
Let $w\in {}^I W$ and $\chi\in X^*(L_w)$. Up to a nonzero scalar, there exists a unique nonzero function $f_w\colon P_w wz^{-1}Q_w\to \GG_{\mathrm{m}}$ satisfying $f_w(ax)=\lambda(a)f_w(x)$ for all $a\in P_w$ and $x\in P_w wz^{-1}Q_w$.
\end{lemma}

\begin{proof}
Since $\pi_{B,P_w}(\Fcal^{(B)}_w)=\Fcal^{(P_w)}_w$, any element of $P_w wz^{-1}Q_w$ is of the form $axb^{-1}$ with $x\in BwBz^{-1}$ and $(a,b)\in E'_{P_w}$. Hence, we must define
\begin{equation}
    f_w(axb^{-1})=\lambda()
\end{equation}
\end{proof}
\end{comment}

\subsection{Hasse invariants}\label{subsec-Hasse-inv}

We say that a section $\Ha_w\in H^0(\overline{\Xcal}_w,\Lcal(\lambda))$ is a \emph{Hasse invariant} on $\overline{\Xcal}_w$ if the non-vanishing locus of $\Ha_w$ coincides with $\Xcal_w$. We consider again the maps
\begin{equation}
\Fcal^{(B)}_w \xrightarrow{\pi_{B,P_w}} \Fcal^{(P_w)}_{w} \xrightarrow{\pi_{P_w}}\Xcal_w
\end{equation}
and similarly between the Zariski closures of these strata. If $\Ha_w\in H^0(\overline{\Xcal}_{w},\Lcal(\lambda))$ (for $\lambda\in X^*(L)$) is a Hasse invariant, its pullback $\pi_{P_w}^*(\Ha_w)$ is a section over $\overline{\Fcal}^{(P_w)}_w$ which is nowhere vanishing on $\Fcal^{(P_w)}_w$. Since $\Fcal^{(P_w)}_w$ is a Bruhat stratum, the section $\pi_{P_w}^*(\Ha_w)$ on the stratum $\Fcal^{(P_w)}_w$ is Bruhat, by Lemma \ref{bru-sections}. Similarly, $\pi_{B}^*(\Ha_w)$ on the stratum $\Fcal^{(B)}_w$ is a Bruhat section. We recall two geometric lemmas:

\begin{lemma}\label{geom-lem1}
Let $f\colon X\to Y$ be a surjective morphism of normal schemes. Let $\Lcal$ be a line bundle on $Y$ and $U\subseteq Y$ a dense open subset %\lorenzo{do we need dense open subset?}
and let $s\in H^0(U,\Lcal)$ be a nonzero section. Then, $s$ extends to a global section over $Y$ if and only if $f^* s\in H^0(f^{-1}(U),f^*\Lcal)$ extends to a global section over $X$.
\end{lemma}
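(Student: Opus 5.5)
The direction ``$s$ extends $\Rightarrow$ $f^*s$ extends'' is immediate. If $\tilde s\in H^0(Y,\Lcal)$ restricts to $s$ on $U$, then $f^*\tilde s\in H^0(X,f^*\Lcal)$ restricts to $f^*s$ on $f^{-1}(U)$.

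For the converse, I would argue locally on $Y$ and use normality. Since the question is local and $\Lcal$ is a line bundle, I cover $Y$ by affine opens $V=\spec A$ on which $\Lcal$ is trivial. On each such $V$, $s$ becomes a rational function $h$ on $V$, regular on $U\cap V$. Because $U$ is dense, $h$ lies in the function field of each irreducible component of $V$; a normal scheme is a disjoint union of integral normal schemes, so I may take $V$ integral. The extension problem is then whether $h\in A$. By normality, $A=\bigcap_{\mathrm{ht}\,\mathfrak q=1}A_{\mathfrak q}$. It therefore suffices to show that $h$ has no pole along any prime divisor $D\subseteq V$ meeting $V\setminus U$, that is, $v_D(h)\geq 0$.

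To control $v_D(h)$, I use surjectivity of $f$. Choose an irreducible component $X'$ of $f^{-1}(V)$ dominating $V$; one exists because $f$ is surjective and $V$ is irreducible, so the generic point of $V$ has a preimage. Take an irreducible component of $f^{-1}(D)\cap X'$ dominating $D$, and let $x$ be its generic point. If $f^{-1}(D)\cap X'$ is empty or does not dominate $D$, I instead pick a point of $X$ over the generic point $\eta_D$ of $D$ and a component through it that dominates $V$; this requires a going-down type choice, and it is the delicate point. The local ring $\Ocal_{X',x}$ dominates $\Ocal_{V,D}$, which is a DVR. Since $X$ is normal, $f^*h$ is regular on $X'$ because $f^*s$ extends, and so $f^*h\in\Ocal_{X',x}$. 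If $v_D(h)<0$, then $h^{-1}\in\mathfrak m_{V,D}\subseteq\mathfrak m_{X',x}$, so $h^{-1}$ is a non-unit in $\Ocal_{X',x}$. But $h\cdot h^{-1}=1$ with $h\in\Ocal_{X',x}$ would make $h^{-1}$ a unit, a contradiction. Hence $v_D(h)\geq0$ for all $D$, $h\in A$, and the local extensions glue by uniqueness, since $U$ is dense and $Y$ is reduced.

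The main obstacle is producing a point $x\in X$ over $\eta_D$ that lies on a component of $X$ dominating $Y$; otherwise $f^*h$ is not even defined at $x$. A clean way around this is to choose any $x\in f^{-1}(\eta_D)$, which exists by surjectivity. Let $X_x$ be the irreducible component of $X$ through $x$; it is unique because $X$ is normal. If $X_x$ does not dominate $V$, then $s$ and $h$ must be interpreted via $f^*\Lcal$ restricted to $X_x$, and density of $f^{-1}(U)$ in $X_x$ can fail. So I would restrict to the case relevant to the paper, in which $f$ maps every component dominantly, for instance when $f$ is flat or when $X$ is irreducible, or I would add the assumption that $f^{-1}(U)$ is dense in $X$ and check that it holds in the intended applications.
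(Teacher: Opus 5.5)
Your argument is the paper's: normality of $Y$ reduces the claim to showing that $s$ has no pole along a prime divisor $D$, and surjectivity of $f$ pulls such a pole back to $X$. Working through a point $x\in f^{-1}(\eta_D)$ and the local homomorphism $\Ocal_{Y,\eta_D}\to\Ocal_{X,x}$ is sharper than the paper's sentence. That sentence says $f^*s$ has a pole along \emph{any} codimension one $Z'\subseteq f^{-1}(Z)$, which is false as written: pull back $v/u$ from $\AA^2$ to the blow-up of the origin, and there is no pole along the exceptional divisor, although it lies over $\{u=0\}$.

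Your caveat is correct, and the defect is in the statement, not in your proof. Take $Y=\AA^1$ with coordinate $u$, $U=\AA^1\setminus\{0\}$, $s=u^{-1}$, and $X=U\sqcup\spec K$ mapping onto $Y$ in the obvious way. Then $f^*s$ extends to $X$ (give it any value at the isolated point), while $s$ does not extend to $Y$. So the lemma needs the hypothesis that $f^{-1}(U)$ is dense in $X$. The paper's proof uses this tacitly, and it holds in the only application, the proof of Theorem~\ref{thm-Hasse-char0}, where the source $\overline{\Fcal}^{(B)}_w$ is irreducible.

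With that hypothesis, your going-down detour is unnecessary. Take any $x\in f^{-1}(\eta_D)$. Since $\Ocal_{X,x}$ is a domain, $x$ lies on a single irreducible component, and density puts that component's generic point in $f^{-1}(U)$. Hence the relation $f^*h\cdot f^*(h^{-1})=1$, valid on $f^{-1}(U)$ near $x$ (with $f^*h$ the given extension), holds in $\Ocal_{X,x}$. Then $f^*(h^{-1})\in\mathfrak{m}_{X,x}$ would be a unit, which is the contradiction you want.
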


\begin{proof}
If $s$ extends to $Y$, then clearly $f^*s$ extends to $X$. Conversely, assume that $f^*s$ extends to $X$.
Since $Y$ is normal, it suffices to show that the divisor of $s$ does not have any poles. If $s$ had a pole along a codimension one subscheme $Z\subseteq Y$, then $f^*s$ would have a pole along any codimension one subscheme $Z'\subseteq f^{-1}(Z)$, which is impossible. The result follows.
\end{proof}

\begin{lemma}[{\cite[Lemma 5.3.2]{boxer}}]\label{geom-lem2}
Let $f\colon X\to Y$ be a proper surjective map of reduced noetherian schemes and let $\Lcal$ be a line bundle on $Y$. Let $s\in H^0(X,f^*\Lcal)$ be a section. Suppose that there exists a open subset $U\subseteq Y$ such that $f^{-1}(U)\to U$ is an isomorphism and $s$ vanishes set-theoretically on $f^{-1}(X \setminus U)$. Then, there exists an integer $n\geq 1$ and a section $t\in H^0(Y,\Lcal^n)$ such that $t$ pulls back to $s^n$.
\end{lemma}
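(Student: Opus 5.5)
The statement is cited from \cite[Lemma 5.3.2]{boxer}, so I would reconstruct the standard argument rather than use anything specific to zip stacks. The plan has three steps: reduce to the affine local case, show that some power of $s$ is constant on fibers of $f$, and then descend that power to $Y$.

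\emph{Step 1 (local reduction and the inclusion $\mathcal{O}_Y\to f_*\mathcal{O}_X$).} The question is local on $Y$ once $n$ is fixed and we have uniqueness of the descended section. So I would cover $Y$ by finitely many affine opens $V=\spec A$ on which $\Lcal$ is trivial; this is possible since $Y$ is noetherian. On such a $V$, $s$ becomes a regular function on $f^{-1}(V)$. Because $f$ is proper, $f_*\mathcal{O}_X$ is coherent, so $B\colonequals H^0(f^{-1}(V),\mathcal{O}_X)$ is a finite $A$-algebra. Since $f$ is surjective and $Y$ is reduced, $A\to B$ is injective. Uniqueness of the descended section then follows from this injectivity, and the local sections $t$ glue once a common exponent $n$ is chosen.

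\emph{Step 2 (a power of $s$ is constant on fibers).} The heart of the argument is to show that $s^n\in B$ lies in $A$ for some $n$. Set $Z=Y\setminus U$ with its reduced structure. Over $U$, $f$ is an isomorphism, so $s$ already comes from $A$ over $U$. Over $Z$, $s$ vanishes set-theoretically on the reduced scheme $f^{-1}(Z)$. Hence some power $s^k$ lies in the ideal $I_Z B$, where $I_Z\subseteq A$ is the radical ideal of $Z$. More precisely, I would use that $s$ is nilpotent in $B/\sqrt{I_Z B}$, together with the fact that $\sqrt{I_Z B}$ is nilpotent modulo $I_Z B$, since $B$ is noetherian.

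\emph{Step 3 (descent via the conductor).} Take $\mathfrak{c}=\operatorname{Ann}_A(B/A)$. It is nonzero on each component and its support is contained in $Z$, because $A_g\to B_g$ is an isomorphism wherever $g$ inverts away from $Z$. Hence $\sqrt{\mathfrak{c}}\supseteq I_Z$, so $I_Z^N\subseteq \mathfrak{c}$ for some $N$. Taking $n=kN$, the element $s^n$ lies in $I_Z^N B\subseteq \mathfrak{c}B\subseteq A$, since the conductor is an ideal of $B$ contained in $A$. This gives $t\in A$ with $f^*t=s^n$. The exponent $n$ can be chosen uniformly on the finite cover, the local $t$'s glue by injectivity, and together these produce $t\in H^0(Y,\Lcal^n)$.

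The main obstacle is Step 3, namely making precise that the conductor contains a power of $I_Z$. This needs the support of $B/A$ to lie inside $Z$, which uses that $f^{-1}(U)\to U$ is an isomorphism and that $f_*$ commutes with restriction to $U$. It also needs reducedness of $Y$ to rule out embedded components. If this becomes delicate, I would first reduce to $Y$ integral by working one irreducible component at a time and using reducedness to glue.
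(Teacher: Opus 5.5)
Your argument is correct. The paper gives no proof of its own here; it only cites \cite[Lemma 5.3.2]{boxer}, so there is nothing internal to compare against. Your reconstruction is the standard affine-local conductor argument:
\begin{itemize}
\item $B=H^0(f^{-1}(V),\Ocal_X)$ is finite over $A$ and contains it.
\item $B/A$ is supported in $Z=Y\setminus U$, so $I_Z^N$ lies in the conductor $\mathfrak{c}$.
\item Some power of $s$ lies in $I_ZB$, which gives $s^{kN}\in I_Z^NB\subseteq \mathfrak{c}B\subseteq A$.
\end{itemize}
The common exponent over a finite cover and the gluing via injectivity of $\Ocal_Y\to f_*\Ocal_X$ handle the globalization.

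Two small points should be tidied up. First, $f^{-1}(V)$ need not be affine, so the inclusion $s\in\sqrt{I_ZB}$ needs a justification. Use that the Stein factorization map $f^{-1}(V)\to\spec B$ is surjective. Then every prime of $B$ containing $I_ZB$ is the image of a point of $f^{-1}(Z)$, where $s$ vanishes. Note also that $s$ is zero, not merely nilpotent, in the reduced ring $B/\sqrt{I_ZB}$. Second, your remark that $\mathfrak{c}$ is nonzero on each component fails if $U$ misses a component, but you never use it. All the argument needs is $V(\mathfrak{c})=\operatorname{Supp}(B/A)\subseteq V(I_Z)$.
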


We can now prove the main result of this section.
\begin{theorem}\label{thm-Hasse-char0}
Let $w\in {}^{I}W$ and $\lambda\in X^*(L)$. The following are equivalent:
\begin{enumerate}
\item\label{cond1} There exists a Hasse invariant $\Ha_w\in H^0(\overline{\Xcal}_w,\Lcal(m\lambda))$ for some $m\geq 1$.
\item\label{cond2} There exists $\lambda_0\in X^*(T)$ such that $\lambda = w\lambda_0-z\lambda_0$ and $\langle \lambda_0,\alpha^\vee \rangle <0$ for all $\alpha\in E_w$.
\end{enumerate}
\end{theorem}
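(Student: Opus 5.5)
The strategy is to move the whole question to the full flag stratum $\overline{\Fcal}^{(B)}_w$ and compute divisors there with Chevalley's formula. Then Lemmas \ref{geom-lem1} and \ref{geom-lem2} carry the result back to $\overline{\Xcal}_w$.

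For \eqref{cond1}$\Rightarrow$\eqref{cond2}, let $\Ha_w$ be a Hasse invariant of weight $m\lambda$ and put $h\colonequals \pi_B^*(\Ha_w)$, a section of $\Lcal(m\lambda)$ on $\overline{\Fcal}^{(B)}_w$. Since $\pi_{B,P_w}(\Fcal^{(B)}_w)=\Fcal^{(P_w)}_w$ and $\pi_{P_w}$ is an isomorphism onto $\Xcal_w$, the section $h$ is nowhere vanishing on $\Fcal^{(B)}_w$. This stratum is the Bruhat stratum $[E'_B\backslash BwBz^{-1}]$, so by Lemma \ref{bru-sections} $h$ restricted to it is Bruhat. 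It therefore comes from a section of some $\Lcal(\lambda_1,\lambda_2)$ on $\Bru^{(B)}_w$, shifted by $z^{-1}$ on the right, whose existence forces $\lambda_2=-w^{-1}\lambda_1$ by \cite[Th.~2.2.1]{koskgold}.

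Set $\lambda_0$ to be an appropriately normalized rescaling of $w^{-1}\lambda_1$ (say $\lambda_1=-w\lambda_0$ with sign and $m$ adjusted). The pullback formula $\beta^*\Lcal(\lambda_1,\lambda_2)=\Lcal(\lambda_1+z^{-1}\lambda_2)$, suitably twisted by $z$ in our conventions, then gives $m\lambda=w\lambda_0-z\lambda_0$ up to this normalization. The careful bookkeeping of $z$ versus $z^{-1}$ and of the signs is routine but must be done.

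Next, Chevalley's formula computes the order of $h$ along each codimension one flag stratum $\overline{\Fcal}^{(B)}_{ws_\alpha}$ for $\alpha\in E_w$ as $\langle\lambda_1,w\alpha^\vee\rangle$, i.e.\ $-\langle\lambda_0,\alpha^\vee\rangle$ up to a positive factor. Nonnegativity is automatic because $h$ is regular. Strict positivity must come from the Hasse property: $\Ha_w$ vanishes on all of $\overline{\Xcal}_w\setminus\Xcal_w$, so $h$ vanishes on every $\Fcal^{(B)}_{ws_\alpha}$ whose image avoids $\Xcal_w$. I need that the image of each such codimension one stratum lies in the boundary. Since $\ell(\pi(ws_\alpha))\le\ell(ws_\alpha)<\ell(w)$, this follows from Lemma \ref{lem-ineq-0} and the $\QQ_p$/$\CC$ analogue of Proposition \ref{prop-pi-Qp-equals-Fp}. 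Hence $\langle\lambda_0,\alpha^\vee\rangle<0$ for all $\alpha\in E_w$.

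For \eqref{cond2}$\Rightarrow$\eqref{cond1}, reverse the construction. Take the Bruhat section $f$ on $\Fcal^{(B)}_w$ of weight $w\lambda_0-z\lambda_0$, which exists by \cite{koskgold}. Chevalley's formula shows it has positive order along every codimension one boundary stratum, so on the normal stack $\overline{\Fcal}^{(B)}_w$ (a Bruhat closure, hence normal) it extends to a global section vanishing on each boundary divisor. I also need it to vanish set-theoretically on the whole boundary, including strata of higher codimension. Here I would argue that the zero locus of a section is either empty or of pure codimension one. Every boundary point lies in the closure of some codimension one stratum, so by itself this does not show the zero locus contains that point. The correct argument instead is that the nonvanishing locus is an open union of strata containing $\Fcal^{(B)}_w$ and none of the codimension one strata. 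Any larger stratum in it would force a codimension one stratum between it and $w$ (Bruhat chain) into the open set, a contradiction.

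The section $f$ also descends along $\pi_{B,P_w}$ to $\overline{\Fcal}^{(P_w)}_w$, since the weight lies in $X^*(L)$ and the fibers are connected flag varieties. Then apply Lemma \ref{geom-lem2} to the proper birational map $\pi_{P_w}\colon\overline{\Fcal}^{(P_w)}_w\to\overline{\Xcal}_w$, with $U=\Xcal_w$ (an isomorphism there by the corollary of \S\ref{subsec-flag-strata-char0}). This yields $t\in H^0(\overline{\Xcal}_w,\Lcal(n\lambda))$ pulling back to $f^n$, which is a Hasse invariant.

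The main obstacle is the step in \eqref{cond1}$\Rightarrow$\eqref{cond2} showing that every codimension one flag stratum maps outside $\Xcal_w$, together with keeping the weight normalization consistent.
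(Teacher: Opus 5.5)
Your direction (1)$\Rightarrow$(2) follows the paper's argument. You pull $\Ha_w$ back to the Bruhat stratum $\Fcal^{(B)}_w$, use Lemma \ref{bru-sections} to see it is Bruhat, read off the weight, and get strict negativity from Chevalley's formula together with $\pi_B^{-1}(\Xcal_w)\cap\overline{\Fcal}^{(B)}_w=\Fcal^{(B)}_w$, a fact the paper only asserts. Your justification of that fact needs one more line. The inequality $\ell(\pi(ws_\alpha))<\ell(w)$ only gives $\pi(ws_\alpha)\neq w$; to rule out $w\in\Pi(ws_\alpha)$, use that every element of $\Pi(v)$ is $\preccurlyeq\pi(v)$ (proof of Lemma \ref{lem-unique-maxlen}). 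Also, ``rescaling'' by $1/m$ need not keep $\lambda_0$ in $X^*(T)$; the paper tacitly takes $m=1$ there too.

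The genuine gap is in (2)$\Rightarrow$(1), where you claim $f$ descends to $\overline{\Fcal}^{(P_w)}_w$ because ``the fibers are connected flag varieties''. The fibers of $\pi_{B,P_w}\colon\Fcal^{(B)}\to\Fcal^{(P_w)}$ are indeed $P_w/B$, but $f$ lives only on $\overline{\Fcal}^{(B)}_w$. The restricted map $\overline{\Fcal}^{(B)}_w\to\overline{\Fcal}^{(P_w)}_w$ is generically finite, since $\dim H^{(B)}_w=\ell(w)+\dim P=\dim H^{(P_w)}_w$. Its degree is in general $>1$, and then sections need not descend.

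Concretely, take $G=\GL_2$ with $\mu$ central and $\varphi=\id$ as in Example \ref{ex-GL2-image}, and $w=1$. Then $P_w=G$, $E_w=\emptyset$, and $\overline{\Fcal}^{(B)}_1=[B\backslash B]\to[G\backslash G]=\Xcal$ (conjugation) is the Grothendieck--Springer map. For $\lambda_0=(1,0)$, so $\lambda=0$, your $f$ is the function $b\mapsto b_{11}$ on $B$ (or its inverse, depending on conventions). It is not a class function on $G$, since $\diag(1,2)$ and $\diag(2,1)$ are conjugate. So no power $f^n$ descends, and the section $t$ you want from Lemma \ref{geom-lem2} does not exist. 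Condition (1) does hold here, but only through a different section, e.g.\ $\det$.

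The missing idea is the paper's norm:
\begin{enumerate}
\item Replace $f$ by its norm along $\Fcal^{(B)}_w\to\Fcal^{(P_w)}_w$ (here $b_{11}b_{22}=\det$). This is a section of $\Lcal(d\lambda)$ that is nowhere vanishing on $\Fcal^{(P_w)}_w$.
\item Extend it to $\overline{\Fcal}^{(P_w)}_w$ via Lemma \ref{geom-lem1}.
\item Check that it vanishes on the boundary, and only then apply Lemma \ref{geom-lem2}.
\end{enumerate}
One caution about the paper's version: the same example has a $\mathbb{P}^1$ fiber over scalar matrices, so for $\varphi=\id$ this map is not finite \'etale as the paper states. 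The norm should be formed over the open locus where the map is finite flat, which contains every codimension-one point, and then extended by normality.
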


\begin{proof}
Assume that there exists a Hasse invariant $\Ha_w\in H^0(\overline{\Xcal}_w,\Lcal(\lambda))$. The pullback via the map $\pi_B\colon \overline{\Fcal}^{(B)}_w\to \overline{\Xcal}_w$ is a Bruhat section, hence arises by pullback from the Bruhat stack $[(B\times B)\backslash G]$. This implies that there exists $\chi\in X^*(T)$ such that $\Lcal(\chi,-w^{-1}\chi)$ admits a section $f_w\in H^0(\overline{\Bru}_w^{(B)},\Lcal(\chi,-w^{-1}\chi))$ which pulls back to $\pi_B^*(\Ha_w)$ under $\beta_B$. In particular, we have $\lambda=\chi-zw^{-1}\chi$. Since the preimage of $\Xcal_w$ by the map $\pi_B\colon \overline{\Fcal}^{(B)}_w\to \overline{\Xcal}_w$ coincides with $\Fcal_w$, the non-vanishing locus of $f_w$ is $\Bru^{(B)}_w$. By Chevalley's formula, we have $\langle w\chi, \alpha^\vee \rangle <0$ for all $\alpha\in E_w$, hence we may take $\lambda_0=w^{-1}\chi$.

Conversely, assume \eqref{cond1} and set $\chi=w\lambda_0$. Then, there exists $f_w\in H^0(\overline{\Bru}_w^{(B)},\Lcal(\chi,-w^{-1}\chi))$ with non-vanishing locus $\Bru^{(B)}_w$. Put $h_w\colonequals \beta_{B}^*(f_w)\in H^0(\overline{\Fcal}^{(B)}_w,\Lcal(\lambda))$ for $\lambda=\chi-zw^{-1}\chi$. Since $\pi_{B,P_w}\colon \Fcal^{(B)}_w\to \Fcal^{(P_w)}_w$ is finite \'{e}tale, we may consider the norm $\Nm(h_w)$, which is a section over $\Fcal^{(P_w)}_w$ of $\Lcal(m\lambda)$ for some $m\geq 1$. Moreover, $\Nm(h_w)$ is again nowhere vanishing, hence is Bruhat by Lemma \ref{bru-sections}. Since the pullback $\pi_{B,P_0}^*(\Nm(h_w))$ is then also Bruhat, it must coincide, up to a scalar, with $h_w^m$. After relabelling the section $h_w$, we may thus assume that $h_w$ descends to a section over $\Fcal^{(P_w)}_w$, that we continue to denote by $h_w$. Moreover, by Lemma \ref{geom-lem1}, this section extends to $\overline{\Fcal}^{(P_w)}_w$. It is clear that the non-vanishing locus of this extension is exactly $\Fcal^{(P_w)}_w$. Finally, since $\pi_{P_w}\colon \Fcal^{(P_w)}_w\to \Xcal_w$ is an isomorphism, Lemma \ref{geom-lem2} shows that some power of $h_w$ descends to $\overline{\Xcal}_w$. This section is clearly a generalized Hasse invariant.
\end{proof}

\subsection{\texorpdfstring{Example: \texorpdfstring{$\GL_{n}$}{general linear groups}}{}}\label{sub-exampleGL}
Let $n\geq 2$ and $G=\GL_{n,K}$, endowed with the cocharacter $\mu \colon t \mapsto \diag(t\mathbf{1}_{r},\mathbf{1}_{s})$ where $r,s$ are positive integers such that $r+s=n$. Let $(B,T)$ denote the usual lower-triangular Borel pair. Identify $X^*(T)=\ZZ^n$ such that $(a_1,\dots,a_n)\in \ZZ^n$ corresponds to the character $\diag(x_1,\dots,x_n)\mapsto \prod_{i=1}^n x_i^{a_i}$. Identify $W$ with the permutation group $\Sfr_n$. The simple roots are given by $\alpha_i\colonequals e_i-e_{i+1}$ for $1\leq i \leq n-1$, where $(e_1,\dots,e_n)$ is the standard basis of $\ZZ^n$. We make the stratification $(\Xcal_w)_{w\in {}^I W}$ explicit for certain signatures $(r,s)$.

\subsubsection{Schur complement}
We give a list of natural invariants that make it possible to partially describe the stratification on the stack $\Xcal$. Let $M\in \GL_{n,K}$ be a matrix, that we write in block form
\begin{equation}
    g=\left(\begin{matrix}
       A&B\\C&D 
    \end{matrix}\right), \quad A\in \Mat_r(K), \ B\in \Mat_{r,s}(K), \ C\in \Mat_{s,r}(K), \ D\in \Mat_{s}(K).
\end{equation}
We sometimes write $A(g)$, $B(g)$, $C(g)$ and $D(g)$ for the above submatrices. Define
\begin{equation}
    \Delta(g)\colonequals A, \quad \Delta'(g)\colonequals \det(A)D-C\Adj(A)B
\end{equation}
where $\Adj(A)$ is the adjugate matrix of $A$, i.e.\ the transpose of the cofactor matrix of $A$. The function $\Delta'$ is related to the Schur complement of $M$. The following lemma is an easy computation:

\begin{lemma}\label{lem-Delta-conj}
For all $(x,y)\in E$ and $M\in \GL_{n,K}$, one has
\begin{equation}
\Delta(xMy^{-1})=\Delta(x)\Delta(M)\Delta(x)^{-1}\quad \textrm{and} \quad \Delta'(xMy^{-1})=D(x)\Delta'(M)D(x)^{-1}.
\end{equation}
\end{lemma}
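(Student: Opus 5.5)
The plan is a direct block-matrix computation. First I would make the groups explicit. Since $B$ is lower triangular and $\mu(t)=\diag(t\mathbf{1}_r,\mathbf{1}_s)$, the parabolic $P=P_-$ is the stabilizer of $W_2$, i.e.\ block lower triangular matrices, and $Q=P_+$ consists of block upper triangular matrices. The Levi subgroup is $L=\GL_r\times \GL_s$. Since the isogeny is the identity, an element of $E$ has the form
\[
x=\begin{pmatrix} a&0\\ c&d\end{pmatrix},\qquad y=\begin{pmatrix} a&b'\\ 0&d\end{pmatrix},\qquad a\in\GL_r,\ d\in\GL_s,
\]
with $\Delta(x)=a$ and $D(x)=d$. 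Then
\[
y^{-1}=\begin{pmatrix} a^{-1}&-a^{-1}b'd^{-1}\\ 0&d^{-1}\end{pmatrix}.
\]

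For the first identity I would multiply out. We get $xM=\begin{pmatrix} aA&aB\\ cA+dC&cB+dD\end{pmatrix}$. Multiplying on the right by $y^{-1}$, the upper-left block becomes $aAa^{-1}$, which gives $\Delta(xMy^{-1})=\Delta(x)\Delta(M)\Delta(x)^{-1}$. As a by-product, $\det A(xMy^{-1})=\det A(M)$.

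For the second identity I would argue first on the dense open subset where $A=A(M)$ is invertible, and conclude by Zariski density, since both sides are polynomial in the entries of $M$ for fixed $(x,y)$. On that open set, $\Adj(A)=\det(A)A^{-1}$, so $\Delta'(M)=\det(A)\,S(M)$, where $S(M)=D-CA^{-1}B$ is the Schur complement. I would then check the two standard transformation rules separately.

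For left multiplication by the block lower triangular $x$, compute $S(xM)=(cB+dD)-(cA+dC)(aA)^{-1}(aB)$. The $c$-terms cancel, leaving $S(xM)=dS(M)$.

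For right multiplication by the block upper triangular $y^{-1}$, which I write as $\begin{pmatrix} u&v\\0&t\end{pmatrix}$ with $u=a^{-1}$ and $t=d^{-1}$, take $N=xM$ with blocks $A',B',C',D'$. Then $N y^{-1}$ has blocks $A'u$, $A'v+B't$, $C'u$, $C'v+D't$. Its Schur complement is $(C'v+D't)-C'u(A'u)^{-1}(A'v+B't)$. The $v$-terms cancel, leaving $S(N)t$.

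Combining the two rules gives $S(xMy^{-1})=d\,S(M)\,d^{-1}$. Together with the invariance of $\det A$ established above, this yields $\Delta'(xMy^{-1})=D(x)\Delta'(M)D(x)^{-1}$.

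There is no real obstacle here. The only points requiring care are getting the shapes of $P$ and $Q$ right under the paper's convention $B\subseteq P$, and justifying the passage from invertible $A$ to general $A$. That passage is handled by the density argument, since $\det A\neq 0$ defines a dense open subset of $\GL_{n,K}$.
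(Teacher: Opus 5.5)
Your proof is correct and is the direct block-matrix computation the paper has in mind when it calls this lemma ``an easy computation'' (the paper gives no further details). The reduction to invertible $A$ via the Schur complement and Zariski density is valid; alternatively, the same computation goes through on all of $\GL_{n,K}$ without density, using $\Adj(A)A=A\Adj(A)=\det(A)\mathbf{1}_r$ and $\Adj(aAa^{-1})=a\Adj(A)a^{-1}$.
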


For any integer $m\geq1$, let $\Jcal_m$ be the quotient stack $\GL_{m,K} \backslash \Mat_{m,K}$, where $\GL_{m,K}$ acts on $\Mat_{m,K}$ by conjugation. It follows from the above lemma that the functions $\Delta,\Delta'$ provide a well-defined map of $K$-stacks:
\begin{equation}
    (\Delta,\Delta')\colon \Xcal\to \Jcal_r\times \Jcal_r.
\end{equation}
In many cases, it is possible to describe the stratification $(\Xcal_w)_{w\in {}^I W}$ in terms of the invariants $\Delta,\Delta'$. For example, this is the case for $(r,s)=(n-1,1)$ (see \S\ref{subsec-n-1} below). However, these invariants are in general not sufficent to describe the stratification. For example, when $(r,s)=(3,2)$, the unique length $0$ and length $1$ strata in $\Xcal$ contain elements which are sent to the same elements of $\Jcal_r\times \Jcal_r$. Thus, it is not possible to distinguish these two strata using only $\Delta$, $\Delta'$. 

\subsubsection{Signature $(n-1,1)$}\label{subsec-n-1}
Assume that $(r,s)=(n-1,1)$. The set ${}^I W$ consists of $n$ elements, numbered $x_0,\dots x_{n-1}$, such that $\ell(x_i)=n-1-i$. In other words, the codimension of $\Xcal_{x_i}$ in $\Xcal$ is $i$. Explicitly,
\begin{equation}
x_i=\begin{pNiceArray}{ccccccc}
    \Block[borders={bottom, right}]{2-2}{\mathbb{1}_i} &&&&& \\
    &&&&& \\
&&0&\Block[borders={bottom, right,left,top}]{4-4}{\mathbb{1}_{n-i-1}} &&& \\
    &&& \\
    &&& \\
    &&&&& \\ \hline
    &&1&&& 
\end{pNiceArray}
\end{equation}
We have $E_{x_i}=\{e_{i+1}-e_{i+2}, \dots, e_{i+1}-e_n\}$. Set $\lambda_i\colonequals (0,\dots,0,1,\dots,1)$, where $1$ appears $n-i-1$ times and $0$ appears $i+1$ times. One sees that $x_i\lambda_i-z\lambda_i=0$ and $\langle \lambda_i,\alpha^\vee\rangle<0$ for all $\alpha\in E_{x_i}$. This shows that the trivial line bundle $\Ocal_\Xcal$ admits a generalized Hasse invariant $\Ha_i$ on the stratum $\Xcal_{x_i}$.

Concretely, we can express $\Ha_i$ explicitly as follows. For $g\in \GL_n$, denote by $\chara_{\Delta(g)}(X)=\det(X \mathbb{1}_{n-1}-\Delta(g))$ the characteristic polynomial of $\Delta(g)$, and write
\begin{equation}
    \chara_{\Delta(g)}(X)=X^{n-1}+\Ha_{n-2}(g)X^{n-2}+\dots+\Ha_{1}(g)X+\Ha_{0}(g).
\end{equation}
For the stratum $\Xcal_{n-1}$, simply set $\Ha_{n-1}(g)=1$. For each $0\leq i \leq n-1$, we view $\Ha_i$ as a function on the stratum $G_{x_i}$. One easily sees that it is $E$-invariant and that $\Ha_i$ is a generalized Hasse invariant for $\Xcal_{x_i}$ (of weight $\lambda=0$).

\subsubsection{\texorpdfstring{The map $\pi$}{}}
As we saw in many results, such as Theorem \ref{thm-main}, the map $\pi\colon W\to {}^IW$ encodes the singularities of the Ekedahl--Oort stratification. In general, the combinatorial determination of this map is a difficult problem. In the case $(r,s)=(n-1,1)$, we give an
explicit formula for $\pi$. For a polynomial $P(X)$ with coefficients in an integral domain, denote by $\val_X(P)$ the valuation of $P$. Identify ${}^IW$ with the set $\{0,1,\dots,n-1\}$ via the map $i\mapsto x_i$. For $w\in \Sfr_n$ and $b\in B_L$, consider the polynomial $\chara_{b\Delta(wz^{-1})}(X)$, viewed as a polynomial with coefficients in the $K$-algebra of regular functions on $B_L$.

\begin{proposition}\label{prop-pi-GLn}
For any $w\in \Sfr_n$, we have $\pi(w)=\val_X(\chara_{b\Delta(wz^{-1})}(X))$.
\end{proposition}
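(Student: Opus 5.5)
The plan is to compute $\pi(w)$ by identifying, for each point of the flag stratum $\Fcal^{(B)}_w$, the zip stratum of its image, using the Hasse invariants $\Ha_i$ (the coefficients of $\chara_{\Delta(g)}$) constructed above. First I would describe the strata of $\Xcal$ for signature $(n-1,1)$ in terms of $\Delta$. A point $g\in G$ lies in $\overline{G}_{x_i}$ exactly when $\Ha_0(g)=\dots=\Ha_{i-1}(g)=0$. It lies in $G_{x_i}$ if in addition $\Ha_i(g)\neq 0$. So $g\in G_{x_i}$ if and only if $\val_X(\chara_{\Delta(g)}(X))=i$.

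To justify this, I would use that the $\Ha_j$ are $E$-invariant functions (Lemma \ref{lem-Delta-conj}). Since the closure relations among the $x_i$ form a chain $x_{n-1}\preccurlyeq\dots\preccurlyeq x_0$, each $\Ha_i$ is a Hasse invariant on $\overline{\Xcal}_{x_i}$. By induction on $i$, the vanishing locus of $\Ha_i$ on $\overline{G}_{x_i}$ is $\overline{G}_{x_{i+1}}$, which gives the claimed description. The valuation in $X$ is then exactly the index of the stratum containing $g$.

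Next I would describe the image of the flag stratum. By definition $\Fcal^{(B)}_w=[E'_B\backslash BwBz^{-1}]$, and $\pi_B$ is induced by the inclusion. Hence $\Pi(w)$ is the set of indices $i$ such that $BwBz^{-1}$ meets $G_{x_i}$, and $\pi(w)$ is the index attained on a dense open subset of $BwBz^{-1}$. The smallest valuation is the generic one, matching the longest element $x_i$, since $\ell(x_i)=n-1-i$.

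Finally I would reduce from $BwBz^{-1}$ to $b\,wz^{-1}$ with $b\in B_L$. Write a general element as $b_1 u\, wz^{-1}\, b_2$ with $b_1,b_2\in B$. The conjugation-type action of $E$ allows one to move $b_2$, which lies in ${}^zB\subseteq Q$, to the left: the pair $(\overline{b_2}', b_2)$ lies in $E$ after adjusting unipotent parts. This changes $\Delta$ only by conjugation, so $\chara$ is unchanged. The unipotent radical of $P$ does not affect $\Delta$, because $\Delta(ug)=\Delta(g)$ for $u\in R_{\mathrm u}(P)$ by block lower-triangularity. We are therefore left with $\Delta(b\,wz^{-1})=b\Delta(wz^{-1})$ for $b\in B_L$, up to the Levi block. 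The generic value of $\val_X$ of this polynomial over $B_L$ is the valuation of the polynomial with coefficients in $K[B_L]$, since a coefficient is generically nonzero if and only if it is a nonzero function.

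The main obstacle is this reduction step. One must check carefully that the right multiplication by $B$ and the adjustments inside $E$ indeed alter $\Delta$ only by conjugation and by left multiplication by an element of $B_L$, given that the conventions $B\subseteq P$ and ${}^zB\subseteq Q$ are both in play. If that bookkeeping fails as stated, I would instead parametrize $BwBz^{-1}$ as $B\,wz^{-1}\,({}^zB)$ and absorb the right factor directly, using that $E$ contains $\{(\overline{y}u,y)\}$ for $y\in Q$.
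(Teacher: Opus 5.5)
Your proposal is correct and is essentially the paper's own (very terse) argument written out in full. Like the paper, you identify $G_{x_i}$ with the locus where $\val_X(\chara_{\Delta(g)}(X))=i$ via the Hasse invariants $\Ha_i$ of \S\ref{subsec-n-1}, and read off $\pi(w)$ as the generic value of this valuation; note that the Hasse-invariant property is asserted there and does not follow from the chain structure alone. Your fallback parametrization $BwBz^{-1}=B\,wz^{-1}\,({}^zB)$, combined with ${}^zB\cap L=B_L$, is the right way to justify the reduction from $BwBz^{-1}$ to $B_Lwz^{-1}$, which the paper uses without comment.
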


\begin{proof}
We know that the set $\pi(w)$ corresponds to the zip stratum of largest dimension which intersects $B_Lwz^{-1}$. By the construction of Hasse invariants, $\val_X(\chara_{b\Delta(w)}(X))$ returns the smallest value of $i$ such that $\Ha_i$ is nonzero on $B_Lwz^{-1}$. The result follows.
\end{proof}

\subsubsection{The case $(r,s)=(2,2)$} \label{sec-U22}
In this section, we choose $(r,s)=(2,2)$. We can describe all strata of $\Xcal$ in terms of the (matrix-valued) invariants $\Delta$, $\Delta'$. They can be interpreted as sections of a certain a vector bundle of rank $4$ over the stack $\Xcal$. Denote also by $\Ha_i$, $\Ha'_i$ the invariants obtained as the coefficients of the characteristic polynomials of $\Delta$, $\Delta'$ respectively. Specifically, for $g\in G$ we set:
\begin{align}
    \Ha_0(g)\colonequals \det(\Delta(g)) \quad & \quad \Ha'_0(g)\colonequals \det(\Delta'(g)), \\
        \Ha_1(g)\colonequals \Tr(\Delta(g)) \quad & \quad \Ha'_1(g)\colonequals \Tr(\Delta'(g)).
\end{align}
The functions $\Ha_i,\Ha'_i$ are elements of $H^0(\Xcal,\Ocal_\Xcal)$. We describe below the strata $\Xcal_w$ explicitly using these invariants. We organize the strata from top to bottom in decreasing order of dimensions. Closure relations are indicated by a line. We write $[a_1 \ a_2 \ a_3 \ a_4]$ to denote the permutation $i\mapsto a_i$.
$$\xymatrix{
& *+[F]{[3412]} \ar@{-}[d] & \\
& *+[F]{[3142]} \ar@{-}[dr] \ar@{-}[dl] & \\
*+[F]{[1342]}\ar@{-}[dr]  && *+[F]{[3124]}\ar@{-}[dl]  \\
& *+[F]{[1324]} \ar@{-}[d] & \\
& *+[F]{[1234]} & \\
}
$$
\medskip

For each element $w\in {}^I W$, we indicate in the table below the equation of each stratum $G_w$ in terms of the invariants defined above. We also record the type of the canonical parabolic $I_w$ in the second column. Write $\mathbf{0}_2$ for the zero matrix of size $2\times 2$.

\begin{center}
    \begin{tabular}{|c|c|l|} 
\hline
$w$ & $I_w$  & Equations for $G_w$ \\ \hline
    $[3412]$& $I$  & $\Ha_0(g)\neq 0$ \\ \hline
    $[3142]$  & $\emptyset$ &$\Ha_0(g)=0, \ \Ha_1(g)\neq 0, \ \Ha'_1(g)\neq 0$ \\ \hline
    $[1342]$ & $\emptyset$ &$\Ha_0(g)=0, \ \Ha_1(g)\neq 0, \ \Ha'_1(g) =0$  \\ \hline
    $[3124]$ & $\emptyset$ &$ \Ha_0(g)=0, \ \Ha_1(g)= 0, \ \Ha'_1(g)\neq 0$\\ \hline
    $[1324]$ & $\emptyset$ &$\Delta(g)\neq \mathbf{0}_2, \ \Ha_0(g)=0, \ \Ha_1(g)= 0, \ \Ha'_1(g) =0$  \\ \hline
    $[1234]$ & $I$ &$\Delta(g)= \mathbf{0}_2$  \\ \hline
\end{tabular}

\end{center}
Let us consider the stratum $w_2=[3124]$. It corresponds to the element $w_2$ defined in \eqref{w12-eq}. Since $\gcd(r,s)=2$, we know by Theorem \ref{thm-Urs-smooth} that the union of the strata parametrized by $w_2$ and $w'=[1324]$ is smooth. By the above equations, one sees that $\overline{G}_{w_2}$ is the set of matrices $g\in \GL_4$ such that $\Delta(g)$ is nilpotent. It is easy to see that the surface
\begin{equation}
    \{A\in \Mat_{2,K} \ | \ A^2=\mathbf{0}_2\}
\end{equation}
is singular at the point $\mathbf{0}_2$ only. Hence, one sees that the singular locus of $\overline{G}_{w_2}$ coincides with the stratum $G_{\iden}=\{g\in \GL_{4,K} \ | \ \Delta(g)=\mathbf{0}_2\}$ (since $\Delta$ is a smooth map $\GL_{4,K}\to \Mat_{2,K}$). This answers the analogue of Remark \ref{rmk-Urs-Uibar} in the non-Frobenius-type case. By analogy, we expect that $\overline{\Ucal}_{w_2}$ is singular along $\Xcal_{\iden}$ for the usual stack of $G$-zips (and hence for the corresponding unitary Shimura variety).

\begin{proposition} \ 
\begin{enumerate}
    \item For all $w\in {}^IW\setminus\{ [1324] \}$, the stratum $\Xcal_w$ admits a Hasse invariant.
    \item The stratum $\Xcal_{[1324]}$ does not admit a Hasse invariant. %\lorenzo{of weight \(\lambda = 0\) maybe?}.
\end{enumerate}
\end{proposition}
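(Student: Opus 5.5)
The plan is to apply Theorem \ref{thm-Hasse-char0} stratum by stratum. For each $w\in {}^IW$ we decide whether some $\lambda_0\in X^*(T)=\ZZ^4$ exists with $w\lambda_0-z\lambda_0\in X^*(L)$ and $\langle\lambda_0,\alpha^\vee\rangle<0$ for all $\alpha\in E_w$. Here $z=w_{0,I}w_0$, with $I=\{\alpha_1,\alpha_3\}$. Since Theorem \ref{thm-Hasse-char0} allows any weight, the condition $\lambda\in X^*(L)$ is automatic: $w\lambda_0-z\lambda_0$ always lies in $X^*(T)$, and we must check that it is a character of $L$. Concretely, $\lambda=(a,b,c,d)$ must satisfy $a=b$ and $c=d$. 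So the criterion becomes: the open cone $C_w\colonequals\{\lambda_0 : \langle\lambda_0,\alpha^\vee\rangle<0 \ \forall \alpha\in E_w\}$ meets the linear subspace $V_w\colonequals\{\lambda_0 : (w-z)\lambda_0\in X^*(L)_\QQ\}$. Scaling lets us work rationally.

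For part (1), exhibit explicit witnesses, or simply read off Hasse invariants from the table of equations.
\begin{itemize}
\item $[3412]$: $\Ha_0$ works, since its nonvanishing locus is $G_{[3412]}$ and its closure is everything.
\item $[3142]$: the closure is $\{\Ha_0=0\}$, and $\Ha_1\Ha'_1$ is nonvanishing exactly on the stratum.
\item $[1342]$ and $[3124]$: the closures are $\{\Ha_0=\Ha'_1=0\}$ and $\{\Ha_0=\Ha_1=0\}$, so $\Ha_1$, resp.\ $\Ha'_1$, serves.
\item $[1234]$: the stratum is closed, so the constant $1$ works.
\end{itemize}
Each of these is an $E$-invariant function, i.e.\ a section of $\Ocal=\Lcal(0)$. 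We check that its nonvanishing locus on $\overline{G}_w$ is exactly $G_w$, using the table and the closure diagram. This gives (1) directly, with no need for Theorem \ref{thm-Hasse-char0}.

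For part (2), take $w=[1324]=s_{\alpha_2}$. Then $E_w=\{\alpha_2\}$, so $C_w=\{\lambda_0 : a_2<a_3\}$ in coordinates $\lambda_0=(a_1,a_2,a_3,a_4)$. Next compute $z$ as a permutation. Here $w_0=[4321]$ and $w_{0,I}=[2143]$, and composing gives $z$ (expected $[3412]$ up to convention). Then compute $w\lambda_0=(a_1,a_3,a_2,a_4)$ and $z\lambda_0$, and impose the two linear equations $(w\lambda_0-z\lambda_0)_1=(w\lambda_0-z\lambda_0)_2$ and $(\cdot)_3=(\cdot)_4$. With $z\lambda_0=(a_3,a_4,a_1,a_2)$, the first equation reads $a_1-a_3=a_3-a_4$ and the second reads $a_2-a_1=a_4-a_2$. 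We then show these force $a_2\ge a_3$ (the expected outcome is $a_2=a_3$), contradicting $C_w$. By Theorem \ref{thm-Hasse-char0}, no Hasse invariant of any weight $m\lambda$ exists.

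The main obstacle is getting the conventions exactly right. This includes how $W=\Sfr_4$ acts on $\ZZ^4$ (left action permuting coordinates via $w(e_i)=e_{w(i)}$), the precise form of $z$, and the sign convention for $E_w$ relative to the lower-triangular $B$ and $\Phi^+$ defined via $B^+$. A sign slip could swap the conclusion. I would sanity-check the conventions on the $(n-1,1)$ example of \S\ref{sub-exampleGL}, where the paper asserts that $x_i\lambda_i-z\lambda_i=0$. For the strata in part (1), I would also cross-check one case, say $[3124]$, against the criterion, to confirm that the conventions agree with the table.
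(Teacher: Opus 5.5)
Your proposal is correct and follows the paper's proof. Part (1) reads Hasse invariants off the table exactly as you do, using $\Ha_0$, $\Ha_1\Ha'_1$, $\Ha_1$, $\Ha'_1$ and $1$. Part (2) applies Theorem \ref{thm-Hasse-char0} with $E_w=\{\alpha_2\}$, under your conventions ($z=[3412]$, $w(e_i)=e_{w(i)}$), and arrives at the same two equations $a_1-a_3=a_3-a_4$ and $a_2-a_1=a_4-a_2$. To close the step you left as ``expected'': these equations say $a_1+a_4=2a_3$ and $a_1+a_4=2a_2$. Hence $a_2=a_3$, so $\langle\lambda_0,\alpha_2^\vee\rangle=0$, which contradicts the required strict inequality.
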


\begin{proof}
One sees immediately from the above table that for each $w\in {}^IW\setminus\{ [1324] \}$, one the sections $\Ha_i, \Ha'_i$ (for $i=0,1$) or a product thereof gives a Hasse invariant for $\Xcal_w$. Assume now that $w=[1324]$, for which $E_w=\{ \alpha_2 \}$. By Theorem \ref{thm-Hasse-char0}, the existence of a Hasse invariant for $\Xcal_w$ amounts to the existence of $\lambda_0=(a_1,a_2,a_3,a_4)\in \ZZ^4$ such that $w\lambda_0-z\lambda_0\in X^*(L)$ %\lorenzo{why can't we take \(\lambda = w\lambda_0-z\lambda_0\) more generally?} 
and $\langle\lambda_0,\alpha_2^\vee\rangle <0$. %\lorenzo{I guess it's just a sign, but shouldn't this be \(<0\)?}.
The first condition translates to $a_1-a_3=a_3-a_4$ and $a_2-a_1=a_4-a_2$, hence $a_2=a_3$. In particular $\langle\lambda_0,\alpha_2^\vee\rangle =0$. 
\end{proof}

%\lorenzo{I think one can do this more generally for \(\mu(t) = \diag(t^{m-1}\mathbf{1}_{r_1}, t^{m-2}\mathbf{1}_{r_2}, \ldots, \mathbf{1}_{r_m})\), for \(r_1 + r_2 + \ldots + r_m = n\).}\JS{For most signatures, Hasse invariants do not exist, so I decided to restrict to $(n-1,1)$}

\bibliographystyle{alpha}
\bibliography{bib.bib}

\end{document}